\documentclass[a4paper,10pt,reqno]{amsart}
\usepackage{xcolor}
\usepackage{tabularx}
\usepackage{ltablex}
\usepackage{caption}
\usepackage{microtype}
\usepackage{tikz}
\usetikzlibrary{cd}
\usetikzlibrary{shapes.geometric}
\usetikzlibrary {shapes.misc}
\usetikzlibrary{positioning}
\usetikzlibrary{patterns}
\usetikzlibrary{calc}
\usepackage{contour}
  \contourlength{2pt}
  \contournumber{120}

\usepackage[hypertexnames=false,linktoc=all]{hyperref}

\usepackage[poorman]{cleveref}
\crefname{thm}{Theorem}{Theorems}
\crefname{dfn}{Definition}{Definitions}
\crefname{lem}{Lemma}{Lemmas}
\crefname{pro}{Proposition}{Propositions}
\crefname{cor}{Corollary}{Corollaries}
\crefname{ex}{Example}{Examples}
\crefname{rem}{Remark}{Remarks}
\crefname{prob}{Problem}{Problems}
\crefname{conj}{Conjecture}{Conjectures}
\crefname{equation}{equation}{equations}
\crefname{conv}{Convention}{Conventions}
\crefname{section}{\S\!}{\S\S\!}
\crefname{subsection}{\S\!}{\S\S\!}
\crefname{Question}{Question}{Questions}
\newcommand{\triv}{\mathrm{triv}}
\newcommand{\hilb}{\text{-Hilb}}
\newcommand{\Per}{\mathrm{Per}}
\newcommand{\ch}{\mathrm{ch}}
\newcommand{\wt}{\widetilde}
\newcommand{\wh}{\widehat}

\newcommand{\ol}{\overline}
\newcommand{\NS}{\mathrm{NS}}
\newcommand{\Stab}{\mathrm{Stab}}
\newcommand{\Irr}{\mathrm{Irr}}
\usepackage{color}

\usepackage{comment}
\usepackage{enumerate}
\usepackage{enumitem}
\usepackage{latexsym}
\usepackage{amscd}
\usepackage{amsmath}
\usepackage{amssymb}
\usepackage{mathrsfs}

\usepackage{array}
\usepackage{amsthm}
\usepackage{float}
\usepackage{graphicx}
\usepackage{subfigure}
\theoremstyle{plain}
\newtheorem{theorem}{Theorem}[section]
\newtheorem{corollary}[theorem]{Corollary}
\newtheorem{lemma}[theorem]{Lemma}
\newtheorem{definition-lemma}[theorem]{Definition-Lemma}

\newtheorem{proposition}[theorem]{Proposition}
\newtheorem{conjecture}[theorem]{Conjecture}

\newtheorem{maintheorem}{Theorem}

\theoremstyle{definition}
\newtheorem{definition}[theorem]{Definition}
\newtheorem{remark}[theorem]{Remark}

\newtheorem{condition}[theorem]{Condition}
\newtheorem{example}[theorem]{Example}

\numberwithin{equation}{section}
\numberwithin{figure}{section}
\numberwithin{table}{section}

\usepackage{prettyref}

\newcommand{\pref}{\prettyref}

\newrefformat{th}{Theorem~\ref{#1}}
\newrefformat{cr}{Corollary~\ref{#1}}
\newrefformat{lm}{Lemma~\ref{#1}}
\newrefformat{dl}{Definition-Lemma~\ref{#1}}
\newrefformat{cl}{Claim~\ref{#1}}
\newrefformat{sl}{Sublemma~\ref{#1}}
\newrefformat{pr}{Proposition~\ref{#1}}
\newrefformat{cj}{Conjecture~\ref{#1}}
\newrefformat{st}{Step~\ref{#1}}
\newrefformat{sc}{Section~\ref{#1}}
\newrefformat{df}{Definition~\ref{#1}}
\newrefformat{rm}{Remark~\ref{#1}}
\newrefformat{q}{Question~\ref{#1}}
\newrefformat{pb}{Problem~\ref{#1}}
\newrefformat{cd}{Condition~\ref{#1}}
\newrefformat{eg}{Example~\ref{#1}}
\newrefformat{he}{Heore~\ref{#1}}
\newrefformat{fg}{Figure~\ref{#1}}
\newrefformat{tb}{Table~\ref{#1}}

\newrefformat{mthm}{Theorem~\ref{#1}}

\newdimen\argwidth
\def\db[#1\db]{%
 \setbox0=\hbox{$#1$}\argwidth=\wd0
 \setbox0=\hbox{$\left[\box0\right]$}
  \advance\argwidth by -\wd0
 \left[\kern.3\argwidth\box0 \kern.3\argwidth\right]}

\newcommand{\sgn}{\operatorname{sgn}}
\renewcommand{\Re}{\ensuremath{\mathop{\mathfrak{Re}}\nolimits}}
\renewcommand{\Im}{\ensuremath{\mathop{\mathfrak{Im}}\nolimits}}
\newcommand{\Res}{\operatorname{Res}}

\newcommand{\Ker}{\operatorname{Ker}}

\newcommand{\id}{\operatorname{id}}

\newcommand{\GL}{GL}

\newcommand{\SL}{SL}

\newcommand{\coh}{\operatorname{coh}}

\newcommand{\Pic}{\operatorname{Pic}}
\newcommand{\Ext}{\operatorname{Ext}}

\newcommand{\Hom}{\operatorname{Hom}}

\newcommand{\Amp}{\operatorname{Amp}}

\newcommand{\orb}{{\mathrm{orb}}}

\newcommand{\bA}{\ensuremath{\mathbb{A}}}

\newcommand{\bC}{\ensuremath{\mathbb{C}}}

\newcommand{\bH}{\ensuremath{\mathbb{H}}}

\newcommand{\bP}{\ensuremath{\mathbb{P}}}
\newcommand{\bQ}{\ensuremath{\mathbb{Q}}}
\newcommand{\bR}{\ensuremath{\mathbb{R}}}

\newcommand{\bZ}{\ensuremath{\mathbb{Z}}}

\newcommand{\scA}{\ensuremath{\mathcal{A}}}
\newcommand{\scB}{\ensuremath{\mathcal{B}}}
\newcommand{\scC}{\ensuremath{\mathcal{C}}}
\newcommand{\scD}{\ensuremath{\mathcal{D}}}

\newcommand{\scF}{\ensuremath{\mathcal{F}}}

\newcommand{\scH}{\ensuremath{\mathcal{H}}}

\newcommand{\scM}{\ensuremath{\mathcal{M}}}

\newcommand{\scO}{\ensuremath{\mathcal{O}}}
\newcommand{\scP}{\ensuremath{\mathcal{P}}}

\newcommand{\scS}{\ensuremath{\mathcal{S}}}
\newcommand{\scT}{\ensuremath{\mathcal{T}}}
\newcommand{\scU}{\ensuremath{\mathcal{U}}}

\newcommand{\scW}{\ensuremath{\mathcal{W}}}

\newcommand{\scY}{\ensuremath{\mathcal{Y}}}
\newcommand{\scZ}{\ensuremath{\mathcal{Z}}}

\newcommand{\bfL}{\ensuremath{\mathbf{L}}}

\newcommand{\bfR}{\ensuremath{\mathbf{R}}}

\newcommand{\Dtilde}{{\widetilde{D}}}

\makeatletter
\newcommand{\xleftrightarrows}[2][]{\mathrel{
 \raise.40ex\hbox{$
       \ext@arrow 3095\leftarrowfill@{\phantom{#1}}{#2}$}
 \setbox0=\hbox{$\ext@arrow 0359\rightarrowfill@{#1}{\phantom{#2}}$}
 \kern-\wd0 \lower.40ex\box0}}

\newcommand{\xrightleftarrows}[2][]{\mathrel{
 \raise.40ex\hbox{$\ext@arrow 3095\rightarrowfill@{\phantom{#1}}{#2}$}
 \setbox0=\hbox{$\ext@arrow 0359\leftarrowfill@{#1}{\phantom{#2}}$}
 \kern-\wd0 \lower.40ex\box0}}
\newcommand{\xleftrightarrow}[2][]{
     \ext@arrow 0055{\leftrightarrowfill@}{#1}{#2}
}
\def\leftrightarrowfill@{
 \arrowfill@\leftarrow\relbar\rightarrow
}  
\makeatother

\newcommand{\la}{\left\langle}
\newcommand{\ra}{\right\rangle}

\title[Ishii's conjecture and Bridgeland stability]{Ishii's conjecture and Bridgeland stability conditions for dihedral reflection groups}

\author[S. Nimura]{Shu Nimura}
\address{Graduate School of Mathematics, Nagoya University, Furocho, Chikusa-ku, Nagoya, 464-8602, Japan}
\email{shu.nimura.c6@math.nagoya-u.ac.jp}

\begin{document}
\begin{abstract}
We provide a new proof of Ishii's conjecture for any dihedral reflection group $G\subset \GL_2(\bC)$ from the viewpoint of Bridgeland stability conditions.
Our strategy is to reduce the problem, via the derived McKay correspondence, to a geometric construction of Bridgeland stability conditions on the root stack of the maximal resolution along the strict transform of the discriminant divisor.
\end{abstract}

\maketitle

%
%
%
%\tableofcontents

%
%------------------------------------------------------
%
\section{Introduction}\label{sc:Introduction}
\subsection{Motivation}
One of the central themes in the McKay correspondence is the relationship between birational geometry and moduli spaces of representations of noncommutative algebras.
Craw and Ishii proposed a conjecture \cite{Craw-Ishii} on the crepant resolutions of $\bC^3/G$ for any finite subgroup $G\subset \SL_3(\bC)$: every crepant resolution arises as a moduli space of $\theta$-stable representations by varying King stability $\theta\in \Theta(G)$ \cite{King}. The conjecture was proved for abelian groups in $\SL_3(\bC)$ by \cite{Craw-Ishii} and for general finite groups in $\SL_3(\bC)$ by \cite{Yamagishi}. Furthermore, inspired by the DK hypothesis, Ishii proposed the following conjecture in dimension two, extending the Craw--Ishii conjecture to the non-crepant setting:
\begin{conjecture}[\cite{Ishii_maximal}]
    Let $G\subset \GL_2(\bC)$ be any finite subgroup. Then, a resolution of $\bC^2/G$ is dominated by the maximal resolution of the pair $(\bC^2/G, B)$ if and only if $X\cong M_{\theta}$ for some generic $\theta\in \Theta(G)$.
\end{conjecture}
\noindent
For the definitions, see \S 2. Here, we only note that if $B=0$ (equivalently, if $G$ acts freely in codimension one), the crepant resolution, when it exists, is both minimal and maximal. Ishii proved the ``if'' part in full generality, and also the ``only if'' part for arbitrary abelian subgroups and for small finite subgroups $G\subset \GL_2(\bC)$, i.e., when $G$ acts freely in codimension one. Later, \cite{capellan_dihedral} proved the conjecture for dihedral reflection groups. This paper aims to give a new proof of Ishii's conjecture in the case of dihedral reflection groups via Bridgeland stability conditions and wall-crossing phenomena.

\subsection{Bridgeland stability}
Bridgeland stability conditions were introduced by Bridgeland \cite{Bridgeland_Stab} inspired by Douglas's work on $\Pi$-stability \cite{Douglas}. Bridgeland stability conditions play a central role in the study of triangulated categories arising in birational geometry, representation theory, enumerative geometry, and mirror symmetry. Bridgeland stability can be regarded as a generalization of classical notions of stability, such as Gieseker stability for sheaves on a variety and King stability for representations of a noncommutative algebra.  

In the context of birational geometry of surfaces, Toda interpreted the surface MMP in terms of wall-crossing phenomena for moduli spaces of Bridgeland stable objects \cite{Toda_surfaceMMP}. We briefly review Toda's work.  Let $Y$ be a smooth projective surface. The statement is formulated in the `real part' of $\Stab(Y)_\bR$ of the stability manifold $\Stab(Y)$. This space fits into the following cartesian diagram (see also \S 2)
\[
\begin{tikzcd}
    \Stab(Y)_\bR\arrow[r]\arrow[d,"\Pi_\bR"]&\Stab(Y)\arrow[d,"\Pi"]\\
\NS(Y)_\bR\arrow[r]&K^{\mathrm{num}}(Y)_\bC^\vee.
\end{tikzcd}\]
Here, we fix the notation: for a Bridgeland stability condition  $\sigma=(Z,\scA)$, $M_{\sigma}([\scO_y])$ denotes the algebraic space parametrizing $Z$-stable objects $E\in\scA$ with $[E]=[\scO_y]$ in $K^{\mathrm{num}}(Y)$ for $y\in Y$.
Toda proved the following theorem:
\begin{theorem}[\cite{Toda_surfaceMMP}]
    Let $Y$ be any smooth projective surface. For every smooth contraction $Y\to X$, we can associate a connected open subset $U(X)\subset \Stab(Y)_{\bR}$ such that 
\begin{enumerate}
    \item For any $\sigma\in U(X)$, $M_{\sigma}([\scO_x])\cong X$.
    \item If $X,X'$ are related by a single blowup, then $\ol{U(X)}\cap \ol{U(X')}$ is nonempty and has real codimension one.
\end{enumerate}
\end{theorem}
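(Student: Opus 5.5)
We have to sketch a proof of Toda's theorem: for each smooth contraction $f\colon Y\to X$, build a connected open subset $U(X)\subset\Stab(Y)_\bR$ on which $M_\sigma([\scO_x])\cong X$, and show that two such regions for surfaces related by a single blowup share a codimension-one boundary. The plan is to work with the abelian category $\Per(Y/X)$ of perverse coherent sheaves attached to $f$ (in the sense of Bridgeland and Van den Bergh). Its simple objects supported on a fibre $f^{-1}(x)$ are explicit. For a single $(-1)$-curve $C$, they are $\scO_C(-1)[1]$ and $\scO_C$ (or a dual version, depending on the perversity). For a composite contraction we will factor $f$ into a sequence of blowups $Y=Y_n\to\cdots\to Y_0=X$ and iterate, obtaining a tilted heart.

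The key input is that the skyscrapers of $X$ correspond to the objects $\bfL f^*\scO_x$, equivalently to the "point-like" objects of $\Per(Y/X)$ of class $[\scO_y]$. First, for $(B,\omega)\in\NS(Y)_\bR$ with $\omega$ ample, we take the standard Arcara–Bertram / Bridgeland tilt $\sigma_{B,\omega}=(Z_{B,\omega},\scA_{B,\omega})$, where
\[ Z_{B,\omega}(E)=-\int e^{-(B+i\omega)}\ch(E). \]
In the large-volume region these give $M_\sigma([\scO_y])\cong Y$, which handles the case $X=Y$. Next, we extend to $\Stab(Y)_\bR$ by allowing $\omega$ to degenerate to $f^*$ of an ample class on $X$ plus a small correction. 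Concretely, we define $U(X)$ as the set of stability conditions in the $\widetilde{GL}^+_2(\bR)$-orbit of pairs $(Z,\Per(Y/X))$ where $Z$ is of the form $Z_{B,\omega}$ with $\omega=f^*\omega_X$ and $B$ chosen in a chamber. In that chamber, $B\cdot C_i$ lies in a prescribed interval for each exceptional curve $C_i$. This makes all simple objects of $\Per(Y/X)$ on the fibres have phase in $(0,1]$, and it gives $Z$ the Harder–Narasimhan property via support-property estimates (Bogomolov–Gieseker on $Y$ pulled back along $f$).

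To prove (1), we show that for $\sigma\in U(X)$ the $\sigma$-stable objects of class $[\scO_y]$ are exactly the pullbacks $\bfL f^*\scO_x$ for $x\in X$, together with the exceptional-fibre replacements. Away from the exceptional locus these are the skyscrapers. Over a point $x$ in the image of the exceptional locus, the filtrations by the simples of $\Per$ force every stable object of this class to be the unique perverse point object $\scO_{f^{-1}(x)}$-type sheaf. This uses the chamber condition: any proper subobject has strictly smaller phase, because $\Im Z$ of the fibre simples has the right sign. Using Van den Bergh's description of $\Per(Y/X)$ as modules over $f_*\mathcal{E}nd(\mathcal{P})$ for a local projective generator $\mathcal{P}$, the moduli functor is identified with the functor of points of $X$. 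Connectedness of $U(X)$ follows because the chamber in $(B,\omega)$-space is convex and the group orbit is connected.

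For (2), let $X'=\mathrm{Bl}_pX$. We identify the wall $\ol{U(X)}\cap\ol{U(X')}$ as the locus where the phase of $\scO_C(-1)[1]$ (for the new exceptional curve $C$) equals the phase of $\scO_C$. Equivalently, $Z(\scO_C)$ and $Z(\scO_C(-1))$ become real-collinear, which is a single linear condition $B\cdot C=\text{const}$ on the parameters. On one side, the point object over $p$ is destabilised by $\scO_C$ and the stable objects are the $\scO_y$, $y\in C$, recovering $X'$. On the other side, the extension object survives as a single point, recovering $X$. The wall therefore has real codimension one and is nonempty, since the chambers are adjacent half-spaces.

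The main obstacle is the support property and the HN property for $Z$ on the non-standard heart $\Per(Y/X)$ when $\omega$ is only nef. I would first try to deduce it from the standard tilt via an autoequivalence or a tilting argument, showing $\Per$ is a tilt of $\scA_{B,\omega'}$ at the torsion pair generated by the $\scO_{C_i}(-1)$. Then I would use a deformation argument: Bridgeland's local homeomorphism $\Pi$ lets us approach the nef boundary while controlling walls only for the finitely many classes supported on the exceptional fibres.
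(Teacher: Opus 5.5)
Your $U(X)$ sits on the wall rather than in an open chamber, so (1) fails for it.

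\textbf{The region lies on the wall.} With $\omega=f^*\omega_X$ you get $\Im Z(F)=\omega\cdot\ch_1(F)=0$ for every $F$ supported on the exceptional fibres. This is in fact forced: the fibre simples of $\Per(Y/X)$ (e.g.\ $\scO_C$ and $\scO_C(-1)[1]$ for a single blowup) have $\ch_1=\pm C$, so positivity requires $\omega\cdot C=0$. Consequently $\bfL f^*\scO_p$, which sits in $0\to\scO_C(-1)[1]\to\bfL f^*\scO_p\to\scO_C\to 0$ in the perverse heart, is an extension of two objects of phase $1$. It is therefore strictly semistable, and so are the $\scO_y$ with $y\in C$. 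Varying $B$ or acting by $\widetilde{GL}^+_2(\bR)$ cannot repair this: the fibre central charges stay real while $\omega=f^*\omega_X$, and the group action preserves equality of phases. Your assertion that ``$\Im Z$ of the fibre simples has the right sign'' is exactly what is unavailable here.

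\textbf{Further problems with the region.} It is not open in $\Stab(Y)_\bR$. That slice has no $B$-field and only $\omega\in\NS(Y)_\bR$ varies, and $\omega\in f^*\Amp(X)$ is a codimension-$N$ condition. Also, on a projective $Y$ the heart $\Per(Y/X)$ itself violates positivity, because it contains sheaves such as $f^*L^{-n}$ of arbitrarily negative slope.

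\textbf{The missing idea.} Toda's construction, reproduced in \S 3 above, pushes $\omega$ \emph{outside} the nef cone. One takes $\omega=f^*\omega_X+D$ with $D=\sum_i a_i f_i^*C_i$ in an open cone ($a_1>\dots>a_N>0$, plus Toda's quadratic condition from the $\ch_0$-term), so $\omega$ is negative on the exceptional curves. This is paired with the glued heart $\scA_\omega(Y/X)=\scC^0_{Y/X}*\bfL f^*\scA_\omega$. Here $\scA_\omega$ is the Arcara--Bertram heart on $X$, and $\scC^0_{Y/X}\subset\Ker\bfR f_!$ is built inductively by gluing and then HRS-tilting at $\scO_{\wh C}$. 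It therefore contains $\scO_{\wh C}[-1]$ and has $\Im Z>0$ on all its nonzero objects. Then (1) is short. An object of class $[\scO_y]$ has $\Im Z=0$, so it has no $\scC^0_{Y/X}$-part and equals $\bfL f^*M$ with $M$ stable of point class, i.e.\ $M\cong\scO_x$. Conversely, $\bfL f^*\scO_x$ is stable because $\bfL f^*\scA_\omega$ is closed under subobjects and quotients.

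\textbf{The wall in (2) is misidentified.} Since $Z(\scO_C)+Z(\scO_C(-1)[1])=Z(\scO_y)=-1$, the two phases agree iff both central charges are real. That means $\omega\cdot C=0$, for every $B$ with $|B\cdot C|<\frac12$; it is not a condition $B\cdot C=\mathrm{const}$. Moving $B\cdot C$ to $\pm\frac12$ instead makes $Z(\scO_C)$ or $Z(\scO_C(-1)[1])$ vanish. That is a boundary of a different nature: beyond it one meets line-bundle twists of the perverse heart.

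The correct mechanism runs as follows. On $\omega\cdot C=0$ the heart is the perverse-type heart $\scA_{h^*\omega}(Y/X')$, which is where your $\Per$ legitimately appears. It carries the torsion pair $(\la\scO_{\wh C}\ra_{\mathrm{ex}},\scO_{\wh C}^\perp)$, whose HRS tilt is $\scA_\omega(Y/X)$. Toda's deformation lemma for tilt-related hearts then gives $\lim_{a\to 0^+}\sigma_{\omega_0+a\wh C}=\sigma_{\omega_0}$ for $\omega_0$ on the wall. This produces the nonempty codimension-one common boundary.

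\textbf{HN and support.} Once the heart is right, these are routine at rational points. HN follows from noetherianity plus discreteness of $\Im Z$. Support is reduced to $X$ via the generators of $\scC^0_{Y/X}$. Continuity then extends everything, so there is no need to approach the nef boundary.
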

To prove this, Toda constructs a local section of $\Pi_\bR$
\[\mu'_{\mathrm{geom}}: A^{\mathrm{tot}}:=\bigcup_{Y\to X} \ol{ A^{\dagger}}(X)\to \Stab_\bR(Y),\]
 where the union runs over all smooth contractions
$Y\to X$, and $\ol{ A^{\dagger}}(X)$ is a subset of the Neron-Severi group $\NS(Y)_{\bR}$. $U(X)$ is defined to be the image of the interior $A^{\dagger}(X)$.
 
On the other hand, in the context of representation theory of algebras, Bayer--Craw--Zhang constructed a continuous map \cite{Bayer-Craw-Zhang}, 
\[\mu'_{\mathrm{alg}}:\Theta(G)\times \Lambda_1\to \Stab([\bC^2/G]),\]
where $\Lambda_1$ is a certain space of additional parameters. Via $\mu'_{\mathrm{alg}}$, one can identify King stability and Bridgeland stability, allowing one to interpret Ishii's conjecture in terms of Bridgeland moduli (see \S 4.1 for the details.)

\subsection{Main results}
Our strategy for Ishii's conjecture is to construct an analogous map to $\mu'_{\mathrm{geom}}$ for a certain DM stack $\scY$ and to compare $\mu'_{\mathrm{geom}}$ and $\mu'_{\mathrm{alg}}$ via the McKay correspondence.

To state our results more precisely, we prepare some notations. Let $G$ be a dihedral reflection group and set $H:=G\cap \SL_2(\bC)$. Let $Y_H:=H\hilb(\bC^2)$ be the $H$-Hilbert scheme defined by \cite{IN_first}. $Y_H$ admits a natural action of $G/H\cong \bZ_2$, and we consider $\scY:=[Y_H/\bZ_2]$, which can be regarded as the second root stack over the quotient variety $Y:=Y_H/\bZ_2$ along the branch locus. $Y$ is the maximal resolution in Ishii's conjecture. We write $\pi:\scY\to Y$ for the coarse moduli map, or the quotient map.  In this situation, the universal family of $H\hilb(\bC^2)$ with $G/H$-action induces an exact equivalence
\[\Phi:D_c(\scY)\cong D_c([\bC^2/G]),\]
where $D_c(-)$ denotes the bounded derived category of coherent sheaves whose cohomology sheaves are compactly supported. This provides the natural setting for Bridgeland stability for our purposes since the corresponding Bridgeland moduli spaces are well-behaved (unlike the derived category $D_Z(-)$ supported on a fixed compact subscheme $Z$). 

To formulate the analogue of Toda's work, we introduce $\NS_{\orb}(\scY)_{\bR}$, the orbifold N\'{e}ron-Severi group (see \S 3.3 for the definition). For our purposes, we consider 
\[\NS_{\orb}(\scY)_\bC:=\NS_{\orb}(\scY)_\bR\otimes \bC.\]
 We define $\Stab_{\mathrm{n}}(\scY)$ to be the space of normalized stability conditions, which fits into the following cartesian diagram (see \S 3.3)
\[
\begin{tikzcd}
    \Stab_{\mathrm{n}}(\scY)\arrow[r]\arrow[d,"\Pi_{\mathrm{n}}"]&\Stab(\scY)\arrow[d,"\Pi"]\\
\NS_{\orb}(\scY)_\bC\arrow[r]&K^{\mathrm{num}}_c(\scY)_\bC^\vee.
\end{tikzcd}
\]
As in the ordinary surface case, we consider the numerical class of skyscraper sheaves on $\scY$, which is explicitly described as $[\pi^*\scO_y]\in K_c^{\mathrm{num}}(\scY)$ for any $y\in Y$.
\begin{maintheorem}\label{mthm:A}
    For any smooth contraction $Y\to X$ of the maximal resolution $Y$, we can associate a connected open subset $\scU(X)\subset \Stab_{\mathrm{n}}(\scY)$ such that 
    \begin{enumerate}
        \item For any $\sigma\in \scU(X)$, $M_{\sigma}([\pi^*\scO_y])\cong X$.
        \item If $X$ and $X'$ are related by a single blowup, then $\ol{\scU(X)}\cap \ol{\scU(X')}$ is nonempty and has real codimension one.
    \end{enumerate}
\end{maintheorem}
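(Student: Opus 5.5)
Following Toda's approach for surfaces, the plan is to build a local section of $\Pi_{\mathrm{n}}$ over a union of closed chambers in $\NS_{\orb}(\scY)_\bC$, indexed by the smooth contractions $Y\to X$ dominated by the maximal resolution.

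\emph{Step 1: the chambers.} For each contraction $f\colon Y\to X$, write the exceptional curves of $f$ as $C_1,\dots,C_n$, and write $\scC_i$ for their strict transforms on $\scY$. Write $\scD$ for the reduced divisor over the branch locus; these components generate the orbifold classes in $\NS_{\orb}(\scY)_\bR$. I would define $A^{\dagger}(X)\subset\NS_{\orb}(\scY)_\bC$ as the set of $B+i\omega$ such that $\omega$ is $f$-relatively trivial on the $C_i$ but positive on $X$, while the $B$-field satisfies $0<B\cdot\scC_i$ plus suitable inequalities. These inequalities come from the Mukai-vector computation for the objects $\scO_{\scC}(-1)[1]$ and for the twisted pieces $\scO_{\scD}\otimes\chi$. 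The chambers are chosen so that the relevant phases lie in $(0,1)$.

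\emph{Step 2: the heart.} For $\sigma\in A^\dagger(X)$ I would take the heart obtained from $\mathrm{Coh}_c(\scY)$ by tilting with respect to the perverse t-structure of $f$, followed by a tilt along the torsion pair generated by the twisted sheaves supported on $\scD$. Concretely, the heart should be ${}^{p}\mathrm{Per}(\scY/X)$ adapted to the stacky structure. This heart contains $\pi^*\scO_y$ for $y\notin \mathrm{Exc}(f)$, and it contains simple objects $\scO_{\scC_i}(-1)[1]$ together with the relevant $\scO_{\scD}$-twists. Since everything is compactly supported and the heart is Noetherian and of finite length over the quotient singularity, the central charge $Z=-\int e^{-(B+i\omega)}\ch(\cdot)\sqrt{\mathrm{td}}$ (the orbifold version) satisfies positivity on the finitely many simples plus pushforwards of points. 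The support property would follow as in Toda.

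\emph{Step 3: moduli.} For $\sigma$ in the interior, I would show that $\sigma$-stable objects of class $[\pi^*\scO_y]$ are exactly the perverse point sheaves of $f$. These objects correspond bijectively to points of $X$. Hence $M_\sigma([\pi^*\scO_y])\cong X$, using the Van den Bergh/Bridgeland identification of $X$ as the moduli of perverse points. I would set $\scU(X)$ to be the image of $A^\dagger(X)$; it is connected since it is the image of a convex set.

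\emph{Step 4: walls.} If $X\to X'$ contracts one further $(-1)$-curve $C$, the common boundary $\ol{A^\dagger(X)}\cap\ol{A^\dagger(X')}$ is the locus where $\omega\cdot C=0$ and $B\cdot C$ sits at the boundary value. This locus is a real hyperplane section, so it has real codimension one, and there $\scO_C(-1)$ becomes semistable of the same phase as the point objects.

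The main obstacle is Step 2: for contractions whose exceptional curves meet $\scD$, one must check that $Z$ maps the simple objects of the heart, including the stacky twists, into the upper half plane. One must also check that the chambers glue continuously across walls, which requires a careful choice of the $\sqrt{\mathrm{td}}$ orbifold correction. I would first verify this explicitly for the $(-1)$- and $(-2)$-curves meeting the branch divisor in Ishii's description of the maximal resolution.
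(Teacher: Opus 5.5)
There is a genuine gap, and it starts in Step 1: the chamber conditions are placed in the wrong direction. You require $\omega\cdot C_i=0$ for every $f$-exceptional curve. That forces the $\omega$-part of $A^\dagger(X)$ into $f^*\NS(X)_\bR\oplus\bR^{\pi_0(R)}$, whose codimension in $\NS_{\orb}(\scY)_\bR$ equals the number of exceptional curves. Since $\Pi_{\mathrm{n}}$ is a local homeomorphism to $\NS_{\orb}(\scY)_\bC$, the image $\scU(X)$ cannot be open in $\Stab_{\mathrm{n}}(\scY)$.

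Moreover, this locus lies on walls rather than in a chamber. For $F$ supported on exceptional fibres, $\Im Z(\pi^*F)=\ch_1(F)\cdot\omega=0$. So objects such as $\scO_{\wh{C_i}}$ have integral phase, and your requirement that ``the relevant phases lie in $(0,1)$'' cannot be met. Already in Toda's non-orbifold case of one $(-1)$-curve $C$ contracted by $h\colon X'\to X$, $\bfL h^*\scO_x$ is an extension of $\scO_C$ by $\scO_C(-1)[1]$ in ${}^{-1}\Per(X'/X)$. Their central charges $B\cdot C-\frac12$ and $-B\cdot C-\frac12$ are both real and negative when $\omega\cdot C=0$ and $|B\cdot C|<\frac12$. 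Hence $\bfL h^*\scO_x$ is strictly semistable, as is every point-class object over $h(C)$, and the moduli of stable objects is not $X$.

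Step 3 fails independently. In Bridgeland's and Van den Bergh's results the moduli of perverse point sheaves of $Y\to X$ is $Y$ (or a flop of it), never $X$; for $p=-1$ the perverse point sheaves are exactly the skyscrapers $\scO_y$, $y\in Y$. Finally, your Step 4 wall is cut out by two real conditions ($\omega\cdot C=0$ and a boundary value of $B\cdot C$), so it would have real codimension two, not one.

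What is needed is the opposite arrangement. The chamber is open in the $\omega$-direction, and $B$ only ranges over a fixed small convex neighbourhood $\scB$ of $0$ common to all $X$.
One builds a heart $\scC^0_{\scY/X}$ of $\Ker(\bfR\wt f_!)$ inductively. Start from $i_*\coh_c(R)[-1]$, and at each blowdown glue with $\la\scO_{\wh C}\ra_{\mathrm{ex}}$ and then tilt. Take $\omega=f^*\omega_X+D$ with $\omega_X\in\Amp(X)$ and $(D,t)$ in the open linear cone where $\Im Z>0$ on $\scC^0_{\scY/X}$. In the paper's notation, for odd $n$ this cone is $t>a_1>\dots>a_N>0$ with $D=\sum a_if_i^*C_i$, so $\omega\cdot f_i^*C_i=-a_i\neq 0$.
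The heart is the glued one $\scC^0_{\scY/X}*\bfL\wt f^*\coh_c(X)$, not a perverse heart. An object of class $[\pi^*\scO_y]$ has $\Im Z=0$, so its $\scC^0$-part vanishes and it is $\bfL\wt f^*\scO_x$; this is what produces $X$.
The perverse heart ${}^{-1}\Per_c(X'/X)$ appears only on the wall between $\scU(X')$ and $\scU(X)$, glued as $\scC^0_{\scY/X'}*\bfL\wt g^*\,{}^{-1}\Per_c(X'/X)$ for $f=h\circ g$. That wall lies in both closures because the two hearts differ by an HRS tilt, so Toda's deformation lemma applies.
You also never address representability of $\scM_\sigma([\pi^*\scO_y])$, which the isomorphism with $X$ as varieties requires. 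Toda's argument via finitely many generators and openness in Inaba's moduli does not directly apply here, because the generating set of $\scC^0_{\scY/X}$ contains the infinite family $\scO_p[-1]$, $p\in R$. The paper obtains the fine moduli space only after gluing with the Bayer--Craw--Zhang stability conditions and invoking King moduli.
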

As a result, the birational geometry of the maximal resolution $Y$ is encoded by wall-crossing phenomena of Bridgeland moduli spaces. More precisely, we will construct a continuous section of $\Pi_{\mathrm{n}}$
\[\mu_{\mathrm{geom}}:A^{\mathrm{tot}}\times \scB:=\left(\bigcup_{Y\to X} \ol{ A^{\dagger}}(X)\right)\times \scB\to \Stab_{\mathrm{n}}(\scY)\]
where $\ol{A^{\dagger}(X)}\subset \NS_{\orb}(\scY)_\bR$ is the counterpart of $\ol{A^{\dagger}(X)}$ appearing in Toda's construction, and $\scB$ is a small open neighborhood of $0$ in $\NS_{\orb}(\scY)_\bR$ that is independent of the contractions. We can think of $\mu_{\mathrm{geom}}$ as a small deformation of $\mu'_{\mathrm{geom}}$ inside $\Stab_{\mathrm{n}}(\scY)$. 
A subtle but crucial point is that the real slice $\Stab_\bR(\scY)$ is too small to compare $\mu'_{\mathrm{geom}}$ with $\mu'_{\mathrm{alg}}$. The two local sections do not admit a nonempty common domain inside $\Stab_\bR(\scY)$. The deformation by the additional parameter space $\scB$ is introduced to produce such a common domain. The main point is that, after the deformation, the two local sections glue along the resulting overlap:
\begin{maintheorem}\label{mthm:B}
    After identifying the two stability manifolds $\Stab(\scY)$ and $\Stab([\bC^2/G])$ via the derived equivalence $\Phi$, an explicit change of parameters, and a rotation action, the local sections of $\Pi_{\mathrm{n}}$
\[
\mu_{\mathrm{geom}}
\quad\text{and}\quad
\mu'_{\mathrm{alg}}
\]
coincide on their common domain.
\end{maintheorem}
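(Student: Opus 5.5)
The plan is to use the standard principle that two stability conditions with the same central charge and the same heart agree. More precisely, if two continuous sections of the local homeomorphism $\Pi$ agree at one point and have connected common domain, they agree everywhere on it.

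First, I will make the identification explicit. The equivalence $\Phi\colon D_c(\scY)\cong D_c([\bC^2/G])$ induces an isometry $\Phi_*\colon K^{\mathrm{num}}_c(\scY)\cong K^{\mathrm{num}}_c([\bC^2/G])$. Via its dual, $\Phi_*$ sends the central charges $Z_{(\beta+i\omega,b)}$ used by $\mu_{\mathrm{geom}}$ to functions on the representation ring $R(G)$. I will compute $\Phi_*$ on a basis: the classes $[\pi^*\scO_y]$, the sheaves $\scO_{C}(-1)$ and their twists by the order-two character along the exceptional and discriminant components, and $\scO_{\scY}$-type classes. In the McKay basis $\{\Phi(-)\}$ these correspond to the vertex simples $\scO_0\otimes\rho$, using the known description of the images of the simples $\rho\otimes\scO_0$ under the inverse McKay functor for $H\hilb(\bC^2)$ together with the $\bZ_2$-equivariant refinement. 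Writing $Z\circ\Phi_*^{-1}(\scO_0\otimes\rho)$ as an affine-linear function of $(\beta,\omega,b)$ gives the explicit change of parameters to $(\theta,\lambda)\in\Theta(G)\times\Lambda_1$. After a rotation by $\bC^*$, the stability $\theta$ becomes the imaginary part (up to normalization) of $Z$ on the vertex simples, so that $\mathrm{Im}$ of the central charge of $\Phi(\pi^*\scO_y)$ vanishes and the phase convention of \cite{Bayer-Craw-Zhang} is matched. The image of $A^{\mathrm{tot}}\times\scB$ then overlaps the domain of $\mu'_{\mathrm{alg}}$ in a nonempty open set. This is exactly where the $\scB$-deformation is needed, since $b\neq0$ is what makes the vertex-simple charges lie in the upper half plane.

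Second, I will show that the hearts agree at one point of the overlap. I will pick $\sigma=\mu_{\mathrm{geom}}(\omega,b)$ with $\omega$ in the chamber $\ol{A^\dagger}(Y)$ (the maximal resolution itself) and $\omega$ close to the boundary where the exceptional curves have small volume. In Toda's construction the heart there is a tilt of a perverse-type heart $\mathrm{Per}(\scY/\bC^2/G)$. The key claim is that, after rotation, $\Phi$ maps this tilted heart to the module heart $\mathrm{mod}\,\bC[x,y]\rtimes G$ with compact support, which is the heart of $\mu'_{\mathrm{alg}}$. To prove it, I will check that the $\Phi$-preimages of the finitely many vertex simples $\scO_0\otimes\rho$ are $\sigma$-stable objects of the geometric heart, all of phase in $(0,1]$. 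The objects $\scO_{C}(-1)[1]$ and $\omega$-twisted $\scO_C$ are stable for $\omega$ small by Toda's local analysis at contracted curves. A heart of finite length whose simples lie in the other heart, in the same triangulated category, then coincides with that heart.

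Third, I will conclude by continuity. Both maps are continuous sections of $\Pi_{\mathrm n}$ over the transported domain. The set where they agree is open, because $\Pi$ is a local homeomorphism, and closed. Once I check that the common domain is connected, which I will arrange by shrinking $\scB$ and using that each $\ol{A^\dagger}(X)$ is convex, agreement holds on all of it.

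I expect the main obstacle to be the second step: identifying $\Phi^{-1}$ of the vertex simples for the dihedral group $G$, together with the $\bZ_2$-twists along the strict transform of the discriminant. I would first get this from the $H$-case (Ishii–Nakamura type results for $\SL_2$) by equivariance, and then verify stability of these objects by direct phase comparison.
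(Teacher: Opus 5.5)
Your first and third steps agree with the paper: the change of parameters $\gamma,\delta$ with the rotation $R(\frac12)$, the observation that the $(B,s)$-deformation is needed because $\delta(0)\in\partial\Lambda_1$, and the open--closed argument for two sections of the local homeomorphism $\Pi_{\mathrm n}$ over a connected overlap. The gap is in the second step, which as written does not produce a point of agreement.

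(i) The geometric heart on $\scU(Y)$ is $\scC^0_{\scY/Y}*\pi^*\coh_c(Y)=\coh_c(R)[-1]*\pi^*\coh_c(Y)$. This is a tilt of $\coh_c(\scY)$ along the twisted sector, not a tilt of a perverse heart relative to $\bC^2/G$. Also, the paper's point of agreement is not a small-volume region but the face $t=0$ over $\Amp(Y)$.

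(ii) The phase window is wrong. The imaginary part of the geometric charge on $\Phi^{-1}(\rho\otimes\scO_0)$ is $\theta_\rho$. Since $\sum_\rho\dim\rho\,\theta_\rho=0$ with $\theta\neq0$, some of these are negative; for example $\theta_0^\pm=-\omega\cdot\pi(E)<0$ at $t=0$, so $\omega_E[1]$ has negative imaginary part for small $t$. Hence they cannot all lie in the geometric heart. The correct target is $\Phi(\scP_{\mathrm{geom}}(\frac12,\frac32])=\coh_c([\bC^2/G])$, i.e.\ the rotated algebraic heart is the $(\theta,\lambda)$-dependent tilt $\scP_{\theta,\lambda}(-\frac12,\frac12]$.

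(iii) $\coh_c([\bC^2/G])$ has finite length but infinitely many simples. Besides the vertex simples there are those supported on nonzero orbits, corresponding under $\Phi$ to $\pi^*\scO_y$ and to $\scO_p$, $\sgn\otimes\scO_p$ off the exceptional locus. So checking only the vertex simples does not give the containment of hearts.

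(iv) Most importantly, you give no real argument that the objects $\Phi^{-1}(\rho\otimes\scO_0)$ are semistable. By Lemma~\ref{lem:DerivedMcKay} these are $\scO_{E_i}(-1)\oplus\scO_{E_{n-i}}(-1)$, $\scO_{E_{n/2}}(-1)$ and its $\sgn$-twist, $\omega_E[1]$ and $\omega_E\otimes\sgn[1]$: equivariant objects on the $(-2)$-curves of $Y_H$. Toda's analysis of $(-1)$-curve contractions (the objects $\scO_C$, $\scO_C(-1)[1]$) says nothing about them, in particular about $\omega_E[1]$, which is supported on the whole chain and meets $R$.

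The paper sidesteps this check by working in the opposite direction. For $\wh{G/H}$-invariant $\theta\in\Theta(G)^{\wh{G/H}}_+=\gamma(\Amp(Y)\times\{0\})$, it writes $\theta=\Res^*\eta$ and uses $H$-clusters to show that the point objects $\Phi(\pi^*\scO_y)$ ($y\notin\pi(R)$), $\Phi(\scO_p)$ and $\Phi(\sgn\otimes\scO_p)$ ($p\in R$) are $\theta$-stable with $\theta$-value $0$ (Lemma~\ref{lem:check_stable_for_comparison}). After rotation they are stable of phase $1$, and a Bridgeland-type criterion then forces the rotated algebraic heart to be $\coh_c(\scY)$ (Lemma~\ref{lem:invariant_alg_stab_are_geom}). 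These parameters have $t=0$, so they lie outside $A^{\mathrm{tot}}\times\scB$. The paper therefore also uses that $\coh_c(\scY)$ is an HRS tilt of $\coh_c(R)[-1]*\pi^*\coh_c(Y)$ (Lemma~\ref{lem:tilt_of_standard_heart}), together with Toda's deformation lemma, to place them on the boundary of $\mu_{\mathrm{geom}}(A^\dagger(Y)\times\scB')$. This extends $\mu_{\mathrm{geom}}$ continuously to the face (Lemma~\ref{lem:smallcomparison_stability}), and connectedness of $(A^{\mathrm{tot}}\cup\Amp(Y))\times\scB'$ finishes. To repair your argument you need either this boundary comparison, or a genuine stability proof at an interior point for every simple of $\coh_c([\bC^2/G])$ in the window $(\frac12,\frac32]$.
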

See \pref{thm:two_slices_glue} for the precise statement. Combining \pref{mthm:A} and \pref{mthm:B}, we obtain the following corollary (\pref{cr:wall&chamber_induced}, \pref{cr:verify_conj}):
\begin{corollary}
    For each contraction $Y\to X$, we can associate a GIT chamber $V(X)$ in $\Theta(G)$ such that 
    \begin{enumerate}
        \item For any $\theta\in V(X)$, $M_\theta\cong X$.
        \item If $X$ and $X'$ are related by a single blowup, then $\ol{V(X)}\cap\ol{V(X')}$ is nonempty and has real codimension one.
    \end{enumerate}
\end{corollary}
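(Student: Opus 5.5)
The corollary will follow by transporting the chambers $\scU(X)$ of \pref{mthm:A} to King stability space through the gluing statement \pref{mthm:B}.

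\textbf{Step 1: define $V(X)$.} For each contraction $Y\to X$, consider the region
\[
\mu_{\mathrm{geom}}\bigl(A^{\dagger}(X)\times\scB\bigr)\subset\scU(X).
\]
Using \pref{mthm:B}, I identify the part of this region lying in the common domain with part of the image of $\mu'_{\mathrm{alg}}$. This identification uses $\Phi$, the explicit change of parameters, and the rotation. I then let $V(X)\subset\Theta(G)$ be the projection of the resulting preimage $(\mu'_{\mathrm{alg}})^{-1}(\scU(X))$ to the $\Theta(G)$-factor, after fixing a parameter in $\Lambda_1$.

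\textbf{Step 2: identify the moduli spaces.} The key input is the Bayer--Craw--Zhang comparison recalled in \S4.1. For $\theta$ generic and $\sigma=\mu'_{\mathrm{alg}}(\theta,\lambda)$, the $\sigma$-stable objects of class $[\scO_0\otimes\bC G]$ are exactly the $\theta$-stable $G$-constellations. Hence $M_\sigma\cong M_\theta$.

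I also need that $\Phi$ sends $[\pi^*\scO_y]$ to the class of the regular representation. This holds because the Fourier--Mukai kernel is the universal $H$-cluster with its $\bZ_2$-action: a point of $\scY$ over a free orbit goes to a free $G$-orbit. Rotation and the change of parameters preserve moduli spaces. So \pref{mthm:A}(1) gives $M_\theta\cong X$ for $\theta\in V(X)$.

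\textbf{Step 3: $V(X)$ is a full open chamber.} Openness of $V(X)$ comes from $\mu'_{\mathrm{alg}}$ being a local homeomorphism onto its image, by the section property. Genericity follows because walls in $\Theta(G)$ correspond to strictly semistable loci, and these are excluded inside $\scU(X)$, where the moduli space is the fine space $X$.

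To see that $V(X)$ is an entire GIT chamber, I use that the Bridgeland moduli space is locally constant on chambers of $\Theta(G)$. A chamber meeting $V(X)$ therefore has $M_\theta\cong X$. I then enlarge $V(X)$ to that whole chamber.

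\textbf{Step 4: part (2).} Take $X,X'$ related by one blowup. By \pref{mthm:A}(2), the set $\ol{\scU(X)}\cap\ol{\scU(X')}$ is a real codimension-one wall. I must show this wall meets the common domain in an open subset of the wall. Then pulling back by the local homeomorphism produces a codimension-one piece of $\ol{V(X)}\cap\ol{V(X')}$.

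\textbf{Main obstacle.} I expect Step 4 to be the hardest. The common domain from \pref{mthm:B} is only produced by the small $\scB$-deformation, and \emph{a priori} it might avoid the walls.

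To handle this, I would first pick a point on the Toda-type wall $\ol{A^\dagger}(X)\cap\ol{A^\dagger}(X')$ in $\NS_{\orb}(\scY)_\bR$. I would then move to a nearby imaginary direction in $\scB$, and check via the explicit parameter change that the resulting central charge lies in the image of $\Theta(G)\times\Lambda_1$. This requires verifying that the gluing region is transverse to each wall. I would attempt this by the explicit linear description of the parameter change on the classes of exceptional curves and their twisted orbifold components.
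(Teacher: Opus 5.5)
Your architecture matches the paper's: transport the chambers through the gluing of Theorem~\ref{mthm:B}, identify moduli spaces via Proposition~\ref{pr:alg_stab}, and use constancy of $M_\theta$ on GIT chambers. However, there is a genuine gap. The step you flag as the main obstacle is left open, and it is open only because you have missed the shape of the common domain.

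The overlap of $\overline{\mu}_{\mathrm{geom}}$ and $\mu_{\mathrm{alg}}$ is the \emph{product} $(A^{\mathrm{tot}}\cup\Amp(Y))\times\scB'$, where $\scB'=\scB\cap\delta^{-1}(\Lambda_1)$. This is because $\mu'_{\mathrm{alg}}$ is defined on all of $\Theta(G)\times\Lambda_1$, $\gamma\colon\NS_{\orb}(\scY)_\bR\to\Theta(G)$ is a linear isomorphism, and $\scB$ does not depend on the contraction. Hence whether $(\omega,t,B,s)$ lies in the domain of $\mu_{\mathrm{alg}}$ depends only on $(B,s)$. It never depends on where $(\omega,t)$ sits relative to the walls. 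Your phrase ``imaginary direction in $\scB$'' is also misplaced: $\scB$ is the $B$-field (real) direction on the $\scY$-side and goes to $\lambda$ under $\delta$, whereas all walls live in the $(\omega,t)\leftrightarrow\theta$ direction. So no transversality check or computation on exceptional curves is needed beyond $\scB'\neq\emptyset$, which is Lemma~\ref{lem:intersection_nonempty_conn} (it follows from $\delta(0)\in\partial\Lambda_1$). Your Step~1 silently needs the same fact, since the common domain must meet \emph{every} $\scU(X)$, and you never justify this.

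With this observation the whole proof is a single slice comparison. Fix $(B,s)\in\scB'$.
\begin{enumerate}
\item By Lemma~\ref{lem:constancy_geom_chamber}, the walls for $[\pi^*\scO_y]$ on $A^{\mathrm{tot}}\times\{(B,s)\}$ are pulled back along $\mu_{\mathrm{geom}}$, and their chambers are the sets $A^\dagger(X)$.
\item By Proposition~\ref{pr:alg_stab}(4), the GIT walls are pulled back along $\Theta(G)\times\{\delta(B,s)\}$.
\item By Theorem~\ref{thm:two_slices_glue}, the two maps agree on this slice.
\end{enumerate}
So $\gamma$ carries the entire decomposition of $A^{\mathrm{tot}}$ onto the restriction of the GIT decomposition to $\gamma(A^{\mathrm{tot}})$. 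In particular, the convex, wall-free cone $\gamma(A^\dagger(X))$ lies in a single chamber $V(X)$.

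Part (1) then follows by evaluating at $\theta\in\gamma(A^\dagger(X))$ and $\lambda=\delta(B,s)$, as in your Step~2, together with constancy of $M_\theta$ on $V(X)$. Part (2) is immediate from the cone description in Lemma~\ref{lem:cone_description}. The codimension-one set $A^\dagger_h(X)$ lies in the closures of both $A^\dagger(X)$ and $A^\dagger(X')$. Its image under the linear isomorphism $\gamma$ is therefore a codimension-one subset of $\ol{V(X)}\cap\ol{V(X')}$.

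Two smaller points:
\begin{itemize}
\item In Step~1, the fixed $\lambda$ must lie in $\delta(\scB')$, not be arbitrary in $\Lambda_1$.
\item In Step~3, genericity does not follow from the moduli space being fine. It follows from the fact that every \emph{semistable} object of class $[\pi^*\scO_y]$ for $\sigma\in\scU(X)$ is some $\bfL\wt{f}^*\scO_x$. This is the argument of Proposition~\ref{pr:moduli_reconstruction_sets}, and it is what Lemma~\ref{lem:constancy_geom_chamber} encodes.
\end{itemize}
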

In particular, we recover Capellan's theorem \cite{capellan_dihedral}
that Ishii's conjecture holds for dihedral reflection groups. An advantage of our approach is that the description of the Bridgeland moduli spaces is very simple: for a contraction $f:\scY\to X$, we construct the moduli space $\{\bfL f^*\scO_x\mid x\in X\}$. The main difficulty is to realize it as a Bridgeland moduli space and to identify it with a King moduli space. This suggests that Bridgeland stability provides a new perspective on Ishii's conjecture in a more general setting.

\subsection{Relations with existing works}
First, we discuss the relation with Ishii's approach. Our approach to Ishii's conjecture is analogous to his. His essential case is when $G\supset \{\pm 1\}=:N$. He considered the crepant resolution $Y_N \to \bC^2/N$, equipped with an action of $\ol{G}$. Given a smooth contraction $Y\to X$, he proved that $X$ can be realized as a moduli space of $\ol{G}$-constellations on $Y_N$. In other words, he realized $X$ as the moduli space of objects in $\coh_c[Y_N/\ol{G}] \subset D_c([Y_N/\ol{G}])$, which is derived equivalent to $D_c([\bC^2/G])$. 
On the other hand, our first aim is to realize $X$ as the moduli space of complexes in $D_c(\scY)$, which is derived equivalent to $D_c([\bC^2/G])$. To put it simply, we aim to modify Ishii's approach to a more flexible framework, namely, Bridgeland moduli spaces.
Ishii's proof cannot be applied directly to our case. Indeed, Ishii uses the fact that minimal resolution dominates $Y_N/\ol{G}$, which is not true in our situation since our $\bC^2/G$ is already the `minimal resolution'. 

Next, we discuss the relation with existing works on Bridgeland stability. While Bridgeland stability conditions have been extensively studied for many classes of smooth projective surfaces and certain threefolds, relatively few works treat stability conditions on $D_c(X)$ for non-projective varieties $X$; see, for example, \cite{Bayer-Craw-Zhang}. Our formulation mainly follows the framework developed in \cite{Bayer-Craw-Zhang}. 

On the other hand, there are several works on stability conditions for Deligne--Mumford stacks, including projective Kleinian orbisurfaces whose coarse moduli spaces are projective surfaces with only Kleinian singularities \cite{Lim-Rota_Stab_orbisurf}. Stability conditions have also been studied for root stacks over projective curves \cite{rota_thesis} and over projective surfaces \cite{liu-Shen_root_stack_stab}. We note that \cite{liu-Shen_root_stack_stab} mainly constructs stability conditions corresponding to the large volume limit. Although the class of stacks considered here is rather special, our setting admits a rich family of stability conditions. Some wall-crossing phenomena appearing in our construction (cf. \pref{lem:tilt_of_standard_heart}, \pref{lem:smallcomparison_stability}) already appeared implicitly in the one-dimensional projective settings \cite{rota_thesis}. Other wall-crossing phenomena can be regarded as direct generalizations of Toda's work \cite{Toda_surfaceMMP}. 

It is also worth mentioning that induced stability provides a useful method for studying Bridgeland stability conditions on $G$-equivariant derived categories, as developed in \cite{Macri-Mehrotra-Stellari} \cite{Polishchuk_constant-family}. Such induced stability conditions are often easier to analyze. This viewpoint plays a key role in the comparison between the two constructions of stability conditions carried out in \S 4.

\subsection{Plan of the paper}

In \S 2, we review the precise statement of Ishii's conjecture and several related notions. We also recall some preliminary results on Bridgeland stability conditions, especially for (non-projective) surfaces. 
In \S 3, we construct the map $\mu_{\mathrm{geom}}$ using Toda's construction. We also prove \pref{mthm:A} except for the proof of the existence of the Bridgeland moduli spaces. 
In \S 4, we prove \pref{mthm:B}. Using the gluing result established there, we also resolve the existence problem of the moduli spaces, thereby completing the proof of \pref{mthm:A}. 
Finally, in \S 5, for the reader's convenience, we sketch a proof of the support property for stability conditions on non-projective surfaces.

\subsection{Notation}

Throughout this paper, all schemes, DM stacks, and algebras are defined over $\bC$. We consider the left action of $G$ on $\bC^2$ induced by the embedding $G\subset \GL_2(\bC)$.
We frequently use $D_c(X)$ to denote the full triangulated subcategory of $D(X):=D^b(\coh X)$ consisting of objects whose cohomology sheaves are compactly supported. There is a natural equivalence $D^b(\coh_c X)\cong D_c(X)$ (for example, see \cite[Proposition 1.7.11]{Sheaves_on_manifolds}). 
For subcategories $\scA,\scB$ of a triangulated category $\scC$, we use the notation $\scA * \scB$ to denote the full subcategory consisting of objects $E$ fitting into an exact triangle $A \to E \to B \to A[1]$ for some $A\in \scA$ and $B\in \scB$.

\section*{Acknowledgements}

The author would like to express his sincere gratitude to his supervisor, Akira Ishii, for introducing the author to his conjecture and for his continuous encouragement. The author is also grateful to Riku Fushimi for valuable discussions.
This work was financially supported by JST SPRING, Grant Number
JPMJSP2125. The author would like to take this opportunity to thank the
“THERS Make New Standards Program for the Next Generation Researchers.”

\section{Preliminaries}\label{sec:pre}
\subsection{Representations of dihedral groups}\label{sc:rep}
For $n\geq 1$, we consider
\[
G:=\left\langle\alpha,\beta\right\rangle \subset \GL(2, \bC)
\]
where \[\alpha:=\begin{pmatrix} \zeta_{n} & 0 \\ 0 & \zeta_{n}^{-1} \end{pmatrix},\,\, \beta:=\begin{pmatrix} 0 & 1 \\ 1 &0 \end{pmatrix},\,\,\zeta_{n}:=\exp{\frac{2\pi\sqrt{-1}}{n}}. \] Put $H:=G\cap \SL(2, \bC)=\left\langle\alpha\right\rangle$. Then $G/H=\langle \beta\rangle\cong \bZ_2$. We write the representation of $H$ by
\[\rho_i: H\to \bC^*, \alpha\mapsto \zeta_{n}^i.\]
We denote the set of irreducible representations modulo isomorphism by $\Irr H, \Irr G$.
There is an action of $G/H$ on $\Irr H$ by
\[\beta\cdot \rho(h):=\rho(\beta^{-1}h\beta).\]
Then the action fixes $\rho_0$ if $n$ is odd or $\rho_0, \rho_{\frac{n}{2}}$ if $n$ is even. Any other irreducible representation $\rho_i$
belongs to the free orbit $\{\rho_i, \rho_{n-i}\}$. Let $\sgn$ denote the nontrivial irreducible representation of $G/H$ and regard it as an irreducible representation of $G$. By the Clifford theory, the irreducible representations of $G$ are given by
\[\Irr G=\begin{cases}
    \{\rho_0^+,\rho_0^-, \rho_i\oplus\rho_{n-i}\mid i=1,\dots,\frac{n-1}{2}\} &\text{if $n$: odd} \\
    \{\rho_0^+,\rho_0^-,\rho_{\frac{n}{2}}^+,\rho_{\frac{n}{2}}^-, \rho_i\oplus\rho_{n-i}\mid i=1,\dots,\frac{n}{2}-1  \}&\text{if $n$: even}
\end{cases}\]
where $\rho_i^+:=\rho_i,\,\,\rho_i^-:=\rho_i^+\otimes \sgn$ for $i=0,\frac{n}{2}$. 

\subsection{Moduli space of G-constellations}
For a finite subgroup $G\subset \GL_n(\bC)$, $G$-Hilbert scheme \cite{Nakamura} \cite{IN_first} is defined to be the moduli space of \emph{$G$-clusters}, where a $G$-cluster is a $G$-invariant 0-dimensional subscheme $Z$ of $\bC^n$ such that $H^0(\scO_Z)$ is isomorphic to the regular representation $\bC[G]$. As a generalization of $G$-clusters, Craw-Ishii introduced the notion of \emph{$G$-constellations} \cite{Craw-Ishii}. A $G$-constellation is a 0-dimensional $G$-equivariant sheaf $E$ on $\bC^n$ such that $H^0(E)\cong \bC[G]$. We write
\[
v:=[\bC[G]\otimes \scO_0]
\in K_c^{\mathrm{num}}([\bC^2/G])
\] (see also \pref{sc:Generalities_Stab}). For any $\theta\in \Theta(G):=v^\perp\subset \Hom(K_c^{\mathrm{num}}([\bC^2/G]),\bR)$, the notion of $\theta$-stability for $G$-constellation is defined. We note that any $\theta$-stable $G$-constellation is $\theta$-semistable by definition.  We say that $\theta\in \Theta(G)$ is \emph{generic} if any $\theta$-semistable $G$-constellation is $\theta$-stable. We consider the moduli functor defined as follows:
\[\scM_\theta(v)(S):=\{\text{flat families of $\theta$-stable $G$-constellations on $\bC^n$}\}/\sim\]
Here, a \emph{flat family of $\theta$-stable $G$-constellations} is a $G$-equivariant sheaf $E_S$ on $S\times \bC^n$ flat over $S$ such that each fiber $E_s$ is a $\theta$-stable $G$-constellation for every closed point $s\in S$. For two flat families $E_S, E'_S$, we write $E_S\sim E'_S$ if there is $L\in \Pic(S)$ such that $E_S\cong E'_S\otimes q^*L$. Using King's construction of the moduli space of representations of a quiver \cite{King}, the moduli functor $\scM_\theta(v)$ is represented by a quasi-projective scheme $M_\theta(v)$ \cite{Craw-Ishii}. 

We also note that the set of generic King stability parameters forms a dense open subset of $\Theta(G)$ consisting of finitely many open convex polyhedral cones (e.g. \cite[Lemma 3.1]{Craw-Ishii}). We call this decomposition the GIT wall-and-chamber structure.

\subsection{Maximal resolutions and Ishii's  conjecture}\label{subsec:max_resol}
We briefly review Ishii's conjecture and the geometry of maximal resolutions for dihedral reflection groups.

We first recall the definition of maximal resolution. Let $(X, B)$ be a klt pair. A resolution $f: Y\to X$ is a \emph{maximal resolution} if the exceptional divisors of $f$ are precisely the prime divisors over $X$ with non-positive discrepancies. The maximal resolution exists uniquely in dimension two as in \cite{KSB}. In the case of $X=\bC^2/G$, we define the boundary divisor $B$ by the equality $\nu^*(K_{\bC^2/G}+B)=K_{\bC^2}$ for $\nu: \bC^2\to \bC^2/G$. If $G$ is a dihedral reflection group, we can write $B=\frac{1}{2}D$ by the discriminant divisor $D$. Now, we restate the conjecture by Ishii
\begin{conjecture}[\cite{Ishii_maximal}]
    Let $G\subset \GL_2(\bC)$ be a finite subgroup and consider the
quotient $\bC^2/G$ with the boundary divisor $B$ defined as above. For any resolution of singularities $f: X \to \bC^2/G$, there is a generic $\theta\in \Theta(G)$  such that $X\cong M_{\theta}(v)$ if and only if there is a morphism $Y \to X\to \bC^2/G$.
Here, $Y$ is the maximal resolution of $(\bC^2/G,B)$.
\end{conjecture}
Ishii's conjecture for dihedral reflection groups was proved by
Capellan \cite{capellan_dihedral} by using the variation of GIT for
$G$-constellations on $\bC^3$ (see \pref{rm:capellan_approach}). We next review the geometry of maximal
resolutions in this case, following \cite{IshiiNimurareflection}.
Let $Y_H$ be the minimal resolution of the quotient singularity $\bA^2/H$ of type $A_{n-1}$, whose exceptional locus consists of $n-1$ exceptional curves $E_1, \dots, E_{n-1}$ such that
$E_{i-1} \cap E_i$ consists of a point. 
Connected components of the fixed point locus of the action of $G/H$ on $Y_H$
are smooth divisors as in \cite[Lemma 2.1(1)]{IshiiNimurareflection}.
Moreover, the action of $G/H$ exchanges the strict transforms in $Y_H$ of the images of the two coordinate axes of $\bA^2$, one intersecting $E_1$
and the other intersecting $E_{n-1}$.
Then, it also exchanges  $E_i$ with $E_{n-i}$,
and no exceptional curve $E_i$ is contained in the fixed point set $R:=(Y_H)^{G/H}$.
Therefore, $R$ coincides with the strict transform $\Dtilde \subset Y$ of the discriminant divisor $D \subset X$.
It follows that $Y:=Y_H/(G/H)$ is smooth and is obtained by blowing up $\bA^2/G\cong\bA^2$ along $\lfloor \frac{n}{2} \rfloor$ points. As in \cite[Lemma 2.1]{IshiiNimurareflection}, $Y$ is the maximal resolution of $(\bC^2/G,\frac{1}{2}D)$ (see also \cite[Theorem 1.2]{capellan_dihedral}).
We also note that any proper birational model $X\to \bA^2/G$
dominated by $Y$ has only one $(-1)$-curve. In particular, the MMP of $Y$ to $\bC^2/G$ is unique in our case.

\subsection{McKay correspondence for dihedral reflection groups}
We use the notation in the previous section.
The quotient stack $[Y_H/(G/H)]$ is the $2$nd root stack of $Y$ along the smooth branch divisor $\pi(R)\subset Y$, and
 $R$ has one ($n$:odd) or two($n$:even) connected components isomorphic to $\bA^1$. 
 
 \begin{lemma}[{\cite[Theorem 5.1]{Collins-Polishchuk}}]Let $i:R\hookrightarrow Y$ be the inclusion and $\pi: Y_H\to Y$ be the quotient map. 
    There are semiorthogonal decompositions of $D_c([Y_H/\bZ_2])$ into $D_c(R)$ and $D_c(Y)$ given by the following triangles:
    \begin{equation}\label{eq:triangle_pi_*}
        \pi^*\pi_*^{\bZ_2}E\to E\to \sgn\otimes i_*(\bfL i^*E\otimes \sgn)^{\bZ_2}
    \end{equation}
\end{lemma}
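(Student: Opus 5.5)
The plan is to obtain the statement as the specialization to $\mu_2$-root stacks of the general semiorthogonal decomposition for a quotient by a group acting with fixed divisor (the result of Collins--Polishchuk cited above). For concreteness, I would verify it directly for $\scY=[Y_H/\bZ_2]$, since everything is local over $Y$ and the branch divisor $\pi(R)$ is smooth.

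\textbf{Step 1: fully faithful functors.} Write $\pi$ also for the map of stacks $\scY\to Y$. The functor $\pi^*\colon D_c(Y)\to D_c(\scY)$ has right adjoint $\pi_*^{\bZ_2}$. Since $\pi_*^{\bZ_2}\scO_{Y_H}=\scO_Y$ and $\pi$ is flat (finite flat of degree two, as both $Y_H$ and $Y$ are smooth), the projection formula gives $\pi_*^{\bZ_2}\pi^*\cong\id$. Hence $\pi^*$ is fully faithful.

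Next, let $j\colon R\hookrightarrow Y_H$ be the fixed locus with trivial $\bZ_2$-action, so that $[R/\bZ_2]=R\times B\bZ_2$. Define $\Psi\colon D_c(R)\to D_c(\scY)$ by $F\mapsto \sgn\otimes j_*F$. Its left adjoint is $E\mapsto(\bfL j^*E\otimes\sgn)^{\bZ_2}$, the $\sgn$-isotypic part of $\bfL j^*E$; this is the functor written $(\bfL i^*E\otimes\sgn)^{\bZ_2}$ in the statement, with $R$ identified with its image. To show $\Psi$ is fully faithful, compute $\bfL j^*j_*F$. It is an extension of $F$ by $F\otimes N^\vee_{R}[1]$, and $\bZ_2$ acts on the conormal bundle $N^\vee_R$ by $\sgn$. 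Tensoring with $\sgn$ and taking invariants kills the $N^\vee$ term and leaves $F$. So the counit is an isomorphism.

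\textbf{Step 2: semiorthogonality.} I need $\Hom(\pi^*A,\Psi F[k])=0$ for all $k$. By adjunction this equals $\Hom_R\bigl(\bfL j^*\pi^*A,\ \sgn\otimes F[k]\bigr)^{\bZ_2}$. The object $\bfL j^*\pi^*A=\bfL(\pi\circ j)^*A$ carries the trivial $\bZ_2$-action, so every invariant map into a $\sgn$-twisted object vanishes. This places $\Psi(D_c(R))$ in $\pi^*(D_c(Y))^{\perp}$; equivalently, the left adjoint of $\Psi$ kills $\pi^*D_c(Y)$. Compact support is preserved by all four functors, since $\pi$ and $j$ are proper or finite.

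\textbf{Step 3: generation (main obstacle).} For arbitrary $E$, complete the counit $\pi^*\pi_*^{\bZ_2}E\to E$ to a triangle with cone $C$. By Step 1, $\pi_*^{\bZ_2}C=0$. I must show $C\cong\Psi$ of the stated object. The off-$R$ part is easy: away from $R$ the action is free and $\pi^*$ is an equivalence, so $C$ is supported on $R$. The real work is to show that an object with $\pi_*^{\bZ_2}C=0$ lies in the image of $\Psi$, not merely that it is supported set-theoretically on $R$.

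For this I would work étale-locally, where $Y_H=\bA^1_t\times\bA^1_u$, $\bZ_2$ acts by $u\mapsto-u$, and $Y=\operatorname{Spec}\bC[t,u^2]$. A $\bZ_2$-equivariant module $M=M^+\oplus M^-$ over $\bC[t,u]$ is determined by $u\colon M^\pm\to M^\mp$, and $\pi_*^{\bZ_2}M=M^+$. Take a cohomology sheaf of $C$ (or use a dévissage on coherent objects). The vanishing of the invariant parts, together with the $u$-maps between the isotypic pieces, forces $C$ to be $\sgn$-isotypic and killed by $u$. Hence $C=\sgn\otimes j_*F$.

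Finally, applying the left adjoint of $\Psi$ to the triangle identifies $F$ with $(\bfL j^*E\otimes\sgn)^{\bZ_2}$, using Step 2 and Step 1. This yields the triangle \eqref{eq:triangle_pi_*} and the decomposition $D_c(\scY)=\langle \Psi D_c(R),\ \pi^*D_c(Y)\rangle$. Exchanging the roles of $\sgn$ and the trivial character gives the other ordering, if needed.
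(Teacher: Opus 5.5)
Your proposal is correct, but it takes a different route from the paper. The paper gives no argument of its own and simply imports the decomposition from Collins--Polishchuk, Theorem~5.1, whereas you verify it directly by adjunctions.

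Your steps check out:
\begin{itemize}
\item $\pi_*^{\bZ_2}\pi^*\cong\id$ gives full faithfulness of $\pi^*$.
\item $\Psi=\sgn\otimes j_*(-)$ has left adjoint $(\bfL j^*(-)\otimes\sgn)^{\bZ_2}$. It is fully faithful because the summand $F\otimes N_R^\vee[1]$ of $\bfL j^*j_*F$ is $\sgn$-isotypic.
\item Semiorthogonality holds because $\bfL(\pi\circ j)^*A$ carries the trivial action.
\item Generation reduces to $\ker \pi_*^{\bZ_2}=\Psi(D_c(R))$. Applying $\Psi^L$ to the triangle then produces exactly the third term of \eqref{eq:triangle_pi_*}.
\end{itemize}

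The one place to tighten is Step~3. ``Killed by $u$'' only makes sense for the cohomology sheaves of $C$. You should say that $\pi_*^{\bZ_2}$ is exact, so $\pi_*^{\bZ_2}\scH^k(C)=0$ and each $\scH^k(C)$ lies in $\Psi(\coh_c R)$. Then conclude for $C$ via the truncation triangles, using that the essential image of the fully faithful exact functor $\Psi$ is closed under shifts and extensions. This is precisely where the equivariance and the $\sgn$-twist are needed. Non-equivariantly $j_*$ is not fully faithful; for example, $\scO_{2R}$ is an extension of $j_*\scO_R$ by itself lying outside the image of $j_*$.

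What your route buys is a self-contained check of the functors and twist conventions in \eqref{eq:triangle_pi_*}. This includes the identification of $R$ with $\pi(R)$ implicit in the notation $i$. You also verify explicitly that all four functors preserve compact support, so the decomposition visibly restricts to $D_c$, a point the paper leaves implicit. The citation, in turn, buys brevity by appealing to a general result.
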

\begin{lemma}\label{lem:DerivedMcKay}
    There is an exact equivalence \[\Phi: D_c([Y_H/\bZ_2])\stackrel{\simeq}{\longrightarrow}D_c([\bC^2/G]),\]
   such that $\Phi^{-1}$ takes the simple $\bZ_2$-equivariant sheaves
\[(\rho_i\oplus \rho_{n-i})\otimes \scO_0(i=1,\dots,\lfloor \frac{n}{2}\rfloor),\,\, \rho_{\frac{n}{2}}^+\otimes \scO_0, \rho_{\frac{n}{2}}^-\otimes \scO_0, \rho_0^+\otimes \scO_0, \rho_0^-\otimes \scO_0\]
to  
\[\scO_{E_i}(-1)\oplus \scO_{E_{n-i}}(-1), \scO_{E_{\frac{n}{2}}}(-1),\scO_{E_{\frac{n}{2}}}(-1)\otimes \sgn, \omega_E[1], \omega_E\otimes \sgn [1],\]
where $E:=\cup_i E_i$ be the exceptional locus of $Y_H$.
\end{lemma}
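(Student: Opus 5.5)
We build $\Phi$ as the composite of two equivalences. The first is the Bridgeland--King--Reid / Kapranov--Vasserot equivalence for $H$. The second is its $\bZ_2$-equivariant upgrade.

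For the first, let $\scZ\subset Y_H\times\bC^2$ be the universal $H$-cluster, so $Y_H=H\hilb(\bC^2)$. Its structure sheaf $\scO_\scZ$ is $H$-equivariant. Because $G/H$ acts on $Y_H$ compatibly with the $G$-action on $\bC^2$ (conjugating $H$-clusters by $\beta$ gives $H$-clusters), $\scO_\scZ$ acquires a $G$-equivariant structure on $Y_H\times\bC^2$, with $G$ acting on $Y_H$ through $G\to G/H$. Define $\Phi(E):=\bfR p_{2*}(\scO_\scZ\otimes^{\bfL} p_1^*E)$, equipped with its $G$-equivariant structure. Non-equivariantly, and $H$-equivariantly, this is the BKR equivalence $D_c(Y_H)\simeq D_c([\bC^2/H])$; the compact-support version follows from the projective-over-affine setting, since the kernel has proper support over both factors. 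Applying the equivariant descent result (Ploog's criterion, or the general fact that a $\bZ_2$-linearized Fourier--Mukai equivalence induces an equivalence of equivariant categories), we get
\[D_c([Y_H/\bZ_2])\simeq D^{G/H}_c([\bC^2/H])\simeq D_c([\bC^2/G]).\]
Here the last identification is the standard $D^{G/H}(D^H)=D^G$.

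To identify the images of the simple objects, we use the known BKR computation for type $A_{n-1}$. Under $\Phi_H^{-1}$, the sheaf $\rho_i\otimes\scO_0$ goes to $\scO_{E_i}(-1)$ for $i\neq0$ and $\rho_0\otimes\scO_0$ goes to $\omega_E[1]$, up to the indexing convention that matches $E_i$ with $\rho_i$ via the tautological bundles. We then track the $\bZ_2$-linearizations. The free orbits $\{\rho_i,\rho_{n-i}\}$ give the induced object $\scO_{E_i}(-1)\oplus\scO_{E_{n-i}}(-1)$, because $\beta$ swaps $E_i$ and $E_{n-i}$. For the fixed representations $\rho_0$ and $\rho_{n/2}$, the $\bZ_2$-invariant objects $\omega_E[1]$ and $\scO_{E_{n/2}}(-1)$ each carry exactly two linearizations, differing by $\sgn$. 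Equivariant simples must map to equivariant objects whose restriction is the $H$-image, so the two linearizations correspond to $\pm$.

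The main obstacle is matching the labels $+$ and $-$ consistently, i.e.\ deciding which linearization corresponds to $\rho^+$. This is a normalization we may fix by choosing the linearization of $\scO_\scZ$ (twisting it by $\sgn$ swaps the labels). So we will check the signs on a single fiber, e.g.\ by computing the $\bZ_2$-action on $\Hom(\scO_{Y_H},-)$ or on the fiber of the tautological bundle at a $\beta$-fixed point of $E_{n/2}$ and at points of $E$ over the origin. We then declare the convention that makes $\rho_0^+\otimes\scO_0$ correspond to $\omega_E[1]$ with its natural linearization. The remaining fixed case $\rho_{n/2}^\pm$ then follows by the same fiber computation.
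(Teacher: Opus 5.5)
Your proposal is correct and follows essentially the same route as the paper. The universal $H$-cluster gives the BKR equivalence, and its $G/H$-linearization descends it to $D_c([Y_H/\bZ_2])\simeq D_c([\bC^2/G])$; the paper cites Ishii--Ueda for this step where you cite Ploog. The simples are identified via the type $A$ description of the simples of ${}^{0}\Per(Y_H/(\bC^2/H))$ (Van den Bergh), with the $\pm$ labels being fixed by a choice of linearizations on $\omega_E[1]$ and $\scO_{E_{n/2}}(-1)$, as you say. One small point: twisting the kernel by $\sgn$ swaps both pairs of labels at once, so the labelling of $\rho_{\frac{n}{2}}^\pm$ is ultimately a choice of linearization on $\scO_{E_{n/2}}(-1)$, which is exactly how the paper phrases it.
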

\begin{proof}
    The exact equivalence $\Phi^H:D(Y_H)\stackrel{\simeq}{\longrightarrow}D([\bC^2/H])$ induced by the universal $H$-cluster $\scZ$ takes a skyscraper sheaf $\scO_y$ to an $H$-cluster $\scZ_y$. As in \cite[Theorem 4.1]{IU}, $\Phi^H$ induces the  equivalence $\Phi:D([Y_H/\bZ_2])\stackrel{\simeq}{\longrightarrow} D([\bC^2/G])$. Since this equivalence preserves the property that a complex has compact support, the desired equivalence follows. The latter correspondence follows from the description of simple objects in ${}^{0}\Per(Y_H/(\bC^2/H))$ by Van den Bergh \cite[Proposition. 3.5.8]{VdB_NCCR} after choosing the $G/H$-equivariant structures on $\scO_{E_{\frac{n}{2}}}(-1),\omega_E[1]$.
\end{proof}

\subsection{Generalities on Bridgeland stability conditions}\label{sc:Generalities_Stab}
Let $X$ be a smooth variety or a smooth DM stack (mainly $[Y_H/\bZ_2]$ or $[\bC^2/G]$). We write
\[D_c(X):=D^{b}_c(\coh X),\]
for the bounded derived category of coherent sheaves with compact support. For the treatment of Bridgeland stability on non-compact varieties, we refer to \cite{Bayer-Craw-Zhang}.
\begin{definition}
     $K^{\mathrm{num}}_c(X)$ is defined to be the quotient of the  Grothendieck group $K(D_c(X))$ by the subgroup consisting of classes
    $E\in K(D_c(X))$ with $\chi(E,F)=0$ for any $F\in D( X)$, where $\chi(E,F)$ is the Euler pairing
\[\chi(E,F)=\sum_{i\in \bZ}(-1)^i\dim \Hom(E,F[i]).\]  Similarly, $K^{\mathrm{num}}(X)$ is defined to be the quotient of $K(D(X))$ by the subgroup of $E\in K(D(X))$ with $\chi(F,E)=0$ for any $F\in D_c(X)$.
\end{definition}
    By definition, the induced pairing $\chi:K^{\mathrm{num}}_c(X)\times K^{\mathrm{num}}(X)\to \bZ$ is nondegenerate. We also note that $K^{\mathrm{num}}_c(X)$ has finite rank for a smooth quasi-projective variety $X$ \cite[Lemma 5.1.1]{Bayer-Craw-Zhang}. 

\begin{definition}[{\cite{Bridgeland_Stab}}]
    A Bridgeland stability condition on $X$ is a pair 
    \[(Z,\scA),\]
    where $Z:K^{\mathrm{num}}_c(X)\to \bC$ is a group homomorphism and $\scA$ is the heart of a bounded t-structure on $D(X)$ such that the following holds:
    \begin{enumerate}
        \item (positivity) for any $E\in \scA\setminus \{0\}$, $Z(E)\in \bH:=\{r\exp(i\pi\phi)\in \bC\mid r>0,\phi\in (0,1]\}$.
        \item (Harder--Narasimhan property) for any $E\in \scA$, there is a filtration in $\scA$
        \[0=E_0\subset E_1\subset \dots\subset E_N=E\]
        such that each factor $F_i=E_i/E_{i-1}$ is $Z$-semistable with $\arg Z(F_i)>\arg Z(F_{i+1})$.
    \end{enumerate}
    Here, we say that $E\in \scA$ is $Z$-(semi)stable if for any $0\subsetneq F\subsetneq E$, we have $\arg Z(F)<(\leq) \arg Z(E)$.
\end{definition}
The homomorphism $Z$ is called a \emph{central charge}. The following lemma is useful to check the Harder-Narasimhan property:
\begin{lemma}[{\cite[Proposition 4.10]{Macri-Schmidt_lecture}}]\label{lem:criterion_HN}
Let $(Z,\scA)$ be a pair as above satisfying only the positivity condition. Assume that 
\begin{itemize}
    \item $\scA$ is noetherian.
    \item the image of $\Im Z$ is discrete in $\bR$.
\end{itemize}
Then, Harder-Narasimhan filtrations exist in $\scA$ with respect to $Z$.
\end{lemma}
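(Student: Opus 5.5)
The plan is the standard argument through stability functions on abelian categories. I would show that every nonzero $E\in\scA$ admits a Harder--Narasimhan filtration with respect to the slope $\mu(E):=-\Re Z(E)/\Im Z(E)$, with $\mu=+\infty$ when $\Im Z(E)=0$. Ordering by $\mu$ is the same as ordering by $\arg Z$ on $\bH$. The proof has two finiteness steps: noetherianity controls ascending chains of subobjects, and discreteness of $\Im Z(\scA)\subset\bR_{\geq 0}$ controls descending chains of quotients. Positivity gives $\Im Z(E)\geq 0$ for all $E\in\scA$. Since $\Im Z$ is additive, for any subobject $F\subset E$ we get $0\leq \Im Z(F)\leq \Im Z(E)$, and likewise for quotients. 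By discreteness, only finitely many values of $\Im Z$ occur in $[0,\Im Z(E)]$.

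The first key step is the existence of a maximal destabilizing subobject. I would show two things for each $E$.
\begin{enumerate}
\item There is no infinite chain $E=E_0\supsetneq E_1\supsetneq\cdots$ with $\mu(E_{i+1})>\mu(E_i)$.
\item There is no infinite chain of quotients $E\twoheadrightarrow Q_1\twoheadrightarrow Q_2\twoheadrightarrow\cdots$ with strictly decreasing slopes.
\end{enumerate}
For (2), strictly decreasing quotients have kernels that grow. Noetherianity stabilizes these kernels, so the chain is eventually constant, a contradiction.

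For (1), $\Im Z(E_i)$ is non-increasing and takes finitely many values, so it is eventually constant. From then on, each quotient $E_i/E_{i+1}$ has $\Im Z=0$, so positivity forces $Z(E_i/E_{i+1})\in\bR_{<0}$. Then $\Re Z(E_{i+1})>\Re Z(E_i)$, and $\mu$ increases. I would handle this by a two-stage argument in the spirit of Bridgeland's Proposition 2.4. First, take a subobject maximizing $\Im Z$ among those of maximal slope type, using that finitely many values occur. Second, among subobjects with that value of $\Im Z$, take a maximal one using the noetherian property. This gives a candidate subobject $A\subset E$ of maximal slope that is maximal with respect to inclusion.

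The second key step is to check that such an $A$ is semistable. It must also satisfy $\mu(E/A)<\mu(A)$, with all subobjects of $E/A$ having slope $<\mu(A)$.

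The third key step is induction. Apply the construction to $E/A$, iterate, and obtain the filtration. The iteration terminates because the resulting chain $A=E_1\subset E_2\subset\cdots\subset E$ is ascending, and noetherianity stops it. The slopes of the factors decrease strictly by maximality.

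The main obstacle I expect is step (1), in the degenerate direction where $\Im Z$ stays constant and only the real parts change. Noetherianity alone does not stop descending chains, so there I must use that $\Im Z=0$ objects have phase exactly $1$. Such a subobject $A$ with $\Im Z(A)=0$ already has maximal phase. So I would first extract the maximal subobject $E_\infty\subset E$ with $\Im Z=0$, which exists by noetherianity since this class is closed under sums and extensions. Then I would run the argument above on $E/E_\infty$. That quotient has no phase-$1$ subobjects, which makes the slope bounded above along descending chains in terms of the finitely many $\Im Z$ values and the real parts. If this bookkeeping becomes too delicate, I would simply cite the argument of \cite[Proposition 4.10]{Macri-Schmidt_lecture}, following Bridgeland's Proposition 2.4.
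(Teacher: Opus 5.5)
Your overall plan is the standard one behind the citation: verify Bridgeland's two chain conditions and conclude HN existence, as in \cite[Proposition 4.10]{Macri-Schmidt_lecture}. Your argument for the quotient condition (2) via noetherianity is fine. The problem is step (1). There a sign error makes you see an obstacle that does not exist, and the workaround you offer in its place is not a proof.

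You correctly get that $\Im Z(E_i)=c$ is eventually constant, that $Z(E_i/E_{i+1})\in\bR_{<0}$, and hence that $\Re Z(E_{i+1})>\Re Z(E_i)$. But with your own convention $\mu=-\Re Z/\Im Z$, this gives the opposite conclusion:
\begin{itemize}
\item if $c>0$, then $\mu(E_{i+1})<\mu(E_i)$, because removing a nonzero phase-$1$ quotient moves $Z$ to the right on the line $\Im=c$ and lowers the phase;
\item if $c=0$, then $\mu(E_{i+1})=\mu(E_i)=+\infty$.
\end{itemize}
Either way this contradicts the assumed strict increase $\mu(E_{i+1})>\mu(E_i)$. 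So condition (1) follows in one line from exactly the ingredient you named: $\Im Z=0$ forces phase $1$. The detour through the maximal $\Im Z=0$ subobject $E_\infty$ and the vague ``bookkeeping'' on $E/E_\infty$ is unnecessary.

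Separately, your ``two-stage'' construction of a maximal destabilizing subobject is circular as stated. ``Take a subobject maximizing $\Im Z$ among those of maximal slope type'' presupposes that the supremum of slopes of subobjects of $E$ is attained, which is precisely what has to be proved. Likewise, the semistability of $A$ and the inequality $\mu(E/A)<\mu(A)$ are asserted rather than shown. Once (1) and (2) are in hand, you should not rebuild this by hand. Bridgeland's Proposition 2.4 in \cite{Bridgeland_Stab} (a stability function satisfying both chain conditions has the HN property) finishes the proof, and this is exactly how the cited reference argues.
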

\begin{definition}[\cite{kontsevich-Soibelman_supp}]
    A Bridgeland stability $(Z,\scA)$ on $X$ satisfies the support property if there is a norm $||-||$ on $K^{\mathrm{num}}_c(X)_\bR$ and a constant $C>0$ such that for any nonzero $Z$-semistable object $E\in \scA$, we have 
    \[||E||<C|Z(E)|.\]
\end{definition}
We denote by $\Stab(X)$ the set of Bridgeland stability conditions satisfying the support property. 
\begin{theorem}[\cite{Bridgeland_Stab}]\label{thm:deformation_thm}There is a natural topology on $\Stab(X)$ such that the forgetting map $\Pi:\Stab(X)\to K^{\mathrm{num}}_c(X)^{\vee}_\bC, (Z,\scA)\mapsto Z$ is a local homeomorphism. 
\end{theorem}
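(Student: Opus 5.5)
The statement to prove is Bridgeland's deformation theorem: $\Stab(X)$ carries a topology for which $\Pi$ is a local homeomorphism. I would follow Bridgeland's original argument, in the support-property formulation of Kontsevich--Soibelman (as in Bayer's short proof). The one adaptation needed is that $K^{\mathrm{num}}_c(X)$ has finite rank. For quasi-projective varieties this is \cite[Lemma 5.1.1]{Bayer-Craw-Zhang}. For our DM stacks I would reduce to that case via the equivalences $D_c([Y_H/\bZ_2])\cong D_c([\bC^2/G])$ of \pref{lem:DerivedMcKay}, together with the semiorthogonal decomposition into $D_c(R)$ and $D_c(Y)$.

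First, I pass to slicings. A pair $(Z,\scA)$ satisfying the HN property is equivalent to a pair $(Z,\scP)$ with $\scP$ a slicing: set $\scP(\phi)$ for $\phi\in(0,1]$ to be the $Z$-semistable objects of $\scA$ of phase $\phi$, and extend by $\scP(\phi+1)=\scP(\phi)[1]$. I then define the topology on $\Stab(X)$ by the generalized metric
\[
d(\sigma_1,\sigma_2)=\sup_{0\neq E}\Bigl\{|\phi^+_{\sigma_2}(E)-\phi^+_{\sigma_1}(E)|,\ |\phi^-_{\sigma_2}(E)-\phi^-_{\sigma_1}(E)|,\ \|Z_1-Z_2\|\Bigr\},
\]
where $\phi^{\pm}$ are the extreme HN phases and $\|\cdot\|$ is any norm on the finite-dimensional space $K^{\mathrm{num}}_c(X)^\vee_\bC$. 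With this topology $\Pi$ is continuous.

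Second, I show $\Pi$ is locally injective. If $\sigma,\tau$ have the same central charge and $d(\sigma,\tau)<1$, then $\scP_\sigma=\scP_\tau$. The argument is Bridgeland's Lemma 6.4: an object that is $\sigma$-semistable of phase $\phi$ has all its $\tau$-HN factors in phases close to $\phi$. Since the central charges agree, comparing $Z$ of the first and last HN factors forces those factors to have phase exactly $\phi$, and symmetrically in the other direction.

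Third, and this is the main step, I prove local surjectivity, which I expect to be the main obstacle. Fix $\sigma=(Z,\scP)$ with support constant $C$, so that $\|E\|<C|Z(E)|$ for semistable $E$. Rephrase the support property as: there is $\epsilon_0>0$ with $|W(E)-Z(E)|<\sin(\pi\epsilon_0)|Z(E)|$ for all semistable $E$ whenever $\|W-Z\|<\delta$ in operator norm, which is possible because $K^{\mathrm{num}}_c$ is finite-dimensional. For such $W$, I construct $\scQ(\psi)$ as the $W$-semistable objects in the thin quasi-abelian categories $\scP((\psi-\epsilon,\psi+\epsilon))$, with $\epsilon<1/8$.

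The work is then to prove that HN filtrations exist in these thin categories. Bridgeland does this by showing there are no infinite chains of strict sub- or quotient objects with monotone phases; I would instead get finiteness directly from the support property. Finally I must check that the new stability condition again satisfies the support property. This should follow because $|W(E)|\ge(1-\sin\pi\epsilon_0)|Z(E)|$ for $W$-semistable $E$, whose $Z$-HN factors all have nearby phase, so their $Z$-values add almost without cancellation. Continuity of the inverse map and openness then follow from the metric estimates.
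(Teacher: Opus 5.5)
Your overall route is the paper's. The paper gives no argument of its own and simply cites Bridgeland, and your outline is Bridgeland's proof in the support-property form, with finite rank of $K^{\mathrm{num}}_c$ as the only extra input. That part is fine.

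However, the justification you give for local injectivity (Step 2) does not work as stated. Set $d:=d(\scP,\mathcal{Q})<1$. Knowing that the $\tau$-HN factors of $E\in\scP(\phi)$ have phases in $[\phi-d,\phi+d]$, and that their central charges add up to a vector of phase $\phi$, forces nothing by itself. For example, two factors of equal mass with phases $\phi\pm\epsilon$ add to $2\cos(\pi\epsilon)e^{i\pi\phi}$. Comparing central charges only works after the Hom-vanishing coming from $\sigma$-semistability has been used to pin down the $\sigma$-phases of a suitable truncation, not of the extreme HN factors.

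The missing idea, which is what Bridgeland's Lemma 6.4 actually does, is a squeeze.
Let $A\to E\to B\to A[1]$ be the $\mathcal{Q}$-truncation at $\phi$, so that $A\in\mathcal{Q}((\phi,\phi+d])$ and $B\in\mathcal{Q}([\phi-d,\phi])$.
Applying the distance bound to the $\tau$-HN factors of $B[-1]$ gives $B[-1]\in\scP([\phi-1-2d,\phi-1+d])\subset\scP((-\infty,\phi))$.
The triangle $B[-1]\to A\to E$ with $E\in\scP(\phi)$ then gives $A\in\scP((-\infty,\phi])$.
Applying the bound to $A$ itself gives $A\in\scP((\phi-d,\phi+2d])$. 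Together these yield $A\in\scP((\phi-d,\phi])$.
If $A\neq 0$, then $Z(A)$ would have phase in $(\phi-d,\phi]$ (computed via $\sigma$) and also in $(\phi,\phi+d]$ (computed via $\tau$). These arcs are disjoint because $2d<2$, so $A=0$.
The mirror argument uses $A'\in\mathcal{Q}([\phi,\phi+d])$, $B'\in\mathcal{Q}([\phi-d,\phi))$ and the bound $A'[1]\in\scP((\phi,\infty))$. It gives $B'\in\scP([\phi,\phi+d))$, hence $B'=0$, and therefore $E\in\mathcal{Q}(\phi)$.

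A smaller point concerns your last step. A $W$-semistable $E$ is in general not $\sigma$-semistable, so your estimate $|W-Z|<\sin(\pi\epsilon_0)|Z|$ cannot be applied to $E$ directly. It must be applied to the $\sigma$-HN factors $E_i$, whose phases lie within $\epsilon$ of $\psi$. This gives $|W(E)|\ge(\cos\pi\epsilon-\sin\pi\epsilon_0)\sum_i|Z(E_i)|$ and $\|E\|\le\sum_i\|E_i\|<C\sum_i|Z(E_i)|$. That is what yields the new support constant, rather than $|W(E)|\ge(1-\sin\pi\epsilon_0)|Z(E)|$.
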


As in \cite{Bridgeland_Stab}, given a stability $(Z,\scA)$ and any $\phi\in (0,1]$, set
\[\scP(\phi):=\{E\in \scA\mid \text{$E$ is $Z$-semistable of phase $\phi$}\}.\]
More generally, if $\phi\in \bR$, take the unique $n\in \bZ$ such that $\phi-n\in (0,1]$ and set $\scP(\phi)$ to be $\scP(\phi-n)[n]$. Such a collection of subcategory $\{\scP(\phi)\}_{\phi\in \bR}$ is called a \emph{slicing}. For any interval $I\subset \bR$, we also write
\[\scP(I):=\la \scP(\phi)\mid \phi \in I\ra_{\mathrm{ex}}.\]
By the support property, one can show that $\scP(\phi)$ is of finite length, i.e., noetherian and artinian. So, by HN-property, $\scA=\scP(0,1]$ holds. Bridgeland showed that giving a stability condition $(Z,\scA)$ is equivalent to the data $(Z,\{\scP(\phi)\}_{\phi\in \bR})$ satisfying some conditions \cite[Proposition 5.3]{Bridgeland_Stab}. 

We recall that the natural right rotation action of the universal cover $\wt{\GL_2^{+}}(\bR)$ of $\GL_2^{+}(\bR)$ on $\Stab(X)$ induces a left action of the commutative subgroup $\bC$ on $\Stab(X)$, given by
\begin{align}\label{eq:def_action}
    R(w)\cdot (Z,\scP(\phi)):=(e^{\pi \sqrt{-1} w}Z,\scP(\phi-\Re w)).
\end{align}
In terms of hearts, we can write
\[R(w)\cdot(Z,\scP(0,1])=(e^{\pi \sqrt{-1}w}Z, \scP(-\Re w,-\Re w+1]).\]
We note that $n\in \bZ\subset \bC$ acts by shift $[n]$. The only case we explicitly use later is $w=\frac{1}{2}$, in which case, the rotated heart is the HRS tilt of the form $\scP(-\frac{1}{2},\frac{1}{2}]=\scP(0,\frac{1}{2}]*\scP(\frac{1}{2},1][-1]$ in the sense of \cite{Happel-Reiten-Smalo}. Also, we note that for any stability $\sigma\in \Stab(\scY)$ and $w\in \bC$, an object $E\in D_c(\scY)$ is $\sigma$-(semi)stable if and only if $E$ is $R(w)\cdot \sigma$-(semi)stable since the relative phases are preserved under the $\bC$-action.

\subsection{Bridgeland stability on smooth surfaces}
In this subsection, we adapt the standard construction of Bridgeland stability conditions on smooth projective surfaces to the compactly supported setting. Although this formulation for non-compact surfaces does not seem to be readily available in the literature, the arguments are essentially identical to the projective case.

Throughout this subsection, we assume that $f: X\to S$ is a smooth surface that is projective birational over an affine surface $S$. We define $\NS(X)_\bR$ to be the space of numerical divisor classes, namely, the quotient of $\Pic(X)_\bR$ by the classes $E$ satisfying $E\cdot C=0$ for any proper curve $C$ on $X$. We note that $K_c^{\mathrm{num}}(X)_\bR/[\scO_x]$ is generated by the classes
$[\scO_C]$ for proper curves $C\subset X$.
Hence, there is a natural embedding
\begin{align}\label{eq:NS_inside_K}
    \NS(X)_\bR\cong (K^{\mathrm{num}}_c(X)_\bR/[\scO_x])^\vee\hookrightarrow K^{\mathrm{num}}(X)_\bR,
\end{align}
where the first map is defined by $[L]\mapsto ([\scO_C]\mapsto \chi([L]-[\scO_X],\scO_C)=-L\cdot C)$, and the second map is also defined by the Euler pairing. The image is identified with the subspace of classes $[E]$ with $\chi(E,\scO_x)=0$. 

 Similarly to projective surfaces, for $(\omega,B)\in \NS(X)_\bC$, we define the central charge as follows:
\begin{align}
    Z_{\omega,B}(E)&:=-\int \ch(E)\cdot (1-B-\sqrt{-1}\omega)\label{eq:def_centralcharge1}\\
&=\ch_1(E)\cdot B-\ch_2(E)+\sqrt{-1}\ch_1(E)\cdot \omega.\label{eq:def_centralcharge2}
\end{align}
for $E\in K^{\mathrm{num}}_c(X)$. 
 Here, we clarify the meaning of the formula on a non-projective surface $X$.
Choose a smooth compactification $j: X\subset \overline{X}$ and choose an extension $\ol{\omega},\ol{B}$ of $\omega,B$. We then define 
\[-\int \ch(E)\cdot (1-B-\sqrt{-1}\omega):=-\int \ch(j_*E)\cdot (1-\ol{B}-\sqrt{-1}\ol{\omega}).\]
This is independent of the choice of the compactification and the extensions. 
Indeed, given two such compactifications $\ol{X}^i$ and extensions $\ol{\omega}^i,\ol{B}^i$ $i=1,2$, one may choose a third smooth projective compactification $\wt{X}$ dominating both, and compare the two expressions on $\wt{X}$. The same argument shows that the terms $\ch_2(E)$, $\ch_1(E)\cdot B$ and $\ch_1(E)\cdot \omega$ are also well-defined via compactifications. Throughout the paper, we use these expressions without further comment. We denote by
\[
\NS(X)_\bC:=\NS(X)_\bR\otimes \bC
\]
the parameter space of classes $B+\sqrt{-1}\omega$, and we often write such classes as pairs $(\omega,B)$.

Since we are interested in the moduli problems and the set of semistable objects does not change under the action of $\bC$ \pref{eq:def_action}, we concentrate on the subset $\Stab_{\mathrm{n}}(X)$ of $(Z,\scA)\in \Stab(X)$ with $Z(\scO_x)=-1$, which is a slice with respect to the $\bC$-action. We call a central charge (or a stability condition) satisfying $Z(\scO_x)=-1$ \emph{normalized}.
\begin{lemma}\label{lem:norm_central_charge}Let $X$ be a smooth surface that is projective birational over an affine surface. Then, the affine transformation over $\bR$
    \[Z_{*,*}:\NS(X)_\bC\to K^{\mathrm{num}}_c(X)^{\vee}_\bC,(\omega,B)\mapsto Z_{\omega,B}\]
    is an embedding whose image is precisely the space of normalized central charges.
\end{lemma}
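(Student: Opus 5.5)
We need to show that $(\omega,B)\mapsto Z_{\omega,B}$ is injective and affine, and that its image is exactly the set of $Z$ with $Z(\scO_x)=-1$.

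\textbf{Affineness and normalization.} By \pref{eq:def_centralcharge2}, $Z_{\omega,B}(E)=-\ch_2(E)+\ch_1(E)\cdot(B+\sqrt{-1}\omega)$. This is a constant functional $-\ch_2$ plus a functional that is $\bR$-linear in $B+\sqrt{-1}\omega\in \NS(X)_\bC$, so the map is affine over $\bR$. The skyscraper $\scO_x$ has $\ch_1=0$ and $\ch_2=1$; this can be computed in any smooth compactification, by Grothendieck--Riemann--Roch or directly via the Koszul resolution. Hence $Z_{\omega,B}(\scO_x)=-1$, and the image lies in the normalized central charges.

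\textbf{Reduction to a linear statement.} Put $W:=\{Z\in K^{\mathrm{num}}_c(X)^\vee_\bC\mid Z(\scO_x)=0\}$. The normalized central charges form the affine space $-\ch_2+W$, provided $-\ch_2$ is a well-defined functional on $K^{\mathrm{num}}_c(X)$. It is enough to show that the linear part
\[L:\NS(X)_\bC\to W,\quad D\mapsto (E\mapsto \ch_1(E)\cdot D)\]
is an isomorphism. Note that $L$ factors through $K^{\mathrm{num}}_c(X)_\bC/[\scO_x]$.

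\textbf{$L$ is an isomorphism.} We use the identification \pref{eq:NS_inside_K}, $\NS(X)_\bR\cong (K^{\mathrm{num}}_c(X)_\bR/[\scO_x])^\vee$ via $[L']\mapsto([\scO_C]\mapsto -L'\cdot C)$. The classes $[\scO_C]$ of proper curves span $K^{\mathrm{num}}_c(X)_\bR/[\scO_x]$, and $\ch_1(\scO_C)=[C]$, so $L(D)([\scO_C])=C\cdot D$. Thus $L$ agrees with this identification up to sign, and after complexifying it is a bijection onto $W_\bC=(K^{\mathrm{num}}_c/[\scO_x])^\vee_\bC$. Injectivity can also be seen directly: if $C\cdot D=0$ for all proper curves $C$, then the real and imaginary parts of $D$ are numerically trivial, hence zero in $\NS$.

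\textbf{Well-definedness on numerical classes.} We must check that $\ch_1(E)\cdot D$ and $\ch_2(E)$ depend only on the numerical class of $E$.

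For $\ch_1$: pairing with a line bundle gives $\ch_1(E)\cdot D=\chi(E,\scO_X)-\chi(E,L_D^{-1})$ up to sign, using Riemann--Roch with a compactly supported first argument. So it factors through $K^{\mathrm{num}}_c$.

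For $\ch_2$: Riemann--Roch gives $\chi(\scO_X,E)=\int\ch(E)\mathrm{td}_X=\ch_2(E)+\tfrac12\ch_1(E)\cdot(-K_X)$, using that $\ch_0(E)=0$ for compactly supported $E$. The left side is numerical, and the $\ch_1$ term is numerical by the previous step, so $\ch_2$ is numerical as well.

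\textbf{Main obstacle.} The only delicate point is justifying these Riemann--Roch identities on the non-compact $X$. We handle this by pushing forward to a smooth projective compactification $j:X\subset\ol X$, where $j_*E$ is supported on a proper subset. We then use $\chi(j_*E,\ol F)=\chi(E,\ol F|_X)$, which follows from the fact that $\Hom$ spaces with compactly supported first argument can be computed locally.
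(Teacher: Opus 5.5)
Your proof is correct and follows essentially the same route as the paper: both reduce the claim to the normalization $Z_{\omega,B}(\scO_x)=-1$ together with the identification \eqref{eq:NS_inside_K} of $\NS(X)_\bR$ with $(K^{\mathrm{num}}_c(X)_\bR/[\scO_x])^\vee$. The paper treats the real and imaginary parts separately, while you handle the complexified linear part at once. Your extra check that $\ch_1(-)\cdot D$ and $\ch_2$ descend to $K^{\mathrm{num}}_c(X)$ is something the paper takes for granted. One small point there: $\chi(\scO_X,E)$ has $E$ in the second slot, whereas numerical equivalence on $K_c$ is defined by pairing with $E$ in the first slot. You should either invoke Serre duality, $\chi(\scO_X,E)=\chi(E,\omega_X)$, or use $\chi(E,\scO_X)=\ch_2(E)+\tfrac12\ch_1(E)\cdot K_X$ directly.
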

\begin{proof}
 By definition, the map $Z_{*,*}$ is the direct sum of affine transformations over
 \begin{align}
     \Im Z_{*,0}:\NS(X)_\bR\to K^{\mathrm{num}}_c(X)_{\bR}^\vee,\,\,\,\, &\omega\to \ch_1(-)\cdot \omega,\label{eq:im_part_Z**}\\ 
     \Re Z_{0,*}:\NS(X)_\bR\to K^{\mathrm{num}}_c(X)_{\bR}^\vee, \,\,\,\,&B\to -\ch_2(-)+\ch_1(-)\cdot B\label{eq:re_part_Z**}
 \end{align}
 Via \pref{eq:NS_inside_K},
$\NS(X)_\bR$ is a real codimension one subspace of
$K^{\mathrm{num}}(X)_\bR$.

On the other hand,
the image of $\Im Z_{*,0}$
(resp. $\Re Z_{0,*}$)
is contained in the affine hyperplane
\[
W(\mathcal O_x)=0
\quad
(\text{resp. } W(\mathcal O_x)=-1).
\]
Thus, by the nondegeneracy of the pairing,
\eqref{eq:im_part_Z**} and
\eqref{eq:re_part_Z**}
are isomorphisms onto their images.
Consequently,
$Z_{*,*}$
is an isomorphism onto the space of normalized central charges.
\end{proof}
Thus, $\Stab_{\mathrm{n}}(X)$ fits in the following cartesian diagram
\begin{equation}\label{eq:cartesian_Stab_n}
    \begin{tikzcd}
    \Stab_{\mathrm{n}}(X)\arrow[r,hook]\arrow[d,"\Pi_{\mathrm{n}}"]&\Stab(X)\arrow[d,"\Pi"]\\
    \NS(X)_\bC\arrow[r,hook,"Z_{*,*}"]&K^{\mathrm{num}}_c(X)^{\vee}_{\bC}.
    \end{tikzcd}
\end{equation}
We first define the parameter space of central charges in $\NS(X)_\bC$. Set
\begin{align*}
    \Amp(X)&:=\{\text{ample numerical classes}\}\subset \NS(X)_\bR\\
\ol{\Amp}(X)&:=\Amp(X)\cup\bigcup_{f:X\to Y} f^*\Amp(Y)\subset \NS(X)_\bR,
\end{align*}
where $f$ runs over all contractions of a single $(-1)$-curve on $X$. To incorporate the $B$-field direction, for each blowdown $f:X\to Y$ with $(-1)$-curve $C$, we define
 \begin{equation}\label{eq:def_scB_f}
 \scB_f:=\{B\in \NS(X)_\bR\mid -1/2<B\cdot C<1/2\}.
 \end{equation} 
 We then define
 \begin{align}
    A(X)&:=\Amp(X)\times \NS(X)_\bR,\\
\ol{A}(X)&:=A(X)\cup\bigcup_{f:X\to Y} (f^*\Amp(Y)\times \scB_{f}).
\end{align}
 Next, for each $\omega\in \ol{\Amp}(X)$, we define a heart
 \begin{align*}
    \scA_\omega:=\begin{cases}
        \coh_c(X)&\text{if $\omega\in \Amp(X)$},\\
        {}^{-1}\Per_c(X/Y)&\text{if $\omega\in f^*\Amp(Y)$ for a single blowdown $f: X\to Y$.}
    \end{cases}
\end{align*}
Here, ${}^{-1}\Per_c(X/Y)$ is defined to be the HRS tilt $\scF_{-1}[1]*\scT_{-1}$\cite{Happel-Reiten-Smalo} with respect to the torsion pair $(\scT_{-1},\scF_{-1})$ on $\coh_c(X)$ given by 
\begin{align*}
	\scT_{-1}&:=\{\text{$E\in \coh_c(X)$}\mid \text{the canonical map $f^*f_*E\to E$ is surjective}\},\\
	\scF_{-1}&:=\{\text{$E\in \coh_c(X)$}\mid \text{$f_*E=0$}\}.
\end{align*}
This description is similar to that in \cite[\S 3]{VdB_NCCR}. Moreover, ${}^{-1}\Per_c(X/Y)$  is precisely the full subcategory of
${}^{-1}\Per(X/Y)$ in the sense of \cite[\S 3]{VdB_NCCR} consisting of objects whose cohomology sheaves are compactly supported.

The following proposition is known for projective surfaces. It is proved by Bridgeland \cite{Bridgeland_K3} for K3 surfaces, and by Arcara--Bertram \cite{Arcara-Bertram} for general smooth projective surfaces.
\begin{proposition}\label{pr:standardstability}
    Let $X$ be a smooth surface that is projective birational over an affine surface. For any rational point $(\omega,B)\in \ol{A}(X)_{\bQ}$, we have \[\sigma_{\omega,B}:=
        (Z_{\omega,B},\scA_\omega)\in \Stab_\mathrm{n}(X).\] 
\end{proposition}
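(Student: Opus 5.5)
The plan is to verify, for a rational point $(\omega,B)\in\ol{A}(X)_\bQ$, the three requirements for $\sigma_{\omega,B}$ in turn: positivity, the Harder--Narasimhan property via \pref{lem:criterion_HN}, and the support property. Normalization is automatic, since $Z_{\omega,B}(\scO_x)=-\ch_2(\scO_x)=-1$. I will treat the two cases $\omega\in\Amp(X)$ and $\omega\in f^*\Amp(Y)$ separately. The key simplification compared with the projective case is that every object of $\coh_c(X)$ has support of dimension at most one, so no tilt with respect to slope is needed and $\ch_1(E)$ is an effective combination of proper curves.

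\emph{Positivity.} First suppose $\omega$ is ample. Then for $E\in\coh_c(X)$ with one-dimensional support we have $\Im Z=\ch_1(E)\cdot\omega>0$. If $E$ is zero-dimensional, then $Z(E)=-\mathrm{length}(E)<0$. Now suppose $\omega=f^*\omega_Y$. Then $\Im Z(E)=\ch_1(E)\cdot\omega\ge 0$ on $\coh_c$, with equality exactly when $\ch_1(E)$ is a multiple of the $(-1)$-curve $C$.
\begin{itemize}
\item For $F\in\scF_{-1}$ we have $f_*F=0$, so $F$ is supported on $C$ and is an iterated extension of sheaves $\scO_C(-1)$. Hence $Z(F[1])=-Z(\scO_C(-1))=-(B\cdot C)+\ch_2(\scO_C(-1))$. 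I will compute this via Riemann--Roch using $C^2=-1$: it gives $\ch_2(\scO_C(-1))=-\tfrac12$, so $Z(\scO_C(-1)[1])=-B\cdot C-\tfrac12<0$ precisely because $B\cdot C>-\tfrac12$.
\item For $T\in\scT_{-1}$ supported on $C$, such as $\scO_C$, the sheaf is generated by $f^*f_*$, which forces it to be an extension of $\scO_C$ and points. Here $Z(\scO_C)=B\cdot C-\tfrac12<0$ precisely because $B\cdot C<\tfrac12$.
\end{itemize}
This is where the definition of $\scB_f$ enters. The general case reduces to these building blocks through the torsion-pair filtration and the standard description of ${}^{-1}\Per$ objects with $\Im Z=0$ as extensions of $\scO_C$, $\scO_C(-1)[1]$ and points. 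That description is the content of Van den Bergh's simple objects $\scO_C(-1)[1]$, $\scO_x$ for $x\notin C$, and $\scO_C$.

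\emph{Harder--Narasimhan property.} Since $(\omega,B)$ is rational, $\Im Z$ takes values in $\frac1N\bZ$ for some $N$, so its image is discrete. Noetherianity of $\coh_c(X)$ is standard. For ${}^{-1}\Per_c(X/Y)$, I will use Van den Bergh's equivalence with modules of finite length over a sheaf of algebras $f_*\mathcal{E}nd(\scO\oplus\scO(-C)^{\vee})$ on $Y$, restricted to compact support. That category is noetherian, and \pref{lem:criterion_HN} then applies.

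\emph{Support property.} I expect this to be the main obstacle, and the paper defers it to \S5. The plan is to take the norm $\|E\|=|\ch_0|+|\ch_1\cdot\omega|+|\ch_2|$, together with the components of $\ch_1$ against a basis of the finitely many exceptional curves; this is legitimate since $K^{\mathrm{num}}_c$ has finite rank. I would then bound it by $|Z|$ for semistable objects. For semistable sheaves with positive $\Im Z$, the curve components of $\ch_1$ are bounded by $\ch_1\cdot\omega$, because $\omega$ is ample (or nef and strictly positive on the non-$C$ curves). The quantity $\ch_2$ is bounded using a Bogomolov-type inequality for torsion sheaves: pushing forward to a compactification, $\ch_2$ of a pure one-dimensional semistable sheaf is controlled by its slope, and the twisted Euler characteristic is bounded in terms of $\ch_1\cdot\omega$ and $\ch_1^2$, which is a negative-definite form on exceptional classes. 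For the phase-one objects I will use the explicit description above. In the perverse case the direction $\ch_1\propto C$ is bounded separately, using $|B\cdot C|<\tfrac12$ and the finite list of building blocks.
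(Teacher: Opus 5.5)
Your positivity and Harder--Narasimhan arguments match the paper. Objects of ${}^{-1}\Per_c(X/Y)$ with $\Im Z=0$ are iterated extensions of the simples $\scO_x$, $\scO_C$, $\scO_C(-1)[1]$, which is exactly where $-\frac12<B\cdot C<\frac12$ enters, and discreteness of $\Im Z$ plus noetherianity gives HN. (Two slips there are harmless, because you reduce to the simples anyway: $\scF_{-1}$ is not generated by $\scO_C(-1)$ alone, e.g.\ $\scO_C(-2)\in\scF_{-1}$; and ${}^{-1}\Per_c(X/Y)$ corresponds to compactly supported coherent modules, not finite-length ones.)

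\emph{The genuine gap is the support property in the perverse case $\omega=f^*\omega_Y$.} Write $\ch_1(E)=f^*\gamma+aC$ with $\gamma=f_*\ch_1(\scH^0(E))$ effective.
\begin{itemize}
\item $\Im Z(E)=\gamma\cdot\omega_Y$ bounds $\gamma$ but is blind to $a$, which can be very negative since $\scH^{-1}(E)$ contributes multiples of $-C$.
\item $\Re Z(E)=-\ch_2(E)+a\,(B\cdot C)+f^*\gamma\cdot B$ controls only one linear combination of $a$ and $\ch_2(E)$.
\end{itemize}
In fact $\|E\|\le C|Z(E)|$ fails for general objects of this heart. Suppose $C'\neq C$ is a proper curve with $C'\cdot C=1$. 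Then $E_k=\scO_{C'}(m_k)\oplus\scO_C(-1)[1]^{\oplus k}\in{}^{-1}\Per_c(X/Y)$ has $a=-(k+1)$ and constant $\Im Z$. Its $\Re Z$ stays bounded once $m_k\approx -k(\frac12+B\cdot C)$. So for objects with $\Im Z>0$ you must use semistability in an essential way, and your plan never does. The condition $|B\cdot C|<\frac12$ and the finite list of simples describe only phase-one objects. The Bogomolov-type bound you sketch is for sheaves, not for two-term complexes in ${}^{-1}\Per_c(X/Y)$.

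The missing ingredient is the paper's Proposition~\ref{pr:quadratic_form_(2)}. Every $\sigma_{\omega,B}$-semistable $E\in{}^{-1}\Per_c(X/Y)$ satisfies $\ch_1(E)^2+C_\omega\frac{(\ch_1(E)\cdot\omega)^2}{\omega^2}\ge -1$; this is proved by Toda's wall-crossing argument \cite{Toda_ext_cont} (cf.\ \cite{Bayer-Macri-Stellari}). Since $\ch_1(E)^2=\gamma^2-a^2$ with $\gamma^2\le 0$, this gives $a^2\lesssim 1+(\Im Z(E))^2$. At a rational point $\Im Z\ge 1/N$ whenever it is positive, so $|a|\lesssim|Z(E)|$, and then $|\ch_2(E)|\lesssim|Z(E)|$ follows from $\Re Z$.

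In the ample case your outline does work, and more directly than the paper's route through Proposition~\ref{pr:quadratic_form_(1)} and \cite{Toda_ext_cont}. Effectivity of $\ch_1(E)$ gives $\|\ch_1(E)\|\lesssim\Im Z(E)$. Then $|\ch_2(E)|\le|\Re Z(E)|+|\ch_1(E)\cdot B|\lesssim|Z(E)|$ for \emph{every} $E\in\coh_c(X)$, semistable or not. The extra step you propose there, bounding the twisted Euler characteristic by $\ch_1\cdot\omega$ and $\ch_1^2$, is false as stated. For a proper integral curve $C''$, the sheaves $\scO_{C''}(k)$, $k\in\bZ$, are stable with fixed $\ch_1$ and unbounded $\chi$. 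That step should simply be dropped.
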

We will provide a proof in \S 5 for clarity.
\begin{remark}
    Let $(\ol{\omega},\ol{B})$ be some extensions of $(\omega,B)\in A(X)$ to a smooth compactification $\ol{X}$, and $\coh^{\ol{\omega},\ol{B}} (\ol{X})$ be the associated HRS-tilt of $\coh (\ol{X})$ (see, for example, \cite[6.2]{Macri-Schmidt_lecture}). Although $\coh_c(X)$ is contained in $\coh^{\ol{\omega},\ol{B}} (\ol{X})$, the proposition above is not directly implied by the claim for projective surfaces. Indeed, there are sheaves $E\in \coh_c(X)$ that are $Z_{\omega,B}$-semistable but not $Z_{\ol{\omega},\ol{B}}$-semistable in $\coh^{\ol{\omega},\ol{B}} (\ol{X})$. Fortunately,  the proof for projective surfaces can be applied to our cases. We give details in \S 5.
\end{remark}

\begin{proposition}\label{pr:continuity_standard_stab}
    Let $X$ be a smooth surface that is projective birational over an affine surface. The map $\ol{A}(X)\to K^{\mathrm{num}}_c(X)_{\bC}^\vee, (\omega,B)\to Z_{\omega,B}$ lifts to a continuous map 
\[\sigma: \ol{A}(X)\to \Stab_\mathrm{n}(X)\]
which takes any rational point $(\omega,B)$ to the stability condition $\sigma_{\omega,B}$.
\end{proposition}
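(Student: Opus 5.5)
We follow Bridgeland's deformation argument as adapted by Toda \cite{Toda_surfaceMMP}: first extend $\sigma$ to all of $\ol{A}(X)$ by the formula $\sigma_{\omega,B}=(Z_{\omega,B},\scA_\omega)$, then prove continuity locally. By \pref{thm:deformation_thm} and the cartesian square \pref{eq:cartesian_Stab_n}, it suffices to show two things. First, each $(\omega,B)\in\ol{A}(X)$ has a neighborhood $V$ such that the pairs $(Z_{\omega',B'},\scA_{\omega'})$, $(\omega',B')\in V$, are stability conditions with support property. Second, these stability conditions coincide with the unique local lift of $(\omega',B')\mapsto Z_{\omega',B'}$ through the point $\sigma_{\omega,B}$.

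The first step handles rational points. By \pref{pr:standardstability}, $\sigma_{\omega,B}\in\Stab_{\mathrm n}(X)$ when $(\omega,B)$ is rational. Bridgeland's theorem then gives a neighborhood $W\subset\Stab_{\mathrm n}(X)$ of $\sigma_{\omega,B}$ mapping homeomorphically onto an open set of $\NS(X)_\bC$. The support property is the quantitative input here: the radius of $W$ is controlled by the constant $C$.

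The second step, which is the heart of the proof, identifies the lifted stability condition $\tau$ over a nearby point $(\omega',B')$ with $(Z_{\omega',B'},\scA_{\omega'})$. The argument depends on the stratum.

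When $\omega\in\Amp(X)$, we show that skyscraper sheaves $\scO_x$ remain $\tau$-stable of phase $1$. Together with the fact that the heart of $\tau$ lies in $\scP_{\sigma}(-\epsilon,1+\epsilon)$, this forces the heart of $\tau$ to be a tilt of $\coh_c(X)$ of Bridgeland's form. The tilt is determined by $\mu_{\omega'}$-slope, as in \cite{Bridgeland_K3}, and for our hearts it is just $\coh_c(X)$. The reason is that all objects of $\coh_c(X)$ have support of dimension at most one, so $\Im Z_{\omega',B'}\ge 0$ on all of $\coh_c(X)$, and no torsion-free part has to be tilted. Consequently the lifted heart is $\coh_c(X)$ for all nearby ample $\omega'$.

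When $\omega\in f^*\Amp(Y)$, the same argument is carried out with ${}^{-1}\Per_c(X/Y)$. Its objects of class $[\scO_x]$ that are stable are $\bfL f^*\scO_y$ for $y$ on the contracted point, together with $\scO_x$ off the curve $C$. The condition $|B\cdot C|<1/2$ is exactly what keeps $\scO_C(-1)$ and $\scO_C(-2)[1]$, in the appropriate ordering, inside the upper half plane. Nearby $\omega'$ can be ample or pulled back from $Y$. In the ample case, the lifted heart is $\coh_c(X)$; one checks that the objects of ${}^{-1}\Per_c$ and $\coh_c$ are related by a tilt at $\scO_C(-1)$, which is consistent with the phases near the wall. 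In the pulled-back case, the lifted heart stays ${}^{-1}\Per_c(X/Y)$.

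The third step extends the argument to irrational points. At an irrational $(\omega,B)$, we pick a nearby rational point whose Bridgeland neighborhood $W$ contains $(\omega,B)$. This requires a locally uniform lower bound on the size of $W$, coming from a locally uniform support-property constant, which we extract from the proof in \S 5 (Bogomolov-type inequality on compactly supported objects). The identification from the second step then shows that the lift at $(\omega,B)$ has heart $\scA_\omega$, so $\sigma_{\omega,B}\in\Stab_{\mathrm n}(X)$, and the local lifts glue into the continuous map $\sigma$.

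The main obstacle is the second step at boundary points $\omega\in f^*\Amp(Y)$. There one must show that the deformed heart jumps exactly from ${}^{-1}\Per_c$ to $\coh_c$, which requires controlling the stability of $\scO_C(-1)$, $\scO_C(-2)[1]$ and the objects of class $[\scO_x]$ uniformly near the wall. The uniform support constant needed in the third step is the secondary difficulty.
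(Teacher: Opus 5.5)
Your overall architecture is the same as in Toda's argument \cite[\S 5.1]{Toda_surfaceMMP}, which is all the paper invokes: rational points from Proposition~\ref{pr:standardstability}, local sheets of $\Pi_{\mathrm n}$, identification of the local lift with $\sigma_{\omega',B'}$ at nearby rational points, and a locally uniform support constant for irrational points. But you never carry out the identification step at a point $\omega\in f^*\Amp(Y)$, which you yourself flag as the main obstacle, and the facts you propose to feed into it are wrong.

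At such a point $\Im Z_{\omega,B}$ vanishes on sheaves supported on $C$. Since $\ch_2(\scO_C(k))=k+\frac12$, one gets $Z_{\omega,B}(\scO_C)=B\cdot C-\frac12$ and $Z_{\omega,B}(\scO_C(-1)[1])=-B\cdot C-\frac12$. The condition $|B\cdot C|<\frac12$ says exactly that these two simple objects of ${}^{-1}\Per_c(X/Y)$ both have phase $1$; this is the check made in \S 5. Hence $\bfL f^*\scO_p$ for $p=f(C)$, and $\scO_x$ for $x\in C$, both have Jordan--H\"older factors $\scO_C$ and $\scO_C(-1)[1]$. They are strictly semistable at the wall, not stable.

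The pair $\scO_C(-1)$, $\scO_C(-2)[1]$ that you use consists of the simples of ${}^{0}\Per$. Here $Z_{\omega,B}(\scO_C(-1))=B\cdot C+\frac12>0$ is not even in $\bH$. So your ``same argument'' has no valid input. Openness of stability does not give stability of skyscrapers on $C$ for the lift over nearby ample $\omega'$; that would need a separate wall-crossing analysis at $\sigma_{\omega,B}$. For nearby $\omega'$ still on $f^*\Amp(Y)$, you have no characterization of ${}^{-1}\Per_c(X/Y)$ by stable objects analogous to Bridgeland's Lemma 10.1. So the claim that ``the lifted heart stays ${}^{-1}\Per_c(X/Y)$'' is unsupported.

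The missing idea is Toda's tilting lemma, Lemma~\ref{lem:Toda_deformation_argument} (\cite[Lemma 7.1]{Toda_curve_counting1}). It says: if two hearts $\scA,\scA'$ are related by an HRS tilt, $(Z_t,\scA)$ lies in $\Stab$ for rational $t>0$, and $\sigma_0=(Z_0,\scA')$ lies in $\Stab$, then $\sigma_t\to\sigma_0$. By definition ${}^{-1}\Per_c(X/Y)=\scF_{-1}[1]*\scT_{-1}$ is an HRS tilt of $\coh_c(X)$, and every heart is a trivial tilt of itself. Apply the lemma along rational segments leaving a rational point $(\omega,B)$, either into $A(X)$ or along $f^*\Amp(Y)\times\scB_f$. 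This gives $\sigma_{\omega',B'}\to\sigma_{\omega,B}$, so these points lie on the local sheet of $\Pi_{\mathrm n}$ through $\sigma_{\omega,B}$, without knowing which objects are stable. This is exactly the tool the paper uses, following Toda, at every boundary of this kind (compare Proposition~\ref{pr:connect_boundary} and Lemma~\ref{lem:smallcomparison_stability}). Proposition~\ref{pr:standardstability} is the only new input.

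Your skyscraper argument for ample $\omega$, via the compactly supported version of Bridgeland's Lemma 10.1 as in \S 4.2, is valid but becomes unnecessary. Your Step 3 is then needed only to produce the continuous extension at irrational points, where the statement claims nothing about the heart.
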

\begin{proof}
    Using the support property for rational points (\pref{pr:standardstability}), the proof in \cite[\S 5.1]{Toda_surfaceMMP} can be applied to our claim.
\end{proof}

\subsection{Moduli space of Bridgeland stable objects }
Let $\sigma=(Z,\scA) \in \Stab(\scY)$ and $v\in K_c^{\mathrm{num}}(\scY)$ be a primitive class. We consider the set
\[M_\sigma(v):=\{E\in \scA\mid \text{$E$ is $Z$-stable with $[E]=v$ in $K_c^{\mathrm{num}}(\scY)$}\}/\cong.\] 
The proof of the following statement is essentially contained in \cite[Proposition 9.3]{Bridgeland_K3}. See also \cite[Proposition 5.27]{Macri-Schmidt_lecture}:

\begin{proposition}\label{pr:wall&chamber_Stab}
Fix a primitive class $v\in K_c^{\mathrm{num}}(\scY)$. 
    There exists a locally finite collection
of actual walls
\[
\{\scW_i\}_{i\in I}
\]
in $\Stab(\scY)$ for objects of class $v$ such that the following hold:
\begin{enumerate}
    \item On each connected component $C$ of $\Stab(\scY)\setminus\bigcup_{i\in I}\scW_i$, for any $\sigma,\tau\in C$, $M_\sigma(v)=M_\tau(v)$ holds.
    \item For each $\sigma=(Z,\scP)\in \scW_i$, there is a destabilizing object. More precisely, there is a proper inclusion $F\subset E$ in $\scP(\phi)$ for some $\phi$ with class $v$.
\end{enumerate}
\end{proposition}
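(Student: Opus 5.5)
The plan follows Bridgeland's argument for K3 surfaces \cite[Proposition 9.3]{Bridgeland_K3}, adapted to $D_c(\scY)$. The first and main step is a local finiteness statement. Fix $\sigma_0\in\Stab(\scY)$ and a small neighbourhood $U$ of $\sigma_0$. I want to show that only finitely many classes $w\in K_c^{\mathrm{num}}(\scY)$ can occur as classes of $\sigma$-semistable subobjects $F\subset E$ with $[E]=v$, $\sigma\in U$, and $\phi(F)=\phi(E)$.

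For this I would use the support property. Shrinking $U$, there is a single norm $\|-\|$ and constant $C$ with $\|F\|<C|Z(F)|$ for every $\sigma$-semistable $F$ and every $\sigma\in U$. This uniformity follows from \pref{thm:deformation_thm} together with the standard comparison of support constants under small deformation.

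Now take a semistable $E$ of class $v$ and phase $\phi$, and a destabilizing $F\subset E$ in $\scP(\phi)$. Then $Z(F)$ and $Z(E/F)$ lie on the same ray as $Z(E)$ and sum to it, so $|Z(F)|\le|Z(v)|$. Hence $\|[F]\|\le C\sup_{\sigma\in U}|Z_\sigma(v)|$. The lattice $K_c^{\mathrm{num}}(\scY)$ has finite rank; for the stack one should use the finite-rank statement of \cite[Lemma 5.1.1]{Bayer-Craw-Zhang}, or transport it from $[\bC^2/G]$ via $\Phi$ in \pref{lem:DerivedMcKay}. So only finitely many classes $w$ satisfy this bound.

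Next I define the walls. For each such $w$ not proportional to $v$ (there is none proportional, since $v$ is primitive and $0<|Z(w)|<|Z(v)|$ on the ray), set
\[
\scW_w:=\{\sigma\mid Z_\sigma(w)/Z_\sigma(v)\in\bR_{>0},\ \text{and there is a $\sigma$-semistable $E$ of class $v$ with a subobject of class $w$ in the same $\scP(\phi)$}\}.
\]
This is a closed subset of a real codimension-one locus in $U$, because $\Pi$ is a local homeomorphism. The actual walls $\scW_i$ are the components of these sets that are nonempty, and property (2) then holds by construction. Covering $\Stab(\scY)$ by such neighbourhoods $U$ gives local finiteness.

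For (1), take $\sigma,\tau$ in one connected component $C$ of the complement. Join them by a path in $C$, cover it by finitely many of the neighbourhoods above, and reduce to showing that $M_\sigma(v)$ is locally constant along the path. Suppose $E$ is $\sigma$-stable of class $v$ and becomes unstable at some point of the path. At the first such point, semistability is a closed condition, so $E$ is strictly semistable there. Its Jordan--H\"older factors are finite in number because $\scP(\phi)$ has finite length, and a proper subobject in the same $\scP(\phi)$ puts that point on an actual wall, which is a contradiction. The argument is symmetric in $\sigma$ and $\tau$. Moduli are compared as sets up to isomorphism, so the hearts may differ along the path: we compare slicings $\scP(\phi)$ and use $R(w)$ to track phases, and stability is insensitive to this rotation.

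I expect the main obstacle to be the uniform support property on a neighbourhood. One also needs that semistability of a fixed object is closed, and openness of stability, on $\Stab(\scY)$ for this non-compact DM stack. Both are proved in the Bridgeland/Macr\`i--Schmidt references using only the support property and the finite rank of $K_c^{\mathrm{num}}$, and I would check that those arguments go through verbatim here.
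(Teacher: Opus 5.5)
Your proposal is correct and is essentially the argument the paper relies on. The paper gives no independent proof; it defers to \cite[Proposition 9.3]{Bridgeland_K3} and \cite[Proposition 5.27]{Macri-Schmidt_lecture}, which proceed exactly as you do: local finiteness of destabilizing classes (from the locally uniform support property and the finite rank of $K_c^{\mathrm{num}}(\scY)$), walls contained in the loci $Z(w)/Z(v)\in\bR_{>0}$, and constancy on chambers from closedness of semistability and openness of stability. One step is asserted too quickly: closedness of $\scW_w$ does not follow just from $\Pi$ being a local homeomorphism, but part (1) does not need it, since on $C$ the stable and semistable loci of a fixed $E$ of class $v$ coincide and are therefore open and closed in the connected set $C$.
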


We are also interested in the geometric structure of $M_\sigma(v)$, not just as a set. We consider the following version of the moduli functor in \cite{Bayer-Craw-Zhang} by sending a separated scheme of finite type $S$ to a set
\[\scM_\sigma(v)(S):=\{\text{flat families of $Z$-stable objects in $\scA$ with class $v$ }\}/\sim.\]
Here, we define a flat family and its equivalence relation as follows: an object $E_S\in D(\scY\times S)$ is \emph{$S$-perfect} if locally on $S$, $E_S$ is quasi-isomorphic to a bounded complex of $q^{-1}\scO_S$-flat coherent sheaves for the projection $q:\scY\times S\to S$. We say an $S$-perfect complex $E_S$ is a \emph{flat family of $\sigma$-stable objects in $\scA$} if for any closed point $s\in S$,  $\bfL i_{s}E_S\in D(\scY)$ is a $\sigma$-stable object in $\scA$ for the inclusion $i_s:\{s\}\hookrightarrow S$. Moreover, for two families $E_S, E'_S$, we write  $E_S\sim E'_S$ if there is $L\in \Pic(S)$ such that $E_S\cong E'_S\otimes q^*L$. We discuss the representability of $\scM_\sigma(v)$ in our case in \pref{cr:existence_of_moduli}.

\section{Construction of geometric Bridgeland stability}
In this section, for each smooth surface $X$ admitting a birational morphism from the maximal resolution $Y$ over $\bC^2/G$, we construct a connected open subset $\scU(X)$ in a certain subspace $\Stab_{\mathrm{n}}(\scY)$ of the stability manifold (see \S 3.3). We establish most of \pref{mthm:A} at the end of this section. We will see that most arguments in \cite{Toda_surfaceMMP} carry over to our setting with only minor modifications. We therefore omit proofs when no modification is needed.

We write $\scY$ for the quotient stack $[Y_H/\bZ_2]$ and we regard $\coh \scY$ as $\coh^{\bZ_2}Y_H$. In this section, we often consider the composition $f\circ\pi$ of a morphism $f: Y\to X$ and the coarse moduli map $\pi: \scY\to Y$. To simplify the notation, we write $\wt{f}$ for $f\circ \pi$. By abuse of notation, throughout the paper, we write
\[
\scO_{\wh{C}}:=\wt{f}^*\scO_C
\]
for the total transform of a curve $C$. For a birational morphism $f: Y\to X$ between smooth surfaces, or from $\scY$ to a smooth surface, we frequently use the left adjoint of $\bfL f^*$ 
\[\bfR f_!:=\bfR f_*(-\otimes\omega_Y)\otimes \omega_X^{-1}.\]

\subsection{Construction of the heart \texorpdfstring{$\scC_{\scY/X}^0$}{C(Y/X,0)}}
For any contraction $f: Y\to X$, set
\[\scC_{\scY/X}:=\Ker (\bfR \wt{f}_!)\subset D_c(\scY).\]
In this subsection, we construct a t-structure on $\scC_{\scY/X}$ and its heart $\scC_{\scY/X}^0$ following  Toda's construction \cite[\S 3.1]{Toda_surfaceMMP}. The idea of construction is to regard $\pi:\scY\to Y$ as a ``blowup in codimension 1''. More precisely, we prove the following statement:
\begin{proposition}\label{pr:constructionofC}
    For any smooth contraction $f:Y\to X$, we can associate the heart of a bounded t-structure $\scC_{\scY/X}^0\subset \scC_{\scY/X}$ satisfying the following properties:
    \begin{enumerate}[label=\textup{(\roman*)}]
        \item There is a set $\scS\subset \scC_{\scY/X}^0$ such that $\scC_{\scY/X}^0=\la \scS\ra_{\mathrm{ex}}$ and the image of $\scS$ in $K_c^{\mathrm{num}}(\scY)$ is a finite set. 
        \item For any object $F\in \scC_{\scY/X}^0$, $\bfR \wt{f}_*F[1]$ is a 0-dimensional sheaf.
        \item $\scC_{\scY/X}^0$ is noetherian.
        \item Take a factorization \[f:Y \overset{g}{\to} X'\overset{h}{\to} X\]  where $h$ is the blowdown of a (-1)-curve $C$ on $X'$, and suppose $\scC_{\scY/X'}^0=\la\scS'\ra_{\mathrm{ex}}.$ Then
        \[\scC_{\scY/X}^0=\la \scS,\scO_{\wh{C}}[-1]\ra_{\mathrm{ex}},\]
        where $\scS$ is the set of the cocone $S$ of the universal morphism
        \[S\to \scO_{\wh{C}}\otimes \Ext^1(\scO_{\wh{C}},S')\to S'[1]\]
        for some $S'\in \scS'$.
    \end{enumerate}
\end{proposition}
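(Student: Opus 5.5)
We argue by induction on the number of blowdowns in $f:Y\to X$, following Toda's construction \cite[\S 3.1]{Toda_surfaceMMP}. The base case is $X=Y$, i.e.\ $\wt{f}=\pi$. By the semiorthogonal decomposition of \cite[Theorem 5.1]{Collins-Polishchuk} and the triangle \pref{eq:triangle_pi_*}, the kernel of $\bfR\pi_!$ is $\sgn\otimes i_*D_c(R)$ up to the twist by $\omega$. Concretely, $\scC_{\scY/Y}$ consists of objects supported on the ramification divisor $R$ carrying the sign character, i.e.\ it is the image of $D_c(R)$ under a fully faithful functor. We take $\scC^0_{\scY/Y}$ to be the image of $\coh_c(R)[-1]$, shifted so that (ii) holds. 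Since $R\cong\bA^1$ (one or two components) and we only allow compact support, this category is generated by the finitely many numerical classes of skyscraper sheaves of points of $R$, twisted by $\sgn$ and shifted by $[-1]$. We let $\scS$ be this set of objects. Then (i) holds, and (iii) holds because $\coh_c(R)$ is noetherian. For (ii), note that $\bfR\pi_*$ of such an object, shifted by $[1]$, is the pushforward of a $0$-dimensional sheaf.

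For the inductive step, we factor $f=h\circ g$ as in (iv) and set $\wt{h}:=h$ composed with $\wt{g}$. Since $\bfR\wt{f}_!=\bfR h_!\circ\bfR\wt{g}_!$ and $\bfL\wt{g}^*$ is fully faithful on the relevant pieces, we obtain a semiorthogonal decomposition
\[\scC_{\scY/X}=\la \scC_{\scY/X'},\ \bfL\wt{g}^*\scC_{X'/X}\ra,\]
where $\scC_{X'/X}=\Ker\bfR h_!$ is generated by $\scO_C(-1)$. Its pullback $\bfL\wt{g}^*\scO_C(-1)$ corresponds, up to the twist conventions, to $\scO_{\wh C}$. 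We then glue the t-structures: on $\scC_{\scY/X'}$ we use $\scC^0_{\scY/X'}$, and on the second piece we use the heart generated by $\scO_{\wh C}[-1]$, choosing the perversity exactly as in Toda so that the gluing conditions of a recollement hold. This amounts to the vanishing of the appropriate negative Hom's between $\scO_{\wh C}[-1]$ and the objects of $\scS'$. The glued heart has a simple-object description: it is the extension closure of $\scO_{\wh C}[-1]$ together with the universal extensions $S$, which are the cocones of $S'\to\scO_{\wh C}\otimes\Ext^1(\scO_{\wh C},S')^\vee$ in the sense stated in (iv). This is the standard tilting and mutation argument in \cite[Lemma 3.5]{Toda_surfaceMMP}. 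To make it work, we must compute $\Hom^k(\scO_{\wh C},S')$ and show that it is concentrated in the degrees needed, so that each $S$ lies in $\scC_{\scY/X}$ and is killed by $\bfR\wt f_!$. Given this, (i) follows because the numerical classes of the $S$ are determined by the finitely many classes of $S'$ and by $\dim\Ext^1$. Property (ii) follows from the triangle: $\bfR\wt f_*$ of $S[1]$ is an extension of $0$-dimensional sheaves, using that $\bfR\wt f_*\scO_{\wh C}$ is the structure sheaf of a point. Property (iii), noetherianity, follows from finite length of the generators together with the finiteness of classes in (i), as in Toda, since each object has bounded length measured by a positive functional such as $\bfR\wt f_*(-)[1]$ in $K$-theory and $\ch_1\cdot\omega$.

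The main obstacle is the Ext computation on the stack. On $\scY$, the total transform $\scO_{\wh C}$ may meet $\pi^{-1}(R)$, and the base objects supported on $R$ carry the $\sgn$ twist. We therefore need $\Hom^{\le 0}(\scO_{\wh C},S')=0$ and $\Hom^{\geq 2}=0$ in the right range for all $S'\in\scS'$, including the base objects. I would first do this computation using the adjunction $\bfL\wt g^*\dashv\bfR\wt g_*$ and the projection formula, reducing it to computations on $X'$ and on $R$. The $\bZ_2$-invariants then decide which sign-twisted point sheaves pair nontrivially with $\scO_{\wh C}$.
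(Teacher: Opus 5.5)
Your inductive step has a genuine gap: the heart you build is not the heart of (iv). You glue $\scC^0_{\scY/X'}$ with $\langle\scO_{\wh C}[-1]\rangle_{\mathrm{ex}}$ along the recollement $\scC_{\scY/X'}\hookrightarrow\scC_{\scY/X}\xrightarrow{\bfR\wt g_!}\scC_{X'/X}$, and you assert that the result is $\langle\scO_{\wh C}[-1],S\rangle_{\mathrm{ex}}$. In a BBD gluing the quotient functor is t-exact, so every object $E$ of your heart satisfies $\bfR\wt g_!E\in\langle\scO_C[-1]\rangle_{\mathrm{ex}}$. Now rotate the triangle in (iv) to get $S'\to S\to\scO_{\wh C}\otimes V\to S'[1]$ with $V=\Ext^1(\scO_{\wh C},S')$. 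Since $\bfR\wt g_!S'=0$ and $\bfR\wt g_!\bfL\wt g^*\cong\mathrm{id}$, this gives $\bfR\wt g_!S\cong\scO_C\otimes V$, sitting in degree $0$. This is nonzero already at the first step: for $n$ odd, $S'=\scO_{p_0}[-1]$ gives $S=\scO_{\wh{C_1}}(-p_0)$ (Lemma~\ref{lem:explicit_generator_C}), with $\bfR\wt g_!S\cong\scO_{C_1}$. So the $S$ are not in your heart.

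No choice of perversity along this recollement repairs this. $\scC_{X'/X}$ is generated by the exceptional object $\scO_C$, so its only bounded hearts are the $\langle\scO_C[k]\rangle_{\mathrm{ex}}$. The heart in (iv), however, contains objects sent to $\scO_C$ and objects sent to $\scO_C[-1]$. In fact your glued heart is the tilt of the degree-$0$ glued heart $\wt\scC^0_{\scY/X}$ at the torsion pair formed by the left orthogonal of $\scC^0_{\scY/X'}$ and $\scC^0_{\scY/X'}$. The heart in (iv) is the tilt at $(\langle\scO_{\wh C}\rangle_{\mathrm{ex}},\scO_{\wh C}^\perp)$ instead.

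Your Hom-vanishing condition also points the wrong way. What actually holds is $\Hom^{\le 0}(\scO_{\wh C},S')=\Hom^{\le 0}(\scO_C,\bfR\wt g_*S')=0$, by adjunction and (ii) for $X'$. This is what gives $\wt\scC^0_{\scY/X}=\scC^0_{\scY/X'}*\langle\scO_{\wh C}\rangle_{\mathrm{ex}}$. The analogous vanishing against $\scO_{\wh C}[-1]$ would force $\Ext^1(\scO_{\wh C},S')=0$, i.e.\ every universal extension would be trivial.

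What is missing is Toda's two-step construction, which is exactly what the paper does:
\begin{enumerate}
\item Glue with $\langle\scO_{\wh C}\rangle_{\mathrm{ex}}$ in degree $0$.
\item Prove that $\wt\scC^0_{\scY/X}$ is noetherian. For epimorphisms $E_i\twoheadrightarrow E_{i+1}$, the induced epimorphisms $\scO_{\wh C}^{\oplus n_i}\twoheadrightarrow\scO_{\wh C}^{\oplus n_{i+1}}$ stabilize, which reduces the question to the induction hypothesis on $\scC^0_{\scY/X'}$.
\item Deduce the torsion pair $(\langle\scO_{\wh C}\rangle_{\mathrm{ex}},\scO_{\wh C}^\perp)$.
\item Set $\scC^0_{\scY/X}:=\scO_{\wh C}^\perp*\langle\scO_{\wh C}\rangle_{\mathrm{ex}}[-1]$.
\end{enumerate}
Then $S\in\scO_{\wh C}^\perp$ because the connecting map $\Hom(\scO_{\wh C},\scO_{\wh C}\otimes V)\to\Ext^1(\scO_{\wh C},S')$ is an isomorphism; this is what universality means. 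Generation is \cite[Lemma 3.3]{Toda_surfaceMMP}.

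A single gluing does work if you swap the roles of the two pieces. Take the recollement whose closed piece is generated by $\scO_{\wh C}$ and whose quotient functor is the right adjoint of $\scC_{\scY/X'}\hookrightarrow\scC_{\scY/X}$; this functor sends $S\mapsto S'$. The gluing of $\langle\scO_{\wh C}[-1]\rangle_{\mathrm{ex}}$ with $\scC^0_{\scY/X'}$ along it does contain the $S$. Identifying it with $\langle\scS,\scO_{\wh C}[-1]\rangle_{\mathrm{ex}}$ still requires a generation argument.

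Smaller points:
\begin{itemize}
\item Only $\Hom^{\le 1}$ enters. Your requirement $\Hom^{\ge 2}(\scO_{\wh C},S')=0$ is false: for $S'=\scO_{p_0}[-1]$ it equals $\Ext^1_Y(\scO_{C_1},\scO_{\pi(p_0)})\neq 0$.
\item $S$ is the cocone of $\scO_{\wh C}\otimes V\to S'[1]$, not of a map $S'\to\scO_{\wh C}\otimes V$.
\item Neither $\bfR\wt f_*(-)[1]$ (which kills $\scO_{\wh{C_1}}(-p_0)$) nor $\ch_1\cdot\omega$ is positive on the heart by itself.
\item With the paper's $\bfR\pi_!$ and $\bfR h_!$, the kernels are generated by the untwisted $\scO_p$ and by $\scO_C$, not by $\sgn\otimes\scO_p$ and $\scO_C(-1)$.
\end{itemize}
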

\begin{remark}
    In \cite{Toda_surfaceMMP}, the analogous heart
    \(\scC_{X/Y}^0\) is generated by finitely many objects,
    which is stronger than our property \textup{(i)}.
    However, property \textup{(i)} is sufficient for our purposes.
    See also \pref{rm:existence_moduli}.
\end{remark}

The first step is straightforward: 
\begin{lemma}The shift of the standard t-structure on $\scC_{\scY/Y}=i_*D_c(R)$
\[\scC_{\scY/Y}^0:=i_*\coh_c R[-1]=\la \scO_p[-1]\mid p\in R\ra_{\mathrm{ex}}\]
satisfies the conditions \textup{(i), (ii), (iii)}.
\end{lemma}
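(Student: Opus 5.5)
We need three things for $\scC_{\scY/Y}^0:=i_*\coh_c R[-1]$: that it is the heart of a bounded t-structure on $\scC_{\scY/Y}$, and that it satisfies (i), (ii), (iii). The starting point is the semiorthogonal decomposition of Collins--Polishchuk. Here $i:R\hookrightarrow \scY$ should be read as the inclusion of the ramification divisor, twisted by $\sgn$ as in \eqref{eq:triangle_pi_*}, and $\pi:\scY\to Y$ is the coarse moduli map.

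\textbf{Step 1: identify the kernel.} Since $f=\id$, we have $\bfR\wt{f}_!=\bfR\pi_!$. For the tame stack $\scY$, this functor is $\pi_*^{\bZ_2}$ up to twisting by the relative dualizing sheaf. By \eqref{eq:triangle_pi_*}, an object is killed by $\pi_*^{\bZ_2}$ exactly when it lies in the essential image of the fully faithful functor $D_c(R)\to D_c(\scY)$, namely $F\mapsto \sgn\otimes i_*F$. The twist by $\omega$ moves this image to the other semiorthogonal piece, $i_*D_c(R)$ with the trivial character, and I would check this by a direct computation. Either way, the kernel is a copy of $D_c(R)$ embedded fully faithfully. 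Transporting the standard t-structure of $D_c(R)\cong D^b(\coh_c R)$ along this embedding and shifting by $[-1]$ gives a bounded t-structure on $\scC_{\scY/Y}$ with heart $i_*\coh_c R[-1]$. Since $R\cong \bA^1$ or $\bA^1\sqcup\bA^1$, every compactly supported coherent sheaf on $R$ has finite length with simple factors $\scO_p$. Hence the heart is $\la \scO_p[-1]\mid p\in R\ra_{\mathrm{ex}}$.

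\textbf{Step 2: properties (i)–(iii).} For (i), take $\scS=\{i_*\scO_p[-1]\mid p\in R\}$. Numerical classes are constant in flat families and $R$ has at most two connected components, so $\scS$ has at most two classes in $K_c^{\mathrm{num}}(\scY)$. For (ii), if $F=i_*T[-1]$ with $T$ a finite-length sheaf on $R$, then $\bfR\pi_*F[1]=\pi_*i_*T$. This is a sheaf because both maps are finite, hence exact on coherent sheaves, and it is supported on the finite set $\pi(\mathrm{Supp}\,T)$, so it is $0$-dimensional. For (iii), $\coh_c R$ is a finite-length category, so it is noetherian, and so is its equivalent image.

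\textbf{Main obstacle.} The only delicate point is the identification in Step 1 of $\Ker\bfR\pi_!$ with exactly $i_*D_c(R)$, with the correct $\sgn$ twist. I would settle it by computing $\omega_{\scY}\cong\pi^*\omega_Y\otimes\scO(R)$, checking which character $\bZ_2$ acts by on the fibres of $\scO(R)|_R$, and then applying \eqref{eq:triangle_pi_*}. Whichever character comes out, the image is still a fully faithful copy of $D_c(R)$, so the heart description and properties (i)–(iii) hold in the same form.
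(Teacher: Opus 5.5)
Your argument is correct, and it is the direct verification the paper has in mind; the paper gives no proof and calls this step straightforward. You identify $\Ker\bfR\pi_!$ with a fully faithful copy of $D_c(R)$, transport the shifted standard t-structure, and read off (i)--(iii) from the finite-length structure of $\coh_c R$. The character check you defer does come out as you expect: $\omega_{\scY/Y}\cong\scO(R)$ and $\bZ_2$ acts on $\scO(R)|_R$ by $\sgn$, so $\Ker\bfR\pi_!=\scO(-R)\otimes\sgn\otimes i_*D_c(R)=i_*D_c(R)$ with trivial character, consistent with the paper's later use of $\pi_!\scO_p=0$.
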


For induction, we take a factorization 
\[ f: Y \overset{g}{\longrightarrow} X' \overset{h}{\longrightarrow} X \]
where $h$ is the blowdown of a $(-1)$-curve $C$ on $X'$ and we assume that we already have the bounded heart $\scC_{\scY/X'}^0$ satisfying conditions \textup{(i), (ii), (iii)}. Now, we have a recollement
\[\scC_{\scY/X'}\overset{\mathrm{inc}}{\longrightarrow} \scC_{\scY/X}\overset{\bR\wt{g}_!}{\longrightarrow} \scC_{X'/X},\]
where the right and left adjoints of $\bR\wt{g}_!$ are given by
$\bfL \wt{g}^*$ and $\wt{g}^!$, respectively. 
We note that $\la \scO_{\wh{C}}\ra_{\mathrm{ex}}=\scC_{X'/X}\cap \coh_c(X')\subset \scC_{X'/X}$ is the heart of a bounded t-structure (cf. \cite[Proposition 3.5.8]{VdB_NCCR}). We also have the heart $\scC_{\scY/X'}^0=\la\scS'\ra_{\mathrm{ex}}$ by the induction assumption.
Let $\wt\scC_{\scY/X}^0$ be the heart of the glued t-structure induced from the hearts $\scC_{\scY/X'}^0\subset \scC_{\scY/X'}$ and $\la \scO_{\wh{C}}\ra_{\mathrm{ex}}\subset \scC_{X'/X}$ in the sense of \cite[number 1.4]{Beilinson-Bernstein-Deligne}. 

\begin{lemma}\label{lem:torsionpair_for_tiltC}
There are torsion pairs
    \begin{align}
        \wt\scC_{\scY/X}^0&=\scC_{\scY/X'}^0*\la\scO_{\wh{C}}\ra_{\mathrm{ex}},\label{eq:gluingistorsionpair}\\
        \wt\scC_{\scY/X}^0&=\la\scO_{\wh{C}}\ra_{\mathrm{ex}}*\scO_{\wh{C}}^{\perp}.\label{eq:torsionpair_for_tiltC}
    \end{align}
\end{lemma}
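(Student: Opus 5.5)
The first torsion pair \eqref{eq:gluingistorsionpair} comes from the general structure of a glued heart, and the second from an adjunction argument showing that $\la\scO_{\wh{C}}\ra_{\mathrm{ex}}$ sits inside $\wt\scC_{\scY/X}^0$ as a torsion class.

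\textbf{The glued torsion pair.} Write $j:=\bfR\wt g_!\colon \scC_{\scY/X}\to\scC_{X'/X}$, with right adjoint $\bfL\wt g^*$ and left adjoint $\wt g^!$. By \cite[1.4]{Beilinson-Bernstein-Deligne}, an object $E\in\scC_{\scY/X}$ lies in $\wt\scC_{\scY/X}^0$ if and only if $j(E)\in\la\scO_{\wh{C}}\ra_{\mathrm{ex}}$ and the images of $E$ under the two adjoints of the inclusion $\scC_{\scY/X'}\to\scC_{\scY/X}$ lie in degrees $\le 0$ and $\ge 0$ respectively.

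For \eqref{eq:gluingistorsionpair}:
\begin{itemize}
\item $\scC_{\scY/X'}^0\subset\wt\scC^0_{\scY/X}$, since $j$ kills it and both adjoints restrict to the identity on $\scC_{\scY/X'}$.
\item $\bfL\wt g^*\scO_{\wh{C}}=\scO_{\wh{C}}$ lies in $\wt\scC^0_{\scY/X}$. Its image under one adjoint of the inclusion vanishes. Its image under the other is the cone of $\bfL\wt g^*j\to\id$ (or its dual), and I would check that this lands in degree $\ge0$ of $\scC^0_{\scY/X'}$. This is the standard fact that $j^{*}$ of the heart is right t-exact: $j_{*}$ of a heart object lies in degrees $\ge0$, and $j_!$ of one in degrees $\le0$.
\item Every $E\in\wt\scC^0_{\scY/X}$ sits in a triangle $\bfL\wt g^*\bfR\wt g_!E\to E\to E''$ (if $\bfL\wt g^*$ is the left adjoint; otherwise use the unit of the other adjoint) with $E''\in\scC_{\scY/X'}$. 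The heart condition forces $E''$ into $\scC^0_{\scY/X'}$, which gives the extension presentation.
\item The required Hom-vanishing is $\Hom(\scC^0_{\scY/X'},\scO_{\wh C})=0$. By adjunction this is $\Hom(\bfR\wt g_!(-),\scO_{\wh C})=0$, which holds because $\bfR\wt g_!$ kills $\scC_{\scY/X'}$.
\end{itemize}
Care is needed over which adjoint sits on which side, and I would orient the triangle to match the order $\scC^0_{\scY/X'}*\la\scO_{\wh C}\ra_{\mathrm{ex}}$. Since $\Hom(\scO_{\wh C},-)$ is computed by $\Hom(\scO_{\wh C},\bfR\wt g_*(-))$, we have $\Hom(\scO_{\wh C},F)=0$ for $F\in\scC_{\scY/X'}$ as well. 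So the sub/quotient roles can be arranged either way, and I would use the adjunction unit $E\to \bfR\wt g_*(\ldots)$ as the basis for the order.

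\textbf{The second torsion pair.} For \eqref{eq:torsionpair_for_tiltC} I would show that $\scT:=\la\scO_{\wh C}\ra_{\mathrm{ex}}$ is closed under quotients in $\wt\scC^0_{\scY/X}$, and that every $E$ has a maximal subobject in $\scT$; then $\scF=\scO_{\wh C}^{\perp}$ follows formally.

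The maximal subobject should be the image of the universal evaluation map $\scO_{\wh C}\otimes\Hom(\scO_{\wh C},E)\to E$. For this:
\begin{itemize}
\item Finiteness of $\Hom(\scO_{\wh C},E)$ comes from compact support.
\item The key computation is $\Hom(\scO_{\wh C},E)=\Hom(\scO_{\wh C},\bfR\wt g_*E)$ by adjunction ($\wt g^*\scO_C=\scO_{\wh C}$). Then use \textup{(ii)} for $\scC^0_{\scY/X'}$ and the fact that $\Ext^1(\scO_{\wh C},\scO_{\wh C})=0$, as $\scO_C(-1)$ is exceptional-type on the blowup $X'$ of $X$.
\item Quotient closure: if $\scO_{\wh C}^{\oplus k}\twoheadrightarrow Q$ in the heart, then $Q$ lies in $\scT$. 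Apply $\bfR\wt g_!$ to get a surjection in $\la\scO_C\ra_{\mathrm{ex}}$, whose objects are direct sums of $\scO_C$ since it is semisimple with one simple. Then compare with the $\scC^0_{\scY/X'}$-part, which must vanish because $\Hom(\scO_{\wh C},\scC^0_{\scY/X'})=0$.
\end{itemize}

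The main obstacle will be this maximality and quotient-closure step. In particular it requires showing that the cone of the evaluation map has no further $\scO_{\wh C}$ maps, i.e. that $\Ext^1(\scO_{\wh C},\scO_{\wh C})=0$ in the stacky setting; I would verify this via $\bfR\wt g_*\scO_{\wh C}=\scO_C$ and adjunction.
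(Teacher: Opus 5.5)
Your treatment of \eqref{eq:torsionpair_for_tiltC} is sound. Your argument for \eqref{eq:gluingistorsionpair}, however, has a gap: the two steps you attribute to the gluing formalism are not formal. Write $i^*,i^!$ for the left and right adjoints of the inclusion $\scC_{\scY/X'}\subset\scC_{\scY/X}$.

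First, $\scO_{\wh{C}}=\bfL\wt g^*\scO_C$ lies in the glued heart only if, besides the automatic $i^!\scO_{\wh{C}}=0$, one has $i^*\scO_{\wh{C}}\in\scC^{\le 0}_{\scY/X'}$. Left t-exactness of $\bfL\wt g^*$ (your ``standard fact'') gives only the trivial $\ge 0$ half. In a general recollement, the right adjoint of $j^*$ does not preserve hearts.

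Second, consider the triangle $i_*i^!E\to E\to\bfL\wt g^*\bfR\wt g_!E$, which comes from the unit of $\bfR\wt g_!\dashv\bfL\wt g^*$; $\bfR\wt g_*$ plays no role here. In it, $i^!E$ lies in $\scC^0_{\scY/X'}$ only if the cokernel (in the glued heart) of $E\to\scO_{\wh{C}}^{\oplus n}$ vanishes. That cokernel is an object of $\scC^0_{\scY/X'}$ and a quotient of $\scO_{\wh{C}}^{\oplus n}$.

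Both steps need the inductive hypothesis \textup{(ii)}. For $T\in\scC^0_{\scY/X'}$ we have $\bfR\wt g_*T\cong Q[-1]$ with $Q$ zero-dimensional. Hence $\Hom(\scO_{\wh{C}},T[-k])=\Hom(\scO_C,Q[-1-k])=0$ for all $k\ge 0$, i.e.\ $i^*\scO_{\wh{C}}\in\scC^{\le -1}_{\scY/X'}$. This is the input behind Toda's Lemma 3.1, which the paper cites. You invoke this vanishing in your second half, so the repair is at hand, but it has to enter here.

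Your side remark that $\Hom(\scO_{\wh{C}},F)=0$ for every $F\in\scC_{\scY/X'}$ is false. The category $\scC_{\scY/X'}$ is the kernel of $\bfR\wt g_!$, not of $\bfR\wt g_*$, and the two functors differ by $\omega_{\scY}$, which carries $\sgn$ along $R$. For $X'=Y$ we have $\scO_{p_0}\in\scC_{\scY/Y}$, yet $\Hom(\scO_{\wh{C_1}},\scO_{p_0})=\Hom(\scO_{C_1},\pi^{\bZ_2}_*\scO_{p_0})\neq 0$. This nonzero map produces the non-split object $\scO_{\wh{C_1}}(-p_0)$ of Lemma~\ref{lem:explicit_generator_C}. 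So the order in \eqref{eq:gluingistorsionpair} cannot be ``arranged either way''. Only the vanishing against $\scC^0_{\scY/X'}$ in non-positive degrees holds.

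For \eqref{eq:torsionpair_for_tiltC} you take a genuinely different route from the paper. The paper first proves that $\wt\scC^0_{\scY/X}$ is noetherian: in a chain of quotients, the $\scO_{\wh{C}}^{\oplus n_i}$-parts stabilize, and then the $\scC^0_{\scY/X'}$-parts stabilize by induction. It then runs Toda's Lemma 3.2, where a quotient-closed, extension-closed class in a noetherian heart is a torsion class. You instead build the torsion part directly as the image of $\scO_{\wh{C}}\otimes\Hom(\scO_{\wh{C}},E)\to E$. This uses $\dim\Hom<\infty$, $\Ext^1(\scO_{\wh{C}},\scO_{\wh{C}})=0$ and quotient-closure. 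Quotient-closure in turn follows from \eqref{eq:gluingistorsionpair}, since a surjection between sums of $\scO_{\wh{C}}$ splits, together with $\Hom(\scO_{\wh{C}},\scC^0_{\scY/X'})=0$. This is correct and avoids any chain condition. The paper's route, however, also yields noetherianity, which is needed anyway for property \textup{(iii)} of $\scC^0_{\scY/X}$ and later for the Harder--Narasimhan property.
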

\begin{proof}
        The proof of the former torsion pair is identical to that of \cite[Lemma 3.1]{Toda_surfaceMMP}.
    For the latter torsion pair, once the noetherian property of \(\wt\scC_{\scY/X}^0\) is established, the same argument as in \cite[Lemma 3.2]{Toda_surfaceMMP} applies. We show that $\wt\scC_{\scY/X}^0$ is noetherian. We note that $\scO_{\wh{C}} $ has no higher self-extensions since $\scO_C$ has no higher self-extensions. 
        Let $
E = E_1 \twoheadrightarrow E_2 \twoheadrightarrow \dots \twoheadrightarrow E_i \twoheadrightarrow \cdots
$
    be an infinite sequence in $\wt\scC_{\scY/X}^0$. By the torsion pair \pref{eq:gluingistorsionpair}, we have the following commutative diagram:
    \[
    \begin{tikzcd}
        0\arrow[r] & T_i\arrow[r]\arrow[d]& E_i\arrow[r]\arrow[d, two heads]&\scO_{\wh{C}}^{\oplus n_i}\arrow[r]\arrow[d]& 0\\
        0\arrow[r]& T_{i+1}\arrow[r] &E_{i+1}\arrow[r]& \scO_{\wh{C}}^{\oplus n_{i+1}}\arrow[r]& 0
    \end{tikzcd}
    \]
    where $T_i\in \scC_{\scY/X'}^0$. Then the morphism $\scO_{\wh{C}}^{\oplus n_i}\to \scO_{\wh{C}}^{\oplus n_{i+1}}$ is also surjective and is, moreover, an isomorphism for sufficiently large $i$. So, we only need to show that the infinite sequence $T_1\twoheadrightarrow\dots \twoheadrightarrow T_i\twoheadrightarrow T_{i+1}\twoheadrightarrow\cdots$ is eventually constant. This follows from the induction hypothesis that $\scC_{\scY/X'}^0$ is noetherian. 
\end{proof}
We define $\scC_{\scY/X}^0$ to be the heart $\scO_{\wh{C}}^{\perp}*\la\scO_{\wh{C}}\ra_{\mathrm{ex}}[-1]$ obtained by the HRS tilt associated to the torsion pair \pref{eq:torsionpair_for_tiltC}. We can verify that $\scC_{\scY/X}^0$ has the desired properties:
\begin{lemma}
\begin{enumerate}
    \item We have 
    \[\scC_{\scY/X}^0=\la \scO_{\wh{C}}[-1],S\mid S'\in \scS'\ra_{\mathrm{ex}} \]
    where $S$ is defined by the triangle
    \[S\to \scO_{\wh{C}}\otimes \Ext^1(\scO_{\wh{C}},S')\to S'[1].\]
    In particular, $\scC_{\scY/X}^0$ satisfies the condition \textup{(i)}.
    \item Set $\scS:= \{S\mid S'\in \scS'\}\cup\{\scO_{\wh{C}}[-1]\}$ as above. For any $S\in \scS$, $\bfR f_*S[1]$ is a 0-dimensional sheaf, i.e. $\scC_{\scY/X}^0$ satisfies the condition \textup{(ii)} for $\scS$. 
    \item $\scC_{\scY/X}^0$ is noetherian, i.e. $\scC_{\scY/X}^0$ satisfies the condition \textup{(iii)}. 
\end{enumerate}
\end{lemma}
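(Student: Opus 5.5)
We follow Toda's argument for \cite[Lemma 3.3, Proposition 3.4]{Toda_surfaceMMP}, adjusting only where the induction hypothesis is the weaker condition (i).

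\textbf{Part (1): generators.} We first check that each $S$ lies in $\scC_{\scY/X}^0$. Since $\scO_{\wh{C}}$ is rigid and $\Hom(\scO_{\wh{C}},S')=0$ (by \pref{eq:gluingistorsionpair}, $S'\in\scC^0_{\scY/X'}$ sits in the torsion part while $\scO_{\wh{C}}$ lies in the quotient $\scC_{X'/X}$), applying $\Hom(\scO_{\wh{C}},-)$ to the universal triangle gives $\Hom(\scO_{\wh{C}},S)=0$. So $S$ is an object of $\wt\scC^0_{\scY/X}$: it is an extension of $\scO_{\wh{C}}^{\oplus}$ by $S'$ in the glued heart, and it lies in $\scO_{\wh{C}}^\perp$. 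The objects $\scO_{\wh{C}}^\perp$ and $\scO_{\wh{C}}[-1]$ both lie in the tilt.

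For the reverse inclusion, take $E\in\scO_{\wh{C}}^\perp$. By \pref{eq:gluingistorsionpair} there is an exact sequence $0\to T\to E\to\scO_{\wh{C}}^{\oplus m}\to0$ with $T\in\scC^0_{\scY/X'}=\la\scS'\ra_{\mathrm{ex}}$. We then filter $T$ by objects of $\scS'$ and induct on the filtration length, replacing each $S'$ by the corresponding universal extension $S$. The quotients produced in this process are copies of $\scO_{\wh{C}}$, which become $\scO_{\wh{C}}[-1]$ inside the tilted heart.

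The cleanest way to make this precise is Toda's argument: $E$ is an extension in which the extension class $\Ext^1(\scO_{\wh{C}}^{\oplus m},T)$ factors through the universal one. Hence $E\in\la S\mid S'\ra * \la\scO_{\wh{C}}\ra$, and any extra copies of $\scO_{\wh{C}}$ are absorbed as $\scO_{\wh{C}}[-1]$ subobjects or quotients in the tilt.

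Finiteness of classes in $K^{\mathrm{num}}_c$ comes from two facts. First, $[S]=[S']+(\dim\Ext^1(\scO_{\wh{C}},S')-0)[\scO_{\wh{C}}]$. Second, $\dim\Ext^1(\scO_{\wh{C}},S')=-\chi(\scO_{\wh{C}},S')$, because $\Hom=0$ and $\Ext^2(\scO_{\wh{C}},S')=\Hom(S',\scO_{\wh{C}}\otimes\omega)^\vee=0$ for $t$-structure reasons. So this dimension depends only on $[S']$, which ranges over a finite set by (i) for $X'$.

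\textbf{Part (2): pushforwards.} Apply $\bfR\wt{g}_*$ and then $\bfR h_*$. For $\scO_{\wh{C}}[-1]$ we have $\bfR\wt{f}_*\scO_{\wh{C}}=\scO_x$ with $x=h(C)$, using the projection formula and $\bfR h_*\scO_C=\scO_x$, $\bfR\pi_*\scO=\scO$. For $S$, apply $\bfR\wt{f}_*$ to the triangle. By (ii) for $X'$, $\bfR\wt{g}_*S'[1]$ is a $0$-dimensional sheaf $Q$, so $\bfR\wt{f}_*S'[1]=\bfR h_*Q=h_*Q$ is also $0$-dimensional. The map $\scO_x^{\oplus}\to h_*Q$ sits in the long exact sequence, so $\bfR\wt{f}_*S[1]$ has cohomology in degrees $-1$ and $0$. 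The degree $-1$ part is the kernel of $\scO_x\otimes\Ext^1\to h_*Q$.

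Showing this map is injective is the delicate point. We would handle it through adjunction $\bfR\wt{g}_!\dashv\bfL\wt{g}^*$ and Toda's computation, reducing to injectivity of the induced map on $\Hom(\wt{f}^*\scO_x,-)$, which the universality of the extension guarantees. Since both (ii) and exactness of $\la\cdot\ra_{\mathrm{ex}}$ are closed under extensions, (ii) follows for every object of $\scC^0_{\scY/X}$.

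\textbf{Part (3): noetherianity.} This should be the main obstacle, because we lack Toda's finite generation and the heart is obtained by tilting. Our plan is to use the torsion pair $(\scO_{\wh{C}}^\perp,\la\scO_{\wh{C}}\ra_{\mathrm{ex}}[-1])$ in $\scC^0_{\scY/X}$. Given a chain of surjections $E_i\twoheadrightarrow E_{i+1}$, we take the torsion and free parts as in \pref{lem:torsionpair_for_tiltC}.

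The free parts $\scO_{\wh{C}}^{\oplus n_i}[-1]$ have bounded multiplicity. We control this by the length of the $0$-dimensional sheaf $\bfR\wt{f}_*E_i[1]$, which is additive by (2) and non-increasing along surjections. The torsion parts lie in $\scO_{\wh{C}}^\perp\subset\wt\scC^0_{\scY/X}$, which is noetherian. Standard arguments, such as \cite[Lemma 5.5.2]{Bayer-Craw-Zhang}-type criteria, then show that the chain stabilizes.

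Alternatively, we could pass through $\Im$ of a suitable discrete function. This would use the length of $\bfR\wt{f}_*(-)[1]$ together with noetherianity of $\scC^0_{\scY/X'}$ applied to $\wt g$-components.
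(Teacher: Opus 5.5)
Your route matches the paper's: Toda's Lemmas 3.3--3.4, then the torsion-pair argument of Lemma~\ref{lem:torsionpair_for_tiltC}. Part (2) is fine. The map you need to be injective is $h_*$ of the evaluation map $\scO_C\otimes\Hom(\scO_C,Q)\to Q$, which is the socle inclusion $\scO_x\otimes\Hom(\scO_x,h_*Q)\hookrightarrow h_*Q$.

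In (1), however, two steps are wrong. First, the torsion pair \eqref{eq:gluingistorsionpair} gives $\Hom(S',\scO_{\wh{C}})=0$, not $\Hom(\scO_{\wh{C}},S')=0$. The latter needs (ii) for $X'$: with $Q:=\bfR\wt{g}_*S'[1]$ a $0$-dimensional sheaf, adjunction gives $\Ext^i(\scO_{\wh{C}},S')\cong\Ext^{i-1}_{X'}(\scO_C,Q)$. Second, the same formula gives $\Ext^2(\scO_{\wh{C}},S')=\Ext^1_{X'}(\scO_C,Q)$, which is nonzero whenever $Q$ meets $C$. So your ``$t$-structure'' vanishing is false. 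Moreover, $\chi(\scO_{\wh{C}},S')=-\chi(\scO_C,Q)=0$ for every $S'$, so your identity $\dim\Ext^1=-\chi$ would make every universal extension trivial. That contradicts the passage from $\scO_{p_0}[-1]$ to $\scO_{\wh{C_1}}(-p_0)$ in Lemma~\ref{lem:explicit_generator_C}. Taking $n$ odd, $X'=Y$, $C=C_1$, one has $\Ext^1=\Ext^2=\bC$ there, while for $p\in R\setminus\{p_0\}$, which has the same class, both vanish. Hence $\dim\Ext^1(\scO_{\wh{C}},S')$ is not a function of $[S']$, and finiteness in (i) must come from a bound instead: $\dim\Ext^1(\scO_{\wh{C}},S')=\dim\Hom(\scO_C,Q)\le\mathrm{length}(Q)=-\chi(\scO_{\scY},S')$. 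Thus $[S]$ lies in the finite set $\{[S']+k[\scO_{\wh{C}}]\mid 0\le k\le -\chi(\scO_{\scY},S')\}$.

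The gap in (3) is more substantial. Once the free parts stabilize, say for $i\ge i_0$, the maps $T_i\to T_{i+1}$ of torsion parts are epimorphisms in $\scC^0_{\scY/X}$, but not necessarily in $\wt{\scC}^0_{\scY/X}$, because $\scO_{\wh{C}}^\perp$ is not closed under subobjects in the tilt. For example, with $n$ odd, $\scO_{\wh{C_1}}[-1]\to\scO_{p_0}[-1]\to\scO_{\wh{C_1}}(-p_0)$ is a short exact sequence in $\scC^0_{\scY/X_2}$. Yet in $\wt{\scC}^0_{\scY/X_2}$ the map $\scO_{p_0}[-1]\to\scO_{\wh{C_1}}(-p_0)$ is a monomorphism with cokernel $\scO_{\wh{C_1}}$. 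So noetherianity of $\wt{\scC}^0_{\scY/X}$ cannot be applied to the $T_i$ directly. The length of $\bfR\wt{f}_*(-)[1]$ does not repair this, since it vanishes on nonzero objects such as $\scO_{\wh{C_1}}(-p_0)$.

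The missing step runs as follows. The $\wt{\scC}^0$-kernels of the composites $T_{i_0}\to T_i$ form an ascending chain in $T_{i_0}$, so they stabilize, with common image $J$. The $\wt{\scC}^0$-cokernel of $T_{i_0}\to T_i$ is the degree-one $\wt{\scC}^0$-cohomology of its $\scC^0$-kernel, so it lies in $\la\scO_{\wh{C}}\ra_{\mathrm{ex}}$. Hence $T_i$ is an extension of $\scO_{\wh{C}}\otimes W_i$ by $J$. Since $\Hom(\scO_{\wh{C}},T_i)=0$, the extension class embeds $W_i$ into the finite-dimensional space $\Ext^1(\scO_{\wh{C}},J)$, and these images increase with $i$. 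Therefore $T_i\to T_{i+1}$ is an isomorphism for $i\gg 0$.
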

\begin{proof}
    \begin{enumerate}
        \item This follows by the same argument as in \cite[Lemma 3.3]{Toda_surfaceMMP}.
        \item The proof is identical to that of \cite[Lemma 3.4.]{Toda_surfaceMMP}.
        \item The noetherian property can be proved by repeating the argument of \pref{lem:torsionpair_for_tiltC}.
    \end{enumerate}
\end{proof}

\subsection{Generator of \texorpdfstring{$\scC_{\scY/X}^0$}{C(Y/X,0)}}\label{sc:generator_C}
In this subsection, we describe an explicit extension generator of $\scC_{\scY/X}^0$ for any smooth contraction $f: Y\to X$.
We have a unique factorization of $f$
\[Y=X_1 \overset{g_1}{\longrightarrow} X_2
\overset{g_2}{\longrightarrow}\cdots
\overset{g_N}{\longrightarrow} X_{N+1}=X\]
where each $g_i$ is a contraction of a $(-1)$-curve $C_i\subset X_i$ (see the last part of \S 2.3).  Write $f_i:=g_{i-1}\circ\dots\circ g_{1}:Y\to X_{i}$. We note that for odd $n$, $C_1=\pi(E_{\frac{n-1}{2}}\cup E_{\frac{n+1}{2}}), C_2=\pi(E_{\frac{n-3}{2}}\cup E_{\frac{n+3}{2}}),\cdots ,C_N=\pi(E_{\frac{n-2N+1}{2}}\cup E_{\frac{n+2N-1}{2}})$ up to strict transform. For even $n$, the description is similar.

Let $p_i:=g_i(C_i)\in X_{i+1}$ for $i=1,\dots,N$ and $p_0:=R \cap \wh{C_{1}}=R_1\cap(E_{\frac{n-1}{2}}\cup E_{\frac{n+1}{2}})\subset Y$ for odd $n$, $p_0^1:=R_1\cap C_1,p_0^2:=R_2\cap C_1$ for even $n$. For $i=1,\dots,N-1$, we consider
the exact sequence
\[0\to \ol{S_i}\to g_i^*\scO_{C_{i+1}}\to \scO_{C_i}\to 0\]
and set $S_i:=\bfL \wt{f_i}^*\ol{S_i}=\wt{f_i}^*\ol{S_i}$. The sheaf $\ol{S_i}$ is isomorphic to $\scO_{\ol{g_i^*C_{i+1}-C_i}}(-p_i)$.

\begin{lemma}\label{lem:explicit_generator_C}
For odd $n$, an extension generator of $\scC_{\scY/X}^0$ is given by 

    \begin{align}
        \scO_p[-1](p\in R\setminus \{p_0\}), \scO_{\wh{C_1}}(-p_0), S_1,\dots,S_{N-1}, \scO_{\wh{C_N}}[-1].
    \end{align}

For even $n$, an extension generator of $\scC_{\scY/X}^0$ is given by 
    \begin{align}
        \scO_p[-1](p\in R\setminus \{p_0^1,p_0^2\}), \scO_{\wh{C_1}}(-p_0^1),\scO_{\wh{C_1}}(-p_0^2), S_1,\dots,S_{N-1},\scO_{\wh{C_N}}[-1].
    \end{align}
\end{lemma}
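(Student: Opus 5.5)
We argue by induction along the factorization $Y=X_1\to X_2\to\cdots\to X_{N+1}=X$, using the inductive description of \pref{pr:constructionofC}(iv). At each stage we replace a generator $S'$ of $\scC_{\scY/X_i}^0$ by the cocone $S$ of the universal map $\scO_{\wh{C_i}}\otimes\Ext^1(\scO_{\wh{C_i}},S')\to S'[1]$, and we add $\scO_{\wh{C_i}}[-1]$. So the plan is to compute these cocones explicitly for the generators present at each step. Two facts make this tractable. First, for a generator $S'$ with $\Ext^1(\scO_{\wh{C_i}},S')=0$ the universal extension is trivial and $S=S'$. Second, by adjunction,
\[\Ext^1(\scO_{\wh{C_i}},S')=\Ext^1(\bfL\wt{f_i}^*\scO_{C_i},S')=\Ext^1(\scO_{C_i},\bfR\wt{f_i}_*S'),\]
and by condition (ii) the object $\bfR\wt{f_i}_*S'[1]$ is a $0$-dimensional sheaf. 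Hence this group is nonzero only when the support of $\bfR\wt{f_i}_*S'[1]$ meets $C_i$, and it is computed by a local Ext between a skyscraper and $\scO_{C_i}$ on a smooth surface.

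\emph{Step 1} (from $X_1=Y$ to $X_2$, $n$ odd). Start from $\scC^0_{\scY/Y}=\la\scO_p[-1]\mid p\in R\ra_{\mathrm{ex}}$. For $p\notin \wh{C_1}\cap R=\{p_0\}$ the Ext group vanishes, so $\scO_p[-1]$ survives unchanged. For $p=p_0$ we compute $\Ext^1(\scO_{\wh{C_1}},\scO_{p_0}[-1])=\Hom(\scO_{\wh{C_1}},\scO_{p_0})\cong\bC$. Here $\wh{C_1}$ is the total transform on the stack, the curve $\pi^{-1}(C_1)$, which meets $R$ transversally at the single point $p_0$. The universal morphism is then $\scO_{\wh{C_1}}\to\scO_{p_0}$, and its cocone is the kernel $\scO_{\wh{C_1}}(-p_0)$. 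This gives $\scO_p[-1]$ for $p\neq p_0$, $\scO_{\wh{C_1}}(-p_0)$ and $\scO_{\wh{C_1}}[-1]$. For even $n$ the same computation at the two points $p_0^1,p_0^2$ gives two kernels.

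\emph{Inductive step} (from $X_i$ to $X_{i+1}$, with $i\ge 1$). Assume the generators for $X_i$ are the objects $\scO_p[-1]$, the sheaf $\scO_{\wh{C_1}}(-p_0)$, the objects $S_1,\dots,S_{i-1}$, and $\scO_{\wh{C_i}}[-1]$. We compute each pushforward $\bfR\wt{f_{i+1}}_*$.

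The objects $\scO_p[-1]$ with $p\neq p_0$ push forward to points of $R$, which avoid $C_{i+1}$ away from $p_0$. The sheaves $\scO_{\wh{C_1}}(-p_0)$ and $S_j$ with $j<i$ have $\bfR\wt{f_{i+1}}_*(-)[1]$ supported at points already contracted. We check that they are $\Ext^1$-orthogonal to $\scO_{\wh{C_{i+1}}}$. This is consistent with the observation that these objects lie in the kernel of $\bfR\wt{f_{j+1}}_*$ up to shift, and we verify it via the projection formula $\bfR\wt{f_j}_*\bfL\wt{f_j}^*=\id$, which holds on the stack since $\pi_*\scO_\scY=\scO_Y$.

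The only nontrivial case is $S'=\scO_{\wh{C_i}}[-1]$. Using the projection formula again,
\[\Ext^1(\scO_{\wh{C_{i+1}}},\scO_{\wh{C_i}}[-1])=\Hom(g_i^*\scO_{C_{i+1}},\scO_{C_i})\cong\bC,\]
computed on $X_i$. The universal map is the natural surjection $g_i^*\scO_{C_{i+1}}\to\scO_{C_i}$, pulled back along $\wt{f_i}$. Its cocone is $\wt{f_i}^*\ol{S_i}=S_i$, where we use that $\bfL\wt{f_i}^*$ is exact on these sheaves because $\wt{f_i}$ is flat over the relevant locus, or at least has no higher Tor on these curve sheaves. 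Adding $\scO_{\wh{C_{i+1}}}[-1]$ then closes the induction.

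The main obstacle is the vanishing claims: that the earlier generators are $\Ext^1$-orthogonal to $\scO_{\wh{C_{i+1}}}$, and that the relevant Hom spaces are exactly one-dimensional. To settle these we would compute $\bfR\wt{f_{i+1}}_*$ of each generator explicitly. Using the description of $C_i$ as images of $E_{(n\pm(2i-1))/2}$, we would check that each such pushforward is (a shift of) a skyscraper at $p_i$ or at an earlier point, and then reduce to Ext computations between points and smooth curves on the surfaces $X_{i+1}$.
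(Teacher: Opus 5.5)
Your skeleton matches the paper's proof: induction along $Y=X_1\to\cdots\to X_{N+1}$, the universal extension of $\scO_{p_0}[-1]$ by $\scO_{\wh{C_1}}$ producing $\scO_{\wh{C_1}}(-p_0)$, and the universal extension of $\scO_{\wh{C_i}}[-1]$ by $\scO_{\wh{C_{i+1}}}$ producing $S_i$ from $\Hom(g_i^*\scO_{C_{i+1}},\scO_{C_i})\cong\bC$. There are two harmless slips. First, the list $\scO_p[-1],\scO_{\wh{C_1}}(-p_0),S_1,\dots,S_{i-1},\scO_{\wh{C_i}}[-1]$ generates $\scC^0_{\scY/X_{i+1}}$, not $\scC^0_{\scY/X_i}$, so your inductive step is really $X_{i+1}\to X_{i+2}$. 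Second, $\pi^{-1}(C_1)=E_{\frac{n-1}{2}}\cup E_{\frac{n+1}{2}}$ is nodal at $p_0$, so it does not meet $R$ transversally; this is harmless because $\Hom(\scO_{\wh{C_1}},\scO_{p_0})\cong\Hom(\scO_{C_1},\scO_{\pi(p_0)})\cong\bC$ by adjunction.

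The genuine gap is the step you flag yourself: the vanishing $\Ext^1(\scO_{\wh{C_{i+1}}},S')=0$ for $S'=\scO_{\wh{C_1}}(-p_0)$ and $S'=S_j$ with $j<i$.

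Your justification, that $\bfR\wt{f_{i+1}}_*S'[1]$ is supported at points already contracted, cannot work. In $X_{i+1}$ all of $C_1,\dots,C_i$ are contracted to the single point $p_i$, and $p_i\in C_{i+1}$. Suppose these pushforwards were nonzero skyscrapers at ``earlier points'', as your last paragraph predicts. Then $\Hom(\scO_{C_{i+1}},\bfR\wt{f_{i+1}}_*S'[1])\neq 0$, the universal extension would modify $S'$, and the stated list of generators would be wrong. What is needed, and what the paper supplies, is that these pushforwards vanish identically.

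For $\scO_{\wh{C_1}}(-p_0)$, apply $\bfR\wt{f_2}_*$ (with $\wt{f_2}=g_1\circ\pi$) to $0\to\scO_{\wh{C_1}}(-p_0)\to\scO_{\wh{C_1}}\to\scO_{p_0}\to 0$. The induced map $\scO_{p_1}\to\scO_{p_1}$ is an isomorphism, so $\bfR\wt{f_2}_*\scO_{\wh{C_1}}(-p_0)=0$. Since $\wt{f_{i+1}}$ factors through $\wt{f_2}$, adjunction gives $\Ext^1(\scO_{\wh{C_{i+1}}},\scO_{\wh{C_1}}(-p_0))=\Ext^1(\scO_{C_{i+1}},\bfR\wt{f_{i+1}}_*\scO_{\wh{C_1}}(-p_0))=0$.

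For $S_j$ (the part the paper takes from Toda's Proposition 3.7), the projection formula gives $\bfR\wt{f_{j+1}}_*S_j\cong\bfR g_{j*}\ol{S_j}\cong\scO_{C_{j+1}}(-p_j)\cong\scO_{\bP^1}(-1)$. This is nonzero, so your assertion that $S_j$ lies in the kernel of $\bfR\wt{f_{j+1}}_*$ is off by one. However, $\bfR\wt{f_{j+2}}_*S_j\cong\bfR g_{j+1*}\scO_{\bP^1}(-1)=0$, which suffices because $j+2\le i+1$.

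With these two vanishings, together with the disjointness argument for $\scO_p[-1]$, $p\neq p_0$ (which you have right), your argument closes.
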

\begin{proof}
    We prove the lemma by induction on $N$. In both cases, $\scC_{\scY/Y}^0$ is generated by $\scO_p[-1],p\in R$. We first take the universal extension in $\wt\scC_{\scY/X_2}^0$ of $\scO_p[-1]$ by $\scO_{\wh{C_1}}$. Then, $\scO_p[-1]$ is invariant if $p$ is away from the compact curves. Suppose that $p\in R\cap C_1$. In the odd case, the universal extension is given by
    \begin{align}
        0\to \scO_{p_0}[-1]\to \scO_{\wh{C_1}}(-p_0)\to \scO_{\wh{C_1}}\to 0.
    \end{align}
    In the even case, it is
    \begin{align}
        0\to \scO_{p_0^i}[-1]\to \scO_{\wh{C_1}}(-p_0^i)\to \scO_{\wh{C_1}}\to 0.
    \end{align} So, the first step is done. 
    
    Suppose that the lemma holds for $f_{N-1}:Y\to X_{N}$. We claim that taking the universal extension by $\scO_{\wh{C_N}}$ does not change the generators except for $S_{N-1},f_{N-1}^*\scO_{\wh{C_{N-1}}}[-1]$. Indeed, $\scO_p[-1]$ is remains unchanged if $p$ is away from the compact curves, and we claim that \[\Ext^1(\scO_{\wh{C_N}},\scO_{\wh{C_1}}(-p_0))=0.\] We have an exact triangle in $D_c(X_2)$:
    \[\bfR \wt{f_{1}}_*\scO_{\wh{C_1}}(-p_0)\to \bfR \wt{f_{1}}_*\scO_{\wh{C_1}}\to \bfR \wt{f_{1}}_*\scO_{p_0}.\]
    Since the latter morphism $\scO_{p_0}\cong \bfR \wt{f_{1}}_*\scO_{\wh{C_1}}\to \bfR \wt{f_{1}}_*\scO_{p_0}\cong \scO_{p_0}$ is an isomorphism, $\bfR \wt{f_{1}}_*\scO_{\wh{C_1}}(-p_0)=0$ follows. By this vanishing and the adjointness, we get the desired vanishing of $\Ext^1$. Since the proof for $\scO_{\wh{C_1}}(-p_0^i)$ is similar, we omit the details. The rest is verified similarly to the proof of  \cite[Proposition 3.7]{Toda_surfaceMMP}.
\end{proof}

\subsection{Construction of central charge}
We define 
\[\NS_{\mathrm{orb}}(\scY)_\bR:=\NS(Y)_\bR\oplus K^{\mathrm{num}}(R)_\bR\]
as the orbifold analogue of $\NS(X)_\bR$. 
First, we explain the motivation for the definition.
By the semiorthogonal decomposition \pref{eq:triangle_pi_*}, we have an isomorphism
\begin{align}
    \psi:K^{\mathrm{num}}_c(\scY)&\stackrel{\simeq}{\longrightarrow}K^{\mathrm{num}}_c(Y)\oplus K_c^{\mathrm{num}}(R),\label{eq:SOD_Grothendieck_grp}\\
    E&\mapsto (\pi_*^{\bZ_2}E,(\bfL i^*(\sgn\otimes E))^{\bZ_2}).\notag
\end{align}
Indeed, the semiorthogonal decomposition induces a decomposition of Grothendieck groups, which descends to a surjection on numerical Grothendieck groups. Since both sides are free of the same rank (deduced from the finer semiorthogonal decomposition \cite[Example 3.5]{IshiiNimurareflection}, for example), it is an isomorphism. Via the Euler pairing, this allows us to regard $\NS_{\mathrm{orb}}(\scY)_\bR$ as the subspace of $K^{\mathrm{num}}(Y)_\bR\oplus K^{\mathrm{num}}(R)_\bR\cong K^{\mathrm{num}}(\scY)_\bR$ with $\chi(E,\pi^*\scO_y)=0$. This is analogous to \pref{eq:NS_inside_K}.
\begin{remark}
    A similar definition also appears in \cite[Definition 8.1]{Darda-Yasuda_Manin1} and $\NS_{\orb}(\scY)_\bR$ can be naturally viewed as a subset of the Chen-Ruan cohomology (orbifold cohomology) $H^*_{\mathrm{CR}}(\scY,\bR)$. It is known that the Chen-Ruan cohomology group can be regarded as the direct sum of the ordinary cohomology groups of connected components of the inertia stack. In our case, we have \[H^*_{\mathrm{CR}}(\scY,\bR)\cong H^*(Y,\bR)\oplus H^*(R,\bR).\]
    where the second summand corresponds to the twisted sectors of $\scY$. In this paper, we only use the ordinary intersection theory and will not discuss any intersection products on  $\mathrm{H}^*_{\mathrm{CR}}(\scY,\bR)$. 
\end{remark}

Next, we define a homomorphism $Z_{\omega,B}^{t,s}\in K^{\mathrm{num}}_c(\scY)^\vee_\bC$ for $((\omega,t),(B,s))\in \NS_{\orb}(\scY)_\bC:=\NS_{\orb}(\scY)_{\bR}\otimes \bC$ as follows. Since $K^{\mathrm{num}}_c(R)\cong \bZ^{\pi_0(R)}$ is generated by  skyscraper sheaves $\scO_{p_i}$ on each connected component ${R_i}$ of $R$, we define $Z^{t,s}\in K_c^{\mathrm{num}}(R)^{\vee}_\bC$ for $\{s_i+\sqrt{-1}t_i\}_i\in \bC^{\pi_0(R)}\cong K^{\mathrm{num}}(R)_\bC$ as follows:
\begin{align*}
    Z^{t,s}:K_c^{\mathrm{num}}(R)&\to \bC\\
    \sum_{[R_i]\in \pi_0(R)}a_i[\scO_{p_i}]&\mapsto \sum_{i}a_i(-\frac{1}{2}+s_i+\sqrt{-1}t_i).
\end{align*}
Using the above homomorphisms, for $((\omega,t),(B,s))\in \NS_{\orb}(\scY)_\bC$, we define 
\begin{align*}
    Z_{\omega,B}^{t,s}:=(Z_{\omega,B}+Z^{t,s})\circ \psi\in K^{\mathrm{num}}_c(\scY)^\vee_\bC.
\end{align*}
For example, for $E\in D_c(Y)$ and  a point $p_i\in R_i$, we can calculate the values as follows:
\begin{align}
    Z_{\omega,B}^{t,s}(\pi^*E)&=-\ch_2^B(E)+\sqrt{-1} \ch_1(E)\cdot \omega.\notag\\
    Z_{\omega,B}^{t,s}(\sgn\otimes \scO_{p_i})&=-\frac{1}{2}+s_i+\sqrt{-1}t_i,\label{eq:Z(sgn_otimes_skyscraper)}\\
    Z_{\omega,B}^{t,s}(\scO_{p_i})&=Z_{\omega,B}^{t,s}(\pi^*\scO_{\pi(p_i)})-Z_{\omega,B}^{t,s}(\sgn\otimes \scO_{p_i})\notag\\
    &=-\frac{1}{2}-s_i-\sqrt{-1}t_i.\label{eq:Z(skyscraper)}
\end{align}
Here, the third equality is derived from the following exact sequence in $\coh_c(\scY)$: 
\[0\to \sgn\otimes \scO_{p_i}\to \pi^*\scO_{\pi(p_i)}\to \scO_{p_i}\to 0.\]
We justify the normalization term $-1/2$ in the definition of $Z^{t,s}$ in the following sense:
\begin{lemma}\label{lem:Z_2-inv_central_charge}
    $(s,t)=(0,0)$ if and only if $Z_{\omega,B}^{t,s}$ is $G/H$-invariant, i.e. $Z_{\omega,B}^{t,s}=Z_{\omega,B}^{t,s}\circ (\sgn\otimes -)$.
\end{lemma}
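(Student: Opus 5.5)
Since $Z_{\omega,B}^{t,s}$ and $Z_{\omega,B}^{t,s}\circ(\sgn\otimes -)$ are both additive on $K^{\mathrm{num}}_c(\scY)$, it suffices to compare them on a set of generators. The plan is to use the isomorphism $\psi$ of \pref{eq:SOD_Grothendieck_grp}. By the semiorthogonal decomposition \pref{eq:triangle_pi_*}, $K^{\mathrm{num}}_c(\scY)_\bQ$ is spanned by classes $[\pi^*E]$ for $E\in D_c(Y)$ together with the classes $[\sgn\otimes \scO_{p_i}]$, one point $p_i$ on each connected component $R_i$. Indeed, the $R$-component of $\psi$ is generated by skyscrapers, and $(\bfL i^*(\sgn\otimes \sgn\otimes\scO_{p_i}))^{\bZ_2}$ recovers $\scO_{p_i}$ up to a class that is already in the pulled-back part; alternatively we can argue directly with the triangle \pref{eq:triangle_pi_*}.

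We then compute the twist on each type of generator.
\begin{itemize}
\item For pulled-back classes we have $\sgn\otimes \pi^*E$. Its image under $\psi$ has first component $\pi_*^{\bZ_2}(\sgn\otimes\pi^*E)$, which is the anti-invariant part. So it is better not to compute it that way. Instead, we check the value of $Z_{\omega,B}^{t,s}(\sgn\otimes \pi^*E)-Z_{\omega,B}^{t,s}(\pi^*E)$ using a concrete generating set of $K^{\mathrm{num}}_c(Y)$: the point class $\scO_y$ and the curve classes $\scO_C$. For a point $y\notin\pi(R)$, $\pi^*\scO_y$ is the regular $\bZ_2$-orbit sheaf, so $\sgn\otimes\pi^*\scO_y\cong\pi^*\scO_y$. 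Since the class of $\pi^*\scO_y$ is independent of $y$, this difference vanishes for all $y$. For a compact curve $C\subset Y$ (a component of the exceptional locus), $\pi^{-1}(C)$ meets $R$ transversally in finitely many points, or $C$ is disjoint from $R$. We would use the exact sequence
\[0\to \pi^*\scO_C\otimes \scO(-\pi^{-1}C\cap R\text{-part})\to\cdots\]
More cleanly, we use the relation $[\sgn\otimes\scO_{\pi^{-1}C}]-[\scO_{\pi^{-1}C}]=\sum_{q\in \pi^{-1}C\cap R}\pm([\scO_q]-[\sgn\otimes\scO_q])$. This can be read off from the fact that $\scO_{\wh C}\otimes\scO_{Y_H}(-R)\cong\sgn\otimes \scO_{\wh C}$ equivariantly, combined with the ideal sheaf sequence restricted to the intersection points.
\item For $\sgn\otimes\scO_{p_i}$, the twist is $\scO_{p_i}$. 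By \pref{eq:Z(sgn_otimes_skyscraper)} and \pref{eq:Z(skyscraper)}, the difference is
\[Z_{\omega,B}^{t,s}(\scO_{p_i})-Z_{\omega,B}^{t,s}(\sgn\otimes\scO_{p_i})=-2(s_i+\sqrt{-1}t_i).\]
\end{itemize}

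Consequently, invariance on the twisted-sector generators holds if and only if $s_i=t_i=0$ for every $i$, which proves the "only if" direction immediately. For the "if" direction, assume $(s,t)=(0,0)$. Then $Z(\scO_q)=Z(\sgn\otimes\scO_q)=-\tfrac12$ for all $q\in R$, so the curve relation above shows that the difference vanishes on the curve classes too. Hence the two homomorphisms agree on a spanning set and are equal.

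The main obstacle is the bookkeeping for the pulled-back curve classes: we must show that $[\sgn\otimes\pi^*\scO_C]-[\pi^*\scO_C]$ lies in the span of the differences $[\scO_q]-[\sgn\otimes\scO_q]$, $q\in R$. I would attempt this first via the equivariant isomorphism $\scO_{Y_H}(-R)\cong\sgn\otimes\scO_{Y_H}$ near $R$, using that the fixed divisor $R$ is cut out by an anti-invariant local equation. This gives $\sgn\otimes \scO_{\wh C}\cong \scO_{\wh C}(-R)$, whose quotient by $\scO_{\wh C}$ is supported on the points of $\wh C\cap R$ with the appropriate characters.
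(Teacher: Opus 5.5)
You have the reduction right. Both functionals are additive, and $K^{\mathrm{num}}_c(\scY)_\bQ$ is spanned by $[\pi^*\scO_y]$, the classes $[\scO_{\wh C}]=[\pi^*\scO_C]$ of compact curves, and the $[\sgn\otimes\scO_{p_i}]$. The point class is handled correctly, and the computation on $\sgn\otimes\scO_{p_i}$ gives ``invariant $\Rightarrow(s,t)=(0,0)$'' exactly as in the paper.

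\textbf{The gap.} It lies in the curve step you flag as the main obstacle: the equivariant isomorphism $\sgn\otimes\scO_{\wh C}\cong\scO_{\wh C}(-R)$ you base it on is false. The local isomorphism $\scO(-R)\cong\sgn\otimes\scO$ near $R$ does not globalize. Already non-equivariantly the Euler characteristics differ by $R\cdot\wh C=\pi(R)\cdot C$, which is $2$ for $C=C_1$. Worse, taken literally, tensoring $0\to\scO(-R)\to\scO\to\scO_R\to 0$ with $\scO_{\wh C}$ would give $[\scO_{\wh C}]-[\sgn\otimes\scO_{\wh C}]=[\scO_{\wh C}\otimes\scO_R]$. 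That is a sum of skyscrapers with \emph{trivial} character. It is not in the span of the differences $[\scO_q]-[\sgn\otimes\scO_q]$, and its $Z^{0,0}_{\omega,B}$-value is $-\tfrac12\,\pi(R)\cdot C\neq 0$. So as written the argument would refute the invariance rather than prove it; the ``$\pm$'' and ``appropriate characters'' hide a missing term.

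\textbf{The repair.} You need the global form of the local statement. The anti-invariant part of $\pi_*\scO_{Y_H}$ is a line bundle $L^{-1}=\pi_*^{\bZ_2}(\sgn\otimes\scO_\scY)$ with $L^{\otimes 2}\cong\scO_Y(\pi(R))$. The counit $\pi^*L^{-1}\to\sgn\otimes\scO_\scY$ is injective with image $\sgn\otimes\scO_\scY(-R)$, because anti-invariant functions vanish to first order exactly along $R$. Hence $\scO_{\wh C}(-R)\cong\sgn\otimes\pi^*(\scO_C\otimes L^{-1})$. Since no component of $\wh C$ lies in $R$, and using $[\pi^*\scO_y]=[\scO_{q_j}]+[\sgn\otimes\scO_{q_j}]$ for $q_j\in R_j$, you get
\[
[\scO_{\wh C}]-[\sgn\otimes\scO_{\wh C}]=[\scO_{\wh C}\otimes\scO_R]-(L\cdot C)\,[\pi^*\scO_y]=\tfrac12\sum_j\bigl(\pi(R_j)\cdot C\bigr)\bigl([\scO_{q_j}]-[\sgn\otimes\scO_{q_j}]\bigr).
\]
With this your argument closes.

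The route you dismissed is even quicker. Write $\ol{i}\colon\pi(R)\cong R\hookrightarrow Y$. By the projection formula, $\psi(\sgn\otimes\pi^*\scO_C)=\bigl([\scO_C\otimes L^{-1}],[\bfL\ol{i}^{*}\scO_C]\bigr)$, while $\psi(\pi^*\scO_C)=([\scO_C],0)$. Therefore
\[
Z^{t,s}_{\omega,B}(\sgn\otimes\pi^*\scO_C)-Z^{t,s}_{\omega,B}(\pi^*\scO_C)=L\cdot C+\sum_j\bigl(\pi(R_j)\cdot C\bigr)\bigl(-\tfrac12+s_j+\sqrt{-1}t_j\bigr)=\sum_j\bigl(\pi(R_j)\cdot C\bigr)\bigl(s_j+\sqrt{-1}t_j\bigr),
\]
which vanishes at $(s,t)=(0,0)$.

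\textbf{Comparison with the paper.} Once repaired, your generator check is a different proof from the paper's. The paper instead writes $Z^{0,0}_{\omega,B}$ as the average of $Z_{\omega,B}\circ\pi_*^{\bZ_2}$ and its $\sgn$-twist, via \eqref{eq:class_identity}. That identity makes a closely related slip: it treats $\pi_*^{\bZ_2}(\sgn\otimes\pi^*F)$ as $F\otimes\scO_Y(-\pi(R))$ instead of $F\otimes L^{-1}$. For $E=\pi^*\scO_{C_1}$ its two sides differ by $[\scO_y]$. So the explicit check above is the safer route.
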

\begin{proof}
    The “only if” direction follows immediately from \pref{eq:Z(skyscraper)} and \pref{eq:Z(sgn_otimes_skyscraper)}. Assume that $(s,t)=(0,0)$. It is enough to show that 
    \[Z_{\omega,B}^{0,0}=\frac{1}{2}\left(Z_{\omega,B}(\pi_*^{\bZ_2}(-))+Z_{\omega,B}(\pi_*^{\bZ_2}(\sgn\otimes-))\right)\] First, we claim that 
    \begin{align}\label{eq:class_identity}
        [\pi_*^{\bZ_2}E]=[\pi_*^{\bZ_2}(\sgn\otimes E)]+[i_*(\bfL i^*E)^{\bZ_2}] 
    \end{align}
    holds in $K^{\mathrm{num}}_c(Y)$ for any $E\in K^{\mathrm{num}}_c(\scY)$. Indeed, it is trivial on $K^{\mathrm{num}}_c(R)$, and on $K^{\mathrm{num}}_c(Y)$, it follows from the exact triangle
    \[F\otimes \scO_Y(-\pi(R))\to F\to \ol i_*\bfL \ol i^*F,\]
    where $\ol i: \pi(R)\to Y$ is the inclusion and $F\in K^{\mathrm{num}}_c(Y)$ is an arbitrary object.
    Then, for any $E\in D_c(\scY)$,  the following equality shows $\bZ_2$-invariance:
    \begin{align*}
        \frac{1}{2}\left(Z_{\omega,B}(\pi_*^{\bZ_2}E)+Z_{\omega,B}(\pi_*^{\bZ_2}(\sgn\otimes E))\right)
        &=Z_{\omega,B}(\pi_*^{\bZ_2}E)+ \frac{1}{2}Z_{\omega,B}(i_*(\bfL i^*(\sgn\otimes E))^{\bZ_2})\\
        &=Z_{\omega,B}(\pi_*^{\bZ_2}E)+Z^{0,0}(\bfL i^*(\sgn\otimes E)^{\bZ_2})\\
        &=Z_{\omega,B}^{0,0}(E).
    \end{align*}
    Here, we applied \pref{eq:class_identity} for $\sgn\otimes E$ in the first equality, and in the second equality we used the equality on $K^{\mathrm{num}}_c(R)$
    \[\frac{1}{2}Z_{\omega,B}(i_*-)=Z^{0,0}.\]
    
\end{proof}
\begin{lemma}\label{lem:char'n_of_image_NS_orb}
    The affine transformation over $\bR$
\[Z_{*,*}^{*,*}:\NS_{\orb}(\scY)_\bC\to K_{c}^{\mathrm{num}}(\scY)_\bC^\vee, (\omega,t,B,s)\mapsto Z_{\omega,B}^{t,s}\]
is an embedding whose image is precisely the space of central charges $Z$ satisfying $Z(\pi^*\scO_y)=-1$ for $y\in Y$.
\end{lemma}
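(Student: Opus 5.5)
The plan is to reduce the statement to \pref{lem:norm_central_charge} for $Y$ together with an elementary statement for $R$, by transporting everything along the isomorphism $\psi$ of \pref{eq:SOD_Grothendieck_grp}. Since $\psi$ is an isomorphism of lattices, precomposition with $\psi$ gives a linear isomorphism
\[
\psi^\vee: K^{\mathrm{num}}_c(Y)^\vee_\bC\oplus K^{\mathrm{num}}_c(R)^\vee_\bC\stackrel{\simeq}{\longrightarrow}K^{\mathrm{num}}_c(\scY)^\vee_\bC .
\]
By definition, $Z^{t,s}_{\omega,B}=\psi^\vee(Z_{\omega,B},Z^{t,s})$. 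The map $Z^{*,*}_{*,*}$ is therefore the composite of $\psi^\vee$ with the product of the two affine maps $(\omega,B)\mapsto Z_{\omega,B}$ and $(t,s)\mapsto Z^{t,s}$. Injectivity and the identification of the image can then be checked factor by factor.

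For the first factor, \pref{lem:norm_central_charge} applied to $Y$ (which is projective birational over the affine surface $\bC^2/G$) shows that $(\omega,B)\mapsto Z_{\omega,B}$ is an embedding whose image is $\{W\mid W(\scO_y)=-1\}$. For the second factor, $K_c^{\mathrm{num}}(R)\cong\bZ^{\pi_0(R)}$ has basis $[\scO_{p_i}]$, and $Z^{t,s}$ sends $[\scO_{p_i}]$ to $-\tfrac12+s_i+\sqrt{-1}t_i$. Hence $(t,s)\mapsto Z^{t,s}$ is a bijective affine map $\bC^{\pi_0(R)}\to K_c^{\mathrm{num}}(R)^\vee_\bC$, being a translate of the evident linear isomorphism. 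Consequently $Z^{*,*}_{*,*}$ is an embedding, and its image is
\[
\psi^\vee\bigl(\{W\mid W(\scO_y)=-1\}\times K_c^{\mathrm{num}}(R)^\vee_\bC\bigr).
\]

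It remains to identify this image with $\{Z\mid Z(\pi^*\scO_y)=-1\}$, and the key computation is $\psi([\pi^*\scO_y])$. Take $y\notin\pi(R)$; this is harmless because $[\pi^*\scO_y]$ is independent of $y$ in $K_c^{\mathrm{num}}(\scY)$, as the class of a point is constant on the connected space $Y$. For such $y$ we have $\pi^{\bZ_2}_*\pi^*\scO_y\cong\scO_y$ by the projection formula, and $\bfL i^*(\sgn\otimes\pi^*\scO_y)=0$ since the support misses $R$. Thus $\psi([\pi^*\scO_y])=([\scO_y],0)$, and so $Z(\pi^*\scO_y)=W(\scO_y)$ whenever $Z=\psi^\vee(W,W')$. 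The condition $Z(\pi^*\scO_y)=-1$ therefore constrains only the $Y$-component, and exactly by $W(\scO_y)=-1$, while $W'$ is free. This is precisely the image described above.

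The only step requiring care is this last computation, and it is short. The one subtlety is confirming that the identity $\pi^{\bZ_2}_*\pi^*=\id$ away from the branch locus, together with the vanishing of the $R$-component, is compatible with the normalization of $\psi$. As a consistency check, \pref{eq:Z(skyscraper)} and \pref{eq:Z(sgn_otimes_skyscraper)} sum to $-1$ at points $p_i\in R$, in agreement with the value $Z(\pi^*\scO_y)=-1$.
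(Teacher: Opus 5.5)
Your proof is correct, but it decomposes the problem differently from the paper.

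The paper mirrors Lemma~\ref{lem:norm_central_charge}. It splits $Z^{*,*}_{*,*}$ into an imaginary part in $(\omega,t)$ and a real part in $(B,s)$. It views $\NS_{\orb}(\scY)_\bR$ as the hyperplane of $K^{\mathrm{num}}(\scY)_\bR\cong K^{\mathrm{num}}(Y)_\bR\oplus K^{\mathrm{num}}(R)_\bR$ orthogonal to $[\pi^*\scO_y]$ under the Euler pairing. It then uses nondegeneracy to see that each part maps isomorphically onto the affine hyperplane $W(\pi^*\scO_y)=0$, resp.\ $W(\pi^*\scO_y)=-1$.

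You instead split along $\psi$ into a $Y$-factor and an $R$-factor. You quote Lemma~\ref{lem:norm_central_charge} for $Y$ as a black box and note that the $R$-factor is a translate of a linear isomorphism. You then make explicit a fact the paper's one-line proof leaves implicit: $\psi([\pi^*\scO_y])=([\scO_y],0)$. Hence the normalization constrains only the $Y$-component and leaves the twisted-sector component free.

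Your route is more self-contained and actually verifies the identification of the image. It also confirms, incidentally, the paper's remark that $\NS_{\orb}(\scY)_\bR$ is that orthogonal hyperplane. The paper's route is uniform with the surface case and needs no evaluation of $\psi$ on a particular class, but it leaves that identification to the reader.

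Your choice of $y\notin\pi(R)$ is harmless. By adjunction, $\chi(\pi^*\scO_y,F)=\chi(\scO_y,\pi^{\bZ_2}_*F)$ is the rank of $\pi^{\bZ_2}_*F$ on the connected smooth surface $Y$, hence independent of $y$. In fact $\pi^{\bZ_2}_*\pi^*\scO_y\cong\scO_y$ holds for every $y$.
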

\begin{proof}
    Similarly to \pref{lem:norm_central_charge}, $Z_{*,*}^{*,*}$ is the direct sum of the two affine transformations
    \begin{align}
     &\Im Z_{*,0}^{*,0}:\NS_{\orb}(\scY)_\bR\to K_{c}^{\mathrm{num}}(\scY)_{\bR}^\vee,\,\,\,\, \label{eq:im_part_orbZ**}\\ 
     &\Re Z_{0,*}^{0,*}:\NS_{\orb}(\scY)_\bR\to K_{c}^{\mathrm{num}}(\scY)_{\bR}^\vee,\label{eq:re_part_orbZ**}
 \end{align}
    whose image is the subspace of $W\in K_{c}^{\mathrm{num}}(\scY)_{\bR}^\vee$ with $W(\pi^*\scO_y)=0,-1$ respectively for any $y\in Y$.
\end{proof}
By \pref{lem:char'n_of_image_NS_orb}, we have the following cartesian diagram
\begin{equation}\label{eq:cartesian_Stab_n_orb}
    \begin{tikzcd}
        \Stab_{\mathrm{n}}(\scY)\arrow[r,hook]\arrow[d,"\Pi_{\mathrm{n}}"]&\Stab(\scY)\arrow[d,"\Pi"]\\
    \NS_{\orb}(\scY)_\bC\arrow[r,hook,"Z_{*,*}^{*,*}"]&K_c^{\mathrm{num}}(\scY)^{\vee}_{\bC},
    \end{tikzcd}
\end{equation}
where $\Stab_{\mathrm{n}}(\scY)$ is defined to be the subset of $(Z,\scA)\in \Stab(\scY)$ with $Z(\pi^*\scO_y)=-1$ for $y\in Y$. Similarly to the case of ordinary surfaces, we call such a stability condition \emph{normalized}, and likewise its central charge.

Let $f: Y\to X$ be a smooth contraction. Define
\[\NS_{\wt{f}}(\scY)_\bR:=(f^*\NS(X)_\bR)^{\perp}\oplus \bR^{\pi_0(R)}\subset \NS_{\orb}(\scY)_\bR,\]
where $(f^*\NS(X)_\bR)^{\perp}$ is the orthogonal space of $f^*\NS(X)_\bR$ in $\NS(Y)_\bR$ with respect to the intersection product.
 Then, $\NS_{\wt{f}}(\scY)_\bR$ is generated by the irreducible components of the exceptional locus of $f$ and the components of the twisted sectors. Formally, we regard $\bR^{\pi_0(R)}$ as $\NS_{\pi}(\scY)$ for the coarse moduli map $\pi:\scY\to Y$. For any $\omega\in \ol{\Amp}(X)$, we set
\[C_{\wt{f}}(X):=\{(D,t)\in \NS_{\wt{f}}(\scY)_\bR\mid \Im Z_{f^*\omega+D,0}^{t,0}(F)>0 \,\,\text{for all $F\in \scC_{\scY/X}^0$}\}.\]
\begin{lemma}\label{lem:cone_description}
    $C_{\wt{f}}(X)$ is a convex polyhedral cone that is independent of the choice of $\omega$. In particular, it is a connected, nonempty open subset of $\NS_{\wt{f}}(\scY)_\bR$.
\end{lemma}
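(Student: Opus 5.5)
The plan is to use property (i) of \pref{pr:constructionofC} to reduce the positivity condition over all of $\scC_{\scY/X}^0$ to finitely many linear inequalities. Write $\scC_{\scY/X}^0=\la\scS\ra_{\mathrm{ex}}$, where the classes of $\scS$ in $K_c^{\mathrm{num}}(\scY)$ form a finite set $\{v_1,\dots,v_m\}$. Every object of $\scC_{\scY/X}^0$ is an iterated extension of objects of $\scS$, so its class is a nonnegative integer combination of the $v_j$, with at least one positive coefficient if the object is nonzero. Since $\Im Z$ is additive, the condition ``$\Im Z_{f^*\omega+D,0}^{t,0}(F)>0$ for all nonzero $F$'' is therefore equivalent to $\Im Z_{f^*\omega+D,0}^{t,0}(v_j)>0$ for $j=1,\dots,m$. (I read the statement as quantifying over nonzero $F$.)

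Next I will show that these finitely many expressions are independent of $\omega$ and linear in $(D,t)$. By definition,
\[\Im Z_{f^*\omega+D,0}^{t,0}(F)=\ch_1(\pi_*^{\bZ_2}F)\cdot(f^*\omega+D)+\textstyle\sum_i a_i t_i,\]
where $\psi(F)$ has $R$-component $\sum a_i[\scO_{p_i}]$. The term $\ch_1(\pi_*^{\bZ_2}F)\cdot f^*\omega$ equals $\ch_1(\bfR f_*\pi_*^{\bZ_2}F)\cdot\omega$ by the projection formula. By property (ii), $\bfR\wt f_*F$ is a shifted $0$-dimensional sheaf, so its $\ch_1$ vanishes and this term drops out. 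Here I have to be slightly careful, because $\wt f_*=f_*\pi_*^{\bZ_2}$ and $\ch_1$ is compatible with $\bfR f_*$ for a birational morphism of smooth surfaces via Grothendieck--Riemann--Roch. The remaining expression $\ch_1(\pi_*^{\bZ_2}F)\cdot D+\sum a_it_i$ is a linear functional $\ell_F$ on $\NS_{\wt f}(\scY)_\bR$ that does not involve $\omega$. Hence
\[C_{\wt f}(X)=\{(D,t)\mid \ell_{v_j}(D,t)>0,\ j=1,\dots,m\},\]
which is an open convex polyhedral cone, automatically connected and open.

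The main obstacle is nonemptiness. I will take the explicit generators from \pref{lem:explicit_generator_C} and exhibit a point directly, or else argue by induction along $Y=X_1\to\cdots\to X_{N+1}=X$ as Toda does.

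For the explicit route, the generators are:
\begin{itemize}
\item $\scO_p[-1]$ for $p\in R$, which gives $\Im Z=t_i$ by \pref{eq:Z(skyscraper)}, so we need $t_i>0$;
\item $\scO_{\wh{C_N}}[-1]$, which gives $-\wh{C_N}\cdot D>0$;
\item $S_i$, which gives $(\wt f_i^*\ol{S_i})$-type intersections with $D$ plus twisted contributions;
\item $\scO_{\wh{C_1}}(-p_0)$, which gives $\wh C_1\cdot D-t$ (up to sign conventions).
\end{itemize}

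I would choose $D=-\sum_k c_k \wh{C_k}$ (in total-transform form) with rapidly varying coefficients $c_1\ll c_2\ll\cdots\ll c_N$ or the reverse, mimicking Toda's choice where $D$ is anti-ample relative to $f$. I would then pick $t_i>0$ small compared with the intersection numbers, so that the $t$-contributions do not spoil the inequalities for $\scO_{\wh{C_1}}(-p_0)$. Alternatively, the induction would go as follows. Given a point $(D',t')$ in $C_{\wt g}(X')$, set $D=D'-\varepsilon\,\wh C$ for the new exceptional curve with $\varepsilon>0$ suitably small or large. The new generators $S$ are universal extensions of $S'[1]$-type objects by copies of $\scO_{\wh C}$, so $\Im Z(S)=\Im Z(\scO_{\wh C})\cdot\dim\Ext^1-\Im Z(S')$. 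Positivity should then follow from a sign check, exactly as in Toda's proof that his cone is nonempty. I expect this sign and size bookkeeping to be the only delicate point.
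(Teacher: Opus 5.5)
Your first half is correct, and it takes a different route from the paper. The paper writes $(D,t)=\sum_i a_i[f_i^*C_i]+\sum_j t_j[R_j]$ and evaluates $\Im Z$ directly on the generators of Lemma~\ref{lem:explicit_generator_C}. This gives the values $t$, $t-a_1$, $a_i-a_{i+1}$, $a_N$. Hence $C_{\wt f}(X)=\{t>a_1>\dots>a_N>0\}$ for odd $n$, and $\{t_1>a_1>\dots>a_N>0,\ t_2>a_1\}$ for even $n$. Polyhedrality, $\omega$-independence and nonemptiness are all read off from this description. Your argument uses property (i) to reduce to finitely many classes, and property (ii) with the projection formula and GRR to kill the term $\ch_1(\pi_*^{\bZ_2}F)\cdot f^*\omega$. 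This is a valid structural proof that $C_{\wt f}(X)$ is an open convex polyhedral cone independent of $\omega$, and it never uses the explicit generators.

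The gap is nonemptiness. You leave it as bookkeeping, and the choices you sketch would fail.

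\emph{The explicit route.} From $0\to\scO_{p_0}[-1]\to\scO_{\wh{C_1}}(-p_0)\to\scO_{\wh{C_1}}\to 0$ and $\Im Z(\scO_{p_0})=-t$, one gets $\Im Z(\scO_{\wh{C_1}}(-p_0))=t+C_1\cdot D=t-a_1$, not $\wh{C_1}\cdot D-t$. Meanwhile $\scO_{\wh{C_N}}[-1]$ and the $S_i$ force $a_1>\dots>a_N>0$; the $S_i$ are pullbacks, so they have no twisted contribution. Hence $t$ must exceed $-C_1\cdot D=a_1>0$. Choosing $t$ small compared with the intersection numbers therefore lands outside the cone. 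The relatively anti-ample heuristic is also misleading for $N\ge 2$: $D$ is positive on the $f$-exceptional curve $\ol{g_1^*C_2-C_1}$ (this is $\Im Z(S_1)=a_1-a_2$) and negative on $C_1$.

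\emph{The induction route.} It has two sign errors. First, the triangle $S\to\scO_{\wh C}\otimes\Ext^1(\scO_{\wh C},S')\to S'[1]$ gives $[S]=m[\scO_{\wh C}]+[S']$, not $m[\scO_{\wh C}]-[S']$. Second, $D=D'-\varepsilon\wh C$ gives $\Im Z(\scO_{\wh C}[-1])=-\wh C\cdot D=-\varepsilon<0$, because $\wh C\cdot D'=0$ and $\wh C^2=-1$.

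\emph{The fix.} Take $D=D'+\varepsilon\wh C$ with $t$ unchanged and $0<\varepsilon\ll 1$. The new generator $\scO_{\wh C}[-1]$ gets value $\varepsilon$. Each new $S$ gets the value of $S'$ at $(D',t')$ minus $m\varepsilon$. This is positive for small $\varepsilon$, since by (i) only finitely many classes occur and each value is positive at $\varepsilon=0$. The base case $X=Y$ is the cone $\{t_j>0\}$. Alternatively, simply exhibit the point $a_i=N+1-i$, $t_j=N+1$ using the explicit values above.
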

\begin{proof}
    We can take a basis as follows
\[\NS_{\wt{f}}(\scY)_\bR=\bigoplus_{i=1}^{N}\bR[f_{i}^*C_i]\oplus \bigoplus_{[R_j]\in \pi_0(R)}\bR[R_j]\]
where $f_{i}:Y\to X_i$ denotes the contraction as in \S\ref{sc:generator_C}.
We write $(D,t)=\sum_i a_i[f_{i}^*C_i]+\sum_{[R_j]\in \pi_0(R)}t_j [R_j]$. 
Suppose that $n$ is odd.
By the description of generators in \pref{lem:explicit_generator_C}, we have 
\begin{align*}
    \Im Z_{f^*\omega+D,0}^{t,0}(\scO_p[-1])&=t,\\
    \Im Z_{f^*\omega+D,0}^{t,0}(\scO_{\wh{C_1}}(-p_0))&=t-a_1,\\
    \Im Z_{f^*\omega+D,0}^{t,0}(S_i)&=a_{i}-a_{i+1},\\
    \Im Z_{f^*\omega+D,0}^{t,0}(\scO_{C_N}[-1])&=a_N.
\end{align*}
Therefore, $C_{\wt{f}}(X)$ is identified with the following cone 
\[\{(a_1,\dots,a_N,t)\in \bR^N\oplus \bR\mid t>a_1>a_2>\dots>a_N>0\}.\]
When $n$ is even, similarly, one can see that
$C_{\wt{f}}(X)$ is identified with 
\[\{(a_1,\dots,a_N,t_1,t_2)\in \bR^N\oplus \bR^2\mid t_1>a_1>a_2>\dots>a_N>0, t_2>a_1\}.\]
\end{proof}
\begin{remark}
    Our $C_{\wt{f}}(X)$ is a linear cone, while the corresponding open set constructed by Toda in \cite{Toda_surfaceMMP}\S 4.1 also involves a quadratic cone. This difference is due to the absence of the term $\omega^2\ch_0$ in our central charge. Consequently, the comparison with the algebraic stability conditions in \S 4 is meaningful since the algebraic construction only produces linear cones.
\end{remark}
We define 
\begin{align*}
    A^{\dagger}(X)&:=\{(f^*\omega+D,t)\in \NS_{\orb}(\scY)_\bR\mid \omega\in \Amp(X), (D,t)\in C_{\wt{f}}(X)\},\\
    \ol A^{\dagger}(X)&:=\{(f^*\omega+D,t)\in \NS_{\orb}(\scY)_\bR\mid \omega\in \ol\Amp(X), (D,t)\in C_{\wt{f}}(X)\}.
\end{align*}
We have the natural projection
$\wt{f}_*:\ol A^{\dagger}(X)\to \ol \Amp(X)$ with constant fiber $C_{\wt{f}}(X)$. So, $A^{\dagger}(X)$ is a connected open subset of $\NS_{\orb}(\scY)_\bR$. 

Furthermore, using $\scB_f$ (see definition \pref{eq:def_scB_f}), we define
\[\scB:=\{(B,s)\in \NS_{\orb}(\scY)_\bR\mid f_*B\in \scB_f \,\,\text{for any smooth contraction $f:Y\to X$}\}.\] 
Then, $\scB$ is a convex open neighborhood of $0\in \NS_{\orb}(\scY)_\bR$. Indeed, $\scB$ is the intersection of finitely many open half-spaces. The finiteness follows from the fact that there are only finitely many compact curves on $Y$. In \S 3.5, we will regard $A^\dagger(X)\times \scB$ as the chamber corresponding to $X$.  

\subsection{Construction of the heart corresponding to \texorpdfstring{$\scU(X)$}{U(X)}}
For any smooth contraction $f:Y\to X$, we have an exact triple
\[\scC_{\scY/X}\stackrel{inc}{\to} D_c(\scY)\stackrel{\bfR \wt{f}_!}{\to} D_c(X).\]
By gluing the t-structures $\scC_{\scY/X}^0, \scA_\omega$ for $\omega\in \ol \Amp(X)$, we obtain a heart $\scA_{\omega}(\scY/X)$ on $D_c(\scY)$. 

\begin{lemma}\label{lem:property_A}
\begin{enumerate}
    \item We have a torsion pair $(\scC_{\scY/X}^0,\bfL \wt{f}^*\scA_\omega)$ of $\scA_{\omega}(\scY/X)$. In particular,
    \[\scA_{\omega}(\scY/X)=\scC_{\scY/X}^0*\bfL \wt{f}^*\scA_\omega.\]
    \item $\scA_{\omega}(\scY/X)$ is noetherian. 
    \item The subcategories $\scC_{\scY/X}^0, \bfL \wt{f}^*\scA_\omega$ of $\scA_{\omega}(\scY/X)$ are closed under quotients and subobjects.
\end{enumerate}
\end{lemma}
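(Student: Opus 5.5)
We follow the standard recollement argument (as in \cite[Lemma 3.5--3.6]{Toda_surfaceMMP} and \cite[1.4]{Beilinson-Bernstein-Deligne}), using the exact triple $\scC_{\scY/X}\to D_c(\scY)\xrightarrow{\bfR\wt f_!} D_c(X)$. The left adjoint of the inclusion is not needed. What we use is that $\bfL\wt f^*$ is fully faithful, is right adjoint to $\bfR\wt f_!$, and has image ${}^{\perp}\scC_{\scY/X}$, i.e.\ $\Hom(\bfL\wt f^*D_c(X),\scC_{\scY/X})=0$. Here we use that $\bfR\wt f_*\scO_{\scY}=\scO_X$, since $\pi_*^{\bZ_2}\scO_{Y_H}=\scO_Y$ and $f$ is birational between smooth surfaces.

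For (1), we unwind the glued heart. By definition, an object $E$ lies in $\scA_\omega(\scY/X)$ if and only if $\bfR\wt f_!E\in\scA_\omega$ and the cone $C_E$ of the counit-type map $\bfL\wt f^*\bfR\wt f_!E\to E$ lies in $\scC_{\scY/X}^0$. Indeed, $C_E$ lies in $\scC_{\scY/X}$, and the glued conditions reduce to this because $\bfL\wt f^*$ is a section of $\bfR\wt f_!$. The resulting triangle $\bfL\wt f^*\bfR\wt f_!E\to E\to C_E$ has its first term in $\bfL\wt f^*\scA_\omega$ and its last in $\scC^0_{\scY/X}$. This is the wrong order for the claim, which puts the subobject in $\scC^0_{\scY/X}$.

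So we instead use the other adjoint, the right adjoint $\wt f^!$ of the inclusion? No: the recollement in the paper has $\bfL\wt g^*$ as a right adjoint of $\bfR\wt g_!$. Hence $\Hom(\scC_{\scY/X},\bfL\wt f^*D_c(X))=\Hom(\bfR\wt f_!\scC_{\scY/X},D_c(X))=0$, so $\bfL\wt f^*D_c(X)=\scC_{\scY/X}^{\perp}$. For every $E$ there is then a triangle $\iota\iota^!E\to E\to\bfL\wt f^*\bfR\wt f_!E$, where $\iota^!$ is the right adjoint of the inclusion. The glued heart consists of those $E$ with $\iota^!E\in\scC^0_{\scY/X}$ and $\bfR\wt f_!E\in\scA_\omega$. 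Reading this triangle in the heart gives $\scA_\omega(\scY/X)=\scC^0_{\scY/X}*\bfL\wt f^*\scA_\omega$. The Hom vanishing $\Hom(\scC^0_{\scY/X},\bfL\wt f^*\scA_\omega)=0$ follows from the adjunction above. Both pieces lie in the heart because $\bfR\wt f_!$ kills $\scC$ and $\iota^!\bfL\wt f^*=0$, so (1) holds.

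For (3), the torsion class $\scC^0_{\scY/X}$ is automatically closed under quotients and the torsion-free class $\bfL\wt f^*\scA_\omega$ under subobjects. It remains to show that $\scC^0_{\scY/X}$ is closed under subobjects and that $\bfL\wt f^*\scA_\omega$ is closed under quotients. The functor $\bfR\wt f_!$ is t-exact from the glued heart to $\scA_\omega$, so it sends a short exact sequence $0\to A\to E\to Q\to 0$ to an exact one. If $E\in\scC^0$, then $\bfR\wt f_!E=0$, hence $\bfR\wt f_!A=0$ and $A\in\scC\cap\scA_\omega(\scY/X)=\scC^0$. Dually, the functor $\iota^!$ restricted to the heart is left exact; the plan is to use this to check that $\iota^!Q=0$ when $E\in\bfL\wt f^*\scA_\omega$. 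Concretely, a nonzero map $C\to Q$ with $C\in\scC^0$ would be pulled back along $E\to Q$ to an extension $0\to A\to E'\to C\to 0$. Applying $\bfR\wt f_!$ and using that $\bfL\wt f^*$ is fully faithful, this extension splits off a part of $C$ lying in $E$, contradicting $\Hom(\scC^0,E)=0$. I expect this step to be the delicate one. The fallback is Toda's argument, which uses the explicit property (ii) of \pref{pr:constructionofC}.

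For (2), let $E_1\twoheadrightarrow E_2\twoheadrightarrow\cdots$ be a sequence of surjections in $\scA_\omega(\scY/X)$. Applying the exact functor $\bfR\wt f_!$ gives surjections in $\scA_\omega$, which is noetherian (it is $\coh_c$ or ${}^{-1}\Per_c$), so this sequence stabilizes. After that point the kernels lie in $\scC^0$. Using the torsion pair from (1) and the closure properties from (3), we reduce to a chain of quotients of the torsion parts $T_i\in\scC^0_{\scY/X}$, exactly as in the proof of \pref{lem:torsionpair_for_tiltC}. That chain stabilizes by property (iii) of \pref{pr:constructionofC}.
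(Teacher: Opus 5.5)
Your argument for (1) has a genuine gap. The glued heart is $\{E : \bfR\wt f_!E\in\scA_\omega,\ \iota^*E\in\scC_{\scY/X}^{\le 0},\ \iota^!E\in\scC_{\scY/X}^{\ge 0}\}$, where $\iota^*$ is the \emph{left} adjoint of the inclusion. So $\iota^*$ is exactly what must be controlled. Neither of your two assertions is formal.

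First, $\iota^!\bfL\wt f^*=0$ only handles the $\iota^!$-condition for $\bfL\wt f^*M$. It does not give $\iota^*\bfL\wt f^*M\in\scC_{\scY/X}^{\le 0}$; for a glued t-structure, $\bfL\wt f^*$ is in general only left t-exact. Second, for $E$ in the glued heart you know a priori only $\iota^!E\in\scC_{\scY/X}^{\ge 0}$. Your description of the heart by ``$\iota^!E\in\scC^0_{\scY/X}$ and $\bfR\wt f_!E\in\scA_\omega$'' is therefore a restatement of (1), not a consequence of the definition.

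You never use anything specific to $\scC^0_{\scY/X}$, so your reasoning would apply equally to its shifts, where the conclusion is false. Take $X=Y$ and glue $\coh_c(R)=\scC^0_{\scY/Y}[1]$ with $\coh_c(Y)$. The result is $\coh_c(\scY)$, since $\iota$ and $\bfR\pi_!$ are both exact on sheaves. Yet $\iota^!(\scO_p\otimes\sgn)=\scO_p[-1]\notin\coh_c(R)$. Correspondingly, $\scO_p\otimes\sgn\notin\coh_c(R)*\pi^*\coh_c(Y)$: a subsheaf lying in $\coh_c(R)$ would have to vanish, and $\pi^*M$ always has even length.

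The missing input is property (ii) of Proposition~\ref{pr:constructionofC}, which is what Toda's Lemma~3.9 uses. For $M\in\scA_\omega$, $F\in\scC^0_{\scY/X}$ and $k\le 0$, adjunction gives $\Hom(\bfL\wt f^*M,F[k])=\Hom(M,(\bfR\wt f_*F[1])[k-1])=0$. This holds because $\bfR\wt f_*F[1]$ is a $0$-dimensional sheaf and hence lies in $\scA_\omega$. So $\iota^*\bfL\wt f^*M\in\scC_{\scY/X}^{\le -1}$, which puts $\bfL\wt f^*\scA_\omega$ inside the glued heart. Next, apply $\iota^*$ to your triangle $\iota\iota^!E\to E\to\bfL\wt f^*\bfR\wt f_!E$. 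This yields a triangle $\iota^!E\to\iota^*E\to\iota^*\bfL\wt f^*\bfR\wt f_!E$, whence $\iota^!E\in\scC^{\le 0}_{\scY/X}$ and so $\iota^!E\in\scC^0_{\scY/X}$. Only now does your triangle give (1).

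The same vanishing, in the case $k=0$, is what your quotient step in (3) actually needs. Take $C\subset Q$ to be the torsion part. The pullback $E'=E\times_Q C$ is a subobject of $E$, hence lies in $\bfL\wt f^*\scA_\omega$, and it surjects onto $C$. This contradicts $\Hom(\bfL\wt f^*\scA_\omega,\scC^0_{\scY/X})=0$, not $\Hom(\scC^0,E)=0$. Alternatively, once (1) holds, the kernel of $\bfL\wt f^*M\twoheadrightarrow Q$ is some $\bfL\wt f^*L$, and full faithfulness gives $Q\cong\bfL\wt f^*\operatorname{Cone}(L\to M)$. With (1) in place, the rest of (3) and your argument for (2) go through.
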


\begin{proof}
    \begin{enumerate}
        \item This follows from the same argument as in \cite[Lemma 3.9]{Toda_surfaceMMP}.
        \item Since \(\scC_{\scY/X}^0\) is noetherian by \pref{pr:constructionofC}, the argument of \cite[Lemma 4.3]{Toda_surfaceMMP} applies without modification.        
        \item The claim follows from the same argument as in \cite[Lemma 4.2]{Toda_surfaceMMP}.
    \end{enumerate}
\end{proof}

\subsection{Construction of \texorpdfstring{$\scU(X)$}{U(X)}}

\begin{lemma}\label{lem:stability_of_pullback}
    Let $(f^*\omega+D,t,B,s)\in \ol{A}^{\dagger}(X)\times \scB$. An object $M\in \scA_\omega$ is $Z_{\omega,f_*B}$-(semi)stable if and only if $\bfL \wt{f}^*M\in \scA_\omega(\scY/X)$ is $Z_{f^*\omega+D,B}^{t,s}$-(semi)stable. 
\end{lemma}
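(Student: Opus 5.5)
The plan follows Toda's argument for the analogous statement \cite[Lemma 4.5]{Toda_surfaceMMP}, adapted to the stack. It has three steps.

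\textbf{Step 1: compare central charges on pullbacks.} For $M\in D_c(X)$ and $\bfL\wt{f}^*M=\pi^*\bfL f^*M$, the formula for $Z^{t,s}_{\omega,B}(\pi^*E)$ gives
\[
Z^{t,s}_{f^*\omega+D,B}(\bfL\wt{f}^*M)=-\ch_2^B(\bfL f^*M)+\sqrt{-1}\,\ch_1(\bfL f^*M)\cdot(f^*\omega+D).
\]
The $(t,s)$-terms do not appear. Since $\ch_1(\bfL f^*M)=f^*\ch_1(M)$ is orthogonal to $D\in(f^*\NS(X))^\perp$, the imaginary part equals $\ch_1(M)\cdot\omega$. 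For the real part, $\ch_2(\bfL f^*M)=\ch_2(M)$ by the projection formula, and $f^*\ch_1(M)\cdot B=\ch_1(M)\cdot f_*B$. Hence
\[
Z^{t,s}_{f^*\omega+D,B}\circ\bfL\wt{f}^*=Z_{\omega,f_*B}.
\]

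\textbf{Step 2: reduce to subobjects in the pullback category.} By \pref{lem:property_A}(1),(3), $\bfL\wt{f}^*\scA_\omega$ is closed under subobjects and quotients in $\scA_\omega(\scY/X)$. Also $\bfL\wt{f}^*$ is fully faithful, because $\bfR\wt{f}_*\bfL\wt{f}^*\cong\mathrm{id}$ for a birational contraction composed with a coarse moduli map. Therefore the subobjects of $\bfL\wt{f}^*M$ in $\scA_\omega(\scY/X)$ are exactly the objects $\bfL\wt{f}^*M'$ with $M'\subset M$ in $\scA_\omega$. The same holds for quotients, using exactness of $\bfL\wt{f}^*$ restricted to $\scA_\omega$, which follows from the torsion-pair description.

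\textbf{Step 3: conclude, and the main obstacle.} Steps 1 and 2 together identify the phases of all subobjects on both sides, which gives the equivalence of (semi)stability. The remaining issue is positivity, i.e.\ that we are comparing arguments within the same half-plane $\bH$. One needs $Z_{\omega,f_*B}(\scA_\omega\setminus 0)\subset\bH$, and this follows from \pref{pr:standardstability} and \pref{pr:continuity_standard_stab} provided $(\omega,f_*B)\in\ol{A}(X)$. When $\omega\in\Amp(X)$ this is automatic. When $\omega$ lies on a boundary face $h^*\Amp(X'')$ for a blowdown $h\colon X\to X''$, it uses $f_*B\in\scB_h$, which holds by the definition of $\scB$. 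I expect this boundary case to be the main point requiring care: the heart is then ${}^{-1}\Per_c(X/X'')$, and objects such as $\scO_C(-1)[1]$ have $\Im Z=0$. Their real parts are $-\tfrac12\pm B\cdot C<0$, which is exactly where the condition $|B\cdot C|<1/2$ is needed. Everything else is formal once Step 1 is in place.
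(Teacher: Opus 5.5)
Your proposal is correct and takes the same approach as the paper, whose proof consists of your Steps 1 and 2: the identity $Z^{t,s}_{f^*\omega+D,B}(\bfL\wt{f}^*M)=Z_{\omega,f_*B}(M)$, plus the closure of $\bfL\wt{f}^*\scA_\omega$ under subobjects and quotients from \pref{lem:property_A}(3). Your Step 3 is not needed for the equivalence, since corresponding subobjects have literally the same central charge; the positivity check you sketch (via $|f_*B\cdot C|<1/2$) is correct, and it is what makes stability meaningful on both sides.
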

\begin{proof}
    This lemma immediately follows from \pref{lem:property_A}(3) and the fact that
    \[Z_{f^*\omega+D,B}^{t,s}(\bfL \wt{f}^*M)=Z_{\omega,f_*B}(M).\]
\end{proof}

\begin{lemma}\label{lem:stablity_is_constructed}
    For a rational point $(f^*\omega+D,t,B,s)\in \ol{A}^{\dagger}(X)\times \scB$, 
    \[\sigma_{f^*\omega+D, B}^{t,s}:=(Z_{f^*\omega+D,B}^{t,s},\scA_{\omega}(\scY/X))\]
    is a stability condition on $D_c(\scY)$.
\end{lemma}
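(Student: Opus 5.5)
The plan is to verify the three defining conditions, positivity, the Harder--Narasimhan property and the support property, for the pair $(Z,\scA)$ with $Z:=Z_{f^*\omega+D,B}^{t,s}$ and $\scA:=\scA_\omega(\scY/X)=\scC_{\scY/X}^0*\bfL\wt f^*\scA_\omega$ (\pref{lem:property_A}(1)). I follow the pattern of \cite[\S 4]{Toda_surfaceMMP}, using the torsion pair to reduce each condition to the two pieces.

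\emph{Positivity.} Since $\bH$ is closed under addition, it suffices to treat $\scC_{\scY/X}^0$ and $\bfL\wt f^*\scA_\omega$ separately.

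The imaginary part of $Z$ depends only on $(f^*\omega+D,t)$ and not on $(B,s)$. Hence for $0\neq F\in\scC_{\scY/X}^0$ we get $\Im Z(F)>0$ directly from the definition of the cone $C_{\wt f}(X)$, so $Z(F)\in\bH$.

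For $M\in\scA_\omega$ we have $Z(\bfL\wt f^*M)=Z_{\omega,f_*B}(M)$, as in the proof of \pref{lem:stability_of_pullback}. We need $(\omega,f_*B)\in\ol A(X)$. This is automatic if $\omega\in\Amp(X)$. If $\omega$ lies on a boundary face $h^*\Amp(X'')$, then $f_*B\in\scB_h$ because $(B,s)\in\scB$. So $(\omega,f_*B)\in\ol A(X)$ is rational, and \pref{pr:standardstability} gives $Z_{\omega,f_*B}(M)\in\bH$ for $M\neq0$.

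\emph{Harder--Narasimhan property.} I apply \pref{lem:criterion_HN}. The heart $\scA$ is noetherian by \pref{lem:property_A}(2). Because the point is rational, $\Im Z$ takes values in $\frac1N\bZ$ on the lattice $K^{\mathrm{num}}_c(\scY)$ for some $N$, so its image is discrete.

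\emph{Support property.} This is the main obstacle. Fix a norm on $K^{\mathrm{num}}_c(\scY)_\bR$, and let $E\in\scA$ be $Z$-semistable. Write the torsion-pair sequence $0\to F\to E\to\bfL\wt f^*M\to0$ with $F\in\scC_{\scY/X}^0$ and $M\in\scA_\omega$. The argument has three steps.

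\begin{enumerate}
\item \emph{Bounding $F$.} By \pref{pr:constructionofC}(i), $\scC_{\scY/X}^0$ is extension-generated by a set $\scS$ with only finitely many classes, each satisfying $\Im Z>0$. Every class in $\scC_{\scY/X}^0$ is therefore a nonnegative combination of these finitely many classes, which gives $\|F\|\le c_1\Im Z(F)$. Since $\Im Z(E)=\Im Z(F)+\Im Z(\bfL\wt f^*M)$ with both terms nonnegative, we get $\|F\|\le c_1|Z(E)|$, and consequently $|Z(M)|\le(1+c_1 c_2)|Z(E)|$.
\item \emph{Bounding $\bfL\wt f^*M$ by the HN factors of $M$.} I will bound $\|\bfL\wt f^*M\|$ in terms of the Harder--Narasimhan factors $M_j$ of $M$ with respect to $\sigma_{\omega,f_*B}$. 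Each $M_j$ satisfies $\|M_j\|\le c_3|Z(M_j)|$ by the support property for $\sigma_{\omega,f_*B}$ from \pref{pr:standardstability}. Moreover, $\bfL\wt f^*$ changes norms by at most a bounded factor.
\item \emph{Controlling $\sum_j|Z(M_j)|$ by $|Z(M)|$.} Since $\bfL\wt f^*\scA_\omega$ is closed under quotients (\pref{lem:property_A}(3)) and $E$ is semistable, \pref{lem:stability_of_pullback} shows every quotient of $M$ has phase at least $\phi(E)$. Hence all $Z(M_j)$ lie in a sector $[\phi(E),1]$. I then compare $\sum_j|Z(M_j)|$ with $|Z(M)|$ using this sector.
\end{enumerate}

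The difficulty in step 3 is uniformity when $\phi(E)$ is close to $0$: the sector then has opening close to $\pi$ and the comparison constant blows up. In that regime, objects with $\phi(E)$ near $0$ have $\Im Z$ small relative to $|Z|$. I would first try to use the discreteness of $\Im Z$ at rational points together with the finiteness statement (i) to show that only finitely many classes can occur with small phase and bounded $\Im Z$. Failing that, I would fall back on Toda's argument in \cite[\S 4]{Toda_surfaceMMP} (or its \S 5 analogue sketched later in the paper), constructing a quadratic form that is negative definite on $\Ker Z$ and nonnegative on semistable objects, obtained by pulling back the Bogomolov-type form for $\sigma_{\omega,f_*B}$ and adding a term controlled by $(\Im Z)^2$ on the $\scC_{\scY/X}^0$-part.
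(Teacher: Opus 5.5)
Your positivity and Harder--Narasimhan arguments are exactly the paper's proof. You split an object of $\scA_\omega(\scY/X)$ by the torsion pair of Lemma~\ref{lem:property_A}, use the cone $C_{\wt f}(X)$ for the $\scC^0_{\scY/X}$-part and $Z(\bfL\wt f^*M)=Z_{\omega,f_*B}(M)$ with Proposition~\ref{pr:standardstability} for the pulled-back part, and conclude with Lemma~\ref{lem:criterion_HN} via noetherianity and rationality. Your check that $(\omega,f_*B)\in\ol A(X)$ is a detail the paper leaves implicit.

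In the paper's definition a stability condition only requires positivity and the HN property; the support property is proved separately in Proposition~\ref{pr:support_property}. So your unfinished third part is not needed for this lemma. Its step~3 worry is also harmless: any $M_j$ with $\Re Z(M_j)>0$ has $\Re Z(M_j)\le\cot(\pi\phi(E))\,\Im Z(M_j)$, while $\sum_j\Im Z(M_j)\le\Im Z(E)=\sin(\pi\phi(E))\,|Z(E)|$. Hence these real parts total at most $|Z(E)|$, and the negative ones are then controlled by $|Z(M)|$.
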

\begin{proof}
    We first check the positivity. Let $E\in \scA_{\omega}(\scY/X)$. By \pref{lem:property_A}(1), we have an exact triangle
    \[T\to E\to \bfL \wt{f}^*M\]
    where $T\in \scC_{\scY/X}^0,M\in \scA_\omega$. By the definition of $C_{\wt{f}}(X)$, we have $\Im Z(T)>0$ if $T\neq 0$. So, it is enough to show the positivity for $\bfL f^*\scA_\omega$. We have 
    \[Z_{f^*\omega+D,B}^{t,s}(\bfL \wt{f}^*M)=Z_{f^*\omega+D,B}(\bfL \wt{f}^*M)=Z_{\omega,f_*B}(M).\]
    By \pref{pr:standardstability}, it follows that $Z_{\omega,f_*B}(M)\in \bH$.
    
    Next, we verify the Harder-Narasimhan property.  By \pref{lem:criterion_HN} and \pref{lem:property_A}(2), it remains to be proved that the image of $\Im Z_{f^*\omega+D,B}^{t,s}$ is discrete in $\bR$. This follows from the rationality of $f^*\omega+D,t$.
\end{proof}

\begin{proposition}\label{pr:support_property}
    For a rational point $(f^*\omega+D,t,B,s)\in \ol{A}^{\dagger}(X)\times \scB$, we have 
    \[\sigma_{f^*\omega+D,B}^{t,s}\in \Stab_{\mathrm{n}}(\scY),\]
    that is, $\sigma_{f^*\omega+D,B}^{t,s}$ satisfies the support property. 
\end{proposition}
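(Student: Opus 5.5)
We follow the strategy of \cite[\S 4]{Toda_surfaceMMP}: split a semistable object via the torsion pair of \pref{lem:property_A}(1) and bound each piece separately. Fix a rational point $(f^*\omega+D,t,B,s)\in \ol{A}^{\dagger}(X)\times\scB$ and write $Z:=Z_{f^*\omega+D,B}^{t,s}$. Choose norms as follows. Via \pref{eq:SOD_Grothendieck_grp} and the recollement $\scC_{\scY/X}\to D_c(\scY)\to D_c(X)$, we have a splitting
\[K^{\mathrm{num}}_c(\scY)_\bR\cong K(\scC_{\scY/X})_\bR\oplus K^{\mathrm{num}}_c(X)_\bR,\]
induced by $E\mapsto (E-\bfL\wt{f}^*\bfR\wt{f}_!E,\ \bfR\wt{f}_!E)$. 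Take the norm $\|\cdot\|$ on $K^{\mathrm{num}}_c(\scY)_\bR$ to be the sum of a norm $\|\cdot\|_{\scC}$ on the first factor and a norm $\|\cdot\|_X$ on the second.

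First I would bound the two pieces.

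\emph{The $\scC$-part.} By \pref{pr:constructionofC}(i), $\scC^0_{\scY/X}$ is extension-generated by $\scS$, whose classes form a finite set $\{v_1,\dots,v_m\}$. By \pref{lem:cone_description}, each $v_j$ satisfies $\Im Z(v_j)>0$. Hence every $T\in\scC^0_{\scY/X}$ has class $\sum a_jv_j$ with $a_j\ge 0$. Setting $c:=\min_j \Im Z(v_j)>0$, this gives
\[\|T\|_{\scC}\le \Bigl(\max_j\|v_j\|\Bigr)\sum_j a_j\le c^{-1}\max_j\|v_j\|\cdot \Im Z(T).\]

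\emph{The $X$-part.} By \pref{pr:standardstability}, $\sigma_{\omega,f_*B}$ satisfies the support property on $X$. This gives a constant $C_X$ with $\|M\|_X\le C_X|Z_{\omega,f_*B}(M)|$ for every $Z_{\omega,f_*B}$-semistable $M\in\scA_\omega$.

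Next I would reduce to these two estimates. By \pref{lem:property_A}(3), both $\scC^0_{\scY/X}$ and $\bfL\wt{f}^*\scA_\omega$ are closed under subobjects and quotients in $\scA_\omega(\scY/X)$. So for a $Z$-semistable $E$ with triangle $T\to E\to\bfL\wt{f}^*M$, the object $T$ is a subobject of $E$ and $\bfL\wt f^*M$ is a quotient. If $E$ lies in one of the two subcategories, we are done by the piece above. For $\bfL\wt f^*M$ this uses \pref{lem:stability_of_pullback}: it identifies semistability of $\bfL\wt f^*M$ with that of $M$, together with $Z(\bfL\wt f^*M)=Z_{\omega,f_*B}(M)$. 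In the mixed case, $\arg Z(T)\le\arg Z(E)\le \arg Z(\bfL\wt f^*M)$. The $T$-part is controlled since
\[\Im Z(T)\le \Im Z(E)\le |Z(E)|.\]
For the $M$-part, $M$ need not be semistable, so I would use its Harder--Narasimhan filtration in $\sigma_{\omega,f_*B}$. Its factors pull back to subquotients in $\bfL\wt f^*\scA_\omega$ whose phases are all at least the phase of $E$, by semistability of $E$ (the minimal-phase factor is a quotient of $E$). I would bound the norm of each factor by the support property on $X$, and bound $\sum|Z(\text{factor})|$ by $|Z(E)|$ using that the phases lie in the interval $[\phi(E),1]$. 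This is the standard argument that an extension of pieces with phases in a bounded sector has comparable central charge.

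The main obstacle is exactly this final step. When $\phi(E)$ is close to $0$ the sector $[\phi(E),1]$ is almost all of $\bH$, so the phases of the factors are not confined to a narrow sector and cancellation in $Z$ could occur. To handle it I would follow Toda's treatment: use that $\Im Z\ge 0$ on all pieces, so that $\Im$ controls everything of positive imaginary part. The remaining pieces with $\Im Z=0$ are pullbacks of phase-one objects, namely $0$-dimensional sheaves or their analogues in ${}^{-1}\Per_c$. Their $\Re Z$ have a fixed sign and so add without cancellation. If this direct approach is awkward, the fallback is to invoke the quadratic-form version of the support property: construct a form $Q$ from $\Delta$ on $X$ plus a large multiple of $(\Im Z)^2$, negative definite on $\Ker Z$, following \S 5.
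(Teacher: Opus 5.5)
Your architecture is the paper's. The paper's proof is a one-line reduction, following Toda's Proposition 4.7, to the support property of $\sigma_{\omega,f_*B}$ on $X$ via the finiteness of generator classes in Proposition~\ref{pr:constructionofC}(i). Your splitting $T\to E\to\bfL\wt f^*M$, the estimate $\|T\|\le c^{-1}\max_j\|v_j\|\,\Im Z(T)$, and the passage to the HN factors $M_i$ of $M$ with $\phi(M_i)\ge\phi(E)$ are all correct. However, the final step, which you yourself flag as the crux, is not closed by your remedy. The claim that ``$\Im$ controls everything of positive imaginary part'' fails factor by factor. A factor with small positive $\Im Z$ and phase near $0$, or in $(\frac{1}{2},1)$, can have $|Z|$ arbitrarily large compared with $\Im Z$. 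That the phase-one factors have real parts of fixed sign does not prevent cancellation against factors of phase near $\phi(E)\approx 0$, which have large positive real part. The quadratic-form fallback is only named. As written, you have no constant independent of $E$.

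The missing observation is that the regime $\phi(E)\to 0$ cannot occur in the mixed case, and this is where (i) is used a second time. If $T\neq 0$, then $Z(T)=\sum_j a_jZ(v_j)$ with $a_j\ge 0$ lies in the closed cone spanned by the finitely many $Z(v_j)$, each with $\Im Z(v_j)>0$. Hence $\phi(T)\ge\phi_0:=\min_j\phi(v_j)>0$, and also $|Z(T)|\le C\,\Im Z(T)$ with $C:=\max_j|Z(v_j)|/\Im Z(v_j)$. Semistability of $E$ gives $\phi_0\le\phi(T)\le\phi(E)$. So every $Z(M_i)$ lies in the fixed sector of phases $[\phi_0,1]$, of opening angle $<\pi$, and therefore
\[
\sum_i|Z(M_i)|\le\frac{|Z(M)|}{\sin(\pi\phi_0/2)}\le\frac{|Z(E)|+|Z(T)|}{\sin(\pi\phi_0/2)}\le\frac{(1+C)\,|Z(E)|}{\sin(\pi\phi_0/2)}.
\]
Apply the support property on $X$ to each $M_i$. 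In the case $T=0$, use that $M$ itself is semistable by Lemma~\ref{lem:stability_of_pullback}. Together these give $\|E\|\le C'|Z(E)|$ uniformly. An equivalent route is bookkeeping on real parts. The phase bound gives $\sum_i\max(\Re Z(M_i),0)\le\max(\Re Z(E),0)$, and then $|\Re Z(M)|\le|Z(E)|+|Z(T)|$ bounds the negative parts. Either way, the essential input is the full estimate $|Z(T)|\le C\,\Im Z(T)$, not merely $\Im Z(T)\le\Im Z(E)$.
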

\begin{proof}
    Similarly to \cite[Proposition. 4.7]{Toda_surfaceMMP}, the proof of the support property can be reduced to the support property of $\sigma_{\omega,f_*B}$ on the contraction $X$ (\pref{pr:standardstability}) by using \pref{pr:constructionofC}(1). 
\end{proof}
Moreover, we can prove the following:
\begin{proposition}\label{pr:conti_lift}
    The embedding $\ol{A}^{\dagger}(X)\times \scB\to \NS_{\orb}(\scY)_\bC$ lifts to a continuous map 
    \[\mu_X: \ol A^{\dagger}(X)\times \scB\to \Stab_{\mathrm{n}}(\scY)\]
    which takes any rational point $(f^*\omega+D,t,B,s)$ in $\ol A^{\dagger}(X)\times \scB$ to the stability condition $\sigma_{f^*\omega+D,B}^{t,s}$ in \pref{pr:support_property}.
\end{proposition}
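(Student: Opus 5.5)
The plan follows Toda's continuity argument \cite[\S 5.1]{Toda_surfaceMMP}, the same argument used for \pref{pr:continuity_standard_stab}. Let $(\alpha_0,\beta_0)=(f^*\omega_0+D_0,t_0,B_0,s_0)\in \ol A^{\dagger}(X)\times\scB$ be an arbitrary point. I intend to define $\mu_X(\alpha_0,\beta_0)$ as a deformation of a nearby rational stability condition, and then show that this is well defined and continuous.

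\textbf{Uniform support property.} The key input is a support property with constants uniform near $(\alpha_0,\beta_0)$. Fix a compact neighborhood $K$ of $(\alpha_0,\beta_0)$ in $\ol A^{\dagger}(X)\times\scB$. I would show that there is a single norm $\|\cdot\|$ on $K_c^{\mathrm{num}}(\scY)_\bR$ and a constant $C$ such that $\|E\|<C|Z^{t,s}_{f^*\omega+D,B}(E)|$ for every rational point of $K$ and every $E$ semistable for $\sigma^{t,s}_{f^*\omega+D,B}$. The reduction in \pref{pr:support_property} is explicit.
\begin{itemize}
\item A semistable object $E$ sits in a triangle $T\to E\to \bfL\wt f^*M$ by \pref{lem:property_A}(1).
\item $T$ is an iterated extension of objects of $\scS$, whose classes form a finite set by \pref{pr:constructionofC}(i).
\item $\Im Z$ on those classes is bounded below by a positive constant on $K$, because $K$ is compact inside the open cone $C_{\wt f}(X)$ (\pref{lem:cone_description}).
\item The part $\bfL\wt f^*M$ is controlled by the support property of $\sigma_{\omega,f_*B}$ on $X$. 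That support property is uniform on compact subsets of $\ol A(X)$, by the proof in \S 5 together with the local uniformity already used in \pref{pr:continuity_standard_stab}.
\end{itemize}
Combining these gives constants depending only on $K$.

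\textbf{Definition of $\mu_X$ at arbitrary points.} With a uniform support constant, Bridgeland's deformation theorem (\pref{thm:deformation_thm}) provides a radius $\epsilon>0$, depending only on $K$, with the following property. For any rational $\sigma\in\mu_X(K_\bQ)$ and any central charge $W$ within $\epsilon$ of $Z(\sigma)$ in the norm induced by $C$, there is a unique stability condition over $W$ close to $\sigma$.

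Now choose a rational point $q\in K$ near $(\alpha_0,\beta_0)$. Define $\mu_X(\alpha_0,\beta_0)$ as the deformation of $\sigma_q$ over $Z^{t_0,s_0}_{f^*\omega_0+D_0,B_0}$. The target lies in $\Stab_{\mathrm{n}}(\scY)$ because the cartesian square \pref{eq:cartesian_Stab_n_orb} shows that normalization is a condition on $Z$ alone.

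\textbf{Well-definedness and compatibility.} The definition must not depend on $q$. For two rational points $q,q'$ near each other, I would check that $\sigma_{q'}$ coincides with the deformation of $\sigma_q$. Both have the same heart $\scA_\omega(\scY/X)$ when $\omega,\omega'$ lie in the same stratum. Bridgeland's lemma (a stability condition is determined by $Z$ once its heart is fixed, and deformations within distance $<1$ are unique) then gives the identification. This also shows that $\mu_X$ agrees with $\sigma^{t,s}_{f^*\omega+D,B}$ at rational points.

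Points where $\omega$ lies on the boundary $f'^*\Amp(X')$, with a different heart ${}^{-1}\Per$, are the delicate part. There I would argue as in Toda: $\sigma$ at the boundary point and $\sigma$ at nearby interior points have slicings within distance $<1$. The reason is that every object of the boundary heart lies in $\scP(0,1]$ for the interior one up to small phase error, which the uniform bounds make precise. Continuity then follows because $\Pi$ is a local homeomorphism on each deformation patch.

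\textbf{Main obstacle.} The step I expect to be hardest is the uniformity of the support constant near boundary points of $\ol\Amp(X)$ and near the boundary of the $B$-parameter range $\scB$. There the standard heart switches between $\coh_c$ and ${}^{-1}\Per_c$, and the quantities $\Im Z$ of $\scO_{\wh C}$-type objects tend to $0$. To handle this, I would first try to establish the uniform bound on a neighborhood of each boundary stratum separately, by reducing via \pref{lem:stability_of_pullback} to Toda's estimates on $X$ and on $X'$.
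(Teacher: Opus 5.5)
Your overall strategy matches what the paper intends; its proof simply defers to Toda's Proposition~4.8. You seed the lift with the rational stability conditions of Proposition~\ref{pr:support_property}, reach the other points by Bridgeland's deformation theorem, and identify deformations with the $\sigma^{t,s}_{f^*\omega+D,B}$ because the heart $\scA_\omega(\scY/X)$ is constant on each stratum.

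\textbf{The gap} is at the boundary stratum $\omega\in g^*\Amp(X'')$, where $g\colon X\to X''$ contracts the $(-1)$-curve $C''\subset X$. Your argument needs one support constant on a compact neighbourhood $K$ of such a point. Any such $K$ contains points with $\omega$ ample but arbitrarily close to $g^*\Amp(X'')$. \S 5 does not supply this uniformity:
\begin{itemize}
\item In Proposition~\ref{pr:quadratic_form_(1)} the constant is $C_\omega=A_\omega/B_\omega^2$, and $B_\omega\to 0$ as $\omega\to g^*\Amp(X'')$ because $\omega\cdot C''\to 0$.
\item The boundary itself is treated by the different inequality of Proposition~\ref{pr:quadratic_form_(2)}, for a different heart.
\end{itemize}
You flag this as the main obstacle but leave it unresolved. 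Your sentence about slicings being within distance $<1$ ``which the uniform bounds make precise'' rests on exactly this missing estimate. Your worry about the boundary of $\scB$ is moot, since $\scB$ is open and only compact subsets of it enter.

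\textbf{The missing idea} is that uniformity across strata is unnecessary. On each stratum separately the heart is fixed and the constants can be taken locally uniform. This holds because the \S 5 constants vary continuously with $\omega$ inside a stratum, and $\Im Z$ of the finitely many classes of $\scS$ is bounded below on compact subsets of $C_{\wt f}(X)$. So your argument does give continuous lifts on $A^\dagger(X)\times\scB$ and on the boundary stratum separately.

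To glue them at a boundary point $p$, you only need $\mu_X(p)\in\Stab_{\mathrm{n}}(\scY)$ plus the fact that the two hearts differ by an HRS tilt:
\begin{itemize}
\item For $\omega$ ample, $(\scC^0_{\scY/X}*\bfL\wt f^*\scT_{-1},\,\bfL\wt f^*\scF_{-1})$ is a torsion pair in $\scA_\omega(\scY/X)$, since $\Hom(\scC^0_{\scY/X},\bfL\wt f^*\scF_{-1})=0$ by adjunction with $\bfR\wt f_!$.
\item Its tilt is $\scC^0_{\scY/X}*\bfL\wt f^*\,{}^{-1}\Per_c(X/X'')$. The reason is that $\Hom(\bfL\wt f^*F,T)=\Hom(F,\bfR\wt f_*T)=0$ for $T\in\scC^0_{\scY/X}$ and $F\in\scF_{-1}$, by Proposition~\ref{pr:constructionofC}(ii). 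So the extensions between $\scC^0_{\scY/X}$ and $\bfL\wt f^*\scF_{-1}[1]$ split and can be reordered.
\end{itemize}

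Now take a small $U\ni\mu_X(p)$ on which $\Pi_{\mathrm{n}}$ is a homeomorphism, and a rational boundary point $q$ with $\mu_X(q)\in U$. Lemma~\ref{lem:Toda_deformation_argument} then gives interior rational points $q'$ with $\sigma_{q'}\in U$. The open-and-closed argument of Corollary~\ref{cr:total_conti_lift}, on the connected set $\Pi_{\mathrm{n}}(U)\cap(A^\dagger(X)\times\scB)$, shows that $\mu_X$ agrees with $(\Pi_{\mathrm{n}}|_U)^{-1}$ near $p$, which gives continuity at $p$. This is the same mechanism the paper uses in Proposition~\ref{pr:connect_boundary}.
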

\begin{proof}
   The proof is similar to \cite[Proposition 4.8]{Toda_surfaceMMP}.
\end{proof}
Set
\[\scU(X):=\mu_X(A^\dagger(X)\times \scB).\]
We note that $\Pi_{\mathrm{n}}$
induces a homeomorphism $\scU(X)\xrightarrow{\sim}
A^\dagger(X)\times\scB.$ In particular, $\scU(X)$ is a connected open subset in $\Stab_{\mathrm{n}}(\scY)$. 

\subsection{Boundary behavior of \texorpdfstring{$\scU(X)$}{U(X)}}
In this subsection, we study the relation of $\scU(X)$ under a single blowup. We also study a region contained in the boundary of $\scU(Y)$.
 
Suppose that a contraction $f:Y\to X$ decomposes as
\[ f: Y \overset{g}{\longrightarrow} X' \overset{h}{\longrightarrow} X \]
where $h$ is a blowdown of a $(-1)$-curve $C$ on $X'$. The first aim in this subsection is to prove that $\ol{\mu_X(A^\dagger(X))}\cap\ol{\mu_{X'}(A^\dagger(X'))}$ is nonempty and has real codimension one.
\begin{lemma}\label{lem:boundary_tilt}
    There are torsion pairs of the form:
    \begin{align}
        \scA_{h^*\omega}(\scY/X')&=\la \scO_{\wh{C}}\ra_{\mathrm{ex}} *\scO_{\wh{C}}^\perp,\label{eq:boundary_torsion_pair}\\
        \scA_\omega(\scY/X)&=\scO_{\wh{C}}^\perp*\la \scO_{\wh{C}}[-1]\ra_{\mathrm{ex}},
    \end{align}
    where $\scO_{\wh{C}}^\perp$ denotes the right orthogonal complement of $\scO_{\wh{C}}$ in $\scA_{h^*\omega}(\scY/X')$.
    In particular, $\scA_\omega(\scY/X)$ is the HRS tilt with respect to \pref{eq:boundary_torsion_pair}.
\end{lemma}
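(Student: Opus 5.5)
We follow Toda's strategy for the analogous statement (cf. \cite[Lemma 4.10]{Toda_surfaceMMP}) and compare both hearts with the glued heart $\wt\scC^0_{\scY/X}$ of \pref{lem:torsionpair_for_tiltC}. The plan has four steps.

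First, we decompose $\scA_{h^*\omega}(\scY/X')$ via the recollement $\scC_{\scY/X'}\to D_c(\scY)\to D_c(X')$ and \pref{lem:property_A}(1). This gives $\scA_{h^*\omega}(\scY/X')=\scC^0_{\scY/X'}*\bfL\wt{g}^*\,{}^{-1}\Per_c(X'/X)$. Next we decompose ${}^{-1}\Per_c(X'/X)$ with respect to the recollement $\scC_{X'/X}\to D_c(X')\to D_c(X)$. Since $\la\scO_C\ra_{\mathrm{ex}}$ is the heart on $\scC_{X'/X}$, Van den Bergh's description gives ${}^{-1}\Per_c(X'/X)=\la\scO_C\ra_{\mathrm{ex}}*\bfL h^*\coh_c(X)$, and this is a torsion pair. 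Combining the two, we get a three-step filtration
\[\scA_{h^*\omega}(\scY/X')=\scC^0_{\scY/X'}*\la\scO_{\wh C}\ra_{\mathrm{ex}}*\bfL\wt f^*\scA_\omega=\wt\scC^0_{\scY/X}*\bfL\wt f^*\scA_\omega,\]
where $\scA_\omega=\coh_c(X)$ because $\omega\in\Amp(X)$. This identifies $\scA_{h^*\omega}(\scY/X')$ with the heart glued from $\wt\scC^0_{\scY/X}$ on $\scC_{\scY/X}$ and $\coh_c(X)$.

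Second, we produce the torsion pair \pref{eq:boundary_torsion_pair}. Define the torsion part as $\la\scO_{\wh C}\ra_{\mathrm{ex}}$ and the free part as its right orthogonal inside $\scA_{h^*\omega}(\scY/X')$. Two facts are needed.
\begin{enumerate}
\item $\la\scO_{\wh C}\ra_{\mathrm{ex}}$ is closed under quotients in $\scA_{h^*\omega}(\scY/X')$. The quotients inside $\wt\scC^0_{\scY/X}$ are handled by \pref{lem:torsionpair_for_tiltC}. A quotient landing in $\bfL\wt f^*\scA_\omega$ must be zero, because $\bfR\wt f_!\scO_{\wh C}=0$ and adjunction give $\Hom(\scO_{\wh C},\bfL\wt f^*M)=0$.
\item Every object admits a decomposition. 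Since $\scA_{h^*\omega}(\scY/X')$ is noetherian by \pref{lem:property_A}(2), we take the maximal subobject lying in $\la\scO_{\wh C}\ra_{\mathrm{ex}}$, exactly as in \cite[Lemma 3.2]{Toda_surfaceMMP}.
\end{enumerate}
Equivalently, we restrict \pref{eq:torsionpair_for_tiltC} and extend by $\bfL\wt f^*\scA_\omega$. By adjunction this summand lies in $\scO_{\wh C}^\perp$, since $\Hom(\scO_{\wh C},\bfL\wt f^*M)=0$.

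Third, we identify the HRS tilt. Tilting gives $\scO_{\wh C}^\perp*\la\scO_{\wh C}\ra_{\mathrm{ex}}[-1]$. We check that this heart equals the glued heart $\scA_\omega(\scY/X)=\scC^0_{\scY/X}*\bfL\wt f^*\scA_\omega$, where $\scC^0_{\scY/X}=\scO_{\wh C}^{\perp_{\wt\scC}}*\la\scO_{\wh C}\ra_{\mathrm{ex}}[-1]$ by definition. Both are hearts of bounded t-structures, so it suffices to show one is contained in the other. The generators of the tilt are $\scO_{\wh C}[-1]$, the objects of $\scO_{\wh C}^\perp\cap\wt\scC^0_{\scY/X}$, and the objects of $\scO_{\wh C}^\perp$ of the form $T*\bfL\wt f^*M$. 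We show each of these lies in $\scA_\omega(\scY/X)$, using that $\scA_\omega(\scY/X)$ is extension closed.

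The main obstacle is this containment for objects of $\scO_{\wh C}^\perp$ that are genuine extensions between the $\wt\scC$-part and the $\bfL\wt f^*$-part. For such an object $E$ with torsion part $T$, we need $T\in\scO_{\wh C}^{\perp}$. This holds because $\Hom(\scO_{\wh C},\bfL\wt f^*M)=0$ and $\Hom(\scO_{\wh C},E)=0$ force $\Hom(\scO_{\wh C},T)=0$. It then follows that $T\in\scC^0_{\scY/X}$. After this reduction, the rest is Toda's argument verbatim.
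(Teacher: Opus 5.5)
Your overall route (Toda's Lemmas 4.10--4.11 transported through Lemma~\ref{lem:property_A}) is the one the paper intends. However, Step~1 rests on a false claim, and Steps~2(1) and~3 inherit it. The decomposition ${}^{-1}\Per_c(X'/X)=\langle\scO_C\rangle_{\mathrm{ex}}*\bfL h^*\coh_c(X)$ does \emph{not} hold, let alone as a torsion pair.

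Since $h_*\scO_C(-1)=0$, the object $\scO_C(-1)[1]$ lies in ${}^{-1}\Per_c(X'/X)$. In fact $\scO_C(-1)=\scH^{-1}(\bfL h^*\scO_p)$, and the truncation triangle gives a short exact sequence $0\to\scO_C(-1)[1]\to\bfL h^*\scO_p\to\scO_C\to 0$ in ${}^{-1}\Per_c(X'/X)$. So $\bfL h^*\coh_c(X)$ is not closed under subobjects. Concretely, $\bfR h_!(\scO_C(-1)[1])=\bfR h_*\scO_C(-2)[1]=\scO_p$. Hence any triangle $T\to\scO_C(-1)[1]\to\bfL h^*M$ with $T\in\Ker\bfR h_!$ forces $M\cong\scO_p$ and $T\cong\scO_C[-1]\notin\langle\scO_C\rangle_{\mathrm{ex}}$. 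The splitting in Lemma~\ref{lem:property_A}(1) is a property of the particular hearts $\scC^0$ built in \S3; it does not transfer to the sub-heart $\langle\scO_C\rangle_{\mathrm{ex}}$.

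Consequently your identity $\scA_{h^*\omega}(\scY/X')=\wt{\scC}^0_{\scY/X}*\bfL\wt{f}^*\coh_c(X)$ fails. The object $\bfL\wt{g}^*(\scO_C(-1)[1])$ lies in $\scA_{h^*\omega}(\scY/X')$ and in $\scO_{\wh{C}}^{\perp}$, since $\Ext^1(\scO_C,\scO_C(-1))=0$. Yet its would-be $\wt{\scC}$-part is $\scO_{\wh{C}}[-1]\notin\wt{\scC}^0_{\scY/X}$. So your enumeration of $\scO_{\wh{C}}^{\perp}$ in Step~3 misses objects, and the inclusion of the tilt into $\scA_\omega(\scY/X)$ is not proved. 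The quotient-closure argument in Step~2(1) relies on the same nonexistent filtration.

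The repair is to prove the \emph{opposite} inclusion, and you already have every ingredient for it. Since $\scO_C$ and $\bfL h^*\coh_c(X)$ lie in ${}^{-1}\Per_c(X'/X)$, Lemma~\ref{lem:property_A}(1) gives $\wt{\scC}^0_{\scY/X}\subset\scA_{h^*\omega}(\scY/X')$ and $\bfL\wt{f}^*\coh_c(X)\subset\scA_{h^*\omega}(\scY/X')$. The latter lies in $\scO_{\wh{C}}^{\perp}$ by your adjunction argument, and $\scO_{\wh{C}}^{\perp}\cap\wt{\scC}^0_{\scY/X}$ lies there trivially. Now $\scA_\omega(\scY/X)=(\scO_{\wh{C}}^{\perp}\cap\wt{\scC}^0_{\scY/X})*\langle\scO_{\wh{C}}\rangle_{\mathrm{ex}}[-1]*\bfL\wt{f}^*\coh_c(X)$, and the tilt is extension closed, so $\scA_\omega(\scY/X)$ is contained in the tilt. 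Equality follows because both are hearts of bounded t-structures.

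For the torsion pair itself, use that $\scO_C$ is simple in ${}^{-1}\Per_c(X'/X)$. Also, $\bfL\wt{g}^*$ is fully faithful and exact, with image closed under subobjects and quotients by Lemma~\ref{lem:property_A}(3). Hence $\scO_{\wh{C}}$ is simple in $\scA_{h^*\omega}(\scY/X')$. Together with $\Ext^1(\scO_{\wh{C}},\scO_{\wh{C}})=0$, this makes $\langle\scO_{\wh{C}}\rangle_{\mathrm{ex}}$ closed under quotients, and noetherianity then gives the torsion pair.
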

\begin{proof}
    The proof of \cite[Lemma 4.10, Lemma 4,11]{Toda_surfaceMMP} works using \pref{lem:property_A}.
\end{proof}
We note that $h^*\Amp(X)\subset \ol \Amp(X')$ is of codimension one.
We define  the subset $A_h^\dagger(X)\subset \ol A^\dagger(X')$ by the following cartesian diagram
\[
\begin{tikzcd}
    A_h^{\dagger}(X)\arrow[r,hook]\arrow[d,"\wt{g}_*"]&\ol A^\dagger(X')\arrow[d,"\wt{g}_*"]\\
    h^*\Amp(X)\arrow[r,hook]&\ol \Amp(X').
\end{tikzcd}
\]
By \pref{pr:conti_lift}, 
\begin{align}\label{eq:boundary_inc}
    \mu_{X'}(A_h^{\dagger}(X)\times \scB)\subset \ol{\scU(X')}
\end{align}
 is of codimension one.
\begin{proposition}\label{pr:connect_boundary}
    We have \[\mu_{X'}(A_h^{\dagger}(X)\times \scB)\subset \ol{\scU(X)}.\]
\end{proposition}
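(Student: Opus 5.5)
Following Toda's argument for the analogous statement in \cite[Proposition 4.12]{Toda_surfaceMMP}, we fix a point $x=(h^*\omega+D',t,B,s)\in A_h^\dagger(X)\times\scB$, with $\omega\in\Amp(X)$ and $(D',t)\in C_{\wt{g}}(X')$, and show that $\mu_{X'}(x)$ is a limit of points $\mu_X(x_\epsilon)$ with $x_\epsilon\in A^\dagger(X)\times\scB$. The natural approach is to perturb in the exceptional direction. Set
\[x_\epsilon:=(h^*\omega+D'-\epsilon\, g^*\wh{C}, t, B, s),\qquad 0<\epsilon\ll 1,\]
where $g^*\wh{C}$ is the total transform of $C$ on $Y$. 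In the basis of \pref{lem:cone_description}, $(D,t)=(D'-\epsilon g^*C,t)$ adds a new coordinate $a_N=\epsilon>0$ (note $-g^*C$ pairs positively in $\Im Z$ against $\scO_{\wh{C}}[-1]$, up to sign conventions which I will fix by checking the generator $\scO_{\wh{C_N}}[-1]$). The cone conditions for $C_{\wt f}(X)$ are then $t>a_1>\dots>a_{N-1}>\epsilon>0$. These follow from the conditions for $C_{\wt g}(X')$ for small $\epsilon$, together with one extra inequality coming from the generator $S_{N-1}$, which is $a_{N-1}>\epsilon$. So $x_\epsilon\in A^\dagger(X)\times\scB$, and $x_\epsilon\to x$ in $\NS_{\orb}(\scY)_\bC$.

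Since $\Pi_{\mathrm n}$ is a local homeomorphism (\pref{thm:deformation_thm}), it suffices to show that for small $\epsilon$ the stability condition $\mu_X(x_\epsilon)$ lies in the small neighbourhood of $\mu_{X'}(x)$ that maps homeomorphically onto a neighbourhood of $x$. Equivalently, it suffices to exhibit a continuous family over $\epsilon\in[0,\epsilon_0)$ starting at $\mu_{X'}(x)$ whose members for $\epsilon>0$ equal $\mu_X(x_\epsilon)$. By density and continuity (\pref{pr:conti_lift}), we may assume rational points. We then compare hearts using \pref{lem:boundary_tilt}: $\scA_\omega(\scY/X)$ is the HRS tilt of $\scA_{h^*\omega}(\scY/X')$ at the torsion pair $(\la\scO_{\wh C}\ra_{\mathrm{ex}},\scO_{\wh C}^\perp)$. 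At $x$ we have $Z(\scO_{\wh C})=\Re$ only, and by the $\scB_h$ condition $-1/2<B\cdot C<1/2$ we get $Z^{t,s}_{h^*\omega+D',B}(\scO_{\wh C})=-\ch_2^B(\scO_C)\in\bR_{<0}$, so $\scO_{\wh C}$ has phase exactly $1$. For $\epsilon>0$, $Z_\epsilon(\scO_{\wh C})$ has small negative imaginary part, so $\scO_{\wh C}[-1]$ has phase slightly above $0$.

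The key step is to show that, in the slicing $\scP_x$ of $\mu_{X'}(x)$, $\la\scO_{\wh C}\ra_{\mathrm{ex}}$ is exactly the set of semistable objects of phase $1$ in the relevant range. More precisely, $\scO_{\wh C}^\perp=\scP_x(0,1)\cap\scA$ up to objects of phase near $1$ that remain in the right place, and $\scO_{\wh C}$ is $\mu_{X'}(x)$-stable of phase $1$. Granting this, $\scA_\omega(\scY/X)=\scP_x(\delta-1+\cdots)$-type tilt, namely $\scP_x(0,1)\ast\scP_x(1)[-1]$ restricted appropriately. By Bridgeland's deformation result, the stability condition near $\mu_{X'}(x)$ with central charge $Z_\epsilon$ has heart $\scP_\epsilon(0,1]$, which must then coincide with $\scA_\omega(\scY/X)$ for small $\epsilon$. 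By uniqueness of the lift with given heart and central charge, it equals $\mu_X(x_\epsilon)$.

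The main obstacle is the identification of the phase-$1$ part. We need to show that every $\mu_{X'}(x)$-semistable object of phase $1$ in $\scA_{h^*\omega}(\scY/X')$ lies in $\la\scO_{\wh C}\ra_{\mathrm{ex}}$, and that nothing in $\scO_{\wh C}^\perp$ crosses to phase $\le 0$ under the small perturbation. For the first point, I will use the torsion pair $\scC^0_{\scY/X'}\ast\bfL\wt g^*\scA_{h^*\omega}$. Objects of $\scC^0_{\scY/X'}$ have $\Im Z>0$, so an object with $\Im Z=0$ lies in $\bfL\wt g^*\scA_{h^*\omega}$. By \pref{lem:stability_of_pullback} this reduces to the corresponding statement for $\sigma_{h^*\omega,g_*B}$ on $X'$ with $h^*\omega$ on the boundary of the ample cone. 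That statement is Toda's computation of the phase-$1$ objects of ${}^{-1}\Per_c(X'/X)$ with $\Im Z=0$, which again uses $|B\cdot C|<1/2$. For the second point, I will use the support property (\pref{pr:support_property}) to get local finiteness, so that only finitely many classes have phase in $(1-\eta,1]$ with bounded mass.
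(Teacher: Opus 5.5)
Your setup matches the paper's proof: reduce to rational points, perturb along the total transform of $C$, use Lemma~\ref{lem:boundary_tilt} to see that $\scA_\omega(\scY/X)$ is the HRS tilt of $\scA_{h^*\omega}(\scY/X')$ at $(\la\scO_{\wh{C}}\ra_{\mathrm{ex}},\scO_{\wh{C}}^\perp)$, and conclude via the local homeomorphism $\Pi_{\mathrm{n}}$.

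Fix the sign first. Since $D'\perp g^*\NS(X')_\bR$ and $(g^*C)^2=-1$, adding $c\,g^*C$ gives $\Im Z(\scO_{\wh{C}}[-1])=c$. So the perturbed point must be $(f^*\omega+D'+\epsilon\,g^*C,t,B,s)$; with $-\epsilon$ it lies outside $A^\dagger(X)$. Your later claim $\Im Z_\epsilon(\scO_{\wh{C}})<0$ already presupposes the correct sign.

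The genuine gap is the step meant to identify the deformed heart. Your claim that every $\mu_{X'}(x)$-semistable object of phase $1$ lies in $\la\scO_{\wh{C}}\ra_{\mathrm{ex}}$ is false. Every object of $\scA_{h^*\omega}(\scY/X')$ with $\Im Z_x=0$ is semistable of phase $1$. These include $\bfL\wt{f}^*\scO_p$ for every $p\in X$, and also $\wt{g}^*(\scO_C(-1)[1])$. Both lie in $\scO_{\wh{C}}^\perp$ and must remain in the new heart. By Lemma~\ref{lem:stability_of_pullback}, $\bfL\wt{f}^*\scO_p$ keeps phase exactly $1$ at $x_\epsilon$, while $\wt{g}^*(\scO_C(-1)[1])$ acquires $\Im Z_\epsilon=\epsilon>0$.

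Accordingly, the reduction you propose to ${}^{-1}\Per_c(X'/X)$ would produce all objects supported on $C$ together with finitely many points (generated by $\scO_C$, $\scO_C(-1)[1]$ and points), not $\la\scO_C\ra_{\mathrm{ex}}$. Also, $\scA_\omega(\scY/X)$ is not a rotation $\scP_x[0,1)$ of the old slicing. What singles out $\scO_{\wh{C}}$ is the sign of $\Im Z_\epsilon$, not its phase at $x$.

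Your ``second point'' also looks in the wrong direction. The real risk is that an object of $\scO_{\wh{C}}^\perp$ acquires a deformed HN factor of phase $>1$. Local finiteness in bounded mass does not control all objects of the heart at once. So the assertion that $\scP_\epsilon(0,1]$ ``must coincide'' with $\scA_\omega(\scY/X)$ is not established.

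The missing ingredient is Toda's deformation lemma, Lemma~\ref{lem:Toda_deformation_argument}, which is exactly how the paper finishes. Its hypotheses are:
\begin{enumerate}
\item $(Z_\epsilon,\scA_\omega(\scY/X))\in\Stab(\scY)$ for rational $\epsilon$ (Proposition~\ref{pr:support_property});
\item $\mu_{X'}(x)=(Z_0,\scA_{h^*\omega}(\scY/X'))\in\Stab(\scY)$;
\item the two hearts differ by an HRS tilt (Lemma~\ref{lem:boundary_tilt}).
\end{enumerate}
You have all three in hand. The lemma then yields $\lim_{\epsilon\to+0}\mu_X(x_\epsilon)=\mu_{X'}(x)$ with no need to classify phase-one objects.

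If you prefer to argue by hand, the correct target is different from yours. You need that, for small $\epsilon$, $\scO_{\wh{C}}[-1]$ and every object of $\scO_{\wh{C}}^\perp$ lie in $\scP_\epsilon(0,1]$. The two hearts then coincide, since one contains the other. The second inclusion, for all objects simultaneously, is precisely what Toda's lemma supplies, and it needs a genuine argument.
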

\begin{proof}
    The proof is the same as \cite[Proposition 4.12]{Toda_surfaceMMP}. We provide a proof here for the convenience of the reader. It is enough to prove the claim for rational points in $A_h^{\dagger}(X)\times \scB$. Let $(f^*\omega+D,t,B,s)$ be a rational point in $A_h^{\dagger}(X)\times \scB$. By \pref{eq:boundary_inc}, we have the point 
    \begin{align}
        \sigma_{f^*\omega+D,B}^{t,s}\in \ol{\scU(X')}.
    \end{align}
    On the other hand, for any sufficiently small rational number $a>0$, we have 
    \begin{align}
        (f^*\omega+D+aC,t,B,s)\in A^{\dagger}(X)
    \end{align}
    by the description of $C_{\wt{f}}$ in \pref{lem:cone_description}. So, we also have the point 
    \begin{align}
        \sigma_{f^*\omega+D+aC,B}^{t,s}\in \scU(X).
    \end{align}
It suffices to show that 
\begin{align}
    \lim_{a\to +0}\sigma_{f^*\omega+D+aC,B}^{t,s}=\sigma_{f^*\omega+D,B}^{t,s}
\end{align}
This is obvious at the level of central charges. Also, the hearts of $\sigma_{f^*\omega+D+aC,B}^{t,s}$ and $\sigma_{f^*\omega+D,B}^{t,s}$ are related by an HRS tilt by \pref{lem:boundary_tilt}. Then, by the following lemma, we get the desired result.
\end{proof}
\begin{lemma}[{\cite[Lemma 7.1]{Toda_curve_counting1}},{\cite[Lemma 4.13]{Toda_surfaceMMP}}]\label{lem:Toda_deformation_argument}
    Let $\scA,\scA'$ be the hearts of bounded t-structures on $\scD$, which is related by an HRS tilt. Let 
    \[[0,1)\ni t\mapsto Z_t\in K(\scD)_{\bC}^\vee\]
    be a continuous map such that $\sigma_t=(Z_t,\scA)$ for any rational number $t\in (0,1)$ and $\sigma_0$ determine points in $\Stab(\scD)$. Then we have $\lim_{t\to +0}\sigma_t=\sigma_0$.
 \end{lemma}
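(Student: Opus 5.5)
The plan is to compare $\sigma_t$ with the deformation of $\sigma_0$ given by \pref{thm:deformation_thm}, and to show that the two coincide for small rational $t$.

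\emph{Step 1: the deformation of $\sigma_0$.} By \pref{thm:deformation_thm}, there are a neighbourhood $U$ of $\sigma_0$ and $t_0>0$ such that for every $t\in[0,t_0)$ there is a unique $\sigma'_t=(Z_t,\scP'_t)\in U$ over $Z_t$. Moreover $t\mapsto\sigma'_t$ is continuous with $\sigma'_0=\sigma_0$. So it suffices to show $\sigma'_t=\sigma_t$ for all small rational $t$. By Bridgeland's lemma \cite[Lemma 6.4]{Bridgeland_Stab}, two stability conditions with the same central charge and slicing distance $<1$ are equal. Since both $\sigma'_t$ and $\sigma_t$ lie over $Z_t$, it is enough to show that their hearts agree. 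Equivalently, we must show $\scP'_t(0,1]=\scA$.

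\emph{Step 2: locate $\scA$ relative to the slicing of $\sigma_0$.} Write the tilt as $\scA'=\scF[1]*\scT$ for a torsion pair $(\scT,\scF)$ on $\scA$; the case of the opposite tilt is symmetric. For $0\ne F\in\scF$, positivity for $\sigma_t$ gives $Z_t(F)\in\bH$ for rational $t>0$. Positivity for $\sigma_0$ gives $Z_0(F[1])\in\bH$, that is, $Z_0(F)\in-\bH$. Letting $t\to0$ forces $Z_0(F)\in\ol{\bH}\cap(-\bH)=\bR_{>0}$. Hence every nonzero $F[1]\in\scF[1]\subset\scA'$ has $\Im Z_0=0$, so $F[1]\in\scP_0(1)$.

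Next consider $\scP_0(1)$ itself. An object $E\in\scP_0(1)$ sits in a sequence $0\to F[1]\to E\to T\to0$ in $\scA'$ with $T\in\scT\subset\scA$. Then $\Im Z_0(T)=0$, while $T\in\scA$ gives $Z_t(T)\in\bH$ for $t>0$, which in the limit forces $Z_0(T)\in\bR_{\le0}$. So the $\scT$-parts of phase-one objects have phase one as well.

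From this we obtain $\scF\subset\scP_0(0)$ and $\scT\cap\scA'\subset\scP_0(0,1]$. This gives
\[\scA=\scT*\scF\subset\scP_0[0,1].\]
I expect the finer statement to be $\scA=\scP_0(0,1)*\la\scP_0(1)\cap\scT,\ \scF\ra_{\mathrm{ex}}$, i.e. $\scA$ is obtained from $\scA'$ by tilting only inside phase $1$.

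\emph{Step 3 (the main obstacle): show $\scP'_t(0,1]=\scA$.} Since $\sigma'_t$ is $\epsilon$-close to $\sigma_0$, we have $\scP'_t(\phi)\subset\scP_0(\phi-\epsilon,\phi+\epsilon)$. Thus $\scP'_t(0,1]$ differs from $\scA'$ only by objects whose $\sigma_0$-phase is near $0$ or near $1$. The middle part $\scP_0[\epsilon,1-\epsilon]$ already lies in both hearts, and the remaining difficulty is concentrated in phases near $0$ and $1$.

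At phase $1$, the $\sigma_0$-semistable objects of phase $1$ form a finite length category; here I would use the support property of $\sigma_0$. For small $t$, the $\sigma'_t$-semistable factors of such an object $E$ have $\sigma'_t$-phases near $1$. These phases lie in $(0,1]$ exactly when $Z_t$ of the factor lies in $\bH$, and otherwise the factor must be shifted by $[-1]$. By Step 2 the factors of the first kind are the pieces in $\scT$ and those of the second kind are the pieces in $\scF[1]$. The same analysis handles phases near $0$ by symmetry.

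One technical point is that the finite set of relevant classes must be controlled uniformly in $t$. I would handle this using the support property of $\sigma_0$, which makes the set of destabilizing classes of bounded mass finite, so the phase comparison becomes a statement about finitely many numbers $Z_t(v)$ near $\bR$. Once this is in place, $\scP'_t(0,1]\subset\scT*\scF=\scA$. Two hearts of bounded t-structures with one contained in the other are equal, so $\sigma'_t=\sigma_t$ and hence $\lim_{t\to+0}\sigma_t=\sigma_0$.
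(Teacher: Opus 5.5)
Your Steps 1 and 2 are sound. In particular, $\scF\subset\scP_0(0)$, $\scP_0(0,1)\subset\scT$ and $\scA\subset\scP_0[0,1]$ are correct, and they are the real input. The gap is in Step 3, which is where the lemma is actually decided.

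The sentence ``by Step 2 the factors of the first kind are the pieces in $\scT$ and those of the second kind are the pieces in $\scF[1]$'' does not follow from anything you proved:
\begin{itemize}
\item Step 2 only gives the implications $T\in\scT\Rightarrow Z_t(T)\in\bH$ and $F\in\scF\Rightarrow Z_t(F[1])\in-\bH$ in the limit. The converse fails: an object of $\scP_0(1)$ with nonzero $\scF[1]$-part can have $Z_t\in\bH$ if its $\scT$-part dominates.
\item Nothing in your argument forces the $\sigma'_t$-HN factors of $E\in\scP_0(1)$ to lie in $\scP_0(1)$, let alone in $\scT\cup\scF[1]$.
\item The inclusion you need, $\scP'_t(0,1]\subset\scA$, requires the following for one fixed $t$ and all $\sigma'_t$-semistable objects of phase in $(0,1]$ at once: none of them may have a $\sigma_0$-HN factor of phase in $(1,1+\epsilon)$ or $(-\epsilon,0)$, since those lie in $\scA[1]$ and $\scA[-1]$. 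Such $\sigma_0$-phases accumulate at $1$ and $0$, and masses are unbounded.
\item The support property gives finitely many classes only below a mass bound. So the reduction to ``finitely many numbers $Z_t(v)$'' is not available.
\end{itemize}
Decomposing $\sigma_0$-semistable objects with respect to $\sigma'_t$ is the direction in which you have no uniform control.

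The missing idea is to compare the slicing $\mathcal{Q}_t$ of $\sigma_t=(Z_t,\scA)$ directly with $\scP_0$; this is where Step 2 gives uniformity.
\begin{enumerate}
\item For $0\neq E\in\scA$, let $A$ be the first and $B$ the last $\sigma_0$-HN factor. Both are a subobject and a quotient of $E$ in $\scA$:
\begin{itemize}
\item If $A\in\scP_0(1)$, its $\scF[1]$-part vanishes because $\Hom(\scF[1],E)=0$, so $A\in\scT$.
\item If $B\in\scP_0(0)$, its $\scT[-1]$-part vanishes because $\Hom(E,\scT[-1])=0$, so $B\in\scF$.
\item The cones lie in $\scA$. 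For instance, the cone of $A\to E$ lies in $\scP_0[0,1)\subset\scA*\scA[-1]$ and in $\scA*\scA[1]$.
\end{itemize}
\item If $E$ is $\sigma_t$-semistable of phase $\psi$, semistability in $\scA$ gives $\arg Z_t(A)\le\pi\psi\le\arg Z_t(B)$.
\item The support property of $\sigma_0$ gives $|Z_t(A')-Z_0(A')|\le C\|Z_t-Z_0\|\,|Z_0(A')|$ for every $\sigma_0$-semistable $A'$, uniformly in $A'$.
\item Hence $\mathcal{Q}_t(\psi)\subset\scP_0[\psi-\delta_t,\psi+\delta_t]$ with $\delta_t\to0$, that is, $d(\mathcal{Q}_t,\scP_0)\to0$.
\end{enumerate}
Together with $Z_t\to Z_0$, this is exactly convergence in $\Stab(\scD)$. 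Equivalently, $d(\mathcal{Q}_t,\scP'_t)<1$, and Bridgeland's Lemma 6.4 then gives $\sigma_t=\sigma'_t$.

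The paper itself gives no proof of this lemma; it cites Toda.
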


Set
\begin{align*}
    A^{\mathrm{tot}}&:=\bigcup_{f:Y\to X}\ol{A}^\dagger(X)\subset \NS_{\orb}(\scY)_\bR.
\end{align*}
We note that $A^{\mathrm{tot}}$ is a connected open subset of $\NS_{\orb}(\scY)_\bR$ by the description of $C_{\wt{f}}(X)$. 
\begin{corollary}\label{cr:total_conti_lift}
    The map
\[\mu_{\mathrm{geom}}: A^{\mathrm{tot}}\times \scB\to \Stab_{\mathrm{n}}(\scY)\]
defined by $\mu_{\mathrm{geom}}|_{\ol A^\dagger(X)\times \scB}=\mu_X$ is a continuous section of $\Pi_{\mathrm{n}}: \Stab_{\mathrm{n}}(\scY)\to \NS_{\orb}(\scY)_\bC$ \pref{eq:cartesian_Stab_n_orb} on the open subset $A^{\mathrm{tot}}\times \scB$.
\end{corollary}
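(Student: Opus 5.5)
The plan has three parts: make sure $\mu_{\mathrm{geom}}$ is well defined as a map, check that it is continuous, and check that it is a section of $\Pi_{\mathrm{n}}$ over an open subset. The section property is immediate. Each $\mu_X$ lifts the embedding $\ol A^\dagger(X)\times\scB\to\NS_{\orb}(\scY)_\bC$ by \pref{pr:conti_lift}, so $\Pi_{\mathrm{n}}\circ\mu_{\mathrm{geom}}$ is the inclusion wherever $\mu_{\mathrm{geom}}$ is defined. Openness of $A^{\mathrm{tot}}\times\scB$ was noted just before the statement: $A^{\mathrm{tot}}$ is open by the explicit description of the cones $C_{\wt f}(X)$ in \pref{lem:cone_description}, and $\scB$ is a finite intersection of open half-spaces.

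Next, well-definedness on overlaps. The closed pieces $\ol A^\dagger(X)$ meet only along boundary strata of the form $A_h^\dagger(X)\subset\ol A^\dagger(X')$, where $h:X'\to X$ is a single blowdown. Because the MMP from $Y$ to $\bC^2/G$ is unique (end of \S2.3), the contractions form a chain. I will verify from the cone descriptions that two non-adjacent pieces have no common points, so only adjacent pairs need checking.

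On an overlap $A_h^\dagger(X)\times\scB$, the point $\mu_{X'}(p)$ lies in $\ol{\scU(X)}$ by \pref{pr:connect_boundary}. In fact the proof there shows more: it is the limit of $\mu_X(p+aC)$ as $a\to+0$. Since $\Pi_{\mathrm{n}}$ is a local homeomorphism (\pref{thm:deformation_thm}) and $\mu_X$ is continuous on all of $\ol A^\dagger(X)\times\scB$, which contains $p$ and every $p+aC$, that limit also equals $\mu_X(p)$. This requires checking that $p\in\ol A^\dagger(X)$, i.e. that $f'^*\omega$ with $\omega\in h^*\Amp(X)$ decomposes appropriately. Hence $\mu_X(p)=\mu_{X'}(p)$, and the locally defined maps agree on overlaps.

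Finally, continuity: the domain is covered by finitely many sets $\ol A^\dagger(X)\times\scB$, and on each of them $\mu_{\mathrm{geom}}$ agrees with the continuous map $\mu_X$. Continuity then follows from the pasting lemma, provided these pieces are closed in $A^{\mathrm{tot}}\times\scB$; alternatively, it suffices to check that around each point the union of the pieces containing it forms a neighbourhood.

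I expect the main obstacle to be the point-set bookkeeping in this last step, namely showing that the pieces are relatively closed, or at least locally finite and gluing correctly, inside $A^{\mathrm{tot}}$. Since $\ol{\Amp}$ includes only codimension-one faces, I will argue directly from the linear inequalities $t>a_1>\dots>a_N>0$ together with the ample-cone walls, so that each point of $A^{\mathrm{tot}}$ has a neighbourhood meeting at most two adjacent pieces.
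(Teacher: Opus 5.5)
Your plan misreads how the pieces fit together, so both the well-definedness step and the pasting step fail. The bar in $\ol A^\dagger(X)$ refers to $\ol{\Amp}(X)$; it is not a closure. In fact the sets $\ol A^\dagger(X)$ are pairwise disjoint.

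To see this, take $p\in A_h^\dagger(X)$. It has the form $(f^*\omega+D',t)$ with $D'\in(g^*\NS(X')_\bR)^\perp$. Hence at $p$ one gets $\Im Z(\scO_{\wh C}[-1])=-g^*C\cdot D'=0$. On the other hand, $\scO_{\wh C}[-1]\in\scC^0_{\scY/X}$, so membership in $C_{\wt f}(X)$ demands that this value be positive; this is the inequality $a_N>0$ in \pref{lem:cone_description}.

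Consequences:
\begin{itemize}
\item $p\notin\ol A^\dagger(X)$, so the check you flag fails and $\mu_X$ is not defined at $p$. There is nothing to glue, but also no reason yet for $\mu_{\mathrm{geom}}$ to be continuous at $p$.
\item The pieces are not relatively closed. For $n=3$, $\ol A^\dagger(\bC^2/G)=\{t>a>0\}$ is open but not closed in the connected set $A^{\mathrm{tot}}=\{t>0,\ t>a\}$, so the pasting lemma is unavailable.
\item Your fallback criterion also fails. A point of $A_h^\dagger(X)\times\scB$ lies only in $\ol A^\dagger(X')\times\scB$, which is not a neighbourhood of it, since the points $p+aC$ with $a>0$ lie in $A^\dagger(X)$.
\end{itemize}

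So the real content of the corollary is not proved: continuity at points $x\in A_h^\dagger(X)\times\scB$, where the defining formula switches from $\mu_X$ to $\mu_{X'}$. The ray limit $\lim_{a\to+0}\mu_X(p+aC)=\mu_{X'}(p)$ from \pref{pr:connect_boundary} is the right input. However, a limit along one ray does not give continuity along all approaches. The missing idea is uniqueness of continuous lifts through the local homeomorphism $\Pi_{\mathrm{n}}$, which the paper uses as follows.
\begin{itemize}
\item Pick an open $U\ni\mu_{X'}(x)$ on which $\Pi_{\mathrm{n}}$ is a homeomorphism onto its image, with $\Pi_{\mathrm{n}}(U)\cap(A^\dagger(X)\times\scB)$ connected.
\item Since $\mu_{X'}(x)\in\ol{\scU(X)}$, $U$ meets $\mu_X(A^\dagger(X)\times\scB)$. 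So the continuous sections $\mu_X$ and $(\Pi_{\mathrm{n}}|_U)^{-1}$ agree at some point.
\item Their coincidence locus is open and closed, so by connectedness they agree on all of $\Pi_{\mathrm{n}}(U)\cap(A^\dagger(X)\times\scB)$.
\item Likewise, after shrinking $U$, $\mu_{X'}=(\Pi_{\mathrm{n}}|_U)^{-1}$ on $\Pi_{\mathrm{n}}(U)\cap(\ol A^\dagger(X')\times\scB)$.
\end{itemize}
Thus near $x$ the map $\mu_{\mathrm{geom}}$ coincides with the single continuous map $(\Pi_{\mathrm{n}}|_U)^{-1}$. This local-sheet argument is what must replace your pasting step.
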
 
\begin{proof}
The continuity on each $A^\dagger(X)\times \scB$ is precisely \pref{pr:conti_lift}. Let $x=(\omega,t,B,s)\in A_h^\dagger(X)\times \scB$ for some blowdown $h:X'\to X$. By \pref{thm:deformation_thm}, we can take an  open neighborhood $U$ of $\mu_{X'}(x)$ on $\Stab_{\mathrm{n}}(\scY)$ such that $\Pi_{\mathrm{n}}|_U:U\to \Pi_{\mathrm{n}}(U)$ is a homeomorphism. By \pref{pr:connect_boundary}, $U\cap \mu_{X}(A^\dagger(X)\times \scB)\neq \emptyset$. By shrinking $U$, we may assume that $\Pi_{\mathrm{n}}(U)\cap (A^\dagger(X)\times \scB)$ is connected. We claim that
\[
\mu_X=(\Pi_{\mathrm{n}}|_U)^{-1} \quad\text{on}\quad  \Pi_{\mathrm{n}}(U)\cap (A^\dagger(X)\times \scB).
\]
Indeed, both maps are continuous sections of $\Pi_{\mathrm{n}}$, and they coincide at one point since
$U\cap \mu_X(A^\dagger(X)\times \scB)\neq \emptyset.$ Since $\Pi_{\mathrm{n}}|_U$ is a homeomorphism, the coincidence locus is open and closed in $\Pi_{\mathrm{n}}(U)\cap (A^\dagger(X)\times \scB)$, hence the claim follows from connectedness.

Similarly, after shrinking $U$ further if necessary, we have
\[
\mu_{X'}=(\Pi_{\mathrm{n}}|_{U})^{-1} \quad \text{on}\quad   \Pi_{\mathrm{n}}(U)\cap (\ol A^\dagger(X')\times \scB).
\] Therefore, $\mu$ coincides locally with $(\Pi_{\mathrm{n}}|_U)^{-1}$ near $x$, and hence is continuous at $x$.
\end{proof}

In the rest of the subsection, we prepare the lemmas to study the other direction of the boundary of $\ol{\mu_Y(A^{\dagger}(Y))}$, which will be used later. We note that a similar wall crossing phenomenon appears in the study of one-dimensional cases \cite{rota_thesis}.
Define 
      \begin{align*}
          \scT&:=\coh_c(R)=\la\scO_p\mid p\in R\ra_{\mathrm{ex}}\subset \coh_c(\scY),\\
          \scF&:=\scT^{\perp}\subset \coh_c(\scY).
      \end{align*}
\begin{lemma}
      $(\scT,\scF)$ is a torsion pair on $\coh_c(\scY)$.
\end{lemma}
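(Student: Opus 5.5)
To show that $(\scT,\scF)$ is a torsion pair on $\coh_c(\scY)$, two things are needed: $\Hom(\scT,\scF)=0$, and every $E\in\coh_c(\scY)$ fits into a short exact sequence $0\to T\to E\to F\to 0$ with $T\in\scT$ and $F\in\scF$. The first condition holds by definition, since $\scF=\scT^\perp$. So the whole content is the existence of this decomposition, and the plan is to build $T$ as the maximal subsheaf of $E$ lying in $\scT$.

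First, $\scT=i_*\coh_c(R)$, viewed inside $\coh^{\bZ_2}(Y_H)$, is closed under subobjects, quotients and extensions in $\coh_c(\scY)$. Subobjects and quotients are clear: a subsheaf or quotient of a sheaf scheme-theoretically supported on $R$ is again such a sheaf, and since it is compactly supported it lies in $\coh_c(R)$. Extensions need care. An extension of two $\scO_R$-modules is killed by $I_R^2$, not necessarily by $I_R$. Nevertheless $\scT$ is the extension closure $\la\scO_p\mid p\in R\ra_{\mathrm{ex}}$ as stated, so the right reading of $\scT$ is this extension-closed subcategory. Concretely, $\scT$ consists of the $0$-dimensional $\bZ_2$-equivariant sheaves supported on $R$ all of whose composition factors are $\scO_p$ with the \emph{trivial} character, as opposed to $\sgn\otimes\scO_p$. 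With this description, closure under subobjects, quotients and extensions is immediate, because composition factors behave well in short exact sequences.

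Next, for $E\in\coh_c(\scY)$, let $T\subset E$ be the sum of all subsheaves of $E$ lying in $\scT$. This sum is again in $\scT$. Since $E$ is coherent, hence noetherian, the sum is realised by finitely many subsheaves, and a finite sum of subobjects in $\scT$ is a quotient of their direct sum, which lies in $\scT$ by extension closure. Set $F:=E/T$. To see $F\in\scF$, suppose $\phi\colon T'\to F$ is a nonzero morphism with $T'\in\scT$. Its image $\operatorname{im}\phi$ lies in $\scT$ because $\scT$ is closed under quotients. Its preimage $\wt T\subset E$ is an extension of $\operatorname{im}\phi$ by $T$, so $\wt T\in\scT$ and $\wt T\supsetneq T$, contradicting the maximality of $T$.

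The only delicate point is the precise identification of $\scT$ as an extension-closed, Serre-type subcategory, namely the trivial-character condition on composition factors. Once that is fixed, the argument is the standard existence of maximal torsion subobjects in a noetherian abelian category.
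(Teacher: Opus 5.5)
Your argument is correct and is essentially the paper's proof. The paper simply cites Toda's general lemma that, in a noetherian abelian category, a subcategory closed under quotients and extensions is the torsion class of the torsion pair $(\scT,\scT^\perp)$; your maximal-torsion-subobject construction is exactly the proof of that lemma.

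Your worry about extensions is harmless but unnecessary. Since $\bZ_2$ acts by $-1$ on a local equation of $R$, any $\bZ_2$-equivariant sheaf whose composition factors are all trivial-character $\scO_p$ is automatically killed by the ideal of $R$. Hence $i_*\coh_c(R)$ already equals $\la\scO_p\mid p\in R\ra_{\mathrm{ex}}$.
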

\begin{proof}
    Since $\coh_c(\scY)$ is noetherian and $\scT$ is closed under quotients, the claim follows from \cite[Lemma 2.15.(i)]{Toda_curve_counting2}.
\end{proof}
\begin{lemma}\label{lem:tilt_of_standard_heart}We have 
    \[\scF*\scT[-1]=\scC_{\scY/Y}^0*\pi^*\coh_c(Y).\]
    In particular, $\scC_{\scY/Y}^0*\pi^*\coh_c(Y)$ is an HRS tilt of $\coh_c(\scY)$.
\end{lemma}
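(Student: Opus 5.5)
We prove the two inclusions between the full subcategories $\scF*\scT[-1]$ and $\scC_{\scY/Y}^0*\pi^*\coh_c(Y)$ of $D_c(\scY)$. Here $\scC_{\scY/Y}^0=i_*\coh_c(R)[-1]=\scT[-1]$ by the first lemma of \S3.1, and $\scA_\omega(\scY/Y)=\scC_{\scY/Y}^0*\pi^*\coh_c(Y)$ is a heart by \pref{lem:property_A}(1). Since a heart contained in another heart of a bounded t-structure must equal it, it would suffice to prove one inclusion. To be safe, we show both directly.

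\textbf{Key input.} We first record the semiorthogonal decomposition \pref{eq:triangle_pi_*}, in the form: every $E\in\coh_c(\scY)$ sits in a triangle
\[\pi^*\pi^{\bZ_2}_*E\to E\to \sgn\otimes i_*(\bfL i^*E\otimes\sgn)^{\bZ_2}.\]
We also need the following facts.
\begin{enumerate}
\item $\pi^*$ is exact, because $\pi$ is flat: the quotient map of the root stack is flat.
\item $\pi^*F$ lies in $\scF$ for $F\in\coh_c(Y)$. Indeed, by adjunction $\Hom(\scO_p,\pi^*F)=\Hom(\bfR\pi_!\scO_p,F)$. We expect $\bfR\pi_!\scO_p=0$, since $\scO_p$ (with the twist conventions fixed in the first lemma of \S3.1) lies in $\scC_{\scY/Y}=\Ker\bfR\pi_!$. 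This needs a check of which of $\scO_p$, $\sgn\otimes\scO_p$ lies in the kernel. If it is the other one, we instead use that $\pi^*F$ has no subsheaf supported on $R$ with nontrivial $\bZ_2$-character of the appropriate type, which follows by computing locally near $R$.
\end{enumerate}
Consequently $\pi^*\coh_c(Y)\subset\scF$, and $\scC^0_{\scY/Y}=\scT[-1]$. Since $\scF*\scT[-1]$ is extension closed (it is a tilted heart), this gives $\scC_{\scY/Y}^0*\pi^*\coh_c(Y)\subset\scT[-1]*\scF\subset \scF*\scT[-1]$.

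\textbf{Reverse inclusion.} Take $E\in\scF$. The triangle above has third term $Q:=\sgn\otimes i_*(\bfL i^*E\otimes\sgn)^{\bZ_2}$, which lies in $i_*D_c(R)$ and has cohomology in degrees $0$ and $-1$ only, because $\bfL i^*$ of a sheaf has Tor-amplitude $[-1,0]$ along a divisor. The long exact sequence shows that $\pi^*\pi_*^{\bZ_2}E$ is a sheaf and that $\pi^{\bZ_2}_*E\in\coh_c(Y)$. We then must show $H^0(Q)$ vanishes or is absorbed. The map $E\to H^0(Q)$ is a surjection onto an object of $\scT$, giving a quotient of $E$ in $\scT$. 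This is not excluded a priori, since $\scF$ only forbids subobjects. So instead we rotate the triangle to $Q[-1]\to\pi^*\pi_*E\to E$, which yields $E\in \pi^*\coh_c(Y)* (\scT\text{-part})$; this has the wrong order, so we need to reorganize.

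The cleaner route is the heart argument. Both $\scF*\scT[-1]$ and $\scA(\scY/Y)$ are hearts of bounded t-structures, so one inclusion already implies equality. We therefore rely on the inclusion $\scA(\scY/Y)\subset\scF*\scT[-1]$ proved above, and the lemma follows.

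\textbf{Main obstacle.} The main obstacle is the verification that $\Hom(\scT,\pi^*F)=0$, that is, that the correct twisted skyscraper generates $\scC^0_{\scY/Y}$ compatibly with $\scT=\coh_c(R)$. I would settle this by an explicit local computation on $[\bA^2/\bZ_2]$ with $\bZ_2$ acting by $(x,y)\mapsto(-x,y)$. There, $\pi^*\scO_Y=\scO$ is torsion free, hence has no subsheaves supported on $R$. For a general compactly supported $F$, the pullback $\pi^*F$ is a torsion sheaf, so the argument requires more care: one checks that $\pi^*F$ has no $\scO_R$-module subsheaf killed by $x$ of the relevant character, using the flatness of $\scO[x]$ over $\scO[x^2]$.
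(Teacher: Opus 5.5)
Your proof is correct and essentially the paper's: since both are hearts of bounded t-structures, it suffices to show $\scT[-1]*\pi^*\coh_c(Y)\subset\scF*\scT[-1]$, and this reduces to $\Hom(\scO_p,\pi^*F)=\Hom(\bfR\pi_!\scO_p,F)=0$. Your worry about which twist lies in the kernel does not arise, because $\scT[-1]=\scC^0_{\scY/Y}\subset\scC_{\scY/Y}=\Ker\bfR\pi_!$ by definition (concretely, for $\scO_p$ with trivial character, $\scO_p\otimes\omega_{\scY}\cong\sgn\otimes\scO_p$ has no invariants); so the local computation and the abandoned reverse-inclusion attempt can simply be dropped.
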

\begin{proof}
    Since both are the hearts of bounded t-structures, we only have to show that $\scC_{\scY/Y}^0*\pi^*\coh_c(Y)\subset \scF*\scT[-1]$. By definition, $\scC_{\scY/Y}^0=\scT[-1]$. We show that $\pi^*\coh_cY\subset \scF$. Let $E\in \coh_c(Y)$. Then we have
    \[\Hom(\scO_p,\pi^*E)=\Hom(\pi_!\scO_p,E)=0\]
    since $\pi_!\scO_p=0$.
\end{proof}

\subsection{Consequences for the wall-and-chamber structure}
In this subsection, we provide a proof of Theorem \pref{mthm:A}, except for the existence of the moduli space. 

\begin{proposition}\label{pr:moduli_reconstruction_sets}
    Let $f:Y\to X$ be a smooth contraction. 
    For any $\sigma\in \scU(X)$, as sets, we have 
    \[M_{\sigma}([\pi^*\scO_y])=\{\bfL \wt{f}^*\scO_x\mid x\in X\}.\]
     
\end{proposition}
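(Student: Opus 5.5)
We adapt Toda's argument \cite[Proposition 4.9 / Theorem 4.14]{Toda_surfaceMMP} to the stack $\scY$. Fix $\sigma=\mu_X(f^*\omega+D,t,B,s)\in\scU(X)$ with heart $\scA_\omega(\scY/X)$ and central charge $Z=Z_{f^*\omega+D,B}^{t,s}$. Since $\omega\in\Amp(X)$, we have $\scA_\omega=\coh_c(X)$, and $[\bfL\wt{f}^*\scO_x]=[\pi^*\scO_y]$ because $\bfL\wt{f}^*$ preserves numerical classes of points. We prove the two inclusions separately.

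\emph{Inclusion $\supseteq$.} Skyscraper sheaves $\scO_x$ are $Z_{\omega,f_*B}$-stable in $\coh_c(X)$: they have phase $1$, and every proper subobject in $\coh_c(X)$ of a $0$-dimensional simple sheaf is zero. Hence \pref{lem:stability_of_pullback} shows that $\bfL\wt{f}^*\scO_x\in\scA_\omega(\scY/X)$ is $Z$-stable. These objects are pairwise nonisomorphic, since $\bfR\wt{f}_*\bfL\wt{f}^*\scO_x\cong\scO_x$.

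\emph{Inclusion $\subseteq$.} Let $E\in\scA_\omega(\scY/X)$ be $Z$-stable of class $[\pi^*\scO_y]$, so $Z(E)=-1$ and $E$ has phase $1$. By \pref{lem:property_A}(1) there is a triangle $T\to E\to\bfL\wt{f}^*M$ with $T\in\scC_{\scY/X}^0$ and $M\in\coh_c(X)$. If $T\neq0$, then $\Im Z(T)>0$ by the definition of $C_{\wt{f}}(X)$, so $T$ has phase $<1$. This contradicts stability of $E$ at phase $1$, since $T$ is a subobject by \pref{lem:property_A}(3). Hence $T=0$ and $E\cong\bfL\wt{f}^*M$.

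Then $M$ is $Z_{\omega,f_*B}$-stable by \pref{lem:stability_of_pullback}, and $[M]=[\scO_x]$ because $\bfR\wt{f}_*\bfL\wt{f}^*=\mathrm{id}$. Since $\ch_0(M)=0$ and $\ch_1(M)\cdot\omega=0$ with $\omega$ ample, $M$ is a $0$-dimensional sheaf of length one, so $M\cong\scO_x$.

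\emph{Main obstacle.} The delicate point is the vanishing $T=0$. It needs strict positivity $\Im Z>0$ on every nonzero object of $\scC_{\scY/X}^0$, not merely on the generators, together with the fact that $T$ really is a subobject of $E$. The first follows from \pref{lem:cone_description} and \pref{pr:constructionofC}(i), using additivity of $Z$ on extensions. The second follows from the torsion pair in \pref{lem:property_A}.

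Irrational points of $A^\dagger(X)\times\scB$, where the heart is defined only via the continuous lift, are handled by \pref{pr:wall&chamber_Stab}. The set $M_\sigma$ is constant on chambers, and $\scU(X)$ contains no wall for class $[\pi^*\scO_y]$: the argument above shows that on all rational points the stable objects are exactly the $\bfL\wt{f}^*\scO_x$. Being stable with no proper subobjects of equal phase, they are not destabilized anywhere in $\scU(X)$. Since $\scU(X)$ is connected and rational points are dense, the identification extends to all of $\scU(X)$.
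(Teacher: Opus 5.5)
Your route is the paper's: reduce to rational points, split $E$ by the torsion pair $(\scC_{\scY/X}^0,\bfL\wt{f}^*\coh_c(X))$ of Lemma~\ref{lem:property_A}(1), show that the $\scC_{\scY/X}^0$-part vanishes, and finish with Lemma~\ref{lem:stability_of_pullback}. But the step that kills $T$ is argued incorrectly.

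You say that a nonzero $T$ has phase $<1$, and that this contradicts the stability of $E$, which has phase $1$. It does not. Stability of $E$ requires every proper nonzero subobject to have phase \emph{strictly smaller} than that of $E$, so a subobject of phase $<1$ is exactly what stability allows. The contradiction comes from positivity on the other half of the torsion pair. We have $\Im Z(\bfL\wt{f}^*M)=\Im Z_{\omega,f_*B}(M)=\ch_1(M)\cdot\omega\geq 0$, hence
\[
0=\Im Z(E)=\Im Z(T)+\Im Z(\bfL\wt{f}^*M)\geq \Im Z(T)>0,
\]
which is exactly the paper's one-line argument. Equivalently, in a heart every nonzero subobject of a phase-$1$ object has phase $1$, since otherwise the quotient would have $\Im Z<0$. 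So the ingredient missing from your ``main obstacle'' paragraph is $\Im Z\geq 0$ on $\bfL\wt{f}^*\coh_c(X)$. Neither the stability of $E$ nor Lemma~\ref{lem:property_A}(3) plays any role in this step.

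The corrected argument gives $T=0$ for \emph{every} object of the heart of class $[\pi^*\scO_y]$, semistable or not. Then $M$ is a length-one sheaf, as in your argument, so at rational points there are no strictly semistable objects of this class. That is what your last paragraph actually needs. Knowing that the $\bfL\wt{f}^*\scO_x$ have no proper subobjects of equal phase at rational points does not by itself control irrational points. The absence of strictly semistable objects at all rational points does. Since $Z(v)=-1$ on $\Stab_{\mathrm{n}}(\scY)$, an actual wall for this class lies in a locus $\Im Z(u)=0$. In the coordinates $(f^*\omega+D,t)$ this is a hyperplane defined over $\bQ$. Hence a codimension-one wall meeting $\scU(X)$ would contain rational points of $\scU(X)$, which is impossible.

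With these repairs your proof agrees with the paper's, which handles irrational points only through the phrase ``by deforming $\sigma$''.
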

\begin{proof}
    The proof is based on \cite[Proposition 4.14]{Toda_surfaceMMP}. By deforming $\sigma\in \scU(X)$, we may assume that $\sigma$ is of the form $\sigma_{f^*\omega+D,B}^{t,s}$ for a rational point $(f^*\omega+D,t,B,s)\in \scU(X)$. 
    Let $E\in \scA_\omega(\scY/X)$ be an element of $M_{\sigma}([\pi^*\scO_y])$. Then we have an exact triangle
    \[F\to E\to \bfL \wt{f}^*M.\]
    for some $F\in \scC_{\scY/X}^0, M\in \scA_{\omega}=\coh_cX$. If $F\neq 0$, by definition of $\scC_{\wt{f}}(X)$ and \pref{lem:stablity_is_constructed}, $\Im Z_{f^*\omega+D,B}^{t,s}(E)\geq \Im Z_{f^*\omega+D,B}^{t,s}(F)>0$. This contradicts $[E]=[\pi^*\scO_y]$. Then we have $E\cong \bfL \wt{f}^*M$. By \pref{lem:stability_of_pullback}, $M$ is $Z_{\omega,f_*B}$-stable of phase 1 and has the same numerical class as a skyscraper sheaf. So, $M$ must be isomorphic to $\scO_x$ for some $x\in X$. Conversely, for any $x\in X$, $\bfL \wt{f}^*\scO_x$ is $Z_{f^*\omega+D,B}^{t,s}$-stable of phase 1 by \pref{lem:stability_of_pullback} since $\scO_x$ is simple in $\coh_c(X)$.
\end{proof}
To prove a consequence on $M_{\sigma}([\pi^*\scO_y])$ as varieties, we need the existence of fine moduli space, which will be dealt in \pref{cr:existence_of_moduli}. The above proposition and the construction immediately imply the following lemma:
\begin{lemma}\label{lem:constancy_geom_chamber}
    The wall-and-chamber structure on $\Stab(\scY)$ in the sense of \pref{pr:wall&chamber_Stab} induces a wall-and-chamber structure on $A^{\mathrm{tot}}\times \scB$ with chamber $A^{\dagger}(X)\times \scB$. Moreover, for any $(B,s)\in \scB$, the same wall-and-chamber structure on $A^{\mathrm{tot}}\times \{(B,s)\}$ is obtained by pulling back the wall-and-chamber structure on $\Stab(\scY)$ along $\mu_{\mathrm{geom}}$.
\end{lemma}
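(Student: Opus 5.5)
The plan is to pull back the wall-and-chamber structure of \pref{pr:wall&chamber_Stab} for the primitive class $v=[\pi^*\scO_y]$ along the continuous section $\mu_{\mathrm{geom}}$ of \pref{cr:total_conti_lift}. Local finiteness of the walls $\{\scW_i\}$ in $\Stab(\scY)$, together with continuity of $\mu_{\mathrm{geom}}$, shows that the preimages $\mu_{\mathrm{geom}}^{-1}(\scW_i)$ form a locally finite collection of closed subsets of $A^{\mathrm{tot}}\times\scB$. On each connected component of the complement, $M_{\mu_{\mathrm{geom}}(x)}(v)$ is constant.

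The first step is to show that no wall meets the interior $A^\dagger(X)\times\scB$. By \pref{pr:moduli_reconstruction_sets}, for every $\sigma\in\scU(X)=\mu_X(A^\dagger(X)\times\scB)$ the set $M_\sigma(v)$ equals $\{\bfL\wt{f}^*\scO_x\mid x\in X\}$, independently of $\sigma$. One also has to exclude points of $\scU(X)$ lying on an actual wall, i.e.\ points where some object of class $v$ is strictly semistable. For this I would reuse the argument of \pref{pr:moduli_reconstruction_sets} with semistability in place of stability. A semistable $E$ of class $v$ in $\scA_\omega(\scY/X)$ has $\Im Z(E)=0$, so its $\scC^0_{\scY/X}$-part vanishes by positivity of $\Im Z$ on $\scC^0_{\scY/X}$. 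Hence $E\cong\bfL\wt{f}^*M$ with $M\in\coh_c X$ semistable of phase $1$ and class $[\scO_x]$, which forces $M\cong\scO_x$. Then $E$ is stable by \pref{lem:stability_of_pullback}, since $\scO_x$ is simple. Consequently each $A^\dagger(X)\times\scB$ lies in a single chamber, and these open sets are disjoint for different $X$ because the moduli sets differ: the objects $\bfL\wt{f}^*\scO_x$ for distinct contractions are non-isomorphic over the exceptional locus.

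The second step is to check that the boundary pieces $A^\dagger_h(X)\times\scB$ really are walls. By \pref{lem:boundary_tilt} and \pref{pr:connect_boundary}, a point there lies in the closure of both $\scU(X)$ and $\scU(X')$, whose stable sets differ. Concretely, at such a point $\bfL\wt{g}^*\scO_{x'}$ for $x'\in C$ becomes strictly semistable, destabilized by $\scO_{\wh C}$ or by $\scO_{\wh C}[-1]$ once $Z(\scO_{\wh C})$ is real. Since $A^{\mathrm{tot}}$ is the union of the closed pieces $\ol A^\dagger(X)$ glued exactly along these boundaries, the chambers are precisely the $A^\dagger(X)\times\scB$.

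Finally, for fixed $(B,s)\in\scB$, all the statements above hold slice by slice, because the description of both the chambers and the walls is independent of $(B,s)$. Therefore pulling back along $\mu_{\mathrm{geom}}|_{A^{\mathrm{tot}}\times\{(B,s)\}}$ gives the same structure. I expect the main obstacle to be the strict-semistability exclusion in the interior, especially making the phase-$1$ bookkeeping of $\scA_\omega(\scY/X)$ precise when $\omega$ sits on the boundary of $\ol\Amp$.
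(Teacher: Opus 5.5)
Your proposal is correct and follows the route the paper intends. The paper gives no separate proof: it says the lemma follows immediately from \pref{pr:moduli_reconstruction_sets} together with the boundary analysis of \pref{lem:boundary_tilt} and \pref{pr:connect_boundary}. You supply exactly those details, namely constancy and absence of strictly semistable objects of class $[\pi^*\scO_y]$ on $A^\dagger(X)\times\scB$, distinct moduli sets on adjacent chambers forcing $A_h^\dagger(X)\times\scB$ into the walls, and the slicewise nature of everything in $(B,s)$.

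Your closing worry is unnecessary: on the interior $\omega\in\Amp(X)$, so $\scA_\omega=\coh_c(X)$. On $A_h^\dagger(X)\times\scB$ the destabilizing sequence is $0\to\scO_{\wh{C}}\to\bfL\wt{g}^*\scO_{x'}\to\bfL\wt{g}^*\scO_C(-1)[1]\to 0$ in $\scA_{h^*\omega}(\scY/X')$. It is $\scO_{\wh{C}}$, not $\scO_{\wh{C}}[-1]$, that lies in this heart.
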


\begin{remark}
    We can also construct an open chamber $A^{\dagger}(X)\times \NS_{\orb}(\scY)_\bR$ in $\NS_{\orb}(\scY)_\bC$, (or more strongly, in $\Stab_{\mathrm{n}}(\scY)$,)  where the moduli space $M_{\sigma}(v)$ is constant. However, some additional walls appear outside of $\scB$. For example, on the walls around the chamber $A(X)^{\dagger}\times \NS_{\orb}(\scY)_\bR$, ${}^{-1}\Per_c(X/Y)$ can be replaced by some twists of ${}^{-1}\Per_c(X/Y)$ by a line bundle (cf. \cite{Tramel-Xia_negative}, \cite[\S 4.2]{karube_NCMMP_blowup}). To avoid these additional walls, we put a restriction in $(B,s)$-direction.
\end{remark}

\section{Variations of Bridgeland stability and King stability}
In this section, we first recall the algebraic construction of Bridgeland stability due to Bayer--Craw--Zhang. After that, we provide the proof of \pref{mthm:B} by comparing the two constructions of Bridgeland stability conditions in \S 3 and \S 4.1. Using these results, we prove Ishii's conjecture in our setting. We also give an example illustrating the comparison of the corresponding wall-and-chamber structures.
\subsection{An algebraic region of the stability manifold}
In this subsection, we study some properties of stability conditions on $[\bC^2/G]$ constructed algebraically. To compare them with the normalized stability conditions on $\scY$ constructed in \S 3, we prepare the following parameter spaces:
\begin{align*}
    V_0&:=\{\theta\in\bR^{\Irr G}\mid \theta((\dim \rho)_\rho)=0\}\cong \Theta(G),\\
    V_1&:=\{\lambda\in \bR^{\Irr G}\mid \lambda((\dim \rho)_\rho)=1\}.
\end{align*}
For each $(\theta,\lambda)\in V_0\times V_1$, via the identification $K_c^{\mathrm{num}}([\bC^2/G])\cong \bigoplus_{\rho\in \Irr G}\bZ \rho\otimes \scO_0$, we consider the central charges of the form \begin{equation}\label{eq:def_of_Z_theta}
        Z_{\theta,\lambda}(M):=\theta(M)+\sqrt{-1}\cdot \lambda(M).
\end{equation}

\begin{lemma}
    The affine transformation
    \[Z_{*,*}:V_0\times V_1\to K^{\mathrm{num}}_c([\bC^2/G])_\bC^\vee, \,\,\,\,\,(\theta,\lambda)\mapsto Z_{\theta,\lambda}\]
    is an embedding whose image is precisely the subspace of the central charges $Z$ with  $Z(M)=\sqrt{-1}$ for any $G$-constellation $M$.
\end{lemma}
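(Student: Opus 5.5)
The plan is to argue exactly as in \pref{lem:norm_central_charge} and \pref{lem:char'n_of_image_NS_orb}, by splitting $Z_{*,*}$ into its imaginary and real parts. First fix coordinates. By the identification $K_c^{\mathrm{num}}([\bC^2/G])\cong\bigoplus_{\rho\in\Irr G}\bZ\,\rho\otimes\scO_0$, the simple sheaves $\rho\otimes\scO_0$ form a basis. Hence
\[
K_c^{\mathrm{num}}([\bC^2/G])^\vee_\bR\cong\bR^{\Irr G},
\]
and $V_0,V_1\subset\bR^{\Irr G}$ sit inside this dual space in the obvious way. In these coordinates, for any $M$ we have $Z_{\theta,\lambda}(M)=\theta(M)+\sqrt{-1}\lambda(M)$, where $\theta$ and $\lambda$ are evaluated on the class of $M$.

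Next, compute the class of a $G$-constellation. Any $G$-constellation $M$ has $H^0(M)\cong\bC[G]$ as a $G$-representation. Since $M$ is $0$-dimensional and supported at finitely many $G$-orbits, its class in $K_c^{\mathrm{num}}$ depends only on its $G$-character. This holds because the numerical class of a $0$-dimensional $G$-equivariant sheaf can be computed by filtering it into pieces $\rho\otimes\scO_x$, and $[\rho\otimes\scO_x]=[\rho\otimes\scO_0]$ numerically, by flat deformation of points (or by Euler pairing with the locally free objects $\rho'\otimes\scO$). So
\[
[M]=v=\sum_\rho(\dim\rho)[\rho\otimes\scO_0].
\]
Therefore $Z(M)=Z(v)=\sum_\rho(\dim\rho)Z(\rho\otimes\scO_0)$ for every $G$-constellation $M$.

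The claim now splits as follows.
\begin{itemize}
\item[(a)] The map $(\theta,\lambda)\mapsto\theta+\sqrt{-1}\lambda$ is the restriction of the linear isomorphism $\bR^{\Irr G}\oplus\bR^{\Irr G}\to(\bR^{\Irr G})_\bC$. It is therefore an injective affine map, hence an embedding.
\item[(b)] For $Z=\theta+\sqrt{-1}\lambda$, we have $Z(v)=\theta((\dim\rho)_\rho)+\sqrt{-1}\lambda((\dim\rho)_\rho)=0+\sqrt{-1}$. So $Z(M)=\sqrt{-1}$ for every $G$-constellation $M$.
\item[(c)] Conversely, any $Z\in K_c^{\mathrm{num}}([\bC^2/G])^\vee_\bC$ can be written uniquely as $Z=\Re Z+\sqrt{-1}\Im Z$ with $\Re Z,\Im Z\in\bR^{\Irr G}$. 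If $Z(M)=\sqrt{-1}$ for some (equivalently every) $G$-constellation, for instance a $G$-cluster (which exists since $G$-Hilb is nonempty), then $\Re Z(v)=0$ and $\Im Z(v)=1$. That is, $\Re Z\in V_0$ and $\Im Z\in V_1$.
\end{itemize}

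The only step needing care is the numerical class computation in the second paragraph, namely that every $G$-constellation has class $v$. I would justify it via the nondegenerate Euler pairing: $\chi(\rho'\otimes\scO_{\bC^2},M)=\dim\Hom_G(\rho',H^0(M))=\dim\rho'$. This agrees with the pairing against $v$, which forces $[M]=v$ in $K_c^{\mathrm{num}}$. Everything else is linear algebra.
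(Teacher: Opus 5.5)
Your proposal is correct and follows the paper's own argument: you split $Z_{*,*}$ into its real part on $V_0$ and imaginary part on $V_1$, whose images are exactly the hyperplanes $W(v)=0$ and $W(v)=1$ in $\bR^{\Irr G}$. The paper leaves implicit that every $G$-constellation has class $v$, and your Euler-pairing check $\chi(\rho'\otimes\scO_{\bC^2},M)=\dim\rho'$ supplies this cleanly. That check is preferable to your filtration heuristic, which is imprecise because simple $0$-dimensional sheaves on nontrivial orbits are induced sheaves, not $\rho\otimes\scO_x$.
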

\begin{proof}
    As in \pref{lem:norm_central_charge}, $Z_{*,*}$ is the direct sum of the two affine transformations
    \begin{align}
        \Re Z_{*,0}:V_0\to K^{\mathrm{num}}_c([\bC^2/G])_{\bR}^\vee,\,\,\,\, &\theta\to \theta(\rho\otimes \scO_0),\label{eq:re_part_algZ**}\\ 
     \Im Z_{*,0}:V_1\to K^{\mathrm{num}}_c([\bC^2/G])_{\bR}^\vee, \,\,\,\,&\lambda\to \lambda(\rho\otimes \scO_0)\label{eq:im_part_algZ**}
    \end{align}
    whose images consist of the subspaces of $W\in K^{\mathrm{num}}_c([\bC^2/G])_{\bR}^\vee$ satisfying $W(M)=0$ and $W(M)=1$, respectively, for any $G$-constellation $M$.
\end{proof}

Set 
\[\Lambda_1:=\{\lambda\in V_1\mid \lambda_\rho>0\,\,\text{for all $\rho\in\Irr G$}\},\]
which is a convex open subset of $V_1$. The following proposition is due to Bayer--Craw--Zhang:
\begin{proposition}\cite[Lemma 7.1.3, Lemma 7.1.5, Proposition 7.1.6]{Bayer-Craw-Zhang}\label{pr:alg_stab}

    \begin{enumerate}
        \item $\sigma_{\theta,\lambda}:=(Z_{\theta,\lambda}, \coh_c([\bC^2/G]))\in \Stab([\bC^2/G])$.
        \item The map 
        \[\mu'_{\mathrm{alg}}: \Theta(G)\times \Lambda_1\to \Stab([\bC^2/G]), (\theta,\lambda)\mapsto \sigma_{\theta,\lambda}\]
is continuous. 
    \item For any $(\theta,\lambda)\in \Theta(G)\times \Lambda_1$, an object $E\in \coh_c([\bC^2/G])$ with $\theta(E)=0$ is $\sigma_{\theta,\lambda}$-(semi)stable if and only if it is $\theta$-(semi)stable.
    \item For any fixed $\lambda\in \Lambda_1$, the wall-and-chamber structure on $\Theta(G)$ is obtained by pulling back the wall-and-chamber structure on $\Stab([\bC^2/G])$ in \pref{pr:wall&chamber_Stab} with respect to $v=[\bC[G]\otimes \scO_0]$.
    \end{enumerate}
\end{proposition}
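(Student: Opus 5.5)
The plan is to verify the four items in order, working entirely in the heart $\scA:=\coh_c([\bC^2/G])$. Every object in $\scA$ is a finite-length $G$-equivariant sheaf supported at the origin. Its class lies in $\bigoplus_{\rho}\bZ_{\geq 0}\,\rho\otimes\scO_0$, since the simples of $\scA$ are exactly $\rho\otimes\scO_0$ for $\rho\in\Irr G$ (by the McKay correspondence, or directly because the nilpotent action of the coordinates forces a composition series with these factors).

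For (1), I first check positivity. For nonzero $E$ with class $\sum_\rho m_\rho\,\rho\otimes\scO_0$, where all $m_\rho\geq 0$ and not all vanish, we get $\Im Z_{\theta,\lambda}(E)=\sum_\rho m_\rho\lambda_\rho>0$ because $\lambda\in\Lambda_1$. So $Z$ maps into the open upper half plane, which is contained in $\bH$. Harder--Narasimhan filtrations exist because $\scA$ is of finite length, so I can invoke \pref{lem:criterion_HN} or simply use Artinianity plus Noetherianity directly. The support property is immediate from the same computation: take the $\ell^1$-norm $\|E\|=\sum_\rho|m_\rho|$ in the basis of simples. On effective classes, which include all semistable objects, $|Z(E)|\geq\Im Z(E)\geq(\min_\rho\lambda_\rho)\|E\|$. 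Hence $C=2/\min_\rho\lambda_\rho$ works, and this constant is locally uniform in $\lambda$.

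For (2), continuity follows from the fact that the heart is fixed and $Z$ varies continuously. The one point needing care is that Bridgeland's metric is controlled by the phases of semistable objects, and with a fixed heart these vary continuously. Concretely, I would argue that near $(\theta_0,\lambda_0)$ the support constants are uniform, and then apply the standard fact (e.g. \cite[Lemma 7.1.5]{Bayer-Craw-Zhang}, or Bridgeland's deformation \pref{thm:deformation_thm} together with uniqueness of the lift once the heart is fixed) that a continuous family of central charges on a fixed heart, each with the support property, defines a continuous path in $\Stab$. I expect this step to be the main technical obstacle, because one must rule out jumps of the slicing. The fix is that for every $\phi$, the category $\scP(\phi)$ of each stability condition is contained in the fixed heart, so both $\scP_1(\phi)$ and $\scP_2(\phi)$ lie in $\scP(\phi-\epsilon,\phi+\epsilon)$ once the central charges are close relative to the uniform support constant.

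For (3), take $E\in\scA$ with $\theta(E)=0$, so that $Z(E)=\sqrt{-1}\lambda(E)$ has phase $1/2$. A subobject $F\subset E$ in $\scA$ has $\arg Z(F)<\arg Z(E)=\pi/2$ exactly when $\Re Z(F)=\theta(F)>0$, using $\Im Z(F)>0$. Similarly, $\arg Z(F)\leq\arg Z(E)$ holds exactly when $\theta(F)\geq 0$. Subobjects in $\scA$ are precisely $G$-equivariant subsheaves, that is, submodules of the corresponding representation, so these inequalities are exactly King's $\theta$-(semi)stability in the convention of \cite{Craw-Ishii}, after matching sign conventions.

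For (4), I use (3) with $v=[\bC[G]\otimes\scO_0]$, so that $\theta(v)=0$ for every $\theta\in\Theta(G)$. The set $M_{\sigma_{\theta,\lambda}}(v)$ then coincides with the set of $\theta$-stable $G$-constellations. Fix $\lambda$. A point $\theta$ lies on a wall pulled back from \pref{pr:wall&chamber_Stab} iff some $E$ of class $v$ has a proper subobject $F$ of the same phase, i.e. $\theta(F)=0$. That is exactly the condition for $\theta$ to be non-generic, so the pulled-back walls are the GIT walls and the chambers agree.
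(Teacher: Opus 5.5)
The paper gives no argument of its own for this proposition; it only cites Bayer--Craw--Zhang. Your overall scheme is the standard argument behind that citation: effective classes give positivity and the support property, finite length gives Harder--Narasimhan filtrations, a fixed heart gives continuity, and a direct comparison of phases gives (3) and (4). However, your description of the heart, on which everything rests, is wrong and must be repaired.

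Objects of $\coh_c([\bC^2/G])$ are \emph{not} all supported at the origin. They are zero-dimensional $G$-equivariant sheaves supported on arbitrary finite $G$-invariant subsets of $\bC^2$, and the simple objects are not only the $\rho\otimes\scO_0$. For instance, if $x\in\bC^2$ has trivial stabilizer, then $\scO_{Gx}$ is a simple object of class $v$. Such free-orbit clusters are exactly what $M_\theta(v)$ parametrizes over the free locus of $\bC^2/G$, so they cannot be ignored. Hence neither the McKay correspondence nor the nilpotency argument gives a composition series by the $\rho\otimes\scO_0$ in general. What you actually need is that the class of $E\in\coh_c([\bC^2/G])$ in $K^{\mathrm{num}}_c([\bC^2/G])$ depends only on the $G$-module $H^0(E)$, i.e.
\[
[E]=\sum_{\rho\in\Irr G}\dim\Hom_G(\rho,H^0(E))\,[\rho\otimes\scO_0].
\]
This holds for two reasons. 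First, the classes $[\rho\otimes\scO_{\bC^2}]$ span $K(D([\bC^2/G]))$. Second, $\chi(E,\rho\otimes\scO_{\bC^2})$ only involves $\Ext^2(E,\scO_{\bC^2})$, which by local duality is $H^0(E)^\vee$ twisted by a character of $G$. With this in place:
\begin{itemize}
\item every nonzero object has a nonzero effective class;
\item finite length follows from $\dim H^0(E)<\infty$;
\item objects of class $v$ are exactly the $G$-constellations;
\item your estimates for positivity and the support property go through verbatim.
\end{itemize}

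There are also two smaller points. In (2), the estimate controlling the slicing must hold for \emph{all} nonzero $F\in\scA$, not only semistable ones. The bound $\|F\|\le|Z(F)|/\min_\rho\lambda_\rho$ makes $|\arg Z'(F)-\arg Z(F)|$ uniformly small on $\scA$. Apply this to the first and last $\sigma'$-HN factors of a $\sigma$-semistable object, which are a subobject and a quotient in $\scA$; this bounds the distance between the slicings. Citing \cite[Lemma 7.1.5]{Bayer-Craw-Zhang} instead would be circular, since that is the statement being proved. In (4), your ``iff'' presupposes that the walls of Proposition~\ref{pr:wall&chamber_Stab} are exactly the loci where strictly semistable objects of class $v$ exist. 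That proposition as stated only gives the implication from walls to destabilizing subobjects. You should either adopt this definition of actual walls explicitly or prove the converse.
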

\begin{remark}
    The result by Bayer--Craw--Zhang \cite{Bayer-Craw-Zhang} is much stronger than \pref{pr:alg_stab}, which constructs an open subset of the whole stability manifold $\Stab([\bC^2/G])$. We restrict it to a smaller dimensional subspace by the condition $R(\frac{1}{2})\cdot Z_{\theta,\lambda,\xi}$ is a normalized central charge. See also \pref{lem:comparison_central_charge}.
\end{remark}

In the rest of this subsection, we prepare some lemmas to study the geometric interpretation of some stability conditions of the form $\sigma_{\theta,\lambda}$. We consider the following map
\[\Res^*:\Theta(H)\to \Theta(G), \theta\mapsto [\rho\mapsto \theta(\Res \rho)].\]
This is well-defined since $\Res \bC[G]\cong \bC[H]^{\oplus 2}$.
As in \pref{sc:rep}, we consider the action of $G/H$ on $\Theta(H)$ defined by conjugation and the action of the character group $\wh{G/H}$ on $\Theta(G)$ by tensoring $\triv, \sgn$. Set 
\begin{align*}
    \Theta(H)^{G/H}_+&:=\{\eta\in \Theta(H)^{G/H}\mid \eta_i>0\text{\,\,\,for all\,\,$i\neq 0$}\},\\
        \Theta(G)^{\wh{G/H}}_+&:=\{\theta\in \Theta(G)^{\wh{G/H}}\mid \theta_i>0\text{\,\,\, for all $i\neq 0_+,0_-$}\}.
\end{align*}

\begin{lemma}\label{lem:check_stable_for_comparison}
    
    \begin{enumerate}
        \item $\Res^*$ restricts to an isomorphism \[\Theta(H)^{G/H}\stackrel{\simeq}{\longrightarrow}\Theta(G)^{\wh{G/H}},\,\,\,\,\,\Theta(H)^{G/H}_+\stackrel{\simeq}{\longrightarrow}\Theta(G)^{\wh{G/H}}_+.\]
        \item Let $\theta\in \Theta(G)^{\wh{G/H}}_+$. Then 
    \[\Phi(\pi^*\scO_y) \text{ for all $y\in Y\setminus \pi(R)$} ,\,\,\,\,\, \Phi(\scO_p), \Phi(\scO_p\otimes \sgn) \text{ for all $p\in R$}\]
    are $\theta$-stable $G$-equivariant sheaves whose values of $\theta$ are $0$. In particular, for any $(\theta,\lambda)\in \Theta(G)^{\wh{G/H}}_+\times \Lambda_1$, these are $\sigma_{\theta,\lambda}$-stable objects of phase $\frac{1}{2}$. 
    \end{enumerate}
\end{lemma}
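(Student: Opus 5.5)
For (1), I would compute $\Res^*$ directly on the bases given by $\Irr H$ and $\Irr G$ from \S\ref{sc:rep}. We have $\Res(\rho_0^\pm)=\rho_0$, $\Res(\rho_{n/2}^\pm)=\rho_{n/2}$ and $\Res(\rho_i\oplus\rho_{n-i})=\rho_i\oplus\rho_{n-i}$. Hence $(\Res^*\eta)_{\rho_0^\pm}=\eta_0$, $(\Res^*\eta)_{\rho_{n/2}^\pm}=\eta_{n/2}$ and $(\Res^*\eta)_{\rho_i\oplus\rho_{n-i}}=\eta_i+\eta_{n-i}$. In every case the image lies in $\Theta(G)^{\wh{G/H}}$, because tensoring by $\sgn$ swaps $\rho^+$ and $\rho^-$ and fixes the two-dimensional irreducibles.

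Conversely, a $\wh{G/H}$-invariant $\theta$ satisfies $\theta_{0^+}=\theta_{0^-}$ and $\theta_{n/2^+}=\theta_{n/2^-}$. The inverse sends it to the $G/H$-invariant $\eta$ with $\eta_0=\theta_{0^\pm}$, $\eta_{n/2}=\theta_{n/2^\pm}$ and $\eta_i=\eta_{n-i}=\frac12\theta_{\rho_i\oplus\rho_{n-i}}$. The condition $\eta(\bC[H])=0$ is equivalent to $\theta(\bC[G])=0$, since $\Res\bC[G]=\bC[H]^{\oplus2}$. Positivity is visibly preserved in both directions, which gives the second isomorphism.

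For (2), the key is to compare $\theta$-stability of $G$-constellations with $\eta$-stability of $H$-constellations, where $\theta=\Res^*\eta$ and $\eta\in\Theta(H)^{G/H}_+$. The chamber $\eta_i>0$ for $i\neq 0$ is the chamber whose moduli space is $H\hilb(\bC^2)=Y_H$; its stable objects are exactly the $H$-clusters $\scZ_y=\Phi^H(\scO_y)$. I would proceed in three steps.

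\textbf{Step one.} Identify the objects. By \pref{lem:DerivedMcKay} and the construction of $\Phi$ in \cite{IU}:
\begin{itemize}
\item for $y\in Y\setminus\pi(R)$, $\Phi(\pi^*\scO_y)$ is the $G$-sheaf $\scO_{\scZ_{y_1}}\oplus\scO_{\scZ_{y_2}}$, where $\{y_1,y_2\}$ is the free orbit over $y$ and $\beta$ exchanges the two summands;
\item for $p\in R$, $\Phi(\scO_p)$ and $\Phi(\scO_p\otimes\sgn)$ are the cluster $\scO_{\scZ_p}$ with its two $G$-equivariant structures.
\end{itemize}
In all cases these are sheaves with $\theta=0$, and their underlying $H$-sheaf is $\eta$-stable (fixed point) or a direct sum of two $\eta$-stable summands (free orbit).

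\textbf{Step two.} Show stability. Let $F\subset E$ be a proper nonzero $G$-subsheaf. Then $\theta(F)=\eta(\Res F)$. In the fixed-point case $\Res F$ is a proper $H$-subsheaf of an $\eta$-stable sheaf, so $\eta(\Res F)>0$ (with King's sign convention). In the free-orbit case $\Res F$ is an $H$-subsheaf of $\scO_{\scZ_{y_1}}\oplus\scO_{\scZ_{y_2}}$, which is polystable with $\eta=0$. Hence $\eta(\Res F)\ge0$, with equality only if $\Res F$ is a direct sum of summands. Since the two summands have disjoint supports and are exchanged by $\beta$, a nonzero proper $\beta$-stable such sum cannot exist, so the inequality is strict.

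I expect this strictness and the identification of $\Phi$ on these objects to be the main care points. The rest is direct, once one checks the sign convention of $\theta$-stability matches: the Hilbert chamber is given by $\theta_i>0$ for nontrivial $i$, equivalently every proper submodule has positive $\theta$, since the cluster is generated by $\rho_0$.

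\textbf{Step three.} The final assertion follows from \pref{pr:alg_stab}(3). Objects with $\theta(E)=0$ have $Z_{\theta,\lambda}(E)=\sqrt{-1}\lambda(E)$ with $\lambda(E)>0$, so they have phase $\frac12$ and are $\sigma_{\theta,\lambda}$-stable.
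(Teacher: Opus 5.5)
Your overall route is the paper's. You prove part (1) from the explicit formulas for $\Res^*$. For part (2) you write $\theta=\Res^*\eta$ with $\eta_i>0$ for $i\neq 0$, treat fixed points directly, and handle free orbits by analysing $\eta=0$ subsheaves of $\scO_{\scZ_{y_1}}\oplus\scO_{\scZ_{y_2}}$; the phase statement then comes from \pref{pr:alg_stab}(3).

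The free-orbit case of Step two, however, is justified by a false claim: the two clusters need not have disjoint supports. Take $y\in\pi(E)\setminus\pi(R)$, which is a nonempty open part of every curve $\pi(E_i)$. Then $y_1$ and $y_2$ both lie on the exceptional locus of $Y_H\to\bC^2/H$, so $\scZ_{y_1}$ and $\scZ_{y_2}$ are both supported at the origin of $\bC^2$. Relatedly, your assertion that $\eta(\Res F)=0$ forces $\Res F$ to be a sum of the given summands does not follow from polystability alone. In $S\oplus S$ with $S$ stable, the subobjects $\{(s,cs)\}$ have $\eta=0$ but are not coordinate summands.

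The missing input is that $\scO_{\scZ_{y_1}}\not\cong\scO_{\scZ_{y_2}}$ as $H$-sheaves. This holds because $y_1\neq y_2$: isomorphic modules have equal annihilators, and here the annihilators are the distinct ideals of $\scZ_{y_1}$ and $\scZ_{y_2}$ (alternatively, use that $\Phi^H$ is an equivalence). Since both sheaves are $\eta$-stable with $\eta=0$, this gives $\Hom(\scO_{\scZ_{y_1}},\scO_{\scZ_{y_2}})=0$.

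Then argue as the paper does. For a $G$-subsheaf $F$ with $\theta(F)=0$, put $K:=F\cap\scO_{\scZ_{y_2}}$ and $Q:=F/K\subset\scO_{\scZ_{y_1}}$. Since $\eta(K),\eta(Q)\ge 0$ and $\eta(K)+\eta(Q)=0$, stability gives $K\in\{0,\scO_{\scZ_{y_2}}\}$ and $Q\in\{0,\scO_{\scZ_{y_1}}\}$. In the mixed case $K=0$, $Q=\scO_{\scZ_{y_1}}$, the subsheaf $F$ is the graph of a morphism $\scO_{\scZ_{y_1}}\to\scO_{\scZ_{y_2}}$, so it equals $\scO_{\scZ_{y_1}}$ by the vanishing. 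Hence $F$ is $0$, a single summand, or everything, and $\beta$-invariance rules out a single summand. (The paper's step \emph{either $E=N$ or $E=\beta^*N$} uses the same vanishing tacitly.)

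With this repair, the rest of your proposal is correct, including part (1) and Step three.
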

\begin{proof}
    \begin{enumerate}
        \item This is a standard consequence of the Clifford theory. Explicitly, for $\eta\in \Theta(H)$, we have
        \begin{align*}
            (\Res^*(\eta))_{i,n-i}&=\eta_i+\eta_{n-i},\,\,\,\text{for $i=1\dots,\lfloor \frac{n}{2} \rfloor-1$}\\
            \,\,(\Res^*(\eta))_{\frac{n}{2}}^{\pm}&=\eta_{\frac{n}{2}},\\
            \,\,(\Res^*(\eta))_{0}^{\pm}&=\eta_0.
        \end{align*}
        The claims follow immediately from this description. 
        \item By (1), $\theta=\Res^*\eta$ for some $0$-generated stability $\eta$. Since $\Phi(\scO_p)$ is an $H$-cluster, $\Phi(\scO_p)$ is $\eta$-stable. Similarly, $\Phi(\pi^*\scO_y)$ is of the form $N\oplus \beta^*N$, where $N$ is an $H$-cluster and $G/H=\la\beta\ra$. The action of $\beta$ is given by the permutation of the two summands. We note that $\Phi(\pi^*\scO_y)$ is $\eta$-semistable since it is the direct sum of $H$-clusters. Let $0\subsetneq E\subsetneq \Phi(\pi^*\scO_y)$ be a $G$-equivariant subsheaf with $\theta(E)=0$. Set $K:=E\cap \beta^*N\subset \beta^*N$ and $Q:=E/K\subset N$. Then, they fit into an exact sequence 
        \begin{align}\label{eq:exact_seq_constellation}
            0\to K\to E\to  Q\to 0.
        \end{align}
        By the $\eta$-semistability and $\theta(E)=0$, $\theta(K)=\theta(Q)=0$ follows.
        Since $N,\beta^*N$ are $\eta$-stable, the only possibility is $K=0,\beta^*N$ and  $Q=0,N$. In particular, by \pref{eq:exact_seq_constellation}, either $E=N$ or $E=\beta^*N$ follows. Since both $N, \beta^*N$ are not closed under the action of $\beta$, it contradicts the assumption that $E$ is a $G$-equivariant sheaf.

        The latter statement follows from \pref{pr:alg_stab}(3).
    \end{enumerate}
\end{proof}
\begin{remark}
    The degenerate stability $\theta\in \Theta(G)^{\wh{G/H}}_+$ appears in the construction of $G/H\hilb(H\hilb(\bC^2))$ \cite{Ishii-Ito-Nolla}. They constructed the desired stability by deforming this $\theta\in \Theta(G)^{\wh{G/H}}_+$ slightly to a generic direction. In our case, $G/H\hilb(H\hilb(\bC^2))$ is isomorphic to $Y$.
\end{remark}

\subsection{Comparison of the two regions}
In this subsection, we compare the geometric and algebraic regions in the stability manifold constructed in \S 3 and \S 4.1. In particular, we show that the corresponding local sections glue and induce the same wall-and-chamber structure. We begin by comparing their central charges:
\begin{lemma}\label{lem:comparison_central_charge}
    There are isomorphisms of affine spaces defined by
    \begin{align*}
        \gamma:\NS_{\orb}(\scY)_\bR&\stackrel{\simeq}{\to}V_0,&
        (\omega,t)\mapsto \left(\Im Z_{\omega,0}^{t,0}(\Phi^{-1}(\rho\otimes \scO_0))\right)_\rho,\\
        \delta:\NS_{\orb}(\scY)_\bR&\stackrel{\simeq}{\to}V_1,&
        (B,s)\mapsto \left(-\Re Z_{0,B}^{0,s}(\Phi^{-1}(\rho\otimes \scO_0))\right)_\rho.
    \end{align*}
\end{lemma}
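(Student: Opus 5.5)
Both $\gamma$ and $\delta$ are affine maps between real affine spaces of the same dimension, so the plan is in three steps: show they land in $V_0$ and $V_1$, compare dimensions, and prove injectivity of the linear parts. Everything is read off from the description of the normalized central charges on $\scY$ in \pref{lem:char'n_of_image_NS_orb}, transported through the isomorphism $\Phi^{-1}$ on numerical Grothendieck groups.

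\emph{Well-definedness.} By \pref{lem:char'n_of_image_NS_orb}, $\Im Z_{*,0}^{*,0}$ is a linear isomorphism from $\NS_{\orb}(\scY)_\bR$ onto the hyperplane $\{W\mid W(\pi^*\scO_y)=0\}$. Likewise, $\Re Z_{0,*}^{0,*}$ is an affine isomorphism onto $\{W\mid W(\pi^*\scO_y)=-1\}$. Since $\Phi$ sends $\pi^*\scO_y$ to a $G$-constellation, we have
\[
[\Phi(\pi^*\scO_y)]=[\bC[G]\otimes\scO_0]=\sum_\rho(\dim\rho)[\rho\otimes\scO_0].
\]
For $y$ off $\pi(R)$ this is $N\oplus\beta^*N$ with $N$ an $H$-cluster. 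It follows that
\[
\sum_\rho\dim\rho\cdot\gamma(\omega,t)_\rho=\Im Z^{t,0}_{\omega,0}(\pi^*\scO_y)=0,
\qquad
\sum_\rho\dim\rho\cdot\delta(B,s)_\rho=-\Re Z^{0,s}_{0,B}(\pi^*\scO_y)=1 .
\]
Hence $\gamma$ lands in $V_0$ and $\delta$ lands in $V_1$.

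\emph{Bijectivity.} $\Phi^{-1}$ induces an isomorphism $K^{\mathrm{num}}_c([\bC^2/G])\cong K^{\mathrm{num}}_c(\scY)$, and the classes $\rho\otimes\scO_0$ form a basis of the source. So evaluation on the classes $\Phi^{-1}(\rho\otimes\scO_0)$ identifies $K^{\mathrm{num}}_c(\scY)^\vee_\bR$ with $\bR^{\Irr G}$. Under this identification:
\begin{itemize}
\item the hyperplane $\{W(\pi^*\scO_y)=0\}$ corresponds exactly to $V_0$;
\item the affine hyperplane $\{W(\pi^*\scO_y)=-1\}$ corresponds exactly to $V_1$ after the sign change.
\end{itemize}
Composing the isomorphisms of \pref{lem:char'n_of_image_NS_orb} with this identification gives $\gamma$ and $\delta$ as isomorphisms onto $V_0$ and $V_1$.

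As a sanity check on dimensions, $\dim\NS_{\orb}(\scY)_\bR=\dim\NS(Y)_\bR+|\pi_0(R)|=N+|\pi_0(R)|$. This equals $|\Irr G|-1$ in both parities:
\begin{itemize}
\item $n$ odd: $N=\frac{n-1}{2}$ and $|\pi_0(R)|=1$, against $|\Irr G|=\frac{n-1}{2}+2$;
\item $n$ even: $N=\frac n2-1$ and $|\pi_0(R)|=2$, against $|\Irr G|=\frac n2+3$.
\end{itemize}

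\emph{Main obstacle.} The only delicate point is that \pref{lem:char'n_of_image_NS_orb} was stated with its proof only sketched. That is, one must know that $\NS_{\orb}(\scY)_\bR$ has the same dimension as $K^{\mathrm{num}}_c(\scY)_\bR$ minus one, and that the pairing with $\psi$ in \pref{eq:SOD_Grothendieck_grp} is nondegenerate. I would justify this by combining \pref{lem:norm_central_charge} on the factor $K^{\mathrm{num}}_c(Y)$ with the explicit basis $[\scO_{p_i}]$ of $K^{\mathrm{num}}_c(R)$, where $Z^{t,s}$ clearly gives an affine isomorphism $\bC^{\pi_0(R)}\to K_c^{\mathrm{num}}(R)^\vee_\bC$. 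The dimension count above then serves as a consistency check.
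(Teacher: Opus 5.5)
Your proposal is correct and is essentially the paper's proof. The paper likewise composes the isomorphisms $\Im Z^{*,0}_{*,0}$ and $\Re Z^{0,*}_{0,*}$ of Lemma~\ref{lem:char'n_of_image_NS_orb} with the identification $-\circ\Phi^{-1}$ (plus a sign for $\delta$). It uses $\Phi^{-1}[\bC[G]\otimes\scO_0]=[\pi^*\scO_y]$ to match the two hyperplanes with $V_0$ and $V_1$.

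There is one slip in your optional dimension check. For $n$ even, $Y$ has $n/2$ exceptional curves: the images of $E_i\cup E_{n-i}$ for $1\le i<n/2$, together with the image of $E_{n/2}$. So $N=n/2$, and the count is $n/2+2=|\Irr G|-1$. With your value $N=n/2-1$ the check would actually fail, though nothing in your argument depends on it.
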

\begin{proof}
    By \pref{eq:im_part_orbZ**},\pref{eq:re_part_algZ**}, $\gamma$ is defined so that the following diagram commutes
    \[
    \begin{tikzcd}
        \NS_{\orb}(\scY)_\bR\arrow[r,"\cong "',"\Im Z_{*,0}^{*,0}"]\arrow[d,"\gamma"]&\{W\mid W(\pi^*\scO_y)=0\}\arrow[r,hook]\arrow[d,"-\circ \Phi^{-1}","\cong "']&{K^{\mathrm{num}}_c(\scY)_\bC^\vee}\arrow[d,"-\circ \Phi^{-1}","\cong "']\\
        V_0\arrow[r,"\cong "',"\Re Z_{0,*}"]&\{W'\mid W'(\bC[G]\otimes \scO_0)=0
        \}\arrow[r,hook]&{K^{\mathrm{num}}_c([\bC^2/G])_\bC^\vee}
    \end{tikzcd}
    \]
    Similarly, by \pref{eq:re_part_orbZ**},\pref{eq:im_part_algZ**}, $\delta$ is defined so that the following diagram commutes
    \[
    \begin{tikzcd}
        \NS_{\orb}(\scY)_\bR\arrow[r,"\cong "',"\Re Z_{0,*}^{0,*}"]\arrow[d,"\delta"]&\{W\mid W(\pi^*\scO_y)=-1\}\arrow[r,hook]\arrow[d,"-1\circ(-)\circ \Phi^{-1}","\cong "']&{K^{\mathrm{num}}_c(\scY)_\bC^\vee}\arrow[d,"-1\circ (-)\circ \Phi^{-1}","\cong "']\\
        V_1\arrow[r,"\cong "',"\Im Z_{0,*}"]&\{W'\mid W'(\bC[G]\otimes \scO_0)=1
        \}\arrow[r,hook]&{K^{\mathrm{num}}_c([\bC^2/G])_\bC^\vee}
    \end{tikzcd}
    \]
\end{proof}
By the arguments above, we have the following commutative diagram:
\[
\begin{tikzcd}
    A^{\mathrm{tot}}\times \scB\arrow[r,hook]&\NS_{\orb}(\scY)_\bR\times \NS_{\orb}(\scY)_\bR\arrow[d,"\gamma\times \delta","\cong"']\arrow[r,"Z_{*,*}^{*,*}"]&{K^{\mathrm{num}}_c(\scY)_\bC^\vee}\arrow[d,"R(-\frac{\pi}{2})\circ(-)\circ \Phi^{-1}","\cong "']\\
    \Theta(G)\times \Lambda_1\arrow[r,hook]&V_0\times V_1\arrow[r,"{Z_{*,*}}"]&{K^{\mathrm{num}}_c([\bC^2/G])_\bC^\vee}
\end{tikzcd}
\]
    where $R(-\frac{1}{2})$ is the action of $\bC$ \pref{eq:def_action}. The equivalence $\Phi$ induces the following identification
    \[\Phi_*: \Stab(\scY)\overset{\sim}{\to}\Stab([\bC^2/G]), (Z,\scA)\mapsto (Z\circ \Phi^{-1}, \Phi\scA)\]
    
    We note that $R(\frac{1}{2})\circ\Phi^{-1}_*$ commutes with the forgetting maps $\Stab(\scY)\to K^{\mathrm{num}}_c(\scY)_\bC^\vee$ and $\Stab([\bC^2/G])\to K^{\mathrm{num}}_c([\bC^2/G])_\bC^\vee$. For the comparison with $\mu_{\mathrm{geom}}$, set
    \[\mu_{\mathrm{alg}}:=R(\frac{1}{2})\circ \Phi^{-1}_*\circ\mu'_{\mathrm{alg}}\circ(\gamma\times \delta): \NS_{\orb}(\scY)_\bR\times \delta^{-1}(\Lambda_1)\to \Stab_{\mathrm{n}}(\scY).\]
    \begin{lemma}\label{lem:property_gamma}
    $\gamma$ restricts to the isomorphism $\NS(Y)_\bR\stackrel{\simeq}{\longrightarrow}\Theta(G)^{\wh{G/H}}$ and moreover, $\Amp(Y)\stackrel{\simeq}{\longrightarrow}\Theta(G)^{\wh{G/H}}_+$.

    \end{lemma}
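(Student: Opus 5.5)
The plan is to show first that $\gamma$ maps $\NS(Y)_\bR=\NS(Y)_\bR\oplus\{0\}\subset\NS_{\orb}(\scY)_\bR$ into the $\wh{G/H}$-invariant part, then compare dimensions, and finally compute $\gamma(\omega,0)$ coordinate by coordinate to match the ample cone with the positivity condition.

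\emph{Step 1: invariance.} For $t=0$ (and $B=0,s=0$), \pref{lem:Z_2-inv_central_charge} says that $Z_{\omega,0}^{0,0}$ is $G/H$-invariant, i.e.\ $Z_{\omega,0}^{0,0}(E)=Z_{\omega,0}^{0,0}(\sgn\otimes E)$. The equivalence $\Phi$ of \pref{lem:DerivedMcKay} is induced by a $G/H$-equivariant kernel. Hence it intertwines $\sgn\otimes-$ on $D_c(\scY)$ with $\sgn\otimes-$ on $D_c([\bC^2/G])$; this can also be read off from the list of simples in \pref{lem:DerivedMcKay}. Therefore $\gamma(\omega,0)_{\rho\otimes\sgn}=\gamma(\omega,0)_\rho$, so $\gamma(\NS(Y)_\bR)\subset\Theta(G)^{\wh{G/H}}$.

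\emph{Step 2: dimensions.} Since $\gamma$ is an isomorphism by \pref{lem:comparison_central_charge}, its restriction is injective, and it suffices to compare dimensions.
\begin{itemize}
\item By \S2.3, $\dim\NS(Y)_\bR=N=\lfloor n/2\rfloor$, the number of compact curves $\Gamma_i:=\pi(E_i)$ for $1\le i\le\lfloor n/2\rfloor$.
\item On $\Theta(G)^{\wh{G/H}}$ the invariance forces $\theta_{0^+}=\theta_{0^-}$, and $\theta_{\frac n2^+}=\theta_{\frac n2^-}$ when $n$ is even. Counting free coordinates and subtracting one for the condition $\theta(v)=0$ gives $\lfloor n/2\rfloor$ in both parities.
\end{itemize}
So the restriction is an isomorphism. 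Alternatively, one can argue via $\Res^*$ and \pref{lem:check_stable_for_comparison}(1).

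\emph{Step 3: the ample cone.} I compute $\gamma(\omega,0)_\rho=\ch_1(\pi_*^{\bZ_2}\Phi^{-1}(\rho\otimes\scO_0))\cdot\omega$. For $t=0$ the twisted-sector part contributes nothing to the imaginary part.
\begin{itemize}
\item For $\rho=\rho_i\oplus\rho_{n-i}$, the object $\Phi^{-1}(\rho\otimes\scO_0)=\scO_{E_i}(-1)\oplus\scO_{E_{n-i}}(-1)$ is the free $\bZ_2$-orbit. Its invariant pushforward is isomorphic to a line bundle on $\Gamma_i$, since $\pi$ maps $E_i$ isomorphically onto $\Gamma_i$. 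Hence the coordinate equals $\omega\cdot\Gamma_i$.
\item For $\rho=\rho_{n/2}^\pm$, Step 1 together with $[\pi^*\scO_{\Gamma_{n/2}}\text{-type}]=[\scO_{E_{n/2}}(-1)]+[\scO_{E_{n/2}}(-1)\otimes\sgn]$ up to point classes shows that the two coordinates are equal. Their sum equals $\omega\cdot\pi_*[E_{n/2}]$, which is a positive multiple of $\omega\cdot\Gamma_{n/2}$.
\item The $0^\pm$ coordinates are then determined by the condition $\theta(v)=0$ and play no role in $\Theta(G)^{\wh{G/H}}_+$.
\end{itemize}
Consequently, $\gamma(\omega,0)\in\Theta(G)^{\wh{G/H}}_+$ if and only if $\omega\cdot\Gamma_i>0$ for all $i$.

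\emph{Step 4: Kleiman.} The curves $\Gamma_i$ are exactly the irreducible compact curves of $Y$, and $Y$ is projective over the affine $\bC^2/G$. By the relative Kleiman criterion, positivity on all $\Gamma_i$ is equivalent to $\omega\in\Amp(Y)$, which completes the identification of the ample cone.

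The main obstacle is Step 3 for the fixed representations $\rho_{n/2}^\pm$ and the bookkeeping with $\pi_*^{\bZ_2}$. The concern is the $\bZ_2$-equivariant structure on $\scO_{E_{n/2}}(-1)$, which might a priori shift $\ch_1$ by twisted-sector or point classes. My first attempt will be to use \pref{eq:class_identity} to express $[\pi_*^{\bZ_2}]$ of both equivariant structures. The twisted correction is supported on $R\cap E_{n/2}$, which is zero-dimensional, so it affects only $\ch_2$ and hence not the imaginary part.
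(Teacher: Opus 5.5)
Your proposal is correct and is essentially the paper's argument. The first claim comes from the $\sgn$-invariance of $Z_{\omega,0}^{0,0}$ in Lemma~\ref{lem:Z_2-inv_central_charge}, and the second from computing, via Lemma~\ref{lem:DerivedMcKay}, that $\gamma(\omega,0)_{\rho_i\oplus\rho_{n-i}}=\omega\cdot\pi(E_i)$ and $\gamma(\omega,0)_{\rho_{n/2}^{\pm}}=\omega\cdot\pi(E_{n/2})$, with the $\rho_0^{\pm}$ entries irrelevant for positivity. The only small difference is that you get surjectivity onto $\Theta(G)^{\wh{G/H}}$ by a dimension count, whereas the paper gets it from the converse direction of Lemma~\ref{lem:Z_2-inv_central_charge} (invariance forces $t=0$) together with the bijectivity of $\gamma$.
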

    \begin{proof}
The former statement follows from  \pref{lem:Z_2-inv_central_charge}. For the latter statement, we need more detailed expressions.
    Note that $\scY\to Y$ is generically an isomorphism along $\pi(E)$ and that $\sgn\otimes F$ and $F$ are isomorphic outside $R$ for $F\in D(\scY)$. For $\omega\in \NS(Y)_\bR$, $\gamma(\omega,0)\in \Theta(G)$ can be calculated as follows via \pref{lem:DerivedMcKay}:
        \begin{align*}
            \theta_{i,n-i}&=\omega\cdot \pi(E_i) \,\,\,\text{for $i=1\dots,\lfloor \frac{n}{2} \rfloor-1$}\\
            \theta_{\frac{n}{2}}^{\pm}&=\omega\cdot \pi(E_\frac{n}{2})\\
            \theta_0^{\pm}&=-\omega\cdot \pi(E),
        \end{align*}
        where $E$ is the exceptional locus. 
        So, the latter statement is immediate from the above description.
    \end{proof}
    
Now we can show that some rotated algebraic stability conditions can be interpreted geometrically:
\begin{lemma}\label{lem:invariant_alg_stab_are_geom}
    Let $(\theta,\lambda)\in \Theta(G)^{\wh{G/H}}_+\times \Lambda_{1}$. Then we have
    \[\left(R(\frac{\pi}{2})\circ \Phi^{-1}_*\right)(\sigma_{\theta,\lambda})=(Z_{\gamma^{-1}(\theta),\delta^{-1}(\lambda) },\coh_c(\scY)).\]
    In particular, we obtain a continuous map $\mu_{\mathrm{alg}}|_{\Amp(Y)\times \delta^{-1}(\Lambda_1)}: $
    \begin{align}
        \Amp(Y)\times \delta^{-1}(\Lambda_1)&\to \Stab_{\mathrm{n}}(\scY),\,\,\label{eq:mu_scY}\\
        (\omega,0,B,s)&\mapsto (Z_{\omega,B}^{0,s},\coh_c(\scY))=:\sigma_{\omega,B}^{0,s}\notag
    \end{align}
\end{lemma}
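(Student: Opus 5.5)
The plan is to compare the two stability conditions one component at a time. A stability condition is determined by its central charge together with its heart. Equivalently, it is determined by its central charge together with the subcategory $\scP(1)$ of semistable objects of phase $1$ together with all the other slices. For the central charge there is nothing to prove beyond diagram chasing. The commutative diagram preceding \pref{lem:property_gamma} (built from \pref{lem:comparison_central_charge}) shows that rotating $Z_{\theta,\lambda}\circ\Phi$ by $R(\frac12)$ gives exactly $Z_{\gamma^{-1}(\theta),B}^{t,s}$ with $(B,s)=\delta^{-1}(\lambda)$. By \pref{lem:property_gamma}, $\gamma^{-1}(\theta)=(\omega,0)$ with $\omega\in\Amp(Y)$. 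So everything reduces to identifying hearts:
\[
\Phi^{-1}\bigl(\scP_{\theta,\lambda}(-\tfrac12,\tfrac12]\bigr)=\coh_c(\scY),
\]
where $\scP_{\theta,\lambda}$ is the slicing of $\sigma_{\theta,\lambda}$. By the description of $R(\frac12)$ in \S 2, the rotated heart is the HRS tilt $\scP(0,\frac12]*\scP(\frac12,1][-1]$ of $\coh_c([\bC^2/G])$.

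Both sides are hearts of bounded t-structures, so it suffices to prove one inclusion. I will show that $\coh_c(\scY)\subset\Phi^{-1}\scP_{\theta,\lambda}(-\frac12,\frac12]$. Every object of $\coh_c(\scY)$ is a finite extension of $0$-dimensional sheaves and of sheaves supported on curves, and the target is extension closed. The $0$-dimensional part is handled directly by \pref{lem:check_stable_for_comparison}(2). The objects $\Phi(\pi^*\scO_y)$ for $y\notin\pi(R)$, together with $\Phi(\scO_p)$ and $\Phi(\scO_p\otimes\sgn)$ for $p\in R$, are $\sigma_{\theta,\lambda}$-stable of phase $\frac12$. Their pullbacks therefore lie in $\scP(\frac12)[-1]\cdot$ after rotation, i.e.\ in the rotated heart. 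These objects are exactly the simple $0$-dimensional sheaves on $\scY$: for $y\notin\pi(R)$ the $\bZ_2$-equivariant point is $\pi^*\scO_y$. Hence all $0$-dimensional objects of $\coh_c(\scY)$ lie in the rotated heart.

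For sheaves with $1$-dimensional compact support I will argue as follows. Such a sheaf $F$ in $\coh_c(\scY)$ has support in $\pi^{-1}$ of the exceptional curves. By the ampleness of $\omega$ we get $\Im Z(F)=\ch_1(\pi_*F)\cdot\omega>0$, together with corrections from the twisted sector coming from $s$ but not from $t=0$. It is then enough to show that $\Phi(F)$ has all HN factors of phase in $(-\frac12,\frac12]$ with respect to $\sigma_{\theta,\lambda}$.

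The cleanest route is a different standard approach. The rotated pair $(Z',\scA')$, with $\scA'=\Phi^{-1}(\scP(-\frac12,\frac12])$, is a stability condition, and so is $(Z',\coh_c(\scY))$. The latter holds because $\sigma^{0,s}_{\omega,B}$ with $t=0$ lies in the boundary region: it is the point from \pref{lem:tilt_of_standard_heart}, and positivity holds for $\omega$ ample as in \pref{lem:stablity_is_constructed} plus the phase-$1$ computation for $\scO_p,\sgn\otimes\scO_p$. Two stability conditions with the same central charge whose hearts share all stable objects of phase $1$ and whose hearts are both tilts of a common heart must coincide. Concretely, I will show $\scA'\cap\coh_c(\scY)$ contains the generators of both. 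This uses $\Phi^{-1}\coh_c([\bC^2/G])={}^{0}\Per$-type heart generated by the objects of \pref{lem:DerivedMcKay}: $\scO_{E_i}(-1)$ and $\omega_E[1]$ with their $\sgn$-twists. The objects $\scO_{E_i}(-1)$ are sheaves. The objects $\omega_E[1]$ have $\Im Z'<0$-type phase, since $\theta_0<0$ after rotation, so they move into the negative part after the tilt. The tilt by $\scP(\frac12,1]$ then removes exactly the $\omega_E[1]$-type pieces.

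The main obstacle is this last heart comparison. Checking that the torsion part $\scP(\frac12,1]$ is exactly $\la \omega_E[1],\omega_E\otimes\sgn[1]\ra$ up to stable $1$-dimensional pieces is analogous to Bridgeland's identification of ${}^{0}\Per$ with the tilt of $\coh$ for $\theta$ in the chamber. I would first try to show that $\Phi^{-1}\scP(\frac12,1]$ consists of objects $T$ with $\mathcal H^0(T)=0$. I would then deduce $\scA'\subset\coh_c(\scY)*\coh_c(\scY)[1]$ and conclude by the positivity of $Z'$ on $\coh_c(\scY)[1]\cap\scA'$ forcing it to vanish. Continuity of the restricted map then follows from \pref{pr:alg_stab}(2).
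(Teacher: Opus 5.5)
\textbf{There is a genuine gap.} Your central-charge comparison is fine and agrees with the paper. You also isolate the right input. By Lemma~\ref{lem:check_stable_for_comparison}(2), the objects $\Phi(\pi^*\scO_y)$ ($y\notin\pi(R)$), $\Phi(\scO_p)$ and $\Phi(\scO_p\otimes\sgn)$ ($p\in R$) are $\sigma_{\theta,\lambda}$-stable of phase $\frac12$. Hence, after $R(\frac12)$, the simple $0$-dimensional objects of $\coh_c(\scY)$ are stable of phase $1$.

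The gap is the only nontrivial step: identifying the rotated heart $\scA'=\Phi^{-1}\scP_{\theta,\lambda}(-\frac12,\frac12]$ with $\coh_c(\scY)$.
\begin{itemize}
\item Your first route (showing $\coh_c(\scY)\subset\scA'$) treats only $0$-dimensional sheaves. For sheaves with $1$-dimensional support you merely restate what must be shown.
\item Your ``cleanest route'' presupposes that $(Z',\coh_c(\scY))$ is already a stability condition. Positivity is easy, but nothing available gives the Harder--Narasimhan property for irrational $\omega$, nor the support property. In the paper these are \emph{consequences} of this very lemma: the proof of Lemma~\ref{lem:smallcomparison_stability} cites it for the support property. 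Lemma~\ref{lem:tilt_of_standard_heart} only compares hearts and produces no stability condition.
\item The uniqueness principle you then invoke (same central charge, same phase-$1$ stable objects, both tilts of a common heart, hence equal) is not a known result, and you do not prove it.
\item The third route, showing that $\Phi^{-1}\scP(\frac12,1]$ has vanishing $\scH^0$, is left open, as you acknowledge. Your concluding positivity step would also need $(\scA'\cap\coh_c(\scY)[1],\scA'\cap\coh_c(\scY))$ to be a torsion pair in $\scA'$, which is not justified.
\end{itemize}

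\textbf{Missing idea.} Phase-$1$ stability of the simple $0$-dimensional objects already pins down the heart. No information about curves or about the tilt is needed. This is Bridgeland's argument \cite[Lemma 10.1]{Bridgeland_K3}, which the paper adapts to $\scY$. Let $\sigma=(Z,\scA)\in\Stab(\scY)$ have all $\pi^*\scO_y$, $\scO_p$, $\scO_p\otimes\sgn$ stable of phase $1$, and let $E$ be $\sigma$-stable of phase $\phi\in(0,1)$. For $S$ any of these simples:
\begin{itemize}
\item $\Hom^i(E,S)=0$ for $i<0$, by phases.
\item $\Hom^i(E,S)=0$ for $i\geq 2$, by Serre duality $\Hom^i(E,S)\cong\Hom^{2-i}(S\otimes\omega_\scY^{-1},E)^*$. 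Here $S\otimes\omega_\scY^{-1}$ is again a listed simple of phase $1$, since $\omega_\scY\cong\scO_\scY\otimes\sgn$. This is why both $\scO_p$ and $\scO_p\otimes\sgn$ are required.
\end{itemize}
The Bridgeland--Maciocia criterion then makes $E$ a two-term complex $E^{-1}\to E^0$ of locally free $\bZ_2$-sheaves. Compact support forces $\scH^{-1}(E)=0$, so $E\in\coh_c(\scY)$.

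A phase-$1$ stable object not isomorphic to a listed simple would similarly be $E^{-1}[1]$ with $E^{-1}$ locally free, which is impossible. Every object of $\scA$ is an iterated extension of stable objects of phases in $(0,1]$, via HN and Jordan--H\"older filtrations. Hence $\scA\subset\coh_c(\scY)$, and equality holds because both are hearts of bounded t-structures.

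Applying this to $R(\frac12)\cdot\Phi^{-1}_*\sigma_{\theta,\lambda}$ gives the heart identification directly. Continuity then follows from Proposition~\ref{pr:alg_stab}(2), as you say.
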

\begin{proof}
    By the definition of $\gamma,\delta$, the two central charges coincide. In order to compare the hearts, we only need the following lemma in combination with \pref{lem:check_stable_for_comparison}. 
    
    For the latter statement, we used \pref{lem:property_gamma}(1) and the continuity of $\mu'_{\mathrm{alg}}$ (\pref{pr:alg_stab}). 
\end{proof}

\begin{lemma}\cite[Lemma 10.1]{Bridgeland_K3}\cite[Lemma 5.4.2]{liu-Shen_root_stack_stab}
    Suppose $\sigma=(Z,\scA)\in \Stab(\scY)$ satisfies the following conditions:
    \begin{enumerate}
        \item $\pi^*\scO_y $ is a $Z$-stable object of phase $1$ for $y\in Y\setminus \pi(R)$.
        \item $\scO_p, \scO_p\otimes \sgn$ are $Z$-stable objects of phase $1$ for $p\in R$.
    \end{enumerate}
    Then, we have $\scA=\coh_c\scY$.
\end{lemma}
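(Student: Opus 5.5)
We follow the argument of \cite[Lemma 10.1]{Bridgeland_K3}, adapted to the root stack. Let $\scP$ be the slicing of $\sigma$, so $\scA=\scP(0,1]$. It suffices to show $\coh_c(\scY)\subset\scA$, since an inclusion of hearts of bounded t-structures is an equality. The objects in hypotheses (1) and (2) are exactly the closed points of $\scY$: the skyscraper sheaves at non-stacky points and the two equivariant skyscrapers at each point of $R$. Together they form a spanning class for $D_c(\scY)$, and every object of $\coh_c(\scY)$ is an iterated extension of them. Since $\scA$ is extension-closed, we cannot simply conclude from this, because the objects themselves lie in $\scP(1)\subset\scA$ but sheaves with positive-dimensional support are not finite extensions of points. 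So we argue through cohomology.

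\textbf{Step 1: $\scA\subset D^{[-1,0]}$ and the position of objects of $\scA$.} Write $\scO_z$ for any of the point objects in (1),(2). For $E\in\scA$ we have $\Hom(E,\scO_z[i])=0$ for $i<0$, since $\scO_z\in\scP(1)$. Also $\Hom(\scO_z,E[i])=0$ for $i<0$, and $\Hom(\scO_z,E)=0$ unless $\scO_z$ is a subobject of $E$. Serre duality on the smooth 2-dimensional DM stack $\scY$ applies for compactly supported objects: $\Hom(\scO_z,E[i])\cong\Hom(E,\scO_z\otimes\omega_\scY[2-i])^\vee$, and $\scO_z\otimes\omega_\scY$ is again a point object. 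From the vanishing $\Hom(E,\scO_z[j])=0$ for $j<0$, and $\Hom(\scO_z,E[j])=0$ for $j<0$, the standard local argument bounds the amplitude of $E$. This argument takes the top nonzero cohomology sheaf $H^k(E)$, picks a point in its support with a nonzero map $H^k(E)\to\scO_z$, and uses the fiberwise criterion on the stack; locally, $\scY$ is a quotient of a smooth surface by $\bZ_2$, so one takes $\bZ_2$-invariants of the usual Koszul argument. The conclusion is that $E$ has cohomology only in degrees $-1,0$, $H^{-1}(E)$ is torsion-free (no 0-dimensional or 1-dimensional subsheaves supported pointwise), and $H^0(E)$ is arbitrary.

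\textbf{Step 2: compare with the heart $\coh_c(\scY)$.} The objects of $\scA$ thus lie in $\langle\mathcal F[1],\coh_c\rangle$ for a torsion-free class $\mathcal F$. But $H^{-1}(E)$ must be compactly supported, and on $\scY$ a compactly supported torsion-free sheaf is zero, since its support would be a proper 2-dimensional substack, which does not exist (its coarse space is non-proper over $\bC^2/G$). Hence $\scA\subset\coh_c(\scY)$, and so $\scA=\coh_c(\scY)$ by the maximality of hearts.

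\textbf{Main obstacle.} The main obstacle is Step 1, showing that $H^{-1}(E)$ has no subsheaves supported in dimension $\le1$, i.e.\ that it is torsion-free rather than merely having no 0-dimensional subsheaf. Bridgeland's proof uses the fact that a sheaf $F$ with $\Hom(\scO_x,F[1])=0$ for all $x$ has homological dimension 0, hence is locally free. Here we would run that argument on $Y_H$ using $\bZ_2$-equivariance: pulling back to the atlas $Y_H$, both $\scO_p$ and $\scO_p\otimes\sgn$ appear, so the vanishing for all point objects on $\scY$ gives vanishing of $\Ext^1(\scO_y,-)$ for the underlying sheaf on $Y_H$ at every $y$. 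This reduces the claim to the scheme case, which is the point of using hypothesis (2) for both equivariant structures.
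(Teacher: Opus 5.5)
There is a genuine gap in Step 1. You run the amplitude argument for an arbitrary $E\in\scA$, but for such $E$ the available vanishing is too weak. From $E,S\in\scA$ you get $\Hom^j(E,S)=0$ for $j<0$. Serre duality together with $\Hom^{<0}(S',E)=0$ gives $\Hom^j(E,S)=0$ for $j>2$. The Bridgeland--Maciocia criterion then only produces a locally free complex in degrees $[-2,0]$. That kills $\scH^{-2}(E)$ by compact support, but says nothing about torsion in $\scH^{-1}(E)$.

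To get a two-term complex in degrees $[-1,0]$ you need
$\Hom^2(E,S)\cong\Hom(S\otimes\omega_\scY^{-1},E)^\vee=0$ for every point object $S$. This is false in general: take $E=S$, or any object of $\scA$ having a point object as a subobject. You even note that $\Hom(\scO_z,E)\neq 0$ exactly in that case, but never exclude it. Your proposed repair via the atlas $Y_H$ has the same defect. The condition $\Hom(\scO_x,\scH^{-1}(E)[1])=0$ would come from $\Hom(\scO_x,E)=0$, which is again this missing vanishing. Equivariance is not the real obstacle. The criterion works $\bZ_2$-equivariantly once both $\scO_p$ and $\scO_p\otimes\sgn$ are used, and you correctly observe that $\scO_p\otimes\omega_\scY\cong\scO_p\otimes\sgn$.

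The missing idea, which is how the paper argues following Bridgeland, is to reduce to stable objects first. $\scA=\scP(0,1]$ is the extension closure of $\sigma$-stable objects of phase in $(0,1]$: use HN filtrations, then Jordan--H\"older filtrations in the finite-length categories $\scP(\phi)$. Since $\coh_c(\scY)$ is extension-closed, it suffices to show that each stable $E$ lies in $\coh_c(\scY)$.
\begin{itemize}
\item If $E$ has phase $\phi<1$, then $\Hom(S,E)=0$ for every point object $S$, because $S$ is semistable of larger phase. This is exactly the vanishing you lack: $\Hom^i(E,S)=0$ for $i\notin\{0,1\}$. Hence $E$ is a two-term locally free complex, and compact support kills $\scH^{-1}(E)$ as in your Step 2.
\item If $E$ is stable of phase $1$ and not itself a point object, stability gives $\Hom(E,S)=0=\Hom(S,E)$. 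So $\Hom^i(E,S)\neq 0$ only for $i=1$, forcing $E\cong L[1]$ with $L$ locally free. This is impossible for a nonzero compactly supported object.
\end{itemize}
This yields $\scA\subset\coh_c(\scY)$, and equality follows since both are hearts. Note that this is the inclusion you actually prove, not the reverse one you announce at the start. With this reduction inserted, your Step 2 finishes the argument; without it, Step 1 does not hold as stated.
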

\begin{proof}
    The proof is essentially the same as \cite[Lemma 10.1]{Bridgeland_K3}. It suffices to show that any stable object with phase $\phi\in (0,1]$ lies in $\coh_c(\scY)$. First, let $E\in \scA$ be a $Z$-stable object with phase $\phi\in (0,1)$. By the assumption and Serre duality, we have  $\Hom^i(E,S)=0$ for any $i<0,1<i$ and any simple object $S$ in $\coh \scY$. Then, as in \cite[Proposition 5.4]{Bridgeland-Maciocia}, $E$ is quasi-isomorphic to 
    \[E^{-1}\to E^0\]
    with locally free $\bZ_2$-sheaf $E^i$. Since $E$ has compact support, $\scH^{-1}(E)\subset E^{-1}$ must be zero. So, $E\in \coh_c(\scY)$. 

    Let $E$ be a stable object with phase 1. By the same argument above, $\Hom^i(E,S)$ vanishes for any $i\neq 1$ and any simple object $S$ in $\coh \scY$. Then $E\cong E^{-1}[1]$ for a locally free $\bZ_2$-sheaf $E^{-1}$. This is impossible since $E$ has compact cohomologies.
\end{proof}
Unfortunately, the geometric stability conditions in \pref{lem:invariant_alg_stab_are_geom} are not contained in the stability conditions we constructed in \S 3. In order to compare them, we need the following:
\begin{lemma}\label{lem:intersection_nonempty_conn}
    \[
\scB':=\scB\cap \delta^{-1}(\Lambda_1)
\]
is a nonempty convex open subset of $\NS_{\orb}(\scY)_\bR$. In particular, it is connected.
\end{lemma}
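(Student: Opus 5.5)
Both $\scB$ and $\delta^{-1}(\Lambda_1)$ are convex open subsets of $\NS_{\orb}(\scY)_\bR$. Indeed, $\scB$ is a finite intersection of open half-spaces (slabs $-1/2<f_*B\cdot C<1/2$), and $\delta$ is an affine isomorphism by \pref{lem:comparison_central_charge}, so $\delta^{-1}(\Lambda_1)$ is the preimage of the open convex set $\Lambda_1\subset V_1$. Hence $\scB'$ is convex and open, and in particular connected once it is nonempty. The whole content of the lemma is therefore nonemptiness, and I would prove it by exhibiting an explicit point.

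The natural candidate is the point $(B,s)=(0,0)$, which lies in $\scB$. It then suffices to check that $\lambda:=\delta(0,0)$ has all entries strictly positive. By definition,
\[
\lambda_\rho=-\Re Z^{0,0}_{0,0}(\Phi^{-1}(\rho\otimes\scO_0)),
\]
and I would compute this from the list in \pref{lem:DerivedMcKay}, using $\Re Z_{0,0}(\pi^*F)=-\ch_2(F)$ on $Y$ together with \pref{eq:Z(skyscraper)} and \pref{eq:Z(sgn_otimes_skyscraper)} for the twisted-sector contributions.

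The computation would go object by object.

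For $\scO_{E_i}(-1)\oplus\scO_{E_{n-i}}(-1)$, this object is $\pi^*\scO_{\pi(E_i)}(-1)$ up to the $\bZ_2$-structure. Its $\ch_2$ is $0$ up to the twisted correction, so I expect $\lambda=0$ there. That would fail strict positivity at $(0,0)$.

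I therefore expect that $(0,0)\notin\delta^{-1}(\Lambda_1)$ in general, so the point must be perturbed. The plan is then:
\begin{enumerate}
\item Observe that $\delta$ is affine, so $\delta(B,s)=\delta(0,0)+L(B,s)$ with $L$ linear.
\item Note that the functionals $\rho\mapsto\Re Z^{0,s}_{0,B}(\Phi^{-1}(\rho\otimes\scO_0))$ span everything, since $\delta$ is an isomorphism onto $V_1$.
\item Conclude that every direction in $V_0$ is realized by some $L(B,s)$.
\end{enumerate}

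The key point is that $\Lambda_1$ is open and its closure contains $\delta(0,0)$ provided all entries of $\delta(0,0)$ are $\ge 0$. So I would:
\begin{enumerate}
\item Verify that $\lambda^0:=\delta(0,0)$ lies in the closed simplex, i.e. each $\lambda^0_\rho\ge0$. This uses the explicit values: for instance, $\omega_E[1]$ should contribute a positive amount via $\ch_2$, and the $\scO_{E_i}(-1)$ terms should contribute $0$ or $\tfrac12$-type values from the twisted sector.
\item Choose a direction $v\in V_0$ pointing into the interior, e.g. $v=\lambda^{\mathrm{int}}-\lambda^0$ with $\lambda^{\mathrm{int}}=\frac{1}{|G|}(\dim\rho)_\rho$.
\item Take $(B,s)=\epsilon\,L^{-1}(v)$ for small $\epsilon>0$.
\end{enumerate}
Then $\delta(B,s)=(1-\epsilon)\lambda^0+\epsilon\lambda^{\mathrm{int}}\in\Lambda_1$. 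Since $\scB$ is an open neighborhood of $0$, this point also lies in $\scB$ for $\epsilon$ small.

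The main obstacle is the first of these steps: the sign verification $\delta(0,0)\in\overline{\Lambda_1}$. This requires careful Chern character computations of the McKay images on $\scY$, in particular $\ch_2$ of $\omega_E$ and of $\scO_{E_i}(-1)$, and the twisted-sector contributions along $R\cap E_{n/2}$.

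I would first try to shortcut this using \pref{lem:check_stable_for_comparison} and \pref{lem:invariant_alg_stab_are_geom}. At $(B,s)=(0,0)$ the central charge is $\bZ_2$-invariant by \pref{lem:Z_2-inv_central_charge}, and the values $-\Re Z$ of the simple objects are the imaginary parts after rotation. These are nonnegative because the simple $G$-equivariant sheaves $\rho\otimes\scO_0$ lie in $\coh_c([\bC^2/G])$, which is the heart of the stability $(Z,\coh_c(\scY))$ transported via $\Phi$ at a boundary point.

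If that shortcut fails, I would do the direct computation with Riemann–Roch on the compactification.
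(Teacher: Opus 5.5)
Your route is the paper's: openness and convexity are formal, and nonemptiness comes from $\delta(0)\in\ol{\Lambda_1}$ together with the fact that $\delta(\scB)$ is an open neighborhood of $\delta(0)$ in $V_1$. Your segment toward $\frac{1}{|G|}(\dim\rho)_\rho$ just makes this explicit.

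The gap is that the only nontrivial input, $\delta(0)_\rho\ge 0$ for every $\rho\in\Irr G$, is never established. The shortcut you would try first does not work, for three reasons:
\begin{enumerate}
\item Positivity in $\coh_c(\scY)$ constrains only $\Im Z^{0,0}_{\omega,0}$ (the $\gamma$-side), whereas $\delta$ records $-\Re Z^{0,0}_{0,0}$.
\item $\Phi(\coh_c(\scY))\neq\coh_c([\bC^2/G])$; for example, $\Phi^{-1}(\rho_0^+\otimes\scO_0)=\omega_E[1]$.
\item A stability condition with central charge $Z_{\theta,\lambda}$ and heart $\coh_c([\bC^2/G])$ is only known (Proposition~\ref{pr:alg_stab}) for $\lambda\in\Lambda_1$, which is exactly what is at stake. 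In fact none exists at $\lambda=\delta(0)$ for $\theta\in\Theta(G)^{\wh{G/H}}_+$, since there $Z_{\theta,\delta(0)}(\rho\otimes\scO_0)=\theta_\rho>0$ for $\rho\neq\rho_0^\pm$.
\end{enumerate}

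The direct computation is short, and the paper likewise only sketches it. It gives $\delta(0)=(0,\dots,0,\tfrac12,\tfrac12)$, with nonzero entries only at $\rho_0^\pm$. Put $E_\rho:=\Phi^{-1}(\rho\otimes\scO_0)$. Then $\delta(0)_\rho=\ch_2(\pi_*^{\bZ_2}E_\rho)+\tfrac12\ell_\rho$, where $\ell_\rho$ is the signed length of $(\bfL i^*(\sgn\otimes E_\rho))^{\bZ_2}$.
\begin{enumerate}
\item If $E_i\cap R=\emptyset$, then $\pi_*^{\bZ_2}E_\rho\cong\scO_C(-1)$ for the $(-2)$-curve $C=\pi(E_i)$. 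So $\ch_2=-1-\tfrac12C^2=0$ and $\ell_\rho=0$.
\item The curves meeting $R$ are $E_{(n\pm1)/2}$ for odd $n$, and $E_{n/2}$ with either linearization for even $n$. For these, $\pi_*^{\bZ_2}E_\rho\cong\scO_C(-1)$ with $C$ the $(-1)$-curve $C_1$, so $\ch_2=-\tfrac12$.
\item In the same cases the twisted sector contributes exactly one point, so $\ell_\rho=1$. For odd $n$ this is because the swap at $p_0$ has a one-dimensional invariant part; for even $n$, because the two fixed points of $E_{n/2}$ carry different characters. The two terms therefore cancel.
\item Finally, $\delta(0)_{\rho_0^+}=\delta(0)_{\rho_0^-}$ by Lemma~\ref{lem:Z_2-inv_central_charge}, since $E_{\rho_0^-}=\sgn\otimes E_{\rho_0^+}$. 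Since $\delta(0)\in V_1$, we have $\sum_\rho\dim\rho\,\delta(0)_\rho=1$, which forces both entries to equal $\tfrac12$.
\end{enumerate}
With this in hand, your perturbation argument is complete.
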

\begin{proof}
    By direct calculation with \pref{lem:DerivedMcKay} and \pref{lem:Z_2-inv_central_charge}, we have $\delta(0)=(0,\dots,0,\frac{1}{2},\frac{1}{2})\in V_1$,whose nonzero entries correspond only to $\rho=\rho_0^+,\rho_0^-$. For example, for $\rho_{\frac{n}{2}}^+$ with even $n$, we can show that the exact triangle \pref{eq:triangle_pi_*} takes the form
        \[\pi^*\scO_{\pi_*E_{\frac{n}{2}}}(i)\to \scO_{E_{\frac{n}{2}}}(-1)\to \scO_Z\otimes \sgn,\]
         where $i$ is some integer and $Z$ is a 0-dimensional scheme on $R$ of length $2i-1$. Indeed, since $\pi_*\scO_{E_\frac{n}{2}}(-1)$ is torsion free, the desired triangle must sit in $\coh_c\scY$. We omit the details of the calculation. Now, $\delta(0)$ lies on the boundary of $\Lambda_1$.
    Since $\delta(\scB)$ is an open neighborhood of $\delta(0)\in V_1$, $\delta(\scB)\cap \Lambda_1\neq \emptyset$. The convexity follows from the convexity of $\scB$ and $\Lambda_1$.
\end{proof}
\begin{remark}
    As shown in the proof, $\delta(0)$ is not contained in $ \Lambda_1$. This is why we need to deform $\sigma_{\omega,0}^{t,0}$ in the $(B,s)$-direction.
\end{remark}
By restricting the continuous map \pref{eq:mu_scY}, we consider  
\[\mu_\scY:=\mu_{\mathrm{alg}}|_{\Amp(Y)\times \scB'}.\]
Now, we can show that the image of $\mu_{\scY}$ lies on the boundary of $\scU(Y)$ constructed in \S 3:
\begin{lemma}\label{lem:smallcomparison_stability}
    Let $(\omega,0,B,s)\in \Amp(Y)\times \scB'$. Then $\sigma_{\omega,B}^{0,s}$ lies on the boundary of $\mu_{\mathrm{geom}}(A^\dagger(Y)\times \scB')$ in $\Stab_{\mathrm{n}}(\scY)$. In particular, we obtain a continuous map
    \[\overline\mu_{\mathrm{geom}}:(A^{\mathrm{tot}}\times \scB)\cup(\Amp(Y)\times \scB')\to \Stab_{\mathrm{n}}(\scY)\]
    such that $\overline\mu_{\mathrm{geom}}|_{A^{\mathrm{tot}}\times \scB}=\mu_{\mathrm{geom}}$ and $\overline\mu_{\mathrm{geom}}|_{\Amp(Y)\times \scB'}=\mu_\scY$ in \pref{eq:mu_scY}.
\end{lemma}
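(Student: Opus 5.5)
We approach the statement through the deformation argument of \pref{lem:Toda_deformation_argument}, in the same way as in the proof of \pref{pr:connect_boundary}. Fix a rational point $(\omega,0,B,s)\in \Amp(Y)\times \scB'$; it suffices to treat rational points, since both $\mu_\scY$ and $\mu_{\mathrm{geom}}$ are continuous sections of $\Pi_{\mathrm{n}}$. For small rational $a>0$, consider the point $(\omega,a,B,s)$ with $t=(a,\dots,a)\in\bR^{\pi_0(R)}$. By the description of $C_{\wt{f}}(Y)$ in \pref{lem:cone_description} for $f=\id_Y$ (here $N=0$, so the cone is just $t>0$), this point lies in $A^\dagger(Y)\times \scB'$. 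Hence $\sigma_{\omega,B}^{a,s}=(Z_{\omega,B}^{a,s},\scC_{\scY/Y}^0*\pi^*\coh_c(Y))\in \scU(Y)$. As $a\to+0$, the central charges $Z_{\omega,B}^{a,s}$ converge to $Z_{\omega,B}^{0,s}$.

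Next we compare the hearts. By \pref{lem:tilt_of_standard_heart}, $\scA_\omega(\scY/Y)=\scC_{\scY/Y}^0*\pi^*\coh_c(Y)=\scF*\scT[-1]$ is the HRS tilt of $\coh_c(\scY)$ at the torsion pair $(\scT,\scF)$. Moreover, $\sigma_{\omega,B}^{0,s}=(Z_{\omega,B}^{0,s},\coh_c(\scY))$ is a genuine point of $\Stab_{\mathrm{n}}(\scY)$ by \pref{lem:invariant_alg_stab_are_geom}, because $(B,s)\in\scB'\subset\delta^{-1}(\Lambda_1)$ and $\gamma(\omega,0)\in\Theta(G)^{\wh{G/H}}_+$ by \pref{lem:property_gamma}. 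Also, $\sigma_{\omega,B}^{a,s}\in\Stab_{\mathrm{n}}(\scY)$ for rational $a$ by \pref{pr:support_property}.

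We then apply \pref{lem:Toda_deformation_argument}. The lemma is stated with the path $(Z_t,\scA)$ having a fixed heart for $t>0$ and with $\sigma_0$ having the tilted heart. So we either use it in that orientation (it is symmetric, since a tilt in one direction is, up to shift, a tilt in the other), or shift the hearts by $[1]$ so that $\coh_c(\scY)$ becomes the tilt of $\scA_\omega(\scY/Y)[1]$. The conclusion is $\lim_{a\to+0}\sigma_{\omega,B}^{a,s}=\sigma_{\omega,B}^{0,s}$, so $\sigma_{\omega,B}^{0,s}$ lies in the closure of $\mu_{\mathrm{geom}}(A^\dagger(Y)\times\scB')$. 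It is not in the image itself, because its central charge has $t=0$, which lies outside $C_{\wt{f}}(Y)$.

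Finally, we glue to obtain $\ol\mu_{\mathrm{geom}}$, exactly as in \pref{cr:total_conti_lift}. Around $x=(\omega,0,B,s)$, choose a neighborhood $U$ of $\mu_\scY(x)$ on which $\Pi_{\mathrm{n}}$ is a homeomorphism. Shrink $U$ so that $\Pi_{\mathrm{n}}(U)\cap(A^\dagger(Y)\times\scB)$ and $\Pi_{\mathrm{n}}(U)\cap(\Amp(Y)\times\scB')$ are connected. Both $\mu_{\mathrm{geom}}$ and $\mu_\scY$ are continuous sections of $\Pi_{\mathrm{n}}$ that meet $U$; for $\mu_{\mathrm{geom}}$ this follows from the limit statement above. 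An open-closed argument then shows that they agree with $(\Pi_{\mathrm{n}}|_U)^{-1}$, which gives continuity of $\ol\mu_{\mathrm{geom}}$ at $x$.

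The main obstacle is the orientation and hypothesis check in \pref{lem:Toda_deformation_argument}. One must verify that each $(Z_{\omega,B}^{a,s},\scA_\omega(\scY/Y))$ for rational $a$ and the limit point are honest stability conditions, and that the tilt relation goes in the direction the lemma requires. If the orientation fails, we would instead rerun Toda's argument directly: show that the objects of $\scT[-1]$ have phase tending to $0^+$ and the objects of $\scF$ have phases bounded inside $(0,1]$, so that the slicings converge.
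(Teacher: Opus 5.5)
Your proposal is correct and follows the paper's proof step for step. You reduce to rational points, approach $\sigma_{\omega,B}^{0,s}$ along $t=(a,\dots,a)$ with $a\to 0^+$ inside $A^\dagger(Y)\times\scB'$, combine Lemma~\ref{lem:tilt_of_standard_heart} with the support property from Lemma~\ref{lem:invariant_alg_stab_are_geom} to invoke Lemma~\ref{lem:Toda_deformation_argument}, and then glue by the local-sheet argument of Corollary~\ref{cr:total_conti_lift}. Your orientation worry is unnecessary: $\coh_c(\scY)=\scT*\scF$ is the tilt of $\scA_\omega(\scY/Y)=\scF*\scT[-1]$ at its torsion pair $(\scF,\scT[-1])$, which is exactly the orientation already used in Proposition~\ref{pr:connect_boundary}.
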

\begin{proof}
    The proof of the former claim is similar to \pref{pr:connect_boundary}. In other words, it is enough to show the claim for rational points, and then, together with the support property (\pref{lem:invariant_alg_stab_are_geom}) and the fact that $\coh_c(\scY)$ is an HRS tilt of $\scC_{\scY/Y}^0*\pi^*\coh_c(Y)$ (\pref{lem:tilt_of_standard_heart}), Toda's deformation argument (\pref{lem:Toda_deformation_argument}) applies. Hence $\sigma_{\omega,B}^{0,s}$ lies on the boundary of $\mu_{\mathrm{geom}}(A^\dagger(Y)\times\scB')$.

    It remains to be shown that the extension is continuous. Since both $\mu$ and $\mu_\scY$ are local sections of the local homeomorphism $\Pi_{\mathrm{n}}$, the same local-sheet argument as in \pref{cr:total_conti_lift} applies. Namely, near any point $(\omega,0,B,s)\in \Amp(Y)\times\scB'$, the boundary condition above implies that the two sections choose the same local branch of $\Pi_{\mathrm{n}}$. Therefore $\overline{\mu}_{\mathrm{geom}}$ locally coincides with the inverse of this branch, and is continuous.
\end{proof}
Now, we have two local sections of $\Pi_{\mathrm{n}}:\Stab_{\mathrm{n}}(\scY)\to  \NS_{\orb}(\scY)_\bC$ :
\begin{align*}
    \overline{\mu}_{\mathrm{geom}}&:(A^{\mathrm{tot}}\times \scB)\cup(\Amp(Y)\times \scB')\to \Stab_{\mathrm{n}}(\scY),\\
    \mu_{\mathrm{alg}}&:\NS_{\orb}(\scY)_\bR\times \delta^{-1}(\Lambda_1)\to \Stab_{\mathrm{n}}(\scY).
\end{align*}
\begin{theorem}\label{thm:two_slices_glue}
     $\overline{\mu}_{\mathrm{geom}}$ and $\mu_{\mathrm{alg}}$ glue.
\end{theorem}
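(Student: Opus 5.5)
To prove \pref{thm:two_slices_glue}, I would show that the two local sections agree on the intersection of their domains
\[
\scD:=\big((A^{\mathrm{tot}}\times \scB)\cup(\Amp(Y)\times \scB')\big)\cap \big(\NS_{\orb}(\scY)_\bR\times \delta^{-1}(\Lambda_1)\big).
\]
Since $\Pi_{\mathrm{n}}$ is a local homeomorphism (\pref{thm:deformation_thm} together with the cartesian diagram \pref{eq:cartesian_Stab_n_orb}), two continuous sections of $\Pi_{\mathrm{n}}$ over a connected set agree everywhere as soon as they agree at one point. The reason is that their coincidence locus is open, by taking local sheets as in \pref{cr:total_conti_lift}, and it is closed because $\Stab_{\mathrm{n}}(\scY)$ is Hausdorff. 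So the plan has two steps: first exhibit a point, or an open subset, where they agree; then prove that $\scD$ is connected.

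For the first step, I would use \pref{lem:invariant_alg_stab_are_geom}. For $(\omega,0,B,s)\in \Amp(Y)\times \scB'$, that lemma says $\mu_{\mathrm{alg}}(\omega,0,B,s)=\sigma^{0,s}_{\omega,B}=(Z^{0,s}_{\omega,B},\coh_c(\scY))$. By \pref{lem:smallcomparison_stability}, this is exactly $\overline\mu_{\mathrm{geom}}(\omega,0,B,s)=\mu_\scY(\omega,0,B,s)$. Hence the two sections already coincide on the nonempty set $\Amp(Y)\times\scB'$. Here $\scB'$ is nonempty by \pref{lem:intersection_nonempty_conn}, and $\Amp(Y)$ corresponds to the positive invariant cone $\Theta(G)^{\wh{G/H}}_+$ by \pref{lem:property_gamma}. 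Since $\Amp(Y)\times\scB'$ lies in the closure of $A^\dagger(Y)\times\scB'$, the local-sheet argument extends the agreement to an open neighbourhood of it inside $A^\dagger(Y)\times \scB'$.

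For the second step, I would describe the domain of $\mu_{\mathrm{alg}}$ as $\NS_{\orb}(\scY)_\bR\times\delta^{-1}(\Lambda_1)$. The $B$-coordinate then ranges over $\scB\cap\delta^{-1}(\Lambda_1)=\scB'$ on the $A^{\mathrm{tot}}$ part, and over $\scB'$ on the other part as well. So
\[
\scD=\big(A^{\mathrm{tot}}\cup \Amp(Y)\big)\times \scB'.
\]
The factor $\scB'$ is convex by \pref{lem:intersection_nonempty_conn}. The factor $A^{\mathrm{tot}}$ is connected, as noted before \pref{cr:total_conti_lift}. The set $\Amp(Y)$ is adjacent to $A^\dagger(Y)$, since $(\omega,0)$ is a limit of $(\omega+D,t)$ with $(D,t)\in C_{\wt{\id}}(Y)$ tending to $0$ (\pref{lem:cone_description}). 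Hence $A^{\mathrm{tot}}\cup\Amp(Y)$ is connected, and so $\scD$ is connected.

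The main obstacle is the openness of the agreement locus at points of $\Amp(Y)\times\scB'$. These points lie on the boundary of the geometric domain, not in its interior, so the naive argument "two sections on an open connected set" does not directly apply there. I would handle this as in \pref{lem:smallcomparison_stability}: take a neighbourhood $U$ on which $\Pi_{\mathrm{n}}$ is a homeomorphism and which contains $\mu_\scY(x)$. Shrink $U$ so that $\Pi_{\mathrm{n}}(U)\cap\scD$ is connected, which is possible because locally $\scD$ is a convex-polyhedral region times a convex set. Then both sections equal $(\Pi_{\mathrm{n}}|_U)^{-1}$ on this set, since each hits $U$ at $x$. The agreement thereby propagates across the whole of $\scD$, and the glued map is a continuous section on the union of the two domains.
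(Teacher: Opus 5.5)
Your proposal is correct and follows essentially the same route as the paper. The paper also argues that the sections already agree on $\Amp(Y)\times\scB'$ (by Lemmas~\ref{lem:invariant_alg_stab_are_geom} and~\ref{lem:smallcomparison_stability}), that the overlap $(A^{\mathrm{tot}}\cup\Amp(Y))\times\scB'$ is connected, and then propagates agreement by the local-sheet argument of Corollary~\ref{cr:total_conti_lift}; your extra shrinking at boundary points is harmless but unnecessary, since once both sections are continuous on the overlap (which Lemma~\ref{lem:smallcomparison_stability} supplies), their coincidence locus is automatically open in it.
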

\begin{proof}
    We show that the two local sections are the same on the overlap $(A^{\mathrm{tot}}\cup \Amp(Y))\times \scB'$. By \pref{thm:deformation_thm}, $\Pi$ is a local homeomorphism. Since the overlap $(A^{\mathrm{tot}}\cup \Amp(Y))\times \scB'$ is connected by \pref{lem:intersection_nonempty_conn}, by using the arguments in the proof of \pref{cr:total_conti_lift}, we only have to show that the images of the two sections $\overline{\mu}_{\mathrm{geom}},\mu_{\mathrm{alg}}$ on $(A^{\mathrm{tot}}\cup \Amp(Y))\times \scB'$ have a nonempty intersection. By \pref{lem:invariant_alg_stab_are_geom} and \pref{lem:smallcomparison_stability}, the images of $\Amp(Y)\times \scB'$ coincide.
\end{proof}

\begin{corollary}\label{cr:wall&chamber_induced}
    The wall-and-chamber structure of $A^{\mathrm{tot}}$ defined in \pref{lem:constancy_geom_chamber} is obtained by restricting the GIT wall-and-chamber structure of $\Theta(G)$ via the embedding $\gamma: A^{\mathrm{tot}}\subset \NS_{\orb}(\scY)_\bR\stackrel{\simeq}{\longrightarrow}\Theta(G)$. Moreover, we can associate a GIT chamber $V(X)$  to each contraction $Y\to X$ such that 
    \begin{enumerate}
        \item $V(X)$ contains $\gamma(A^{\dagger}(X))$.
        \item if $X\to X'$ is a contraction of a $(-1)$-curve, $\ol{V(X)}\cap\ol{V(X')}$ is real codimension one in $\Theta(G)$.
    \end{enumerate}
\end{corollary}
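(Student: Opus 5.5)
The argument transports the wall-and-chamber structure across the glued section of \pref{thm:two_slices_glue}, working at a fixed $B$-field parameter. Fix $(B,s)\in\scB'$, which is nonempty by \pref{lem:intersection_nonempty_conn}, and set $\lambda:=\delta(B,s)\in\Lambda_1$. By \pref{thm:two_slices_glue}, on $A^{\mathrm{tot}}\times\{(B,s)\}$ we have
\[\mu_{\mathrm{geom}}(-,(B,s))=\mu_{\mathrm{alg}}(-,(B,s))=R(\tfrac12)\circ\Phi^{-1}_*\circ\mu'_{\mathrm{alg}}(\gamma(-),\lambda).\]
Neither the rotation $R(\frac12)$ nor the equivalence $\Phi_*$ changes which objects are (semi)stable; they only relabel them. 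Moreover, $\Phi$ sends the class $[\pi^*\scO_y]$ to $v=[\bC[G]\otimes\scO_0]$, since $\Phi(\pi^*\scO_y)$ is a $G$-constellation (compare \pref{lem:check_stable_for_comparison}). Therefore the actual walls in $\Stab(\scY)$ for the class $[\pi^*\scO_y]$ correspond to the actual walls in $\Stab([\bC^2/G])$ for $v$.

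Next I would compare the two pulled-back structures. By \pref{lem:constancy_geom_chamber}, the wall-and-chamber structure on $A^{\mathrm{tot}}$ is the pullback of the one on $\Stab(\scY)$ along $\mu_{\mathrm{geom}}(-,(B,s))$. By \pref{pr:alg_stab}(4), the GIT structure on $\Theta(G)$ is the pullback along $\mu'_{\mathrm{alg}}(-,\lambda)$. Combining these with the identity displayed above shows that the structure on $A^{\mathrm{tot}}$ equals $\gamma^{-1}$ of the GIT structure. This proves the first statement.

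For the chambers, I observe that $A^\dagger(X)$ is connected and, by \pref{pr:moduli_reconstruction_sets} and \pref{pr:wall&chamber_Stab}, meets no actual wall. Hence $\gamma(A^\dagger(X))$ is a connected set avoiding all GIT walls, so it lies in a single GIT chamber, and I define $V(X)$ to be that chamber. Genericity requires checking that $\gamma(A^\dagger(X))$ does not lie inside a GIT wall, since GIT walls are exactly the loci where strictly semistable $G$-constellations exist. This holds because the objects in $M_\sigma([\pi^*\scO_y])$ are stable and $A^\dagger(X)$ is open of full dimension in $\NS_{\orb}(\scY)_\bR\cong\Theta(G)$. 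With this, (1) follows.

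For (2), by \pref{pr:connect_boundary} the set $\gamma(A^\dagger_h(X))$ is a codimension-one subset of $\overline{\gamma(A^\dagger(X))}\cap\overline{\gamma(A^\dagger(X'))}\subset\overline{V(X)}\cap\overline{V(X')}$. The intersection of the closures of two distinct chambers of a polyhedral fan has codimension at least one. It therefore remains to show $V(X)\neq V(X')$, and I expect this to be the main obstacle. I would argue that the chambers are distinct because the moduli spaces differ: by \pref{pr:moduli_reconstruction_sets}, as sets these are $\{\bfL\wt f^*\scO_x\}$ and $\{\bfL\wt{f'}^*\scO_{x'}\}$. For a point $x'$ on the $(-1)$-curve $C$, the object $\bfL\wt{f'}^*\scO_{x'}$ is not of the form $\bfL\wt f^*\scO_x$, since applying $\bfR\wt{g}_!$ separates them. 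So the two sets of stable objects differ, and the regions are separated by an actual wall. Consequently $V(X)\neq V(X')$ and the intersection has codimension exactly one.
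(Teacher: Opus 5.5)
Your proposal is correct and follows the paper's proof. You fix $(B,s)\in\scB'$, use Theorem~\ref{thm:two_slices_glue} to identify $\mu_{\mathrm{geom}}(-,(B,s))$ with $\mu_{\mathrm{alg}}$ on $A^{\mathrm{tot}}\times\{(B,s)\}$, compare the two pulled-back structures via Lemma~\ref{lem:constancy_geom_chamber} and Proposition~\ref{pr:alg_stab}(4), and place the connected set $\gamma(A^{\dagger}(X))$ in a single GIT chamber.

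Your explicit check that $V(X)\neq V(X')$ fills in a step the paper only asserts: you separate the stable sets $\{\bfL\wt{f}^*\scO_x\}$ with $\bfR\wt{g}_!$, and you sensibly rely only on the set-theoretic Proposition~\ref{pr:moduli_reconstruction_sets}, since Proposition~\ref{pr:moduli_reconstruction_var} itself depends on this corollary. One small point: ``meets no actual wall'' requires excluding strictly semistable objects. The argument of Proposition~\ref{pr:moduli_reconstruction_sets} gives this verbatim for semistable objects, or you can take it directly from Lemma~\ref{lem:constancy_geom_chamber}.
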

\begin{proof}
    By \pref{pr:alg_stab},  the GIT chamber structure on $\Theta(G)$ is the same as the wall-and-chamber structure obtained by pulling back that of $\Stab_{\mathrm{n}}(\scY)$ for any $\lambda\in \Lambda_1$ along $\mu_{\mathrm{alg}}\circ (\gamma\times \delta)^{-1}|_{\Theta(G)\times \{\lambda\}}$. By \pref{lem:constancy_geom_chamber}, the same is true for the wall-and-chamber structure on $A^{\mathrm{tot}}$ via $\mu_{\mathrm{geom}}$. If we take any $(B,s)\in \scB'$, by \pref{thm:two_slices_glue}, the following diagram commutes:
    \[
    \begin{tikzcd}
        A^{\mathrm{tot}}\times \{(B,s)\}\arrow[r,"\mu_{\mathrm{geom}}"]\arrow[d,"\gamma", hook]&\Stab_{\mathrm{n}}(\scY)\\
        \Theta(G)\times \{\delta(B,s)\}\arrow[ru, "\mu_{\mathrm{alg}}\circ \gamma^{-1}"']
    \end{tikzcd}
    \]
     So, the wall-and-chamber structure corresponds by pullback, and then the former statement is shown. For the latter statement, for each contraction $Y\to X$, $\gamma(A^{\dagger}(X))$ is contained in a GIT chamber $V(X)$ in $\Theta(G)$. The property on the single blowup follows from the property of $A^{\dagger}(X)$.
\end{proof}
\begin{example}

Let $n=3$, i.e., $G$ is a dihedral reflection group of order $6$. $H=G\cap \SL_2(\bC)$ is a cyclic group of order $3$. Then, $Y_H=H\hilb(\bC^2)$ has two $(-2)$-curves $E_1,E_2$, corresponding to the non-trivial representations of $H$. The action of $G/H\cong \bZ_2$ on $Y_H$  permutes $E_1$ and $E_2$. The ramification divisor $R$ on $Y_H$ is isomorphic to $\bA^1$ and passes through $E_1\cap E_2$. Let $C$ be the image of $R$ under the quotient map $\pi:Y_H\to Y$. For any $D\in \NS_{\orb}(\scY)_\bR$, we can write $[D]=t[R]+a[C]$ for $t,a\in \bR$. The wall-and-chamber structure of $A^{\mathrm{tot}}$ can be described as in Figure \ref{fig:1}. The dotted line and the solid lines are the inverse image of the lines in $\Theta(G)$ under $\gamma^{-1}$ (see the explanation below). We note that $\NS(Y)_\bR$ is the subspace of $\NS_{\orb}(\scY)_\bR$ defined by the condition $t=0$. 

On the other hand, by taking the coordinate on $\Theta(G)$ given by $\theta_{0}^-,\theta_{1,2}$, we can also describe the GIT wall-and-chamber structure on $\Theta(G)$ as in Figure \ref{fig:2}. This can be calculated using Nolla--Sekiya's description \cite[Theorem 6.4]{Nolla-Sekiya}, which is used in Capellan's proof of Ishii's conjecture \cite{capellan_dihedral}. In this
figure, the solid lines denote walls, while the dotted line denotes only
the axis and is not a wall.  We note that $\gamma(A^{\dagger}(\bC^2/G))$ is strictly contained in $V(\bC^2/G)$. In this description, $\Theta(G)^{\wh{G/H}}$ is the subspace defined by $\theta_{0}^+=\theta_{0}^-$,  or equivalently, $\theta_0^-+\theta_{1,2}=0$.

On $\Theta(G)$, there are two kinds of symmetries of the chamber structure as in \cite[Lemma 2.6]{Craw-Ishii}. Explicitly, one is induced by tensoring with a character $\rho\in \wh{G/H}$. In our case, the nontrivial action is given by \[\theta\mapsto \theta(-\otimes \sgn).\] In the above coordinates, this action is described by
\[-\otimes \sgn: \theta_{1,2}\mapsto \theta_{1,2}, \,\,\,\theta_{0}^-\mapsto \theta_0^+=-2\theta_{1,2}-\theta_0^-.\]
The other symmetry is induced by the dual:
\[\theta\mapsto -\theta((-)^\vee).\]
In this example, all irreducible representations are self-dual. As a result, we have an action of $\bZ_2\times \bZ_2$ on the set of GIT chambers. In our case, one can see that any GIT chamber belongs to the orbit of either $V(Y)$ or $V(\bC^2/G)$ under this action. In this sense, the chambers $\{V(X)\}$ determine all GIT chambers up to these symmetries.

\begin{figure}[h]
\begin{minipage}{16em}
\centering
\begin{tikzpicture}[scale=1]
\fill[gray, pattern= north west lines](0,0) --  (2,2) -- (0,2) --cycle;
%\fill[gray, pattern= north west lines](0,0) -- (2,0) -- (2,2) -- (0,2) --cycle;
%%%%%%%%%%%%%%%%%
\fill[lightgray, pattern = north east lines](0,0) -- (-2,0) -- (-2,2)--(0,2) --cycle;
%\fill[lightgray, pattern = horizontal lines](0,0) -- (-2,2) -- (0,2) --cycle;
%%%%%%%%%%%%%%%%%

\draw[line width = 2pt,white](0,0)to (2,2);
\draw[thick, dotted](-2,-2)to (2,2);
\draw(-2,-1)to(2,1);
\draw[->](0,0) to (2.2,0);
\draw[->](0,0) to (0,2.2);
\draw(0,0) to (-2,0);
\draw(0,0) to (0,-2);
\node(A) at (65:1.5){\contour{white}{$A^\dagger(\bC^2/G)$}};
\node(B) at (-1,1){\contour{white}{$A^{\dagger}(Y)$}};
\coordinate[label=right:\(a\)](x)at(2.1,0);
\coordinate[label=above:\(t\)](x)at(0,2.1);
\end{tikzpicture}
\caption{\\$\{A^\dagger(X)\}_X$ in $\NS_{\orb}(\scY)_\bR$}
\label{fig:1}
\end{minipage}
\begin{minipage}{16em}
\centering
\begin{tikzpicture}[scale=1]
\fill[gray, pattern= north west lines](0,0) -- (2,-1) -- (2,2) -- (0,2) --cycle;
%\fill[gray, pattern= north west lines](0,0) -- (2,0) -- (2,2) -- (0,2) --cycle;
%%%%%%%%%%%%%%%%%
\fill[lightgray, pattern = north east lines](0,0) -- (-2,2) -- (0,2) --cycle;
%\fill[lightgray, pattern = horizontal lines](0,0) -- (-2,2) -- (0,2) --cycle;
%%%%%%%%%%%%%%%%%
\draw[line width = 3pt,white](0,0)to (2,0);
\draw[->](2.1999,0)to (2.2,0);
\draw[dotted,thick](0,0) to (2.2,0);
\draw[->](0,0) to (0,2.2);
\draw[dotted,thick](0,0) to (-2,0);
\draw(0,0) to (0,-2);
\draw(-2,2)to (2,-2);
\draw(-2,1)to(2,-1);
\node(A) at (1,1){\contour{white}{$V(\bC^2/G)$}};
\node(B) at (115:1.5){\contour{white}{$V(Y)$}};
\coordinate[label=right:\(\theta_0^{-}\)](x)at(2.1,0);
\coordinate[label=above:\(\theta_{1,2}\)](x)at(0,2.1);
\end{tikzpicture}
\caption{\\$\{V(X)\}_X$ in $\Theta(G)$}\label{fig:2}
\end{minipage}
\end{figure}

\end{example}

\subsection{Bridgeland moduli and King moduli}
Finally, we prove Ishii's conjecture in our setting. 
Let
$f:Y\to X$ be a projective birational morphism over $\bC^2/G$.
First,
 we prove the existence of the Bridgeland moduli space for our stability conditions $\sigma_{\omega,B}^{t,s}$ via King moduli spaces, bypassing
\pref{cr:wall&chamber_induced}. 

\begin{corollary}\label{cr:existence_of_moduli}Let $\sigma\in \scU(X)=\mu_{\mathrm{geom}}(A^{\dagger}(X)\times \scB)$. Then the moduli functor $\scM_{\sigma}([\pi^*\scO_y])$ is represented by a quasi-projective variety $M_{\sigma}([\pi^*\scO_y])$.
 \end{corollary}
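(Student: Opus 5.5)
The plan is to transport the moduli problem across the gluing of \pref{thm:two_slices_glue} to a King moduli problem of $G$-constellations, where representability is known by \cite{Craw-Ishii}.

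\textbf{Step 1: move into the overlap.} By \pref{pr:wall&chamber_Stab} and \pref{lem:constancy_geom_chamber}, the set of $\sigma$-stable objects of class $[\pi^*\scO_y]$ is constant on the connected chamber $\scU(X)$. The moduli functor is defined purely in terms of $\sigma$-stable objects in the heart, and rotation by $R(w)$ neither changes stability nor does anything beyond shifting. Hence $\scM_\sigma([\pi^*\scO_y])$ depends only on the chamber, up to a fixed shift identifying hearts. We may therefore replace $\sigma$ by $\mu_{\mathrm{geom}}(x)$ with $x=(f^*\omega+D,t,B,s)$ and $(B,s)\in\scB'$, which is possible since $\scB'\neq\emptyset$ by \pref{lem:intersection_nonempty_conn}. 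By \pref{thm:two_slices_glue}, $\mu_{\mathrm{geom}}(x)=R(\tfrac12)\circ\Phi^{-1}_*(\sigma_{\theta,\lambda})$ with $\theta=\gamma(f^*\omega+D,t)$ and $\lambda=\delta(B,s)\in\Lambda_1$.

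\textbf{Step 2: reduce to King moduli.} Under $\Phi$ together with the rotation, objects of class $[\pi^*\scO_y]$ in the heart of $\mu_{\mathrm{geom}}(x)$ that are stable of phase $1$ correspond to $\sigma_{\theta,\lambda}$-stable objects of class $v=[\bC[G]\otimes\scO_0]$ and phase $\tfrac12$ in $\coh_c([\bC^2/G])$. These objects have $\theta=0$, so by \pref{pr:alg_stab}(3) they are exactly the $\theta$-stable $G$-constellations. Moreover $\theta$ is generic, because it lies in the open chamber $\gamma(A^\dagger(X))$ by \pref{cr:wall&chamber_induced}.

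\textbf{Step 3: identify the functors.} We must check that flat families correspond. A flat family of $\theta$-stable $G$-constellations over $S$ is an $S$-flat sheaf, hence $S$-perfect. Conversely, let $E_S$ be an $S$-perfect complex on $[\bC^2/G]\times S$ whose derived fibers are $G$-constellations, i.e.\ sheaves. A standard lemma (e.g.\ \cite[Lemma 3.31]{Huybrechts} style, or \cite{Bayer-Craw-Zhang}) then shows $E_S$ is a sheaf flat over $S$. The Fourier--Mukai kernel of $\Phi$, base-changed to $S$, gives relative equivalences compatible with derived restriction to fibers and with $\otimes q^*L$. Hence $\scM_\sigma([\pi^*\scO_y])\cong\scM_\theta(v)$ as functors, and the latter is represented by the quasi-projective scheme $M_\theta(v)$ of \cite{Craw-Ishii}. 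It is a variety (reduced, irreducible) because, as a set, it is $X$ by \pref{pr:moduli_reconstruction_sets} and it is smooth; alternatively one cites Craw--Ishii/King's description.

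\textbf{Main obstacle.} The delicate point is Step 3, making the functorial identification rigorous. This needs the relative version of $\Phi$ over an arbitrary base $S$, and the passage from $S$-perfect complexes with sheaf fibers to flat sheaves on the stack $[\bC^2/G]\times S$. I would prove the latter by the usual argument: take the top nonvanishing cohomology sheaf of $E_S$ and apply Nakayama to its restrictions to fibers, which must vanish in the wrong degrees; this works locally on $S$ and $G$-equivariantly.
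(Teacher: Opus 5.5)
Your proposal is correct and follows the paper's proof: deform inside the chamber $A^{\dagger}(X)\times\scB$ to a point with $(B,s)\in\scB'$, use Theorem~\ref{thm:two_slices_glue} to write $\sigma=R(\tfrac12)\circ\Phi^{-1}_*(\sigma_{\theta,\lambda})$, and check that $E_S\mapsto\Phi_S(E_S)$ identifies $\scM_{\sigma}([\pi^*\scO_y])$ with the King functor $\scM_\theta(v)$; the paper delegates this last step to \cite[Proposition 7.3.1]{Bayer-Craw-Zhang}, and you sketch it via the standard flatness lemma. The only unsupported remark is your aside that the moduli space is smooth and hence a variety: at this point the argument, like the paper's, only gives representability by the quasi-projective scheme $M_\theta(v)$, and irreducibility and smoothness come afterwards from the morphism $X\to M_{\sigma}([\pi^*\scO_y])$ together with the tangent-space argument of Proposition~\ref{pr:moduli_reconstruction_var}, not from the set-theoretic bijection alone.
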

 \begin{proof}
     Using \pref{cr:wall&chamber_induced}, by deforming $\sigma$ inside a chamber in $\Stab(\scY)$, we may assume that $\sigma=\mu_{\mathrm{alg}}((\gamma\times \delta)^{-1}(\theta,\lambda))$ for some $(\theta,\lambda)\in \Theta(G)\times \Lambda_1$. Since the King moduli functor is representable, it remains to show that, for any separated scheme of finite type $S$, 
     \[\scM_{\sigma}(v)(S)\to \scM_\theta(v)(S),\,\,\, E_S\mapsto \Phi_S(E_S), \]
     is a bijection, where $\Phi_S:=\Phi\times \id_S:D(\scY\times S)\to D([\bC^2/G]\times S)$ is the relative Fourier-Mukai functor. This follows from the same argument as in \cite[Proposition 7.3.1]{Bayer-Craw-Zhang}.
 \end{proof}
 
 The following proposition completes the proof of \pref{mthm:A}:
 \begin{proposition}\label{pr:moduli_reconstruction_var}
     Let $\sigma\in \scU(X)$. Then, $M_{\sigma}([\pi^*\scO_y])$ is isomorphic to $X$.
 \end{proposition}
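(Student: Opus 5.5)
The plan is to realize $X$ as a fine moduli space of the family $\{\bfL \wt{f}^*\scO_x\}_{x\in X}$ and compare it with the representing object from \pref{cr:existence_of_moduli}. By \pref{cr:existence_of_moduli}, $\scM_\sigma([\pi^*\scO_y])$ is represented by a quasi-projective variety $M:=M_{\sigma}([\pi^*\scO_y])$. By \pref{pr:moduli_reconstruction_sets}, its closed points are exactly the objects $\bfL \wt{f}^*\scO_x$ with $x\in X$. So it suffices to construct a morphism $X\to M$ that is bijective on points and then show it is an isomorphism.

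\emph{Step 1: the family.} Consider $\scO_\Delta\in D(X\times X)$ and set
\[
\scE:=\bfL(\wt{f}\times \id_X)^*\scO_\Delta\in D(\scY\times X).
\]
This complex is $X$-perfect, because $\scO_\Delta$ is flat over the second factor $X$ and pullback along $\wt{f}\times\id$ preserves this. Its derived fibre at $x$ is $\bfL \wt{f}^*\scO_x$ by base change. By \pref{lem:stability_of_pullback} and the proof of \pref{pr:moduli_reconstruction_sets}, each such fibre is $\sigma$-stable in $\scA_\omega(\scY/X)$ of class $[\pi^*\scO_y]$. (If $\sigma$ is not itself of the form $\sigma^{t,s}_{f^*\omega+D,B}$ at a rational point, we first move it within the chamber $\scU(X)$; the moduli functor does not change, by \pref{pr:wall&chamber_Stab}.) Hence $\scE$ is an $X$-point of $\scM_\sigma$, and we obtain a classifying morphism $\phi:X\to M$, which is bijective on closed points.

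\emph{Step 2: $\phi$ is an isomorphism.} Distinct $x$ give non-isomorphic fibres, because $\bfR\wt{f}_*\bfL\wt{f}^*\scO_x\cong \scO_x$ (using $\bfR\wt{f}_*\scO_\scY=\scO_X$). So $\phi$ is injective. I would show $\phi$ is étale, or at least injective on tangent spaces. The tangent space of $M$ at $[\bfL\wt{f}^*\scO_x]$ is $\Ext^1(\bfL\wt{f}^*\scO_x,\bfL\wt{f}^*\scO_x)$. By adjunction and $\bfR\wt{f}_*\scO_\scY=\scO_X$, this equals $\Ext^1_X(\scO_x,\scO_x)\cong T_xX$, which is $2$-dimensional, and the Kodaira--Spencer map of $\scE$ realizes exactly this identification. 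Thus $M$ is smooth of dimension $2$ at every point and $d\phi$ is an isomorphism, so $\phi$ is étale and bijective. By Zariski's main theorem it is an open immersion onto all of $M$, hence an isomorphism. Alternatively, if $M$ is known to be irreducible and normal, bijectivity plus birationality suffices.

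\emph{Main obstacle.} I expect the hardest step to be checking that $\phi$ is indeed an isomorphism as schemes, and not merely a bijection. One must control the deformation theory of $\bfL\wt{f}^*\scO_x$ in $D_c(\scY)$, including the case where $x$ lies on the image of $R$ or of the exceptional curves. The first thing I would try is the adjunction computation of $\Ext^1$ above, combined with the fact that $\Ext^2(\bfL\wt{f}^*\scO_x,\bfL\wt{f}^*\scO_x)$ is $1$-dimensional and equal to the trace part, so $M$ is smooth of dimension $2$. A fallback is to transport the question via $\Phi$ and \pref{cr:existence_of_moduli} to the King moduli space $M_\theta$, which is known to be a smooth irreducible surface for generic $\theta$. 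Then $\phi$ is a bijective morphism between smooth surfaces, and therefore an isomorphism.
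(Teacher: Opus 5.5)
Your proposal is correct and follows the paper's proof essentially verbatim. The paper likewise uses the family $\bfL(\wt{f}\times\id_X)^*\scO_{\Delta}$ to get a classifying morphism $X\to M_\sigma([\pi^*\scO_y])$, which is bijective on closed points by \pref{pr:moduli_reconstruction_sets}. It then concludes that this morphism is an isomorphism from the full faithfulness of $\bfL\wt{f}^*$, which is exactly your adjunction computation $\Ext^1(\bfL\wt f^*\scO_x,\bfL\wt f^*\scO_x)\cong T_xX$ using $\bfR\wt{f}_*\scO_{\scY}\cong\scO_X$. Your additional steps fill in details the paper leaves implicit: deducing smoothness of $M$ (via unobstructedness, or via $2\le\dim\scO_{M,\phi(x)}\le\dim T_{\phi(x)}M=2$ from quasi-finiteness), and then passing from étale and bijective to an isomorphism by Zariski's main theorem.
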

 \begin{proof}
     In \pref{pr:moduli_reconstruction_sets}, we already proved that the map
    \[X\to M_{\sigma}([\pi^*\scO_y])\]  sending $x$ to $\bfL \wt{f}^*\scO_x$ is a bijection at the level of closed points. This map can be upgraded to a morphism of varieties since $(\bfL \wt{f}\times \id_X)^*\Delta_X\in D(\scY\times X)$ is a flat family of $\sigma$-stable objects with the class $[\pi^*\scO_y]$ and $M_{\sigma}([\pi^*\scO_y])$ is the fine moduli space (\pref{cr:existence_of_moduli}).
 Since 
    $\bfL \wt{f}^*: D_c(X)\to D_c(\scY)$
    is fully faithful, the map induced on the tangent space is an isomorphism. Thus, the map is an isomorphism.
 \end{proof}
 
 \begin{remark}\label{rm:existence_moduli}
     Toda uses the finiteness of the generator to prove the existence of moduli (especially the openness in Inaba's moduli of simple complexes). This is not obvious in our case. Instead, we prove the existence problem by using the moduli of representations, which is easier to construct.
 \end{remark}

 The following corollary is first proved by Capellan:
 \begin{corollary}[\cite{capellan_dihedral}]\label{cr:verify_conj}
     For any projective birational morphism $X\to \bC^2/G$ dominated by the maximal resolution $Y$ of the pair $(\bC^2/G,B)$, there is a generic stability $\theta\in \Theta(G)$ such that $M_\theta(v)\cong X$.
 \end{corollary}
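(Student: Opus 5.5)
Fix a projective birational morphism $X\to\bC^2/G$ dominated by $Y$, i.e.\ a smooth contraction $f:Y\to X$. Let $V(X)\subset\Theta(G)$ be the GIT chamber supplied by \pref{cr:wall&chamber_induced}, so that $\gamma(A^\dagger(X))\subset V(X)$, and pick any $\theta\in\gamma(A^\dagger(X))$. This $\theta$ lies in an open GIT chamber, so it is generic. The goal is then to show $M_\theta(v)\cong X$.

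To identify $M_\theta(v)$, choose $(B,s)\in\scB'$, which is nonempty by \pref{lem:intersection_nonempty_conn}, and put $\lambda:=\delta(B,s)\in\Lambda_1$. Also put $x:=(\gamma^{-1}(\theta),(B,s))\in A^\dagger(X)\times\scB'$, and consider the stability conditions
\[
\sigma:=\mu_{\mathrm{geom}}(x)\in\scU(X),\qquad \sigma':=\mu_{\mathrm{alg}}(x)=R(\tfrac12)\circ\Phi^{-1}_*(\sigma_{\theta,\lambda}).
\]
By \pref{thm:two_slices_glue} we have $\sigma=\sigma'$. Rotation by $R(\tfrac12)$ preserves (semi)stability, and $\Phi$ sends $[\pi^*\scO_y]$ to $v=[\bC[G]\otimes\scO_0]$, because $\Phi(\pi^*\scO_y)$ is a $G$-constellation for $y\notin\pi(R)$; this is used in \pref{lem:check_stable_for_comparison}. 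Hence $\Phi$ induces a bijection between $\sigma$-stable objects of class $[\pi^*\scO_y]$ and $\sigma_{\theta,\lambda}$-stable objects of class $v$.

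These $\sigma_{\theta,\lambda}$-stable objects of class $v$ have phase $\tfrac12$ and $\theta(E)=0$. By \pref{pr:alg_stab}(3) they are exactly the $\theta$-stable $G$-constellations, since $\coh_c([\bC^2/G])$ is the heart and objects of class $v$ with $H^0\cong\bC[G]$ are $G$-constellations. The relative Fourier--Mukai argument in the proof of \pref{cr:existence_of_moduli}, following \cite[Proposition 7.3.1]{Bayer-Craw-Zhang}, upgrades this bijection to an isomorphism of moduli functors $\scM_\sigma([\pi^*\scO_y])\cong\scM_\theta(v)$. It follows that $M_\theta(v)\cong M_\sigma([\pi^*\scO_y])$, and the right-hand side is isomorphic to $X$ by \pref{pr:moduli_reconstruction_var}.

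The main point needing care is the matching of the point $x$ across the two sections: it must lie in the common domain of $\mu_{\mathrm{geom}}$ and $\mu_{\mathrm{alg}}$, namely $A^{\mathrm{tot}}\times\scB'$. This holds by the choice $(B,s)\in\scB'$. We also need that the central charge identification and the rotation convention ($R(\tfrac12)$ taking phase $\tfrac12$ to phase $1$) are consistent with the normalization $Z(\pi^*\scO_y)=-1$, which is exactly the commutative diagram preceding the definition of $\mu_{\mathrm{alg}}$. Finally, the genericity of $\theta$ could alternatively be checked directly: the $\sigma$-semistable objects of class $[\pi^*\scO_y]$ are all stable by the argument of \pref{pr:moduli_reconstruction_sets}, which shows that no strictly semistable objects occur.
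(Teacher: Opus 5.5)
Your proposal is correct and follows the paper's proof. You pick $\theta\in\gamma(A^\dagger(X))\subset V(X)$ and $\lambda\in\delta(\scB')$, then chain $M_\theta(v)\cong M_{\sigma_{\theta,\lambda}}(v)\cong M_{\sigma}([\pi^*\scO_y])\cong X$ using \pref{pr:alg_stab}, \pref{thm:two_slices_glue} (with the functor comparison from \pref{cr:existence_of_moduli}) and \pref{pr:moduli_reconstruction_var}, and you additionally spell out the class and phase bookkeeping the paper leaves implicit.
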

 \begin{proof}
     Let $X$ be such a contraction. By \pref{cr:wall&chamber_induced}, there is a GIT chamber $V(X)$ in $\Theta(G)$ containing $\gamma( A^{\dagger}(X))$. By choosing any $\lambda\in \delta(\scB')$ and any $\theta\in \gamma( A^{\dagger}(X))$, we have 
     \[M_{\theta}(v)\cong M_{\sigma_{\theta,\lambda}}(v)\cong M_{\sigma_{\gamma^{-1}(\theta),\delta^{-1}(\lambda)}}(v)\cong X. \]
     Here, we used \pref{pr:alg_stab} for the first isomorphism,  \pref{thm:two_slices_glue} for the second isomorphism, and \pref{pr:moduli_reconstruction_var} for the last isomorphism.
 \end{proof}
 
 \begin{remark}\label{rm:capellan_approach}
     Capellan's proof depends on a  $G$-equivariant embedding $\bC^2\hookrightarrow \bC^3$ and the study of the variation of GIT on the moduli space of $G$-constellations on $\bC^3$ due to \cite{Nolla-Sekiya}. It is easy to write down the equation of walls in $\Theta(G)$ using the description of wall-and-chamber structure in \cite{Nolla-Sekiya}. Our approach is very different from his and provides a more conceptual interpretation of this statement. 
 \end{remark}

\section{Appendix}
In this section, we provide a proof of \pref{pr:standardstability} for clarity. The proof is based on the proof of the corresponding statement for smooth projective surfaces. Many steps are simpler since all compactly supported objects satisfy $\ch_0=0$

\begin{condition}\label{cd:omegaB} We consider two conditions on $(\omega,B)\in \ol{A}(X)$:
\begin{enumerate}
    \item $(\omega, B)\in A(X)=\Amp(X)\times \NS (X)$.
    \item $(\omega,B)\in f^*\Amp(Y)\times  \scB_f$ for a contraction $f:X\to Y$ of a single $(-1)$-curve $C$.
\end{enumerate}
\end{condition}

\begin{lemma}Suppose that $(\omega,B)\in \ol{A}(X)$ consists of rational classes.
    Then, $\sigma_{\omega,B}$ is a Bridgeland stability condition.
\end{lemma}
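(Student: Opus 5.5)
We verify the two axioms separately, treating the two cases of \pref{cd:omegaB} in turn. The Harder--Narasimhan property will come from \pref{lem:criterion_HN}, so the work is positivity, noetherianity of the heart, and discreteness of $\Im Z_{\omega,B}$.

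\emph{Case (1).} Here $\scA_\omega=\coh_c(X)$. Every nonzero $E\in\coh_c(X)$ has $\ch_0(E)=0$. If $E$ has one-dimensional support, then $\ch_1(E)$ is a nonzero effective combination of proper curves. Since $\omega$ is ample, $\Im Z_{\omega,B}(E)=\ch_1(E)\cdot\omega>0$. If $E$ is zero-dimensional, then $\ch_1(E)=0$ and $\ch_2(E)=\mathrm{length}(E)>0$, so $Z_{\omega,B}(E)=-\ch_2(E)<0$, which lies on the ray of phase $1$. Hence $Z_{\omega,B}(E)\in\bH$. The category $\coh_c(X)$ is noetherian. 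As $\omega$ is rational, $\ch_1(E)\cdot\omega\in\frac{1}{m}\bZ$ for a fixed $m$, so the image of $\Im Z$ is discrete.

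\emph{Case (2).} Here $\omega=f^*\omega_Y$ with $\omega_Y$ ample on $Y$, and $\scA_\omega={}^{-1}\Per_c(X/Y)=\scF_{-1}[1]*\scT_{-1}$. Noetherianity follows from Van den Bergh's description of ${}^{-1}\Per(X/Y)$ as modules over a sheaf of algebras $f_*\mathcal{E}nd(\scO_X\oplus\scO_X(-C))$ on $Y$, restricted to compact support. Discreteness of $\Im Z$ holds as in Case (1). For positivity, I would use that $\scT_{-1}\cap\scF_{-1}$-type objects are controlled by $f$: objects of $\scF_{-1}$ satisfy $f_*E=0$, so they are of the form $\scO_C(-1)^{\oplus k}$ up to extension, i.e.\ lie in $\la\scO_C(-1)\ra_{\mathrm{ex}}$. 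Since $\omega\cdot C=0$, their shifts $E[1]$ have $\Im Z=0$ and
\[
Z_{\omega,B}(\scO_C(-1)[1])=-\bigl(C\cdot B-\ch_2(\scO_C(-1))\bigr)=-(B\cdot C+\tfrac12),
\]
using $\ch_2(\scO_C(-1))=-\tfrac12$ from $C^2=-1$ and $\deg=-1$. This value is negative precisely when $B\cdot C>-\tfrac12$, which is where the bound $\scB_f$ enters. For $E\in\scT_{-1}$ we have $\Im Z(E)=f_*\ch_1(E)\cdot\omega_Y\ge 0$. If this vanishes, $E$ is supported on $C$ union points. Then $\ch_1(E)=kC$ and $E$ is generated by its global sections relative to $f$, so it is filtered by $\scO_C$ and skyscrapers. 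Here
\[
Z(\scO_C)=B\cdot C-\tfrac12<0
\]
by the other half of the bound in $\scB_f$, and $Z(\scO_x)=-1$.

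The main obstacle is this last classification: showing that every $E\in\scT_{-1}$ with $\Im Z_{\omega,B}(E)=0$ has a filtration with factors $\scO_C$ or $\scO_x$ (equivalently, that the simple objects of phase-$1$-candidate type in ${}^{-1}\Per_c$ are $\scO_C$, $\scO_C(-1)[1]$, and $\scO_x$ for $x\notin C$). I would first try to obtain this from Van den Bergh's list of simple objects supported on $f^{-1}(p)$, via the equivalence with modules over the algebra $A$ above.
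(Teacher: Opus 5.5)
Your overall plan is the paper's. In case (2), positivity is reduced to the values of $Z_{\omega,B}$ on $\scO_C$, $\scO_C(-1)[1]$ and skyscrapers, which is exactly where $-\frac12<B\cdot C<\frac12$ enters. HN comes from noetherianity plus discreteness of $\Im Z_{\omega,B}$; your appeal to Van den Bergh's generator $\scO_X\oplus\scO_X(-C)$ for noetherianity is a fine substitute for the paper's reference to Toda's Lemma 5.2.

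However, the $\scF_{-1}$ step is false as written: a sheaf with $f_*F=0$ need not lie in $\la\scO_C(-1)\ra_{\mathrm{ex}}$. For example, $\scO_C(-2)\in\scF_{-1}$, but $\chi(\scO_C(-2))=-1$, whereas every iterated extension of copies of $\scO_C(-1)$ has $\chi=0$. So your computation says nothing about, e.g., $\scO_C(-2)[1]$. The conclusion survives. For $0\neq F\in\scF_{-1}$ you have $\ch_1(F)=kC$ with $k\geq 1$ and $H^0(F)=H^0(f_*F)=0$, so $\chi(F)\leq 0$. Riemann--Roch with $K_X\cdot C=-1$ gives $\ch_2(F)=\chi(F)-\frac k2$, hence
\[
Z_{\omega,B}(F[1])=\chi(F)-k\bigl(B\cdot C+\tfrac12\bigr)<0 .
\]
Alternatively, argue as the paper does via Toda's Lemma 3.12, using the simple objects $\scO_C$ and $\scO_C(-1)[1]$ of ${}^{-1}\Per_c(X/Y)$ over $p:=f(C)$. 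For instance, the Euler sequence exhibits $\scO_C(-2)[1]$ as an extension of $\scO_C(-1)[1]^{\oplus 2}$ by $\scO_C$ inside the heart. That route handles $\scF_{-1}[1]$ and $\scT_{-1}$ at once.

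The step you flag as the main obstacle is genuinely needed, and as you leave it open, positivity is not yet proved. The same computation gives $\Re Z_{\omega,B}(T)=k(B\cdot C+\frac12)-\chi(T)$ for $T\in\scT_{-1}$ with $\ch_1(T)=kC$. So you need $\chi(T)\geq k$, which fails for general sheaves on $C$ (e.g.\ $\scO_C(-1)$). There is a short direct argument that avoids Van den Bergh's classification:
\begin{enumerate}
\item By definition of $\scT_{-1}$, $T$ is a quotient of $f^*f_*T$, and the finite-length sheaf $f_*T$ is filtered by skyscrapers.
\item $f^*$ is right exact, with $f^*\scO_p=\scO_C$ (since $\mathfrak{m}_p\scO_X=\scO_X(-C)$) and $f^*\scO_y$ a skyscraper for $y\neq p$.
\item Hence $f^*f_*T$, and therefore its quotient $T$, has a filtration whose factors are quotients of $\scO_C$ or of skyscrapers, i.e.\ copies of $\scO_C$ or $0$-dimensional sheaves.
\end{enumerate}
This is exactly the filtration you wanted. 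With these two repairs your argument is complete.
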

\begin{proof} We divide the proof into two cases (1),(2):
    \begin{enumerate}
        \item We prove the positivity first. Let $0\neq E\in \coh_c(X)$. $\Im Z_{\omega,B}(E)=\ch_1(E)\cdot \omega\geq 0$ since $\omega$ is ample. Suppose $\Im Z_{\omega,B}(E)=0$. Then, $\ch_1(E)=0$ by the ampleness of $\omega$. So $E$ is a 0-dimensional sheaf. In the numerical Grothendieck group, we can write $[E]=a[\scO_x]$ for some $a\in {\bZ_{>0}}$. So $Z_{\omega,B}(\scO_x)=-1$ implies $Z_{\omega,B}(E)\in \bR_{<0}$. 

        The HN-property follows from the discreteness of $\Im Z_{\omega,B}=\omega\cdot \ch_1$ by applying \pref{lem:criterion_HN}.
        \item Firstly, we show the positivity. Note that for any $E\in {}^{-1}\Per_c(X/Y)$, $\ch_1(E)\cdot f^*\omega=\ch_1(\scH^0(E))\cdot f^*\omega$ holds since $f_*\scH^{-1}(E)=0$. Let $ 0\neq E\in{}^{-1}\Per_c(X/Y).$ $\Im Z_{f^*\omega,B}(E)=\ch_1(E)\cdot f^*\omega \geq 0$ follows from the nefness of $f^*\omega$. Suppose $\Im Z_{f^*\omega,B}(E)=0$. Then, $E$ is 0-dimensional outside $C$. Then, as in \cite[Lemma 3.12]{Toda_ext_cont}, it is enough to show that \[Z_{f^*\omega,B}(\scO_C), Z_{f^*\omega,B}(\scO_C(-1)[1])\in \bH.\] This is equivalent to $-\frac{1}{2}<B\cdot C<\frac{1}{2}$.

        The HN-property follows similarly to \cite[Lemma 5.2]{Toda_ext_cont} (our situation is easier since we need fewer HRS-tilts from $\coh_c(X)$).
    \end{enumerate}
\end{proof}
    
\begin{proposition}\label{pr:quadratic_form_(1)}
    Assume \pref{cd:omegaB}(1). There is a constant $C_\omega>0$ that depends only on the class $\bP(\NS(X)_\bR)$ such that for any $\sigma_{\omega,B}$-semistable object $E\in \coh_c(X)$ we have
        \[\ch_1(E)^2+C_\omega\frac{(\ch_1(E)\cdot \omega)^2}{\omega^2}\geq 0.\]
\end{proposition}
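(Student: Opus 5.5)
The plan is to show that the inequality already holds for \emph{every} nonzero $E\in\coh_c(X)$, not only for $\sigma_{\omega,B}$-semistable ones. Because $\ch_0(E)=0$, it reduces to a finite-dimensional estimate on the cone spanned by the finitely many proper curves of $X$.

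\textbf{Step 1: $\ch_1(E)$ lies in a finite effective cone.} Let $f:X\to S$ be the projective birational morphism to the affine surface $S$. Every proper curve in $X$ is contracted by $f$, since its image in the affine $S$ is proper and connected, hence a point. So the proper curves of $X$ are among the finitely many $f$-exceptional curves $C_1,\dots,C_m$. For $0\neq E\in\coh_c(X)$ we have $\ch_0(E)=0$. Moreover, $\ch_1(E)$ is the cycle of the one-dimensional part of $\operatorname{Supp}E$ counted with multiplicities. That support is proper, so
\[\ch_1(E)=\sum_{i=1}^m a_iC_i,\qquad a_i\in\bZ_{\geq 0}.\]
Thus $\ch_1(E)$ lies in the closed polyhedral cone $K:=\sum_i\bR_{\geq0}[C_i]\subset \NS(X)_\bR$.

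\textbf{Step 2: compactness estimate.} The function $D\mapsto D\cdot\omega$ is linear and strictly positive on $K\setminus\{0\}$, because $\omega$ is ample and each $C_i$ is a proper curve. Let $\Delta:=\{D\in K\mid D\cdot\omega=1\}$. This is a compact polytope, the convex hull of the points $C_i/(C_i\cdot\omega)$. The quadratic function $D\mapsto D^2$ is continuous on $\Delta$, so it is bounded below by some $-c_\omega$ with $c_\omega\geq0$. By homogeneity, $D^2\geq -c_\omega (D\cdot\omega)^2$ for all $D\in K$. Here one could also invoke Grauert–Mumford negative definiteness of the intersection form on the $C_i$; this shows $D^2\le 0$ on $K$ and explains why a lower bound is the whole content, but it is not needed for the estimate. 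Setting $C_\omega:=c_\omega\,\omega^2+1$, with $\omega^2$ understood through the compactification convention of \S2 as a fixed positive normalizing constant, yields
\[\ch_1(E)^2+C_\omega\frac{(\ch_1(E)\cdot\omega)^2}{\omega^2}\geq 0.\]
If $\ch_1(E)=0$, the inequality is trivial.

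\textbf{Step 3: dependence only on the ray of $\omega$.} Replacing $\omega$ by $\lambda\omega$ with $\lambda>0$ leaves $\frac{(\ch_1\cdot\omega)^2}{\omega^2}$ unchanged, since it is homogeneous of degree $0$. It also leaves $\Delta$ unchanged up to rescaling, so $c_\omega\,\omega^2$ is invariant. Hence $C_\omega$ depends only on the class of $\omega$ in $\bP(\NS(X)_\bR)$. It is also locally bounded as $\omega$ varies in $\Amp(X)$, because $\min_{i}C_i\cdot\omega/|C_i|$ varies continuously; this is what the subsequent support-property argument will use.

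\textbf{Main obstacle.} The only real point is Step 1: identifying the possible $\ch_1$ of compactly supported sheaves with a finitely generated effective cone on which $\omega$ is strictly positive. The subtlety is making precise that $\omega^2$ and the intersection numbers are taken via a compactification, as in \S2. Once that is in place, semistability plays no role, in contrast with the Bogomolov-type argument needed in the projective case.
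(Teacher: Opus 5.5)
Your proposal is correct and follows essentially the same route as the paper: the paper also ignores semistability and uses only that $\ch_1(E)$ is effective, bounding $D^2$ below by a multiple of $(D\cdot\omega)^2$ via $|D^2\,\omega^2|\le A_\omega\|D\|^2$ and $D\cdot\omega\ge B_\omega\|D\|$ for effective $D$, with $C_\omega=A_\omega/B_\omega^2$. Your compact-slice argument on the cone spanned by the finitely many exceptional curves is a repackaging of these norm estimates, and it usefully justifies the existence of $B_\omega$, which the paper asserts without comment.
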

\begin{proof}
    This is proved in \cite[Corollary 7.3.3]{Bayer-Macri-Toda_BG1}. We note that $\ch_1(E)$ is an effective divisor for $E\in \coh_c(X)$. Fix a norm $||-||$ on $\NS(X)$. There is a constant $A_\omega>0$ such that $|D^2\cdot\omega^2|\leq A_\omega||D||^2$ holds for any $D\in \NS(X)$. Also, there is a constant $B_\omega>0$ such that $D\cdot\omega\geq B_\omega ||D||$ holds for any effective divisor $D\in \NS(X)$. Set $C_\omega:=A_\omega/B_\omega^2$. Then, for any effective divisor $D$, we have\[ D^2\cdot\omega^2+C_\omega(D\cdot \omega)^2\geq D^2\cdot\omega^2+|D^2\cdot\omega^2|\geq 0\]
\end{proof}
We need an inequality in the case (2) analogous to \pref{pr:quadratic_form_(1)}. 
\begin{proposition}\label{pr:quadratic_form_(2)}
    Assume \pref{cd:omegaB}(2). There is a constant $C_\omega>0$ that depends only on the class $\bP(\NS(X)_\bR)$ such that for any $\sigma_{\omega,B}$-semistable object $E\in {}^{-1}\Per_c(X/Y)$ we have 
        \[\ch_1(E)^2+C_\omega\frac{(\ch_1(E)\cdot \omega)^2}{\omega^2}\geq -1.\]
\end{proposition}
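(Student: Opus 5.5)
The plan is to use the structure of ${}^{-1}\Per_c(X/Y)$ to split $\ch_1(E)$ into an effective part and a multiple of the exceptional curve $C$. The effective part is then handled by the argument of \pref{pr:quadratic_form_(1)}, and the $C$-part by $C^2=-1$. The phase-$1$ case is treated separately, and there the stated bound turns out to be false; see the third paragraph.

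\textbf{Step 1 (structure of $\ch_1$).} For $E\in{}^{-1}\Per_c(X/Y)$ we have $\scH^{-1}(E)\in\scF_{-1}$, so $f_*\scH^{-1}(E)=0$. Such sheaves are successive extensions of copies of $\scO_C(-1)$, hence $\ch_1(\scH^{-1}(E))=a\,C$ with $a\geq 0$. Writing $D:=\ch_1(\scH^0(E))$, which is effective, we get
\[
\ch_1(E)=D-aC,\qquad \ch_1(E)\cdot\omega=D\cdot\omega,
\]
because $\omega=f^*\omega_Y$ satisfies $C\cdot\omega=0$.

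\textbf{Step 2 (phase $<1$).} Suppose $E$ is semistable with $\Im Z_{\omega,B}(E)>0$. Then $D\cdot\omega>0$, but $D\cdot\omega$ no longer controls $\|D\|$, since $\omega$ is only nef. I would decompose $D=D'+bC$ with $D'=f^*f_*D$, so that $D'\cdot C=0$ and $b\geq 0$. Then
\[
\ch_1(E)^2=D'^2-(b-a)^2 .
\]
Since $\omega_Y$ is ample on $Y$ and $f_*D$ is effective, the argument of \pref{pr:quadratic_form_(1)} on $Y$ gives $D'^2+C_\omega(D'\cdot\omega)^2/\omega^2\geq0$. The remaining task is to bound $|b-a|\leq 1$ for semistable $E$. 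I would get this from semistability. Excess copies of $\scO_C$ in $\scH^0(E)$ give quotients $E\twoheadrightarrow\scO_C$, and excess copies of $\scO_C(-1)[1]$ give subobjects. Both of these have phase $1$, which forces $\phi(E)=1$ and contradicts $\Im Z_{\omega,B}(E)>0$. What survives is at most one unit of discrepancy, coming from non-split extensions of the type $0\to\scO_C(-1)\to F\to\scO_x$ inside $\scH^0$, and this yields the constant $-1$. Making this precise is the main obstacle. I would first try to compute $\Hom(E,\scO_C)$ and $\Hom(\scO_C(-1)[1],E)$ via adjunction for $f$, in the style of \cite[Lemma 3.12]{Toda_ext_cont}, to conclude $|b-a|\leq1$.

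\textbf{Step 3 (phase $1$).} Suppose $E$ is semistable with $\Im Z_{\omega,B}(E)=0$. Then $D\cdot\omega=0$, so $E$ is an extension of copies of $\scO_x$, $\scO_C$ and $\scO_C(-1)[1]$, and $\ch_1(E)=mC$. A direct computation gives
\[
Z_{\omega,B}(\scO_C)=B\cdot C-\tfrac12,\qquad Z_{\omega,B}(\scO_C(-1)[1])=-B\cdot C-\tfrac12,
\]
and both are negative real numbers when $B\in\scB_f$. Consequently $\scO_C^{\oplus 2}$ is $\sigma_{\omega,B}$-semistable of phase $1$, with $\ch_1^2=-4$ and $\ch_1\cdot\omega=0$. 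So the inequality as literally stated fails for semistable objects of phase $1$, and no choice of $C_\omega$ repairs it.

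What can be proved in this case is a bound in terms of the central charge. Since every class contributing to $m$ has $|Z|\geq \tfrac12-|B\cdot C|>0$, we get
\[
\ch_1(E)^2=-m^2\geq -\Bigl(\tfrac12-|B\cdot C|\Bigr)^{-2}|Z_{\omega,B}(E)|^2 .
\]
This replacement is what the support-property argument downstream actually needs: a quadratic form that is nonnegative on semistable classes up to a multiple of $|Z|^2$. I would therefore prove the stated inequality exactly for semistable objects of phase $<1$, via Step 2. For phase-$1$ semistable objects I would record the displayed bound instead, noting that it agrees with the stated $-1$ precisely when $|m|\leq1$, and in particular for stable objects.
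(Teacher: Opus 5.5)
Your Step 3 is correct. Since $f^*f_*\scO_C\cong\scO_C$, the object $\scO_C^{\oplus 2}$ lies in ${}^{-1}\Per_c(X/Y)$. It is semistable of phase $1$, with $\ch_1^2=-4$ and $\ch_1\cdot\omega=0$. So the inequality as written has to be restricted, e.g.\ to stable objects, which is all the support-property argument needs via Jordan--H\"older factors. Your $|Z|^2$-bound is a valid substitute in phase $1$.

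The genuine gap is in Step 2, whose key claim is false. Already $b\geq 0$ fails. Let $\Gamma\subset Y$ be an irreducible proper curve through $p=f(C)$ of multiplicity $m\geq 1$, with strict transform $\widetilde\Gamma=f^*\Gamma-mC$. Then $D=\widetilde\Gamma$ has $D'=f^*\Gamma$ and $b=-m$. Moreover $\scO_{\widetilde\Gamma}$ is a quotient of $\scO_X$, so it lies in $\scT_{-1}$, and it is stable of phase in $(0,1)$. Indeed, a nonzero proper subobject of $\scO_{\widetilde\Gamma}$ in ${}^{-1}\Per_c(X/Y)$ is an extension of a nonzero ideal sheaf of $\widetilde\Gamma$ by some $K\in\scF_{-1}$. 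It therefore has the same $\Im Z$ and strictly larger $\Re Z$, since the colength of the ideal and $Z(K)>0$ both contribute. Hence $E=\scO_{\widetilde\Gamma}^{\oplus k}$ is semistable of phase $<1$ with $a=0$ and $|b-a|=km$, which is unbounded. This already occurs in the paper's own situation: take $g_i\colon X_i\to X_{i+1}$ with $i<N$ and $\Gamma=C_{i+1}$. The stated inequality survives for these objects only because $\ch_1(E)\cdot\omega=k\,\Gamma\cdot\omega_Y$ grows too. What you actually need is a bound $(b-a)^2\leq 1+C'(\ch_1(E)\cdot\omega)^2$. Your first estimate spends all of $C_\omega(D'\cdot\omega)^2/\omega^2$ on $D'^2$ and leaves nothing to absorb $(b-a)^2$.

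Semistability also cannot supply this bound in the way you sketch. A semistable object of phase $<1$ may have quotients of phase $1$, since only subobjects of larger phase destabilize. So quotients $E\twoheadrightarrow\scO_C$ give no contradiction. The Hom computations against $\scO_C$ and $\scO_C(-1)[1]$ also do not see the strict-transform term $-mC$ at all. (A minor point: $\scF_{-1}$ contains $\scO_C(-j)$ for all $j\geq 1$, not only extensions of $\scO_C(-1)$, although $\ch_1(\scH^{-1}(E))=aC$ with $a\geq 0$ is still right.) Because the intersection form on compactly supported classes is negative definite here, a linear control of the $C$-coefficient of $\ch_1(E)$ by $\ch_1(E)\cdot\omega$ is essentially the whole content of the proposition. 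The paper does not attempt it directly; it instead invokes Toda's wall-crossing argument (\cite[Theorem 3.23, Corollary 3.24]{Toda_ext_cont}, cf.\ \cite[Theorem 3.5]{Bayer-Macri-Stellari}).
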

\begin{proof}The proof is similar to \cite[Corollary 3.24]{Toda_ext_cont}. For the wall-crossing argument in \cite[Theorem 3.23]{Toda_ext_cont}, see also \cite[Theorem 3.5]{Bayer-Macri-Stellari}.
\end{proof}

Now, we are ready to prove the support property for rational points:

\begin{proof}(proof of \pref{pr:standardstability}.)
    Take a smooth compactification $j: X\hookrightarrow \ol{X}$. Then we have the embeddings
    \[K^{\mathrm{num}}_c(X)_\bR\hookrightarrow K^{\mathrm{num}}(\ol{X})_\bR\hookrightarrow H^*(\ol{X},\bR) \]
    via $j_*$ and the Chern character map. Now, the proof in \cite[3.7]{Toda_ext_cont} can be applied to our case (1), (2) by using \pref{pr:quadratic_form_(1)} and \pref{pr:quadratic_form_(2)}, respectively. The required estimates are simpler in our setting.
\end{proof}

\end{document}